\patchcmd{\section}{\scshape}{\bfseries}{}{} \makeatletter
\renewcommand{\chaptermark}[1]{\markboth{#1}{}}
	\ifnum\value{chapter}>0 \chaptername\ \thechapter. \fi
\theoremstyle{plain} \newtheorem{theorem}{Theorem}[chapter]
\theoremstyle{plain} \newtheorem{lemma}[theorem]{Lemma}
\theoremstyle{plain} 
\theoremstyle{plain} \newtheorem{corollary}[theorem]{Corollary}
\theoremstyle{plain} \newtheorem{Conjecture}[theorem]{Conjecture}
\theoremstyle{definition} 
\theoremstyle{definition} \newtheorem*{definition*}{Definition}
\theoremstyle{plain} \newtheorem*{theorem*}{Theorem}
\theoremstyle{definition} \newtheorem{remark}[theorem]{Remark}
\theoremstyle{definition} \newtheorem*{remark*}{Remark}
\newcommand{\NN}{\mathbb{N}} \newcommand{\RR}{\mathbb{R}}
\newcommand{\QQ}{\mathbb{Q}} \newcommand{\ZZ}{\mathbb{Z}}
\newcommand{\TT}{\mathbb{T}} \newcommand{\cQ}{\mathcal{Q}}
\newcommand{\cD}{\mathcal{D}} \newcommand{\cS}{\mathcal{S}}
\newcommand{\cH}{\mathcal{H}} \newcommand{\cR}{\mathcal{R}}
\newcommand{\bp}{\boldsymbol{p}} \newcommand{\bx}{\boldsymbol{x}}
\newcommand{\by}{\boldsymbol{y}} \newcommand{\bz}{\boldsymbol{z}}
\newcommand{\bq}{\boldsymbol{q}} \newcommand{\0}{{\boldsymbol{0}}}
\newcommand{\br}{\boldsymbol{r}} \newcommand{\bs}{\boldsymbol{s}}
\newcommand{\bi}{\boldsymbol{i}} \newcommand{\bv}{\boldsymbol{v}}
\newcommand{\balpha}{\boldsymbol{\alpha}} \newcommand{\bbeta}{\boldsymbol{\beta}}
\newcommand{\bgamma}{\boldsymbol{\gamma}} 
\newcommand{\bad}{\mathbf{Bad}}
\newcommand{\I}{{\rm I}}
\newcommand{\F}{{\rm F}}
\newcommand{\sgn}{\operatorname{sgn}}
\DeclareMathOperator{\SL}{SL}
\def\eps{\varepsilon}
\providecommand{\ceil}[1]{\lceil#1\rceil}
\providecommand{\abs}[1]{\lvert#1\rvert}
\providecommand{\Abs}[1]{\left|#1\right|}
\providecommand{\norm}[1]{\lVert#1\rVert}
\providecommand{\inner}[2]{\langle#1,#2\rangle}
\title{Simultaneous Diophantine approximation on affine subspaces and Dirichlet improvability} \author{Fabian~S{\"u}ess}
\numberwithin{equation}{chapter}
\numberwithin{section}{chapter}
\numberwithin{subsection}{section}
\begin{document}

\begin{titlePage}

\end{titlePage}

%\maketitle

\clearpage
\thispagestyle{plain}
\phantom{a}
\vfill
\newpage
%\centering[This page intentionally left blank]
\vfill
%\addtocounter{page}{-1}

\abstract{
\addcontentsline{toc}{section}{Abstract}
We show that affine coordinate subspaces of dimension at least two in Euclidean
space are of Khintchine type for divergence. For affine coordinate subspaces of dimension
one, we prove a result which depends on the dual Diophantine type of the base point of
the subspace. These results provide evidence for the conjecture that all affine subspaces of
Euclidean space are of Khintchine type for divergence. We also prove a partial analogue regarding the Hausdorff measure theory.

Furthermore, we obtain various results relating weighted Diophantine approximation and Dirichlet improvability. In particular, we show that weighted badly approximable vectors are weighted Dirichlet improvable, thus generalising a result by Davenport and Schmidt. We also provide a relation between non-singularity and twisted inhomogeneous approximation. This extends a result of Shapira to the weighted case.}

\newpage
\addcontentsline{toc}{section}{Contents}
\tableofcontents

\newpage
\addcontentsline{toc}{section}{Acknowledgements}
\afterPreface{
Now to the sentimental bits. Mathematics is an incredible success story within the history of humankind, being built on the ideas of (almost) uncountably many curious minds and spanning millennia as well as continents. I have been lucky to get a chance to add some of the most negligible bits to this story. However, even this smallest of contributions would not have been possible without the help of many people.

First I would like to thank Sanju and Evgeniy, for always keeping an open ear and an open mind, and for letting me work at my own speed. I have never been fast, but I have always tried to be thorough. A big thanks also to my co-authors Felipe and David, without whom this thesis would not look the same. Many others have led me to the path of mathematics or helped keeping me on it. I will never know if I could have made it through my undergraduate degree without the company of Lisa and Salome.

I will always be grateful to my parents and my sister for letting me grow up in an environment where everything was possible, for supporting me in any way they could and for being welcoming and surprisingly excited each time I came home to visit.

York has become a second home to me and this is mainly due to the people I have met here, be they colleagues, housemates or just other fellow lost souls. These include, but are not limited to: Spiros, James, Oliver \& Ellie, Ben, all the Daves, Demi (a special thanks for reading through this thesis), Mirjam, Vicky, the Italians, the pool \& snooker lot and Derek.

Whenever I went back to Switzerland, I tried (and failed) to visit all my dear old friends. Out of all those I would like to specially mention Paddy, Pagi and Janine. Thanks for being there through all the highs and lows.

Last, but by no means least, I would like to thank Henna.
}
\newpage
\thispagestyle{plain}
\vspace*{10ex}
\noindent
\textit{``And the mercy seat is waiting\\ And I think my head is burning\\ And in a way I'm yearning\\ To be done with all this measuring of truth\\ An eye for an eye\\ And a tooth for a tooth\\ And anyway there was no proof\\ Nor a motive why"\\[2ex]}
- Nick Cave, 1988

\chapter{Introduction}\label{Ch:Introduction}

The most fundamental problem in Diophantine approximation is the characterisation of points in Euclidean space $\RR^n$ as to how well they can be approximated by rational points. Obviously, the set $\QQ^n$ is dense in $\RR^n$ and so for a given $\balpha=(\alpha_1,\dots,\alpha_n)$ in $\RR^n$ we can find infinitely many rational points contained in an arbitrarily small ball around $\balpha$. On the other hand, this requires considering rationals with arbitrarily large denominators, which gives rise to the idea of relating the quality of approximation to the size of the denominator. More formally, given $\balpha\in\RR^n$, we are looking for solutions $q\in \NN$ to the inequality
\begin{equation}\label{Eqn:Psi}
	\parallel q\balpha\parallel < \psi(q),
\end{equation}
where
\begin{equation*}
	\parallel \bx \parallel = \min\limits_{\bz\in\ZZ^n}\left\{|\bx-\bz|\right\} = \min\limits_{\bz\in\ZZ^n}\left\{\max\limits_{1\leq j\leq n}\{|x_j-z_j|\}\right\}
\end{equation*}
denotes the distance from a point $\bx\in\RR^n$ to the nearest integer $\bz=(z_1,\dots,z_n)\in\ZZ^n$ and $\psi:\RR\rightarrow\RR$ is a positive real-valued function. It is readily seen that for any $\balpha\in\RR^n\backslash\QQ^n$ and $q\in \NN$, we can find $\bp=(p_1,\dots,p_n)\in \ZZ^n$ with 
\begin{equation*}
	\left|\alpha_j-\frac{p_j}{q}\right|<\frac{1}{2q},\quad (1\leq j\leq n)
\end{equation*}
and so any $q\in \NN$ is a solution to \eqref{Eqn:Psi} if $\psi$ is taken to be constant to $1/2$. Hence, we will want to consider functions $\psi$ which tend to zero with growing $q$. It is also worth noting that the case of $\balpha=\boldsymbol{a}/b\in\QQ^n$ with $\boldsymbol{a}\in\ZZ^n$ and $b\in\NN$ is of little interest as $\parallel q\balpha\parallel$ is equal to zero when $q$ is a multiple of $b$ and bounded from below by $1/b$, otherwise. It follows that we will restrict our attention to irrational points.

\section[Basic metric properties of simultaneous Diophantine approximation]{Basic metric properties of simultaneous\\ Diophantine approximation}\label{Sec:Basic}

The first fundamental result in Diophantine approximation was proved by Dirichlet and is a consequence of the pigeonhole principle, a rather simple, but powerful concept.

\theoremstyle{plain} \newtheorem*{pigeon}{Pigeonhole Principle}

\begin{pigeon}
	If m objects are placed into n boxes, where $m>n$, then at least one of the boxes contains at least two objects.
\end{pigeon}

\begin{theorem}[Dirichlet, 1842]
	For any $\alpha\in\RR$ and $Q\in\NN$, there exist integers $p$ and $q$, such that
	\begin{equation*}
		\left|\alpha-\frac{p}{q}\right|<\frac{1}{qQ} \quad \text{ and } \quad 1\leq q\leq Q.
	\end{equation*}
\end{theorem}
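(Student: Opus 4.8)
The plan is to apply the Pigeonhole Principle to the fractional parts of the multiples of $\alpha$. First I would consider the $Q+1$ real numbers $\{q\alpha\}$ for $q=0,1,\dots,Q$, where $\{x\}=x-\lfloor x\rfloor$ denotes the fractional part; each of these lies in the half-open interval $[0,1)$. Next I would partition $[0,1)$ into the $Q$ subintervals $[k/Q,(k+1)/Q)$ with $k=0,1,\dots,Q-1$. Since there are $Q+1$ points distributed among $Q$ boxes, the Pigeonhole Principle produces two distinct indices $0\leq q_1<q_2\leq Q$ for which $\{q_1\alpha\}$ and $\{q_2\alpha\}$ lie in the same subinterval, so that $|\{q_2\alpha\}-\{q_1\alpha\}|<1/Q$.

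Then I would set $q=q_2-q_1$ and $p=\lfloor q_2\alpha\rfloor-\lfloor q_1\alpha\rfloor$. By construction $1\leq q\leq Q$, and since $\{q_i\alpha\}=q_i\alpha-\lfloor q_i\alpha\rfloor$, the difference $\{q_2\alpha\}-\{q_1\alpha\}$ equals $q\alpha-p$. Hence $|q\alpha-p|<1/Q$, and dividing through by $q>0$ gives $|\alpha-p/q|<1/(qQ)$, which is the desired conclusion.

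The argument is essentially routine, and I do not expect any genuine obstacle: the entire content lies in the clever choice of the $Q+1$ objects and the $Q$ boxes. The only points deserving a little care are that the subintervals should be taken half-open, so that two points falling in the same box differ by \emph{strictly} less than $1/Q$ (this secures the strict inequality in the statement), and that selecting two \emph{distinct} indices guarantees $q\geq 1$ rather than the degenerate $q=0$.
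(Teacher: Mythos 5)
Your proof is correct and follows exactly the same route as the paper: the pigeonhole principle applied to the $Q+1$ fractional parts $\{0\cdot\alpha\},\dots,\{Q\alpha\}$ distributed among the $Q$ half-open subintervals $[k/Q,(k+1)/Q)$, then subtracting the two colliding indices to produce $q=q_2-q_1$ and $p=\lfloor q_2\alpha\rfloor-\lfloor q_1\alpha\rfloor$. The remarks on half-openness and distinctness are sensible bookkeeping observations but do not change the argument.
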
 
\vspace{2ex}
\begin{proof}
	Let $\lfloor x\rfloor =\max\{z\in\ZZ:z\leq x\}$ and $\{x\}=x-\lfloor x\rfloor$ denote the integer and fractional part of a real number $x$, respectively. For any $x\in\RR$, we have $0\leq\{x\}<1$ and so the $Q+1$ numbers $\{0\alpha\},\{\alpha\},\{2\alpha\},\dots,\{Q\alpha\}$ are all contained in the half-open unit interval $[0,1)$. We can divide this interval into $Q$ disjoint subintervals of the form 
	\begin{equation*}
		[u/Q,(u+1)/Q),\quad u\in\{0,1,\dots,Q-1\}.
	\end{equation*}		
	Hence, by the Pigeonhole Principle, one of these intervals contains two points $\{q_1\alpha\},\ \{q_2\alpha\}$ with $q_1< q_2\in \{0,1,\dots,Q\}$ and it follows that
	\begin{equation*}
		|\{q_2\alpha\}-\{q_1\alpha\}|<\frac{1}{Q}.
	\end{equation*}
	As $\{q_k\alpha\}=q_k\alpha-p_k$ with $p_k=\lfloor q_k\alpha\rfloor$ for $k=1,2$, this implies
	\begin{equation*}
		|\{q_2\alpha\}-\{q_1\alpha\}|=|(q_2\alpha-p_2)-(q_1\alpha-p_1)|=|(q_2-q_1)\alpha-(p_2-p_1)|
	\end{equation*}
	and so letting $q=q_2-q_1$ and $p=p_2-p_1$ we have found integers $p$ and $q$, with $1\leq q\leq Q$, satisfying
	\begin{equation*}
		|q\alpha-p|<\frac{1}{Q},
	\end{equation*}
	or, in other words,
	\begin{equation*}
		\left|\alpha-\frac{p}{q}\right|<\frac{1}{qQ}.\\ \qedhere
	\end{equation*}
\end{proof}
\vspace{1ex}
There is a higher-dimensional analogue to Dirichlet's Theorem concerning the approximation of points $\balpha\in\RR^n$ by rational points $(\bp/q)=(p_1/q,\dots,p_n/q)\in\ZZ^n\times\NN$. 

\begin{theorem}[Dirichlet]\label{Dir}
	For any $\balpha=(\alpha_1,\dots,\alpha_n)\in\RR^n$ and $Q\in\NN$, there exist $\bp=(p_1,\dots,p_n)\in\ZZ^n$ and $q\in\NN$, such that
	\begin{equation}\label{eqDir}
		\max\limits_{1\leq j\leq n}\left|\alpha_j-\frac{p_j}{q}\right|<\frac{1}{qQ^{1/n}} \quad \text{ and } \quad 1\leq q\leq Q.
	\end{equation}
\end{theorem}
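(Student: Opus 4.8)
The plan is to carry over the proof of the one-dimensional case verbatim to the cube $[0,1)^n$. Writing $\lfloor\bx\rfloor$ and $\{\bx\}=\bx-\lfloor\bx\rfloor$ for the coordinatewise integer and fractional parts, I would consider the $Q+1$ points
\begin{equation*}
  \bx_q := \bigl(\{q\alpha_1\},\dots,\{q\alpha_n\}\bigr), \qquad q=0,1,\dots,Q,
\end{equation*}
all of which lie in the half-open cube $[0,1)^n$. Partition this cube into $Q$ congruent half-open sub-boxes, each a product of $n$ half-open intervals of length $Q^{-1/n}$, so that any two points of the same box differ by strictly less than $Q^{-1/n}$ in every coordinate. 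Since there are $Q$ boxes but $Q+1$ points, the Pigeonhole Principle produces indices $q_1<q_2$ in $\{0,1,\dots,Q\}$ with $\bx_{q_1}$ and $\bx_{q_2}$ in the same box, whence $|\{q_2\alpha_j\}-\{q_1\alpha_j\}|<Q^{-1/n}$ for each $j=1,\dots,n$.

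From here I would simply repeat the algebra of the one-dimensional proof coordinatewise: setting $q:=q_2-q_1$ and $p_j:=\lfloor q_2\alpha_j\rfloor-\lfloor q_1\alpha_j\rfloor$ gives $1\le q\le Q$, $\bp=(p_1,\dots,p_n)\in\ZZ^n$, and
\begin{equation*}
  |q\alpha_j-p_j| = \bigl|\{q_2\alpha_j\}-\{q_1\alpha_j\}\bigr| < Q^{-1/n} \qquad (1\le j\le n),
\end{equation*}
so that dividing through by $q$ yields \eqref{eqDir}.

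The one genuinely delicate point is the partition step. A true partition of $[0,1)^n$ into $Q$ boxes each of sup-norm diameter $Q^{-1/n}$ exists only when $Q$ is a perfect $n$-th power, since $Q$ boxes of diameter at most $Q^{-1/n}$ each have volume at most $Q^{-1}$ and so can cover the volume-$1$ cube only if every box has volume exactly $Q^{-1}$, forcing an integer grid. For general $Q$ one would instead cut each coordinate into $\lceil Q^{1/n}\rceil$ equal pieces and track the number of boxes against the number of available points, at the mild cost of $\lceil Q^{1/n}\rceil^{-1}$ in place of $Q^{-1/n}$; or, losing nothing, one can replace the pigeonhole step by Minkowski's convex body theorem applied to the centrally symmetric convex region $\{(\bx,t)\in\RR^{n+1}:|x_j-t\alpha_j|\le Q^{-1/n},\ |t|\le Q\}$, which is the image of a box of volume $2^{n+1}$ under a unimodular map and hence contains a nonzero integer point $(\bp,q)$; the coordinate $q$ is nonzero because $Q^{-1/n}<1$, and after sign change $1\le q\le Q$, giving \eqref{eqDir}. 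I expect this bookkeeping around the subdivision, rather than the pigeonhole idea itself, to be the only real obstacle.
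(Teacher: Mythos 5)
Your final route --- Minkowski's convex body theorem applied to $\{(\bx,t):|x_j-t\alpha_j|\le Q^{-1/n},\ |t|\le Q\}$ --- is exactly the paper's: the proof of Theorem~\ref{Dir} is deferred to Section~\ref{sec:weighted}, where it is deduced from Minkowski's Linear Forms Theorem (Theorem~\ref{Minkowski}) with $m=n+1$, $A_1=\dots=A_n=Q^{-1/n}$, $A_{n+1}=Q$, which encodes precisely your box. Your diagnosis of why a verbatim pigeonhole fails for general $Q$ is also correct and is why the paper gives no pigeonhole proof in dimension $n\ge 2$.

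Two details to tighten. First, cutting each coordinate into $\lceil Q^{1/n}\rceil$ pieces does not rescue the pigeonhole: that yields $\lceil Q^{1/n}\rceil^n\ge Q$ boxes against only $Q+1$ points, so no collision is forced; taking $\lfloor Q^{1/n}\rfloor$ pieces instead keeps $q\le\lfloor Q^{1/n}\rfloor^n\le Q$ but returns the genuinely weaker bound $1/\lfloor Q^{1/n}\rfloor>Q^{-1/n}$ when $Q$ is not a perfect $n$-th power. Second, the closed convex body theorem at volume exactly $2^{n+1}$ only gives $|q\alpha_j-p_j|\le Q^{-1/n}$, not the strict inequality required by \eqref{eqDir}; to recover $<$ you should either cite the Linear Forms Theorem directly, which is strict in the first $n$ forms, or adapt the $K_\eps$ limiting argument the paper uses in its own proof of Theorem~\ref{Minkowski}, enlarging the constraint on $|t|$ by a factor $1+\eps$ and letting $\eps\downarrow 0$.
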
 

We will skip the proof for now, but the statement can be easily obtained as a consequence of Minkowski's Theorem for systems of linear forms, which will be a later topic, see Section \ref{sec:weighted}. Of course, whenever $q\leq Q$ is a solution to \eqref{eqDir}, it will also satisfy the same equation with $Q=q$. This, and the fact that that $\parallel q\balpha\parallel$ is bounded away from zero for any given $q\in\NN$ and $\balpha\in\RR^n\backslash\QQ^n$, gives rise to the following corollary.

\begin{corollary}\label{DirCor}
	Let $\balpha\in\RR^n$. There exist infinitely many $q\in\NN$ satisfying\vspace{-1ex}
	\begin{equation}\label{Eqn:DirCor}
		\parallel q\balpha\parallel <\frac{1}{q^{1/n}}.
	\end{equation}
\end{corollary}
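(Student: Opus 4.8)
The plan is to deduce Corollary \ref{DirCor} directly from Theorem \ref{Dir} by a standard ``infinitely many'' argument. First I would dispose of the rational case: if $\balpha = \boldsymbol{a}/b \in \QQ^n$, then $\parallel q\balpha \parallel = 0$ for every $q$ that is a multiple of $b$, so \eqref{Eqn:DirCor} holds trivially for infinitely many $q$. Hence we may assume $\balpha \in \RR^n \backslash \QQ^n$.

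Now fix such an $\balpha$. Given $Q \in \NN$, Theorem \ref{Dir} supplies $\bp \in \ZZ^n$ and $q \in \NN$ with $1 \leq q \leq Q$ and $\parallel q\balpha \parallel \leq \max_{j}|\alpha_j - p_j/q| < 1/(qQ^{1/n})$. Since $q \leq Q$, we have $Q^{1/n} \geq q^{1/n}$, and therefore $\parallel q\balpha \parallel < 1/q^{1/n}$; so every $Q$ produces at least one solution $q=q(Q)$ to \eqref{Eqn:DirCor}. The key point is then to show the solution set is infinite. Suppose for contradiction it were finite, say contained in $\{q_1,\dots,q_k\}$. Because $\balpha$ is irrational, each quantity $\parallel q_i\balpha \parallel$ is strictly positive, so $\delta := \min_{1\leq i \leq k}\parallel q_i\balpha \parallel > 0$. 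Choose $Q$ large enough that $1/(q_i Q^{1/n}) < \delta \leq \parallel q_i\balpha \parallel$ for all $i$ (for instance $Q > \delta^{-n}$ suffices, since $q_i \geq 1$). Then none of $q_1,\dots,q_k$ can be the solution guaranteed by Theorem \ref{Dir} for this $Q$, yet that solution lies in $\{q_1,\dots,q_k\}$ by assumption --- a contradiction. Hence infinitely many $q$ satisfy \eqref{Eqn:DirCor}.

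The only mild subtlety, and the step I would be most careful about, is the use of irrationality to guarantee $\parallel q\balpha \parallel \neq 0$: this is exactly what prevents the approximating denominators from stabilising and is the reason the corollary is stated for irrational $\balpha$ in spirit (the rational case being handled separately and trivially above). Everything else is bookkeeping with the inequality $Q^{1/n} \geq q^{1/n}$.
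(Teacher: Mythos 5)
Your argument follows exactly the route the paper itself sketches: apply Theorem~\ref{Dir}, observe that since $q\leq Q$ the resulting $q$ is automatically a solution of \eqref{Eqn:DirCor}, and use the irrationality of $\balpha$ (so that each $\parallel q\balpha\parallel$ is strictly positive) to rule out a finite solution set, with the rational case dispatched trivially exactly as in the paper's remark.

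One inequality needs tightening. You write $\parallel q\balpha\parallel \leq \max_j|\alpha_j-p_j/q| < 1/(qQ^{1/n})$, but in the paper's notation $\parallel q\balpha\parallel$ is the distance from $q\balpha$ to the nearest lattice point of $\ZZ^n$, so what Theorem~\ref{Dir} actually gives is
\[
\parallel q\balpha\parallel \;\leq\; \max_{1\leq j\leq n}|q\alpha_j-p_j| \;=\; q\max_{1\leq j\leq n}\Bigl|\alpha_j-\frac{p_j}{q}\Bigr| \;<\; \frac{1}{Q^{1/n}},
\]
not $<1/(qQ^{1/n})$; you have dropped a factor of $q$ in the wrong direction. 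The slip is benign here: $1/Q^{1/n}\leq 1/q^{1/n}$ still yields \eqref{Eqn:DirCor}, and your choice $Q>\delta^{-n}$ still forces $\parallel q_i\balpha\parallel < 1/Q^{1/n} < \delta$ in the contradiction step, so the conclusion stands. But as written the intermediate chain asserts a bound that is a factor of $q$ stronger than what Dirichlet's Theorem supplies, and should be corrected.
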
 

\begin{remark*}
Corollary \ref{DirCor} is trivially true for rational points. If $\balpha=(\boldsymbol{a}/b)\in\QQ^n$, then $\parallel q\balpha\parallel=0$ for any $q\in\NN$ of the form $q=kb$ with $k\in\NN$.
\end{remark*}

We note that inequality \eqref{Eqn:DirCor} has the form  $\parallel q\balpha\parallel <\psi(q)$ as introduced above with $\psi(q)=q^{-1/n}$. We would like to extend this concept to a suitable class of functions. A positive-valued decreasing function $\psi:\mathbb{R}^+\rightarrow\mathbb{R}^+$ is called an \textit{approximating function}. Given such a function $\psi$, we look at the set of points in $\I^n=[0,1]^n$ which are \textit{simultaneously $\psi$-approximable}.

Namely, this is the set
\begin{equation}\label{approx}
		W_n(\psi)=\left\{\balpha\in \I^n: \parallel q\balpha\parallel<\psi(q)\text{ infinitely often}\right\},
\end{equation}
where infinitely often means that the inequality holds for infinitely many $q\in\NN$. If $n=1$, we simply write $W(\psi)$. An often occurring case is when the approximating function $\psi$ has the form $\psi(q)=q^{-\tau}$ for $\tau>0$. In this case we speak of \textit{simultaneously $\tau$-approximable} points and denote the corresponding set by $W_n(\tau)$. 

\begin{remark*}
	The restriction to the unit cube $\I^n$ is purely for simplicity and does not affect the generality of results. This is due to the fact that the fractional part of the product $q\balpha$ does not depend on the integer part of $\balpha$ and thus we have that
\begin{equation*}
	\parallel q\balpha\parallel=\parallel q(\balpha+\boldsymbol{k})\parallel
\end{equation*}
for any integer vector $\boldsymbol{k}\in\ZZ^n$. In other words, $\balpha$ is $\psi$-approximable if and only if any element of the set $\balpha+\ZZ^n$ is $\psi$-approximable.
\end{remark*}

\begin{remark*}
	Simultaneous approximation is one of the two main types of Diophantine approximation, the other one being \textit{dual approximation}. In the dual case, points in $\RR^m$ are approximated by rational hyperplanes of the form
	\begin{equation*}
		\{\bq\cdot \bx=p:(p,\bq)\in\ZZ\times\ZZ^m\setminus\{\0\}\},
	\end{equation*}
	where $\bq\cdot\bx=q_1 x_1+\dots +q_m x_m$ is the standard scalar product in $\RR^m$. Given an approximating function $\psi$, one can define the set of \textit{dually $\psi$-approximable} points as
	\begin{equation*}
		W_m^D(\psi)=\{\balpha\in\RR^m:\norm{\bq\cdot\balpha}<\psi(|\bq|)\text{ infinitely often}\}.
	\end{equation*}
	Clearly, when $n=m=1$, the sets $W(\psi)$ and $W^D(\psi)$ coincide. The two approximation problems are also closely connected in higher dimension. Of course we can combine the two forms and this leads to a system of linear forms. Famous results like the Khintchine--Groshev Theorem or Minkowski's Theorem (see Theorem \ref{Minkowski}) are formulated in a generality which allows us to deduce both simultaneous and dual statements. Theorem \ref{thm:lines}, one of the main results in Chapter \ref{fibres}, is dependent on dual approximation properties, and in Section \ref{Sec:KTP} we will make use of a transference principle relating the two types of approximation. Other than this, we will not be concerned with dual approximation and refer the reader to \cite{Cassels} or \cite{BBDV} for a more extensive theory.
	
\end{remark*}

Dirichlet's Theorem tells us that $W_n(1/n)=\I^n$. Since \eqref{approx} requires infinitely many solutions, we can conclude that $W_n(\psi)=\I^n$ for any function $\psi$ which eventually dominates $q^{-1/n}$. However, Theorem \ref{Dir} by itself cannot reveal anything more about functions not falling in this category. To make further progress, we start by noting that $W_n(\psi)$ can be a written as a so-called $\limsup$ set. Given a countable collection of sets $(A_k)_{k\in\NN}$, we denote by
\begin{equation*}
	\limsup\limits_{k\rightarrow\infty}A_k:=\bigcap\limits_{l=1}^{\infty}\bigcup\limits_{k=l}^{\infty}A_k
\end{equation*}
the set of points contained in infinitely many of the $A_k$. In our case, let
\begin{equation*}
	A_n(\psi,q)=\bigcup\limits_{|\bp|\leq q} B\left(\frac{\bp}{q},\frac{\psi(q)}{q}\right)\cap \I^n,
\end{equation*}
where $(\bp/q)=(p_1/q,\dots,p_n/q)\in\QQ^n$ and $B(\bx,r)$ is the ball of radius $r$ with respect to $\max$-norm centred at $\bx\in\RR^n$. As always, $|\bp|=\max\{|p_1|,\dots,|p_n|\}$ and the index $|\bp|\leq q$ means that the union runs over all integer vectors $\bp$ satisfying this condition. It follows directly from definition \eqref{approx} that
\begin{equation*}
	W_n(\psi)=\limsup\limits_{q\rightarrow\infty}A_n(\psi,q).
\end{equation*}
The structure of $\limsup$ sets proves to be very useful when trying to investigate the measure theoretic properties of $W_n(\psi)$ with respect to $n$-dimensional Lebesgue measure $\lambda_n$. In fact, it directly fits the requirements for the convergence part of the Borel--Cantelli Lemma, a fundamental result in probability theory.

\begin{lemma}[Borel--Cantelli]\label{BoCa}
	Let $(\Omega,m)$ be a finite measure space and let $(A_k)_{k\in\NN}$ be a collection of $m$-measurable subsets of $\Omega$. Then
	\begin{equation*}
		\sum\limits_{k=1}^{\infty}m(A_k)<\infty\quad \Rightarrow\quad m\left(\limsup\limits_{k\rightarrow\infty}A_k\right)=0.
	\end{equation*}
\end{lemma}
Clearly, $\I^n$ with measure $\lambda_n$ is a finite measure space and we get
\begin{align*}
	\lambda_n(A_n(\psi,q))&=\lambda_n\left(\bigcup\limits_{|\bp|\leq q} B\left(\frac{\bp}{q},\frac{\psi(q)}{q}\right)\cap \I^n\right)\\[1ex]
	&\leq\sum\limits_{|\bp|\leq q}\lambda_n\left(B\left(\frac{\bp}{q},\frac{\psi(q)}{q}\right)\cap \I^n\right)\\[1ex]
	&=\sum\limits_{|\bp|\leq q}\lambda_n\left(B\left(\frac{\bp}{q},\frac{\psi(q)}{q}\right)\right)\\[1ex]
	&=2^n q^n\frac{\psi(q)^n}{q^n}=2^n\psi(q)^n,
\end{align*}
where we can do the shift in summation to compensate for the portions of balls outside $\I^n$. Equality holds whenever the balls contained in $W_n(\psi,q)$ are not overlapping, i.e. for $\psi(q)<1/2$, which will always be satisfied in our considerations. Lemma \ref{BoCa} implies that
\begin{equation}\label{Eqn:KhinCon}
	\lambda_n(W_n(\psi))=0 \quad \text{ if } \quad \sum\limits_{q=1}^{\infty}\psi(q)^n<\infty.
\end{equation}
Observe that in the above argument we have not made use of the fact that $\psi$ is monotonic. 

It is much more work to obtain a converse statement to \eqref{Eqn:KhinCon}. Divergence is not enough to satisfy the converse version of the Borel--Cantelli Lemma, the sets in question are also required to be \textit{pairwise independent}. For example, consider the sets of the form $A_k=[0,\frac{1}{k}]$ with $k\in\NN$. It is easily seen that
\begin{equation*}
	\sum\limits_{k=1}^{\infty}\lambda(A_k)=\sum\limits_{k=1}^{\infty}\frac{1}{k}=\infty,
\end{equation*}
where in the one-dimensional case we just write $\lambda$ for the Lebesgue measure $\lambda_1$. However, as we are dealing with a collection of nested intervals, it follows that\vspace{-2ex}
\begin{equation*}
	\limsup\limits_{k\rightarrow\infty}A_k=\lim\limits_{k\rightarrow\infty}A_k=\{0\},
\end{equation*}
which is a null-set with respect to Lebesgue measure. Similar overlaps occur between the sets $A_n(\psi,q)$, from which $W_n(\psi)$ is constructed and so pairwise independence is not satisfied in our case. There is a proof using the notion of \textit{quasi-independence on average} (see \cite{DAaspects} for an outline of the proof), but originally the following statement was proved by Khintchine using methods of classical measure and integration theory \cite{Khintchine}.

\begin{theorem}[Khintchine, 1924]\label{Khintchine}
	Let $\psi$ be an approximating function. Then
	\begin{equation*}
		\lambda_n(W_n(\psi))=
		\begin{dcases}
			1\quad &\text{ if }\quad \sum\limits_{q=1}^{\infty} \psi(q)^n=\infty,\\[2ex]
			0\quad &\text{ if }\quad \sum\limits_{q=1}^{\infty} \psi(q)^n<\infty.
		\end{dcases}
	\end{equation*}	
\end{theorem}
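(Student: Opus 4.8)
The convergence case is already established in the excerpt as \eqref{Eqn:KhinCon}, so the entire task is the divergence case: showing $\lambda_n(W_n(\psi)) = 1$ when $\sum_q \psi(q)^n = \infty$. The plan is to exploit the $\limsup$ structure via a quantitative Borel--Cantelli-type lemma that only requires \emph{quasi-independence on average}, rather than genuine pairwise independence. Concretely, I would invoke the divergence Borel--Cantelli lemma in the following form: if $(A_k)$ is a sequence of measurable subsets of a probability space with $\sum_k m(A_k) = \infty$ and
\begin{equation*}
	\liminf_{N\to\infty} \frac{\sum_{s,t=1}^{N} m(A_s \cap A_t)}{\left(\sum_{s=1}^N m(A_s)\right)^2} \le C
\end{equation*}
for some constant $C$, then $m(\limsup_k A_k) \ge 1/C > 0$. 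Applying this to the sets $A_n(\psi,q)$ (or rather a suitable subsequence of $q$'s, since consecutive levels overlap badly) would give that $W_n(\psi)$ has positive Lebesgue measure.

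The key estimate is therefore the overlap bound: I need to show that for $q \ne q'$,
\begin{equation*}
	\lambda_n\bigl(A_n(\psi,q) \cap A_n(\psi,q')\bigr) \ll \lambda_n(A_n(\psi,q))\,\lambda_n(A_n(\psi,q'))
\end{equation*}
on average over $q, q' \le N$. This is a counting problem: a point lies in both sets precisely when it is close both to some $\bp/q$ and to some $\bp'/q'$, which forces $|\bp/q - \bp'/q'|$ to be small, i.e. $|q'\bp - q\bp'|$ small relative to $qq'$. Counting the integer vectors $(\bp,\bp')$ for which the two balls $B(\bp/q, \psi(q)/q)$ and $B(\bp'/q', \psi(q')/q')$ actually intersect, and summing the volumes of the intersections, yields the desired product bound up to an absolute constant — this is where the monotonicity of $\psi$ (via the divergence of the sum) and the arithmetic of rationals with bounded denominators both enter. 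Once positive measure is in hand, a zero--one law argument upgrades it to full measure: one shows $W_n(\psi)$ is invariant (modulo null sets) under a suitable group of transformations, or more elementarily, one observes that $W_n(\psi)$ restricted to any dyadic subcube again has positive proportional measure by the same argument applied locally, and a Lebesgue density point argument then forces $\lambda_n(W_n(\psi)) = 1$.

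The main obstacle is unquestionably the overlap estimate, and within it the honest bookkeeping of the two regimes $\psi(q)/q \gtrless \psi(q')/q'$ and the contribution of pairs $\bp/q = \bp'/q'$ representing the same rational point. One has to be careful that the ``diagonal'' terms $s = t$ contribute $\sum_q \lambda_n(A_n(\psi,q))$, which is dominated by the square of the partial sum precisely because the latter diverges, so these terms are harmless in the limit. A secondary technical point is that one cannot use all $q \in \NN$ directly — since $A_n(\psi,q) \subseteq A_n(\psi,q')$ can nearly happen for $q \mid q'$ — so one typically passes to a lacunary or otherwise sparsified sequence of denominators on which $\sum \psi(q)^n$ still diverges, proves the variance bound there, and concludes. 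Modulo these care points, the argument is the standard one and I would cite the quasi-independence-on-average framework (as in \cite{DAaspects}) for the abstract lemma and carry out the geometric overlap count explicitly.
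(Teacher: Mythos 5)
Your plan is correct, but it is a genuinely different route from the one the paper takes. The paper does not prove the divergence case by direct overlap estimates; it defers the proof to Section \ref{Sec:ClassicalResults}, where it is derived from ubiquity theory: Lemma \ref{K-J-lemma} uses Dirichlet's Theorem to show that the family of rationals with bounded denominator forms a \emph{locally $\lambda$-ubiquitous system} relative to $\rho(t)=kt^{-(1+1/n)}$, and then Corollary \ref{cor2} (the Lebesgue-measure ubiquity theorem under condition (M2)) together with Cauchy's condensation test (Lemma \ref{sum-lemma}) gives $\lambda_n(W_n(\psi))=1$. Your proposal, by contrast, runs the quasi-independence-on-average argument by hand: the Kochen--Stone/Erd\H{o}s--R\'enyi divergence Borel--Cantelli lemma plus an explicit second-moment overlap bound for the sets $A_n(\psi,q)$, followed by a density or zero-one-law upgrade. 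The trade-off is instructive. The ubiquity route pushes the overlap and second-moment bookkeeping into the proof of the general ubiquity theorem (Theorem \ref{mT1} in the paper, cited from \cite{limsup}), so what the end user has to verify (Dirichlet $\Rightarrow$ local ubiquity) is short and clean, and the same verification immediately yields Jarn\'ik's theorem via Theorem \ref{hT1}; your route keeps all the arithmetic in view and is more self-contained but has to be redone from scratch to get the Hausdorff-measure statement. Both work. Two small technical notes on your plan: the upgrade from positive to full measure is, in the paper's toolkit, most cleanly handled by Cassels' zero-one law (Theorem \ref{Thm:Cassels01}) in dimension one and by the density argument you sketch in higher dimensions; and the lacunary sparsification you flag is indeed necessary and is exactly what the $k$-adic blocks $J_k(m)$ accomplish inside the ubiquity framework, so the two approaches are discretising the problem in the same way even if they diverge afterwards.
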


We will not present a proof here, but later we will show how Khintchine's Theorem can be obtained as a consequence of \textit{ubiquity theory}. See Section \ref{Sec:ClassicalResults}.

\begin{remark*}
	As mentioned above, the convergence part of Theorem \ref{Khintchine} does not need the function $\psi$ to be decreasing. In fact, it has been shown by Gallagher that even for the divergence part this assumption can be removed if $n\geq 2$ \cite{Gallagherkt}. This is an important improvement of Khintchine's Theorem and will be vital to our proof of Theorem \ref{thm:subspaces}.
\end{remark*}
\vspace{-1ex}
However, monotonicity of $\psi$ is essential when $n=1$. In 1941, Duffin and Schaeffer proved the existence of a non-monotonic function $\vartheta:\RR^+\rightarrow\RR^+$, for which the sum $\sum_{q\in\NN}\vartheta(q)$ diverges, but $\lambda(W(\vartheta))=0$ \cite{Duffin}. The construction of $\vartheta$ uses the following facts. For any square-free positive integer $N$, and $s>0$,\vspace{-1ex}
\begin{equation}\label{Eqn:Fact1}
		\sum\limits_{q\in\NN,\ q|N}q=\prod\limits_{p\in\mathbb{P},\ p|N}(1+p)
		\vspace{-2ex}
\end{equation}
and
\begin{equation}\label{Eqn:Fact2}
		\prod\limits_{p\in\mathbb{P}}(1+p^{-s})=\frac{\zeta(s)}{\zeta(2s)},	
\end{equation}
where $\mathbb{P}\subset\NN$ is the set of prime numbers. Here, $\zeta$ denotes the Riemann zeta function, which on our domain of interest is defined by the infinite series
\begin{equation*}
	\zeta(s):=\sum\limits_{k=1}^{\infty}k^{-s}=\frac{1}{1^s}+\frac{1}{2^s}+\frac{1}{3^s}+\cdots .
\end{equation*}
The value $\zeta(1)$ is the limit of the harmonic series, which diverges, whereas $\zeta(2)$ takes the finite value $\pi^2/6$. Therefore, \eqref{Eqn:Fact2} implies that
\begin{equation*}
	\prod\limits_{p\in\mathbb{P}}(1+p^{-1})=\frac{\zeta(1)}{\zeta(2)}=\infty.
\end{equation*}
Thus, we can find a sequence of square-free positive integers $(N_i)_{i\in\NN}$ such that $N_i$ and $N_j$ are coprime whenever $i\neq j$ and which satisfy
\begin{equation}\label{Eqn:Fact3}
	\prod\limits_{p\in\mathbb{P},\ p|N_i}(1+p^{-1})>2^i+1.
\end{equation}
We define the function $\vartheta$ on the positive integers by
\begin{equation*}
	\vartheta(q):=
	\begin{cases}
		2^{-i-1}\frac{q}{N_i}\ &\text{ if }q>1\text{ and }q|N_i\text{ for some }i,\\
		0\ &\text{ otherwise.}
	\end{cases}
\end{equation*}
As above, let
\begin{equation*}
	A(\vartheta,q)=\bigcup\limits_{p=0}^q B\left(\frac{p}{q},\frac{\vartheta(q)}{q}\right)\cap\I.
	\vspace{2ex}
\end{equation*}
If $q$ divides $N_i$, then $A(\vartheta,q)\subseteq A(\vartheta,N_i)$, since
\begin{equation*}
	\frac{\vartheta(q)}{q}=2^{-i-1}\frac{q}{qN_i}=\frac{2^{-i-1}}{N_i}=\frac{\vartheta(N_i)}{N_i}
\end{equation*}
for any divisor $q$ of $N_i$. Hence,\vspace{-1ex}
\begin{equation*}
	\bigcup\limits_{q\in\NN,\ q|N_i}A(\vartheta,q)=A(\vartheta,N_i)
	\vspace{-1ex}
\end{equation*}
and so
\begin{equation*}
	\lambda\left(\bigcup\limits_{q\in\NN,\ q|N_i}A(\vartheta,q)\right)=\lambda\left(A(\vartheta,N_i)\right)=2\vartheta(N_i)=2^{-i}.
	\vspace{1ex}
\end{equation*}
By definition
\begin{equation*}
	W(\vartheta)=\limsup\limits_{q\rightarrow\infty}A(\vartheta,q)=\limsup\limits_{i\rightarrow\infty}A(\vartheta,N_i)
	\vspace{1ex}
\end{equation*}
and, moreover, we have that
\begin{equation*}
	\sum\limits_{i=1}^{\infty}m(A(\vartheta,N_i))=\sum\limits_{i=1}^{\infty}2^{-i}=1<\infty.
\end{equation*}
Thus, the Borel--Cantelli Lemma implies that \vspace{-1ex}
\begin{equation*}
	\lambda(W(\vartheta))=0.
	\vspace{-1ex}
\end{equation*}
On the other hand, using the equations \eqref{Eqn:Fact1} and \eqref{Eqn:Fact3}, we can show that
\begin{align*}
	\sum\limits_{q=1}^{\infty}\vartheta(q)&=\sum\limits_{i=1}^{\infty}2^{-i-1}\frac{1}{N_i}\sum\limits_{q>1,\ q|N_i}q\\[1ex]
	&=\sum\limits_{i=1}^{\infty}2^{-i-1}\frac{1}{N_i}\left(\prod\limits_{p\in\mathbb{P},\ p|N_i}(1+p)-1\right)\\[1ex]
	&=\sum\limits_{i=1}^{\infty}2^{-i-1}\frac{1}{N_i}\left(\prod\limits_{p\in\mathbb{P},\ p|N_i}(1+p^{-1})p-1\right)\\[1ex]
	&>\sum\limits_{i=1}^{\infty}2^{-i-1}\frac{1}{N_i}\left((2^i+1)N_i-1\right)\\[1ex]
	&>\sum\limits_{i=1}^{\infty}2^{-i-1}\frac{1}{N_i}2^i N_i\\[1ex]
	&=\sum\limits_{i=1}^{\infty}2^{-1}=\infty.
\end{align*}
In the same paper, Duffin and Schaeffer also discuss a variation of Khintchine's Theorem for arbitrary positive functions $\psi$. The inequality $\norm{q\alpha}<\psi(q)$ implies the existence of an integer $p$ satisfying
\begin{equation}\label{Eqn:DS}
	\left|\alpha-\frac{p}{q}\right|<\frac{\psi(q)}{q}.
\end{equation}
We can uniquely fix the rational point $p/q$ to the approximation error $\psi(q)/q$ by requiring $\gcd(p,q)=1$. Let $W'(\psi)$ denote the set of points $\alpha$ in $\I$ for which inequality~\eqref{Eqn:DS} holds for infinitely many coprime pairs $(p,q)\in\ZZ\times\NN$. Clearly, $W'(\psi)\subseteq W(\psi)$. The Borel--Cantelli Lemma directly implies that
\begin{equation*}
	\lambda(W'(\psi))=0\quad \text{ if }\quad \sum\limits_{q=1}^{\infty}\varphi(q)\frac{\psi(q)}{q}<\infty,
\end{equation*}
where $\varphi$ is the Euler totient function, i.e.
\begin{equation*}
	\varphi(q):=\left|\left\{k\in\NN: 1\leq k\leq q \text{ and } (k,q)=1\right\}\right|.
\end{equation*}

\begin{Conjecture}[Duffin--Schaeffer, 1941]\label{Con:DS}
	For any positive real-valued function $\psi$,
	\begin{equation*}
	\lambda(W'(\psi))=1\quad \text{ if }\quad \sum\limits_{q=1}^{\infty}\varphi(q)\frac{\psi(q)}{q}=\infty.
\end{equation*}
\end{Conjecture}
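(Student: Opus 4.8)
\emph{Remark: the statement above is the Duffin--Schaeffer Conjecture, which at the time of writing is open; what follows is therefore a genuine proposal rather than a reconstruction of a known argument, and it describes the line of attack that the existing partial results suggest.}

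The first move is to reduce the claim to positivity of measure. By the Gallagher zero--one law, $\lambda(W'(\psi))\in\{0,1\}$ for every positive $\psi$, so it suffices to prove $\lambda(W'(\psi))>0$ whenever the series diverges. Setting, for each $q$ with $\psi(q)<1/2$,
\begin{equation*}
	A'(q)\;=\;\bigcup_{\substack{0\le p\le q \\ \gcd(p,q)=1}} B\!\left(\frac{p}{q},\,\frac{\psi(q)}{q}\right)\cap\I,
\end{equation*}
we have $W'(\psi)=\limsup_{q\to\infty}A'(q)$ and $\lambda(A'(q))\asymp\varphi(q)\psi(q)/q$, so the hypothesis $\sum_q\varphi(q)\psi(q)/q=\infty$ is exactly $\sum_q\lambda(A'(q))=\infty$. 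The standard route to a converse Borel--Cantelli conclusion then asks for \emph{quasi-independence on average}: it is enough to produce a constant $C$ and arbitrarily large $Q$ with
\begin{equation*}
	\sum_{q,r\le Q}\lambda\bigl(A'(q)\cap A'(r)\bigr)\;\le\;C\left(\sum_{q\le Q}\lambda(A'(q))\right)^{\!2},
\end{equation*}
since this forces $\lambda(W'(\psi))>0$.

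Everything therefore comes down to the pairwise overlaps. A short geometric computation shows that $A'(q)\cap A'(r)$ is governed by the number of pairs $(p,p')$ with $|pr-p'q|$ small, and this yields a bound of the shape
\begin{equation*}
	\lambda\bigl(A'(q)\cap A'(r)\bigr)\;\ll\;\frac{\psi(q)\psi(r)}{qr}\,\gcd(q,r)^{2}\prod_{p\mid qr/\gcd(q,r)^{2}}\!\Bigl(1+\tfrac1p\Bigr),
\end{equation*}
which, for coprime $q,r$, is comparable to $\lambda(A'(q))\lambda(A'(r))$ but can be much larger when $q$ and $r$ share many small primes. Summing over $q,r\le Q$, the quasi-independence inequality becomes an assertion about a weighted double sum over moduli whose weights mix the values of $\psi$ with greatest common divisors and factors of the form $\prod_{p\mid n}(1+1/p)$. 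This is precisely the arithmetically delicate heart of the problem: it is the same correlation mechanism that produces the Duffin--Schaeffer counterexample for $W(\psi)$ recalled above (sequences of highly composite, pairwise-coprime $N_i$), and the coprimality condition defining $W'(\psi)$ is what must tame it.

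I would approach this final estimate along the path opened by Erd\H{o}s (who settled the case $\psi(q)\in\{0,c/q\}$) and by Pollington and Vaughan (who proved the conjecture under an ``extra divergence'' hypothesis): localise $\psi$ to dyadic scales, split the moduli according to the size and shape of their prime factorisations, and encode the surviving dangerous correlations as a weighted bipartite ``GCD graph'' on the set of moduli, whose edges record pairs with an abnormally large greatest common divisor. If quasi-independence on average were to fail, this graph would have to be simultaneously edge-dense and arithmetically rigid; the plan is then to run an iterative \emph{compression} argument that passes to cleaner, quasi-random sub-configurations on which the relevant sums can be evaluated directly, and to extract from this a contradiction with the divergence of $\sum_q\varphi(q)\psi(q)/q$. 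I expect this graph-compression step to be the main obstacle: the second-moment framework above is routine, but fusing it with enough structural information about the multiplicative anatomy of the moduli to rule out all near-extremal configurations is exactly the ingredient that has been missing, and it is where any successful proof must concentrate its new ideas.
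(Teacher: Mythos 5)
The statement you are asked to prove is presented in the paper as an open conjecture, not a theorem: the paper gives no proof of it, and the only thing proved there is the much weaker Theorem~\ref{Thm:DS}, which assumes the additional regularity hypothesis~\eqref{Eqn:AssumDS}. Your opening remark correctly flags this, and it is the most important thing to say: there is no argument in the paper against which your proposal can be checked, because at the time the thesis was written the Duffin--Schaeffer Conjecture was genuinely open.

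Evaluated on its own terms, your outline is an accurate and well-informed description of the standard line of attack. The reduction to $\lambda(W'(\psi))>0$ via Gallagher's zero--one law is correct (the paper itself cites this fact), the identification of $W'(\psi)$ as a $\limsup$ set with $\lambda(A'(q))\asymp\varphi(q)\psi(q)/q$ is routine, and the second-moment / quasi-independence-on-average criterion is the right target. Your overlap estimate is morally the Pollington--Vaughan bound, although the exponent on $\gcd(q,r)$ and the precise range of primes in the Euler product need care; and your closing paragraph essentially describes the ``GCD graph'' compression scheme by which Koukoulopoulos and Maynard eventually settled the conjecture (after this thesis was submitted). That said, as you yourself concede, the crucial compression step is left as an unproved heuristic, so what you have written is a research programme rather than a proof; no reviewer could accept it as establishing the displayed measure statement. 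In short: the paper asserts the conjecture without proof, your remark is correct, your sketch of the reduction and the obstruction is accurate, and the genuine gap is exactly where you say it is --- the arithmetic control of the correlated moduli --- which is not a small lacuna but the entire content of the problem.
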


Duffin and Schaeffer proved the following weaker version of their conjecture.

\begin{theorem}\label{Thm:DS}
	Conjecture \ref{Con:DS} holds under the additional assumption that
	\begin{equation}\label{Eqn:AssumDS}
		\limsup\limits_{k\rightarrow\infty}\left(\sum\limits_{q=1}^{k}\varphi(q)\frac{\psi(q)}{q}\right)\left(\sum\limits_{q=1}^{k}\psi(q)\right)^{-1}>0.
	\end{equation}
\end{theorem}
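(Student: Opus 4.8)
The plan is to realise $W'(\psi)$ as a $\limsup$ set and to combine the divergence half of the Borel--Cantelli Lemma, in its \emph{quasi-independence on average} form, with a zero--one law; the argument requires no monotonicity of $\psi$. After a standard preliminary reduction we may assume that $\psi(q)<1/2$ for every $q$. For $q\in\NN$ put
\[
	E_q=\bigcup_{\substack{0\le p\le q\\ \gcd(p,q)=1}}B\!\left(\frac pq,\frac{\psi(q)}{q}\right)\cap\I ,
\]
so that the balls constituting $E_q$ are pairwise disjoint, $\lambda(E_q)=2\varphi(q)\psi(q)/q$, and $W'(\psi)=\limsup_{q\to\infty}E_q$. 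Put $S(Q)=\sum_{q\le Q}\varphi(q)\psi(q)/q$; by hypothesis $S(Q)\to\infty$. It suffices to prove $\lambda(W'(\psi))>0$, for Gallagher's zero--one law for $W'(\psi)$ then forces $\lambda(W'(\psi))=1$ (alternatively one runs the estimates below on an arbitrary subinterval of $\I$ and appeals to the Lebesgue density theorem).

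For the positive-measure step I would invoke the quasi-independence-on-average lemma \cite{DAaspects} (a consequence of the Cauchy--Schwarz inequality, often attributed to Chung and Erd\H{o}s): if $(A_k)$ are measurable in a probability space with $\sum_k\lambda(A_k)=\infty$ and
\[
	\limsup_{Q\to\infty}\ \frac{\bigl(\sum_{k\le Q}\lambda(A_k)\bigr)^{2}}{\sum_{s,t\le Q}\lambda(A_s\cap A_t)}\ =\ \rho\ >\ 0 ,
\]
then $\lambda(\limsup_k A_k)\ge\rho$. With $A_q=E_q$ the whole theorem reduces to the overlap estimate
\[
	\sum_{q,q'\le Q}\lambda(E_q\cap E_{q'})\ \ll\ S(Q)^{2}+\sum_{q\le Q}\psi(q) ,
\]
which I denote $(\ast)$. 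Granting $(\ast)$, hypothesis \eqref{Eqn:AssumDS} furnishes a constant $c>0$ and infinitely many integers $Q$ with $\sum_{q\le Q}\psi(q)\le c^{-1}S(Q)$; since $S(Q)\to\infty$, along these $Q$ the right-hand side of $(\ast)$ is $\ll S(Q)^{2}$, while $\sum_{q\le Q}\lambda(E_q)=2S(Q)$, so the ratio in the lemma is bounded below by a positive absolute constant along this sequence and hence has positive $\limsup$. Therefore $\lambda(W'(\psi))>0$, as required.

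The arithmetic estimate $(\ast)$ is the core of the argument, and I expect it to be the main obstacle; everything else is soft. The diagonal contributes $\sum_{q\le Q}\lambda(E_q)=2S(Q)$, which is negligible once $S(Q)$ is large. For $q\ne q'$, a point of $E_q\cap E_{q'}$ forces $|p/q-p'/q'|<\psi(q)/q+\psi(q')/q'$ for some reduced fractions $p/q$, $p'/q'$, equivalently $|pq'-p'q|<q'\psi(q)+q\psi(q')$ subject to $\gcd(p,q)=\gcd(p',q')=1$. When $\gcd(q,q')=1$ the Chinese Remainder Theorem shows that, as $(p\bmod q,\,p'\bmod q')$ ranges over reduced residues, $pq'-p'q$ ranges bijectively over the residues modulo $qq'$ coprime to $qq'$; counting those in the relevant interval and multiplying by the length $2\min(\psi(q)/q,\psi(q')/q')$ of a single overlap interval yields, up to a controlled error, the genuine quasi-independence $\lambda(E_q\cap E_{q'})\ll\lambda(E_q)\lambda(E_{q'})$, whose double sum is at most $4S(Q)^{2}$. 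When $\gcd(q,q')=d>1$ one writes $q=da$, $q'=db$ with $\gcd(a,b)=1$, repeats the count modulo $a$ and modulo $b$, and resums the outcome as a divisor sum over $d$, exploiting the multiplicativity of $\varphi$; the \emph{degenerate} pairs --- those for which the interval of admissible values of $pq'-p'q$ is too short for the lattice-point count to be proportional to its expected density, so that $\lambda(E_q\cap E_{q'})$ is only $O\bigl(\min(\psi(q)/q,\psi(q')/q')\bigr)$ --- together with the sieve errors are precisely what produce the residual term $\sum_{q\le Q}\psi(q)$ in $(\ast)$. This bookkeeping is elementary but delicate; it is the computation originally carried out by Duffin and Schaeffer \cite{Duffin}, and with $(\ast)$ established the proof is completed by the preceding paragraph.
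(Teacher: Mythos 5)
The paper does not prove Theorem~\ref{Thm:DS}: it is stated and attributed to Duffin and Schaeffer \cite{Duffin}, so there is no in-text proof to compare against. Your outline is the classical 1941 argument, recast in the modern Chung--Erd\H{o}s ``quasi-independence on average'' framework, with Gallagher's zero--one law substituted for Duffin and Schaeffer's Lebesgue-density-point argument (and you correctly flag the density argument as the alternative that avoids relying on the later result). The role of hypothesis \eqref{Eqn:AssumDS} is correctly localised: it is used only to dominate the residual $\sum_{q\le Q}\psi(q)$ term in $(\ast)$ by a multiple of $S(Q)$ along a subsequence, after which $S(Q)\to\infty$ absorbs it and the second-moment lemma delivers positive measure.

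That said, the overlap estimate $(\ast)$ carries the entire weight of the theorem, and you state it rather than prove it. The reduction to a second-moment inequality and the CRT bijection for coprime $q,q'$ are routine; the real content is the bookkeeping showing that the degenerate pairs and those with $\gcd(q,q')>1$ contribute only $O\bigl(\sum_{q\le Q}\psi(q)\bigr)$ rather than something larger, and that the sieve errors in the density count can be absorbed. Your sketch describes what this bookkeeping ought to look like but does not carry it out --- it is referred back to \cite{Duffin}. At the architectural level the proposal is sound and the hypothesis is deployed exactly where it is needed, but as a self-contained proof it has its load-bearing lemma asserted by citation rather than established.
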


Theorem \ref{Thm:DS} will be referred to as the Duffin--Schaeffer Theorem.

\begin{remark*}
	The Duffin--Schaeffer conjecture is one of the most important and difficult open problems in Diophantine approximation. Various partial results have been established (see \cite{Harman} or \cite{Sprindzuk2}) and Gallagher's ``$0-1$ law'' shows that $W'(\psi)$ has either zero or full measure \cite{Gallagher01}. Conjecture \ref{Con:DS} and Khintchine's Theorem are equivalent in the case when $\psi$ is monotonic. 
	
	It is also worth noting that while $\vartheta$ as defined above is a counterexample to Khintchine's Theorem without monotonicity, it is not a counterexample to the Duffin--Schaeffer conjecture. Indeed, using the fact that $\sum_{q|N}\varphi(q)=N$, we see that
	\begin{align*}
		\sum\limits_{q=1}^{\infty}\varphi(q)\frac{\vartheta(q)}{q}&=\sum\limits_{i=1}^{\infty}2^{-i-1}\frac{1}{N_i}\sum\limits_{q>1,\ q|N_i}\varphi(q)\\[1ex]
		&<\sum\limits_{i=1}^{\infty}2^{-i-1}\frac{1}{N_i}N_i\\[1ex]
		&=\sum\limits_{i=1}^{\infty}2^{-i-1}=\frac{1}{2}<\infty.
	\end{align*}
\end{remark*}

Turning back to Khintchine's Theorem, as a direct consequence we get that Corollary \ref{DirCor} is optimal in the sense that almost no $\balpha\in\I^n$ is $\tau$-approximable for $\tau>1/n$. On the other hand, if we define a collection of approximating functions $(\psi_k)_{k\in\NN}$ by
\begin{equation}\label{Eqn:DefPsiK}
	\psi_k:q\mapsto\psi_k(q):=\frac{1}{k}q^{-\frac{1}{n}},
\end{equation}
then almost all $\balpha\in\I^n$ are $\psi_k$-approximable for any $k\in\NN$. We call a point $\balpha\in\I^n$ \textit{badly approximable} if there exists a $k\in\NN$ such that $\balpha\notin W_n(\psi_k)$ and we denote the set of badly approximable points in $\I^n$ by $\bad_n$. For simplicity, we will write $\bad$ for $\bad_1$. In other words, $\bad_n$ is the set of points $\balpha\in\I^n$, for which
\begin{equation*}
	\liminf\limits_{q\rightarrow\infty}q^{1/n}\parallel q\balpha\parallel>0.
\end{equation*}
Khintchine's Theorem immediately implies the following.

\begin{theorem}
	$\lambda_n(\bad_n)=0$.
\end{theorem}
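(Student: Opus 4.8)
The plan is to express $\bad_n$ as a countable union of complements of the sets $W_n(\psi_k)$ and then invoke Khintchine's Theorem to show each such complement is null.

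First I would recall the definition: $\balpha \in \bad_n$ precisely when there exists some $k \in \NN$ with $\balpha \notin W_n(\psi_k)$, where $\psi_k(q) = \frac{1}{k} q^{-1/n}$ as in \eqref{Eqn:DefPsiK}. Hence
\begin{equation*}
	\bad_n = \bigcup_{k=1}^{\infty} \left( \I^n \setminus W_n(\psi_k) \right).
\end{equation*}
Since each $\psi_k$ is a positive decreasing function, it is an approximating function in the sense required by Theorem \ref{Khintchine}, so the theorem applies to it.

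Next I would check the divergence condition for each fixed $k$. We have
\begin{equation*}
	\sum_{q=1}^{\infty} \psi_k(q)^n = \sum_{q=1}^{\infty} \frac{1}{k^n} \cdot \frac{1}{q} = \frac{1}{k^n} \sum_{q=1}^{\infty} \frac{1}{q} = \infty,
\end{equation*}
because the harmonic series diverges. By the divergence part of Theorem \ref{Khintchine}, $\lambda_n(W_n(\psi_k)) = 1$, and therefore $\lambda_n(\I^n \setminus W_n(\psi_k)) = 0$ for every $k \in \NN$.

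Finally, $\bad_n$ is a countable union of $\lambda_n$-null sets, so by countable subadditivity of Lebesgue measure $\lambda_n(\bad_n) = 0$, as claimed. There is no real obstacle here — the only point requiring a little care is confirming that each $\psi_k$ genuinely qualifies as an approximating function so that Theorem \ref{Khintchine} is applicable, and that the divergence hypothesis is met; both are immediate.
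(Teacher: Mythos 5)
Your proof is correct and, in fact, slightly more direct than the one the paper gives. Both arguments start from the same decomposition $\bad_n=\bigcup_{k=1}^{\infty}\left(\I^n\setminus W_n(\psi_k)\right)$ and both ultimately invoke the divergence part of Khintchine's Theorem, but they differ in how that theorem is deployed. You apply Theorem~\ref{Khintchine} once to each $\psi_k$, noting $\sum_q \psi_k(q)^n = k^{-n}\sum_q q^{-1}=\infty$, and then finish by countable subadditivity. The paper instead introduces a single auxiliary function $\psi(q)=(q\log q)^{-1/n}$, observes that $\psi$ is eventually dominated by every $\psi_k$, so that $W_n(\psi)\subseteq W_n(\psi_k)$ for all $k$ and hence $\bad_n\subseteq \I^n\setminus W_n(\psi)$, and then applies Khintchine just once to $\psi$. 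The paper's route buys a marginally stronger conclusion — it exhibits one full-measure set $W_n(\psi)$ entirely disjoint from $\bad_n$ — but your version gets to $\lambda_n(\bad_n)=0$ with less machinery, and the step you flagged as requiring care (that each $\psi_k$ is a genuine approximating function) is indeed immediate since each $\psi_k$ is positive and decreasing. There is no gap in your argument.
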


\begin{proof}
	We define an approximating function $\psi$ by
	\begin{equation*}
		\psi(q):=\frac{1}{(q\log q)^{1/n}}.
	\end{equation*}
For any $k\in\NN$, there exists a $Q(k)\in\NN$ such that
\begin{equation*}
	\psi_k(q)=\frac{1}{k}q^{-1/n}>\frac{1}{(\log q)^{1/n}}q^{-1/n}=\psi(q)
\end{equation*}
for all $q\geq Q(k)$ and thus $W_n(\psi)\subseteq W_n(\psi_k)$ for all $k\in\NN$. This implies that 
	\begin{equation*}
		\bad_n=\bigcup\limits_{k=1}^{\infty}\left(\I^n\setminus W_n(\psi_k)\right)\subseteq \I^n\setminus W_n(\psi).
	\end{equation*}
	Now, observe that
	\begin{equation*}
		\sum\limits_{q=1}^{\infty}\psi(q)^n=\sum\limits_{q=1}^{\infty}\frac{1}{q\log q}=\infty,
	\end{equation*}
	and hence, by Khintchine's Theorem, $\lambda_n(\psi)=1$ and so, in turn, $\lambda_n(\bad_n)=0$.
\end{proof}
A priori, the set $\bad_n$ could be empty. However, while being a null set with respect to Lesbesgue measure $\lambda_n$, $\bad_n$ is maximal with respect to Hausdorff dimension (see Section \ref{Sec:Hausdorff} for the definition).

\begin{theorem}\label{Thm:DimBad}
	$\dim(\bad_n)=n$.
\end{theorem}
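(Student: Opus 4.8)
Since $\bad_n\subseteq\I^n$, the inequality $\dim(\bad_n)\le n$ is automatic, so the whole content is the lower bound $\dim(\bad_n)\ge n$. The plan is to deduce this from Schmidt's theory of $(\alpha,\beta)$-games. Recall that in such a game two players alternately choose a nested sequence of closed balls $B_1\supseteq W_1\supseteq B_2\supseteq W_2\supseteq\cdots$ in $\RR^n$ whose radii contract by the fixed ratios $\alpha$ and $\beta$ respectively, so that $\bigcap_i B_i$ is a single point $\balpha$; a subset of $\RR^n$ is $\alpha$-winning if, for every $\beta$, the player choosing the balls $W_i$ has a strategy that forces $\balpha$ into it. The relevant theorem of Schmidt is that an $\alpha$-winning subset of $\RR^n$ has Hausdorff dimension $n$ (and, although it will not be needed here, a countable intersection of $\alpha$-winning sets is again $\alpha$-winning). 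It therefore suffices to fix a small $c>0$ and to prove that the \emph{uniformly badly approximable} set
\[
\bad_n(c):=\bigl\{\balpha\in\I^n:\ \norm{q\balpha}\ge c\,q^{-1/n}\text{ for all }q\in\NN\bigr\},
\]
which by \eqref{Eqn:DefPsiK} is contained in $\I^n\setminus W_n(\psi_k)\subseteq\bad_n$ as soon as $k\ge 1/c$, is $\alpha$-winning for a suitably small $\alpha$.

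The shape of the winning strategy is the following, and it doubles as the blueprint for a direct Cantor construction inside $\bad_n(c)$. Membership in $\bad_n(c)$ amounts to avoiding every ``resonant ball'' $R(\bp,q):=B\bigl(\tfrac{\bp}{q},\,c\,q^{-1-1/n}\bigr)$ with $\bp\in\ZZ^n$ and $q\in\NN$. The elementary input is a separation estimate: two distinct rationals $\bp/q,\bp'/q'$ with $q,q'\le Q$ satisfy $\norm{\bp/q-\bp'/q'}\ge 1/(qq')\ge Q^{-2}$, so all such rationals are $Q^{-2}$-separated. The strategy maintains the invariant that the current ball is disjoint from $R(\bp,q)$ for all $q$ below a threshold $Q$ — which, since the ball contains its own centre $\bx$, forces $\norm{q\bx}\ge c\,q^{-1/n}$ for every such $q$. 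Under this invariant a short pigeonhole argument on the gaps between ``near-resonant'' denominators shows that only boundedly many $q$ in the next block $(Q,2Q]$ can have a rational $\bp/q$ within distance $O\bigl(Q^{-1-1/n}\bigr)$ of $\bx$; White then enlarges the threshold and dodges these boundedly-many new resonant balls, each of which is far smaller than the ball she is choosing, which is possible once $\alpha$ is taken small in terms of $c$. Since the thresholds tend to infinity, every denominator is eventually cleared and the resulting point lies in $\bad_n(c)$. In the Cantor-set version one simply subdivides each surviving cube and discards the boundedly-many subcubes meeting a new resonant ball, and letting $c\to 0$ pushes the dimension of the resulting set up to $n$; either way $\dim(\bad_n)=n$.

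The main obstacle, as usual for this kind of statement, is the uniform bookkeeping compressed into the previous paragraph: one has to group the denominators into blocks, and then choose the contraction rate of the balls, the growth of the thresholds and the constants $\alpha$ and $c$ in a mutually consistent way, so that at \emph{every} stage the number of resonant balls obstructing the move stays bounded — by something small enough, relative to the size of the move, for the dodging to succeed — and, in the game formulation, so that this holds uniformly in the opponent's parameter $\beta$. Making all these scales close up simultaneously is the substance of Schmidt's theorem on badly approximable systems of linear forms, and I would invoke it rather than re-prove it. (For $n=1$ there is also the classical route: $\bad=\bad_1$ is exactly the set of irrationals with bounded partial quotients, and Jarn\'{\i}k's Cantor-set argument gives it dimension $1$; no such explicit description is available for $n\ge 2$, which is why the game-theoretic argument is the natural one.)
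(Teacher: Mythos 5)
The paper gives no proof of this theorem: it simply cites Jarn\'ik's Cantor-set construction for $n=1$ and Schmidt's $(\alpha,\beta)$-game/winning-set machinery for general $n$, with a pointer to \cite{DAaspects} for details. Your proposal does the same thing in substance — it reduces to showing the uniformly badly approximable set $\bad_n(c)$ is $\alpha$-winning and then explicitly defers to Schmidt's theorem for the uniform bookkeeping — so the two match.

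One caveat worth making explicit, which your closing paragraph already half-acknowledges. The separation bound $\norm{\bp/q-\bp'/q'}\ge 1/(qq')\asymp Q^{-2}$ is \emph{weaker} than the resonance radius $c\,q^{-1-1/n}\asymp Q^{-1-1/n}$ as soon as $n\ge 2$, so resonant balls of denominator $q\in(Q,2Q]$ need not be disjoint, and the naive pigeonhole count of rationals $\bp/q$ within distance $O(Q^{-1-1/n})$ of a fixed point is of order $Q^{n-1}$, not bounded. Bringing that count down to something the dodging player can actually avoid requires playing several game rounds per denominator block, shrinking the reference scale well below $Q^{-1-1/n}$; that is precisely where Schmidt's argument is harder than Jarn\'ik's $n=1$ Cantor construction, and you are right to invoke it rather than assert the ``boundedly many'' claim as it stands.
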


In dimension one, Theorem \ref{Thm:DimBad} was proved by Jarn\'ik using a Cantor set construction \cite{JarnikBad}. The proof for arbitrary dimensions was done by Schmidt within the more general context of \textit{Schmidt games} and \textit{winning} sets \cite{Schmidtgames}. A concise account of both methods can be found in \cite{DAaspects}. The sets of badly approximable numbers are of great importance in Diophantine approximation and have been studied thoroughly. In particular, in the one-dimensional case they can be completely characterised using the theory of \textit{continued fractions}.

Any number $\alpha\in\RR$ can be written as an iterated fraction of the form
\begin{equation*}
	\alpha=a_0+\cfrac{1}{a_1+\cfrac{1}{a_2+\cfrac{1}{a_3+\cfrac{1}{\ddots}}}}
	\vspace{2ex}
\end{equation*}
with $a_0\in\ZZ$ and $a_k\in\NN$ for $k\geq 1$. For simplicity, we usually prefer the notation
\begin{equation*}
	\alpha=[a_o;a_1,a_2,a_3,\dots].
\end{equation*}
The numbers $a_i$, $i\in\NN$, are called the \textit{partial quotients} of the continued fraction. The first entry is the integral part of $\alpha$ and so our usual restriction to $\alpha\in\I$ corresponds to only considering continued fractions with $a_0=0$. Clearly, rational numbers are represented by finite length continued fractions. On the other hand, every irrational number $\alpha\in\I$ has a unique infinite continued fraction expansion. Given a number $\alpha\notin\QQ$ and $k\in\NN$, we define the $k$-th \emph{convergent} of $\alpha$ by
\begin{equation*}
	\frac{p_k}{q_k}:=[a_o;a_1,a_2,a_3,\dots,a_k].
\end{equation*}
The convergents have very useful properties. They provide explicit solutions to the inequality in Corollary \ref{DirCor}. That is,
\begin{equation*}
	\left| \alpha-\frac{p_n}{q_n}\right| < \frac{1}{q_n^2}\quad \text{ for all }n\in\NN.
\end{equation*}
Furthermore, they are also so-called \textit{best approximates}. This means, given $1\leq q<q_n$, any rational $\frac{p}{q}$ satisfies
\begin{equation*}
	\left| \alpha-\frac{p_n}{q_n}\right| < \left| \alpha-\frac{p}{q}\right|.
	\vspace{1ex}
\end{equation*}
For the proofs of these facts and an extensive account of the theory on continued fractions, see \cite{HardyWright} or \cite{Khintchine-book}. Coming back to badly approximable numbers, we know the following to be true.
\begin{theorem}
	Let $\alpha\in\I^n\setminus\QQ$ with continued fraction expansion $[0;a_0,a_1,a_2,\dots]$. Then
	\begin{equation*}
		\alpha \in \bad\ \Longleftrightarrow\ \exists\ M=M(\alpha)\geq 1\ \text{ s.t. } a_i\leq M\  \text{ for all } i\in\NN. 
	\end{equation*}
\end{theorem}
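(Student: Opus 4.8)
The plan is to recast membership in $\bad$ in terms of the convergents $p_k/q_k$ of the continued fraction expansion $\alpha=[0;a_1,a_2,\dots]$, and then to show that $q_k\norm{q_k\alpha}$ is, up to bounded multiplicative constants, equal to $1/a_{k+1}$. By the remark following Corollary~\ref{DirCor} no rational point lies in $\bad$, so we may assume $\alpha$ irrational; then $q\norm{q\alpha}>0$ for all $q\in\NN$, and $\alpha\in\bad$ is equivalent to $\liminf_{q\to\infty}q\norm{q\alpha}>0$. Recall the recurrence $q_{k+1}=a_{k+1}q_k+q_{k-1}$ (with the usual conventions $q_{-1}=0$, $q_0=1$), which together with $0\le q_{k-1}\le q_k$ gives $a_{k+1}q_k\le q_{k+1}\le(a_{k+1}+1)q_k$. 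I would next use the classical two-sided sharpening of the convergent inequality quoted above, namely
\begin{equation*}
	\frac{1}{q_k(q_{k+1}+q_k)}<\Abs{\alpha-\frac{p_k}{q_k}}<\frac{1}{q_kq_{k+1}},
\end{equation*}
which follows by writing $\alpha=\frac{\alpha_{k+1}'p_k+p_{k-1}}{\alpha_{k+1}'q_k+q_{k-1}}$ for the complete quotient $\alpha_{k+1}'=[a_{k+1};a_{k+2},\dots]\in(a_{k+1},a_{k+1}+1)$ (see \cite{HardyWright}). Since for $k\ge1$ one has $q_k\abs{\alpha-p_k/q_k}<1/q_{k+1}\le 1/2$, the integer $p_k$ realises the nearest-integer distance, so $\norm{q_k\alpha}=q_k\abs{\alpha-p_k/q_k}$; multiplying by $q_k$ and inserting the bounds on $q_{k+1}$ then yields
\begin{equation*}
	\frac{1}{a_{k+1}+2}<q_k\norm{q_k\alpha}<\frac{1}{a_{k+1}}\qquad(k\ge1).
\end{equation*}

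Granting this, the direction in which the partial quotients are unbounded is immediate: choosing indices $k_1<k_2<\cdots$ with $a_{k_j+1}\to\infty$ we obtain $q_{k_j}\norm{q_{k_j}\alpha}<1/a_{k_j+1}\to0$, hence $\liminf_{q\to\infty}q\norm{q\alpha}=0$ and $\alpha\notin\bad$.

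For the reverse direction, suppose $a_i\le M$ for all $i\in\NN$. Given any $q\ge q_1$, let $k\ge1$ be the index with $q_k\le q<q_{k+1}$. I would invoke best approximation of convergents in its sharper ``second kind'' form --- for every natural number $q<q_{k+1}$ one has $\norm{q\alpha}\ge\norm{q_k\alpha}$, a standard strengthening of the property quoted before the theorem (see \cite{Khintchine-book}) --- which together with $q\ge q_k$ gives
\begin{equation*}
	q\norm{q\alpha}\ge q_k\norm{q_k\alpha}>\frac{1}{a_{k+1}+2}\ge\frac{1}{M+2}.
\end{equation*}
As the finitely many remaining $q<q_1$ contribute only positive values, $\liminf_{q\to\infty}q\norm{q\alpha}\ge1/(M+2)>0$, so $\alpha\in\bad$.

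The non-routine content of this plan is essentially nil: the only ingredients beyond the text are the two classical continued-fraction facts just cited, and once the comparison $q_k\norm{q_k\alpha}\asymp1/a_{k+1}$ is in place both implications are one-liners. If one prefers to use only the first-kind best-approximation property exactly as stated, the reverse direction still goes through after bounding $\norm{q\alpha}$ below by $q\abs{\alpha-p_{k+1}/q_{k+1}}$ for $q<q_{k+1}$ and applying the two-sided estimate to the $(k+1)$-st convergent, at the cost of weakening the constant $1/(M+2)$ to $1/\bigl((M+2)(M+1)^2\bigr)$.
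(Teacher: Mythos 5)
Your proof is correct. The paper states this theorem without proof, deferring to \cite{HardyWright} and \cite{Khintchine-book} for the continued-fraction theory, and your argument is a careful and accurate rendering of exactly the standard argument those references give: you derive the two-sided comparison $q_k\norm{q_k\alpha}\asymp 1/a_{k+1}$ from the complete-quotient representation of $\alpha$ and the recursion for $q_{k+1}$, note that $p_k$ realises the nearest integer for $k\geq 1$ so that $\norm{q_k\alpha}=|q_k\alpha-p_k|$, and then close the reverse implication with best approximation of the second kind (while correctly flagging that this is a strengthening of the first-kind property the paper quotes, and sketching the fallback using only the first-kind version).
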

In other words, $\bad$ consists exactly of the numbers whose continued fractions have bounded partial quotients. In particular, this implies that quadratic irrationals are in $\bad$, since they are precisely the numbers with a periodic continued fraction expansion. It is widely believed that no higher degree algebraic irrationals are badly approximable, but this has not been verified for any single number. In Section \ref{sec:weighted} we will introduce a more general class of badly approximable numbers and we will show later how being badly approximable implies good approximation behaviour with regard to twisted Diophantine approximation. See Theorem  \ref{Thm:Kurzweil} and Theorem \ref{Con1}.

Corollary \ref{DirCor} shows that all numbers are $\psi_1$-approximable, with $\psi_k$ as defined in \eqref{Eqn:DefPsiK}. In other words, this means that the complement of $W_n(\psi_1)$ is empty. Letting $k$ grow, Khintchine's Theorem tells us that $W_n(\psi_k)$ still has full measure for any $k\in\NN$, but the complement might be non-empty. In dimension one, Hurwitz first showed that
\begin{equation*}
	\I\setminus W(\psi_3)\neq\emptyset,
\end{equation*}
implying the existence of badly approximable numbers \cite{Hurwitz}. Asymptotically, as $k$ tends to infinity, this complement will contain all the badly approximable numbers. This example shows the limitations of Theorem \ref{Khintchine}. Being purely a zero-one law it cannot illustrate the difference between those exceptional sets. Even more importantly, let $1/n<\tau_1<\tau_2$. Then, clearly
\begin{equation*}
W_n(\tau_2)\subseteq W_n(\tau_1),
\vspace{-2ex}
\end{equation*}
but
\begin{equation*}
\lambda_n(W_n(\tau_1))=\lambda_n(W_n(\tau_2))=0.
\end{equation*}
From a heuristic point of view, one would expect $W_n(\tau_2)$ to be strictly smaller than $W_n(\tau_1)$, but Khintchine's Theorem is not powerful enough to verify this claim. Hence, we need some finer means to distinguish the sizes of Lebesgue null-sets. This leads us to the concepts of \textit{Hausdorff measure} and \textit{dimension}.

\section[Hausdorff measures and dimension and Jarn\'ik's Theorem]{Hausdorff measures and dimension\\and Jarn\'ik's Theorem}\label{Sec:Hausdorff}

The definition of Hausdorff measures involves several steps. First, a \textit{dimension function} $f:\RR^+\rightarrow\RR^+$ is a left-continuous monotonic function such that 
\begin{equation}\label{Eqn:DimFct}
	\lim\limits_{t\rightarrow 0}f(t)=0.
\end{equation}
Let $A$ be a subset of $\RR^n$. Then, given a real number $\rho>0$, a $\rho$\textit{-cover} of $A$ is a countable collection $\{A_k\}_{k\in\NN}$ of subsets of $\RR^n$ such that
\begin{equation*}
	A\subseteq\bigcup\limits_{k=1}^{\infty}A_k
	\vspace{-2ex}
\end{equation*}
and
\begin{equation*} 
	d_k=d(A_k)<\rho\quad \text{ for all } k\in\NN,
\end{equation*}
where the \textit{diameter} of a set $B\subset\RR^n$ is given by 
\begin{equation*}
	d(B):=\sup\{|\bx-\by|:\bx,\by\in B\}.
\end{equation*}
Let
\begin{equation*}
	\mathcal{H}^f_{\rho}(A)=\inf\sum\limits_{k=1}^{\infty}f(d(A_k)),
	\vspace{2ex}
\end{equation*}
where the infimum is taken over all $\rho$-covers of $A$.

\newpage

As $\rho$ decreases, the class of possible $\rho$-covers for $A$ is reduced and thus $\mathcal{H}^f_{\rho}$ increases. Hence, the limit
\begin{equation*}
	\mathcal{H}^f(A)=\lim\limits_{\rho\rightarrow 0}\mathcal{H}^f_{\rho}(A)
\end{equation*}
exists and is either finite or equal to $+\infty$. $\mathcal{H}^f(A)$ is called the \textit{Hausdorff} $f$\textit{-measure} of $A$. In the case that $f(t)=t^s$ with $s> 0$, the measure $\mathcal{H}^f$ is called the $s$\textit{-dimensional Hausdorff measure} and denoted by $\mathcal{H}^s$. This definition can be extended to $s=0$, even though the function
\begin{equation*}
	f:t\mapsto t^0=1\quad \text{for all }t\in\RR^+
\end{equation*}
does not satisfy condition \eqref{Eqn:DimFct}. It is easily verified that $\mathcal{H}^0(A)$ is the cardinality of $A$ and for $s\in\NN$, $\mathcal{H}^s$ is a constant multiple of the $s$-dimensional Lebesgue measure on $\RR^s$. Importantly, this means their notions of null sets and full measure coincide. It is also worth noting that $\mathcal{H}^g(A)=0$ if
\begin{equation*}
	\cH^f(A)<\infty\quad \text{ and }\quad \lim\limits_{t\rightarrow 0}\frac{g(t)}{f(t)}=0.
\end{equation*}
In particular, unless $A$ is finite, this shows there exists a unique $s_0$ where $\cH^s(A)$ drops from infinity to zero, i.e.
\begin{equation*}
	\cH^s(A)=
	\begin{cases}
		\infty\ &\text{ if }\ s<s_0,\\
		0\ &\text{ if }\ s>s_0.
	\end{cases}
\end{equation*}
This critical point is called the \textit{Hausdorff dimension} of $A$. Formally,
\begin{equation*}
	\dim A=\inf\{s>0:\cH^s(A)=0\}.
\end{equation*}
If $\dim A=s$, then $\cH^s(A)$ may be zero or infinite or satisfy $0<\cH^s(A)<\infty$. For example, a ball in $\RR^n$ has finite measure $\cH^n$. Often it is easier to find the dimension of a set than to obtain the actual measure at the critical value. Showing $\dim A=s$ is usually done by proving $\dim A\leq s$ and $\dim A\geq s$ separately. In most cases the upper bound is more easily attained since it is enough to provide specific covers as $\rho\rightarrow 0$, whereas for the lower bound we need to show no other sequence of covers could lead to a smaller limit.

For more details on Hausdorff measure and dimension as well as related notions and constructions, see \cite{Falconer} and \cite{Mattila}. A standard example for a non-integral dimensional subset of $\mathbb{R}$ is the middle third Cantor set $C$. The set $C$ is constructed by removing the middle third from the unit interval $\I=[0,1]\subset\mathbb{R}$ and then successively removing the middle third from all the resulting intervals. This means that after the first iteration the resulting set consists of the intervals $\left[0,\frac{1}{3}\right]$ and $\left[\frac{2}{3},1\right]$, after removing the middle thirds of those intervals the next set comprises the intervals $\left[0,\frac{1}{9}\right]$, $\left[\frac{2}{9},\frac{1}{3}\right]$, $\left[\frac{2}{3},\frac{7}{9}\right]$ and $\left[\frac{8}{9},1\right]$. Continuing in the same fashion the $k$-th step leaves $2^k$ disjoint intervals of length $3^{-k}$. We will refer to them as level $k$ intervals and the Cantor set is the resulting infinite limit or infinite intersection of this process. It is possible to describe $C$ in explicit ways, for example as all the numbers in the unit interval with a $3$-adic expansion which does not contain the digit 1; i.e. if we write any $\alpha\in \I$ as
\begin{equation*}
	\alpha=\sum\limits_{k=0}^{\infty}a_k 3^{-k},\quad a_k\in\{0,1,2\}\ \text{ for all }k\in\NN,
\end{equation*}
then $C$ comprises exactly the numbers which have a representation with $a_k\neq 1$ for all $k$. This is true since the $k$-th step of the process above removes precisely the numbers where $a_k$ is the first coefficient equal to one.

\begin{lemma} [Example 2.7. of \cite{Falconer}]
	Let $s=\frac{\log 2}{\log 3}$. Then the Cantor set satisfies $\dim C=s$. Furthermore, $\cH^s(C)=1$.
\end{lemma}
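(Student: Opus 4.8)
The plan is to prove the sharper statement $\mathcal{H}^s(C)=1$; since then $0<\mathcal{H}^s(C)<\infty$, this forces $\dim C=s$. For the upper bound I would use the obvious covers: for each $k$ the $2^k$ level-$k$ intervals form a $\rho$-cover of $C$ for every $\rho>3^{-k}$, and because $3^s=2$ their $f$-weights sum to $2^k(3^{-k})^s=2^k 3^{-ks}=2^k\cdot 2^{-k}=1$; choosing $k$ with $3^{-k}<\rho$ shows $\mathcal{H}^s_\rho(C)\le 1$ for every $\rho$, hence $\mathcal{H}^s(C)\le 1$ and in particular $\dim C\le s$.

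The substance is the lower bound $\mathcal{H}^s(C)\ge 1$, i.e.\ $\sum_i d(U_i)^s\ge 1$ for every countable cover $\{U_i\}$ of $C$. First I would reduce to a convenient situation: replacing each $U_i$ by the convex hull of $U_i\cap C$ does not increase its diameter; after an arbitrarily small enlargement one may, by compactness of $C$, pass to a finite subcover; discarding the sets disjoint from $C$ and then, using that $t\mapsto t^s$ is increasing and subadditive (as $s<1$), merging each connected component of the remaining union into a single closed interval, one is left with a \emph{finite family of pairwise disjoint closed intervals, each with endpoints in $C$, whose union contains $C$}, and the sum of $s$-th powers of the lengths has not increased. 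The crux is then the claim, to be proved by induction on scale: if $J$ is a level-$j$ basic interval and $I_1,\dots,I_m\subseteq J$ are pairwise disjoint closed intervals with endpoints in $C$ and $C\cap J\subseteq\bigcup_i I_i$, then $\sum_i|I_i|^s\ge|J|^s=2^{-j}$. Applying this with $j=0$ gives the lower bound.

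To prove the claim, split $J=J_0\cup G\cup J_1$ into its two level-$(j+1)$ children $J_0,J_1$ and the removed middle third $G$, so that $|J_0|=|J_1|=|G|=3^{-(j+1)}$. Since the $I_i$ have endpoints in $C$ while $C\cap G=\emptyset$, each $I_i$ lies in $J_0$, lies in $J_1$, or meets both, and disjointness allows at most one $I_i$ of the last kind, say $I^\ast=I^\ast_0\cup G\cup I^\ast_1$ with $I^\ast_0=I^\ast\cap J_0$ and $I^\ast_1=I^\ast\cap J_1$ (again with endpoints in $C$, possibly one-point sets). The families obtained by adjoining $I^\ast_0$ to the intervals inside $J_0$, and $I^\ast_1$ to those inside $J_1$, satisfy the hypotheses of the claim one level deeper, so by induction each has $s$-sum at least $2^{-(j+1)}$; adding, $\sum_{i\ne\ast}|I_i|^s+|I^\ast_0|^s+|I^\ast_1|^s\ge 2^{-j}$. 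It remains to absorb the straddling interval, i.e.\ to check
\[
\bigl(|I^\ast_0|+|G|+|I^\ast_1|\bigr)^s\ \ge\ |I^\ast_0|^s+|I^\ast_1|^s ,
\]
which, writing $|I^\ast_0|=\alpha|G|$ and $|I^\ast_1|=\beta|G|$ with $\alpha,\beta\in[0,1]$, becomes $(\alpha+\beta+1)^s\ge\alpha^s+\beta^s$ on $[0,1]^2$: the difference of the two sides is non-increasing in each of $\alpha,\beta$ and vanishes at $(1,1)$ precisely because $3^s=2$, so it is nonnegative. The induction is well-founded since the finitely many endpoints all occur at some bounded level $N$ and one inducts on $N-j$. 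Together with the upper bound this yields $\mathcal{H}^s(C)=1$, whence $\dim C=s$.

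The step I expect to be the main obstacle is this sharp inductive inequality. A cruder estimate — for each $U_i$ pick $k$ with $3^{-(k+1)}\le d(U_i)<3^{-k}$, note that $U_i$ then meets at most one level-$k$ interval, and sum over a level-$k$ refinement — yields only $\sum_i d(U_i)^s\ge 1/2$, which already settles $\dim C=s$ but loses a factor of $2$. Recovering the exact constant $1$ is exactly what the scale induction, together with the boundary case $3^s=2$ in the elementary inequality above, is there to accomplish. Some secondary care is also needed in the reduction step (degenerate one-point intervals, and the merging of overlapping intervals) and in phrasing the induction so that it is manifestly well-founded.
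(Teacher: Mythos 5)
Your proof is correct but takes a genuinely stronger route than the paper's. For the lower bound the paper uses exactly the cruder estimate you note at the end: for each $U_j$ fix $k$ with $3^{-(k+1)} \le d(U_j) < 3^{-k}$, observe that $U_j$ meets at most one level-$k$ interval and hence at most $2^{l-k} \le 2^l 3^s d(U_j)^s$ intervals of level $l$, and count. This gives only $\cH^s(C) \ge \tfrac12$, and the paper's own remark explicitly acknowledges that its argument establishes only $\tfrac12 \le \cH^s(C) \le 1$, the sharp constant being omitted. Your scale-induction supplies the omitted half: the decomposition $J = J_0 \cup G \cup J_1$, with at most one straddling interval, descends from level $j$ to level $j+1$, and the inequality $(\alpha+\beta+1)^s \ge \alpha^s + \beta^s$ on $[0,1]^2$ (equality at $(1,1)$ precisely because $3^s=2$) absorbs the straddler. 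The one point worth tightening is your well-foundedness claim: having endpoints merely ``in $C$'' does not by itself place them at a bounded scale, since $C$ is full of irrationals. What makes it work is that, after your reduction, each interior endpoint of the disjoint intervals $I_i$ necessarily bounds a removed middle third (otherwise $C$ would contain an uncovered point in the gap between consecutive intervals) and is therefore triadic, while the two outer endpoints may be taken to be $0$ and $1$; that triadic structure is what makes the induction on $N-j$ terminate, and it deserves to be said.
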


\begin{proof}
	As the level $k$ intervals are a collection of $2^k$ intervals of length $3^{-k}$ covering $C$, it is natural to use those sets as a $3^{-k}$-cover of $C$. We directly see that 
	\begin{equation*}
		\mathcal{H}_{3^{-k}}^s(C)\leq 2^k 3^{-ks}=1,
	\end{equation*}
	which implies $\mathcal{H}^s\leq 1$ by letting $k\rightarrow\infty$ and thus it follows that $\dim C\leq s$. 
	
	To prove that $\mathcal{H}^s(C)\geq\frac{1}{2},$ we will show that
	\begin{equation} \label{cantor1}
		\sum\limits_{i\in J}d_i^s\geq\frac{1}{2}=3^{-s}
	\end{equation}
	for any cover $\mathcal{U}=\{U_j\}_{j\in J}$ of $C$ with $d_j=d(U_j)$. Clearly, we can assume all the $U_j$ to be intervals and the compactness of $C$ implies the existence of a finite subcover of $\mathcal{U}$. Hence, by choosing the closure of those intervals, it is enough to prove \eqref{cantor1} when $\mathcal{U}$ is a finite collection of closed subintervals of $[0,1]$. For each $j\in J$, let $k$ be the unique integer such that
	\begin{equation} \label{cantor2}
		3^{-(k+1)}\leq d_j<3^{-k}.
	\end{equation}
	Then $U_j$ can intersect at most one of the level $k$ intervals since all those intervals lie at least $3^{-k}$ apart. Let $l\geq k$, then by the $l$-th iteration any level $k$ interval has been replaced by $2^{l-k}$ level $l$ intervals and hence $U_j$ can intersect at most
	\begin{equation*}
		2^{l-k}=2^l 3^{-sk}\leq 2^l 3^s d_j^s
	\end{equation*}
	intervals of level $l$ where the last inequality is due to \eqref{cantor2}. We can choose $l$ large enough such that $3^{-(l+1)}\leq d_k$ holds for all $k\in J$ and then, since $\mathcal{U}$ is a cover of $C$ and hence the $U_k$ intersect all $2^l$ level $l$ intervals, it follows that
	\begin{equation*}
		2^l\leq\sum\limits_{j\in J}2^l 3^s d_j^s
		\vspace{-1ex}
	\end{equation*}
	which reduces to \eqref{cantor1}. 
\end{proof}

\begin{remark*}
	This proof actually shows that $\frac{1}{2}\leq\cH^s(C)\leq 1$. Proving that the upper bound is sharp requires a little more effort and will be omitted here.
\end{remark*}

Turning back to the $\limsup$ set of $\psi$-approximable points, we can make use of its structure similarly to the above example to obtain an upper bound for $\dim W_n(\psi)$. For simplicity we will stick to the case $n=1$. Recall that
\begin{equation*}
	W(\psi)=\limsup\limits_{q\rightarrow\infty}A(\psi,q),
	\vspace{-2ex}
\end{equation*}
where
\begin{equation*}
	A(\psi,q)=\bigcup\limits_{0\leq p\leq q} B\left(\frac{p}{q},\frac{\psi(q)}{q}\right)\cap \I.
	\vspace{2ex}
\end{equation*}
For each $t\in\NN$, the balls contained in the sets $A(\psi,q)$ with $q\geq t$ form a cover of $W(\psi)$. Let $\rho>0$. Assuming that $\psi$ is monotonic and $\psi(q)<1$ for $q$ large enough, $\rho>2\psi(t)/t$ holds for $t$ large enough. Fixing such a $t$, the balls contained in the collection $\{A(\psi,q)\}_{q\geq t}$ form a $\rho$-cover of $W(\psi)$. It follows that
\begin{align*}
	\cH^s(W(\psi))&\leq 2^s \sum\limits_{q=t}^{\infty}q\left(\frac{\psi(q)}{q}\right)^s\\[1ex]
	&=2^s\sum\limits_{q=t}^{\infty}q^{1-s}\psi(q)^s\rightarrow 0
\end{align*}
as $t\rightarrow\infty$ (i.e. as $\rho\rightarrow 0$) if the sum $\sum_{q\in\NN}q^{1-s}\psi(q)^s$ converges. It can be shown with some extra effort that one can remove the monotonicity condition, giving us a Hausdorff measure analogue of the convergence part of Khintchine's Theorem. Formally:
\begin{lemma}\label{BCHd}
	Let $\psi:\RR^+\rightarrow\RR^+$ be a function and $s>0$. Then
	\begin{equation*}
		\cH^s(W(\psi))=0\quad \text{ if }\quad \sum\limits_{q=1}^{\infty}q^{1-s}\psi(q)^s<\infty.
	\end{equation*}
\end{lemma}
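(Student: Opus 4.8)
The plan is to run the cover-and-sum argument sketched above for monotone $\psi$, isolating the single place where monotonicity was actually used. In that sketch, monotonicity was invoked only to guarantee that, for $\rho$ small and $t$ large, every ball occurring in $A(\psi,q)$ with $q\ge t$ has diameter $2\psi(q)/q<\rho$, so that $\{A(\psi,q)\}_{q\ge t}$ is a genuine $\rho$-cover of $W(\psi)$. The key observation is that this conclusion needs only $\psi(q)/q\to 0$, and that the convergence hypothesis forces exactly this: since $\sum_{q}q^{1-s}\psi(q)^s<\infty$, the general term tends to $0$, i.e.\ $q^{1-s}\psi(q)^s\to 0$; taking $s$-th roots gives $\psi(q)\,q^{1/s-1}\to 0$, and multiplying by $q^{-1/s}\le 1$ yields $\psi(q)/q\to 0$.

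With that in hand I would argue as follows. Fix $\rho>0$. Since $\psi(q)/q\to 0$, there is a least integer $t=t(\rho)$ with $2\psi(q)/q<\rho$ for all $q\ge t$. Every $\alpha\in W(\psi)=\limsup_{q\to\infty}A(\psi,q)$ lies in $A(\psi,q)$ for infinitely many $q$, in particular for some $q\ge t$, so the balls $B(p/q,\psi(q)/q)$ with $q\ge t$ and $0\le p\le q$ still cover $W(\psi)$, and by choice of $t(\rho)$ they form a $\rho$-cover. For each such $q$ there are at most $q+1\le 2q$ of these balls, each of diameter $2\psi(q)/q$, and intersecting with $\I$ only shrinks the sets, so
\[
\cH^s_\rho(W(\psi))\le\sum_{q\ge t(\rho)}(q+1)\Bigl(\frac{2\psi(q)}{q}\Bigr)^s\le 2^{s+1}\sum_{q\ge t(\rho)}q^{1-s}\psi(q)^s.
\]

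Finally I would let $\rho\to 0$. Because $\psi$ is positive, $\sup_{q\ge N}2\psi(q)/q>0$ for every $N$, while $2\psi(q)/q\to 0$, so these suprema decrease to $0$; consequently $t(\rho)\to\infty$ as $\rho\to 0$. Since the series $\sum_{q}q^{1-s}\psi(q)^s$ converges, its tail $\sum_{q\ge t(\rho)}q^{1-s}\psi(q)^s$ tends to $0$, whence $\cH^s(W(\psi))=\lim_{\rho\to 0}\cH^s_\rho(W(\psi))=0$.

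I do not expect a serious obstacle here: the only real point is the failure of monotonicity, and this is absorbed entirely into the elementary fact that $\psi(q)/q\to 0$ under the convergence assumption, together with the remark that discarding the finitely many ``large'' balls (those with $q<t(\rho)$) does not destroy the covering property of a $\limsup$ set. Everything else is the same counting estimate used for the Cantor set and for the monotone case above.
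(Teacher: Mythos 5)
Your proof is correct and follows the same cover-and-sum argument the paper sketches for the monotone case. The ``extra effort'' the paper alludes to but does not carry out is exactly the observation you make: convergence of $\sum_q q^{1-s}\psi(q)^s$ already forces $q^{1-s}\psi(q)^s\to 0$, hence $\psi(q)/q\to 0$, so the balls eventually become small and the tail $\{A(\psi,q)\}_{q\ge t(\rho)}$ furnishes a genuine $\rho$-cover without any appeal to monotonicity.
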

If we let $\psi(q)=q^{-\tau}$ with $\tau>1$ and $s>\frac{2}{\tau+1}$, we see that
\begin{equation*}
	\sum\limits_{q=1}^{\infty}q^{1-s}\psi(q)^s=\sum\limits_{q=1}^{\infty}q^{1-s(1+\tau)}\leq\sum\limits_{q=1}^{\infty}q^{-1}<\infty,
\end{equation*}
and so $\cH^s(W(\tau))=0$ for $s>\frac{2}{\tau+1}$. For $\tau>1$, this shows that $\dim W(\tau)\leq\frac{2}{\tau+1}$.
This result relies on the $\limsup$ nature of $W(\psi)$ and proves the easier half of the Jarn\'ik--Besicovitch Theorem. 

\begin{theorem}[Jarn\'ik--Besicovitch]\label{JB}
	Let $\tau>\frac{1}{n}$. Then
	\begin{equation*}
		\dim(W_n(\tau))=\frac{n+1}{\tau+1}.
	\end{equation*}
\end{theorem}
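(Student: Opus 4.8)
The plan is to establish the two inequalities $\dim W_n(\tau)\le\frac{n+1}{\tau+1}$ and $\dim W_n(\tau)\ge\frac{n+1}{\tau+1}$ separately; the first is routine and the second is the substantial half. For the upper bound I would run the covering argument already used above in dimension one word for word in $\RR^n$. Writing $W_n(\tau)=\limsup_{q\to\infty}A_n(q^{-\tau},q)$ and observing that $A_n(q^{-\tau},q)$ is a union of at most $(q+1)^n$ balls of radius $q^{-1-\tau}$, the balls coming from all $q\ge t$ form a $\rho$-cover of $W_n(\tau)$ once $t$ is large enough, so that for every $s>0$ there is a constant $C=C(n)$ with
\begin{equation*}
\cH^s(W_n(\tau))\ \le\ C\sum_{q\ge t}q^n\,q^{-s(1+\tau)}=C\sum_{q\ge t}q^{\,n-s(1+\tau)}.
\end{equation*}
This tends to $0$ as $t\to\infty$ whenever $n-s(1+\tau)<-1$, i.e. whenever $s>\frac{n+1}{\tau+1}$ (this is exactly the $n$-dimensional analogue of Lemma \ref{BCHd}). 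Hence $\cH^s(W_n(\tau))=0$ for all such $s$, which gives $\dim W_n(\tau)\le\frac{n+1}{\tau+1}$.

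For the lower bound the cleanest route is the Mass Transference Principle of Beresnevich and Velani (see \cite{DAaspects}), which promotes a full-Lebesgue-measure statement for a $\limsup$ of balls into a full-Hausdorff-measure statement for the $\limsup$ of suitably shrunken balls. Apply it to the sequence of balls $B\!\left(\tfrac{\bp}{q},q^{-1-\tau}\right)$ indexed by $q\in\NN$ and $|\bp|\le q$, whose $\limsup$ set is precisely $W_n(\tau)$. Taking the dimension function $f(x)=x^{s}$ with $s=\frac{n+1}{\tau+1}$, the $f$-dilated balls have radius $\big(q^{-1-\tau}\big)^{s/n}=q^{-1-1/n}$, so the $\limsup$ of the dilated balls is $W_n(1/n)=\I^n$, which has full $\lambda_n$-measure by Corollary \ref{DirCor}. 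Since $\tau>1/n$ we have $s<n$, hence $x^{-n}f(x)=x^{s-n}$ is monotonic and the hypotheses of the principle are satisfied; it follows that $\cH^{s}(W_n(\tau))=\cH^{s}(\I^n)=\infty$, and in particular $\dim W_n(\tau)\ge\frac{n+1}{\tau+1}$. Combining the two bounds yields the theorem.

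I expect the main obstacle to lie entirely in the lower bound. If one declines to invoke the Mass Transference Principle as a black box, the classical alternative is to construct a Cantor-type subset of $W_n(\tau)$ directly: at stage $k$ one selects, inside each surviving ball, a large well-separated family of balls $B\!\left(\tfrac{\bp}{q_k},q_k^{-1-\tau}\right)$ with $q_k$ growing rapidly, using that a ball of radius $\delta$ contains on the order of $(\delta q_k)^n$ rationals of denominator $q_k$, any two of which are at least $q_k^{-1}$ apart. One then defines a natural mass distribution $\mu$ on the limiting set and estimates $\mu\big(B(x,r)\big)$ at \emph{all} scales $r$ — not only the construction scales — to obtain the Frostman exponent $\frac{n+1}{\tau+1}$, whence $\cH^{s}(W_n(\tau))>0$ by the mass distribution principle and $\dim W_n(\tau)\ge\frac{n+1}{\tau+1}$. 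The delicate points are choosing the growth of $(q_k)$ fast enough that earlier scales do not interfere, guaranteeing the required counting and spacing of admissible numerators (ultimately a ``regular system''/ubiquity estimate resting on Dirichlet's theorem and the distribution of rationals), and controlling $\mu$ on the intermediate balls. Once the ubiquity machinery of Section \ref{Sec:ClassicalResults} is available, a more streamlined option is to exhibit $W_n(\tau)$ as a ubiquitous system whose general Hausdorff-measure theory delivers the divergence half at once.
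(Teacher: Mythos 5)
Your proposal is correct and takes essentially the same route as the paper: the lower bound is obtained by applying the Mass Transference Principle to the balls $B(\bp/q, q^{-(1+\tau)})$ with dimension function $f(r)=r^{s}$, $s=\frac{n+1}{\tau+1}$, noting that the $f$-scaled balls have radius $q^{-(1+1/n)}$ so that their $\limsup$ is all of $\I^n$ by Dirichlet's theorem, while the upper bound is the standard covering/Borel--Cantelli estimate. The Cantor-type/ubiquity alternative you outline at the end is a valid classical route but is not what the paper does here.
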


Theorem \ref{JB} was proved independently and with different methods by Jarn\'ik in 1929 and Besicovitch in 1934 \cite{Jarnikold}, \cite{Besicovitch}. This confirms the intuition that given ${\tau_1<\tau_2}$, $W_n(\tau_2)$ is strictly smaller than $W_n(\tau_1)$. However, it is not able to provide us with the measure $\cH^s(W_n(\tau))$ at the critical exponent and it can only deal with functions of the form $\psi(q)=q^{-\tau}$. These gaps were subsequently filled by Jarn\'ik~\cite{Jarnik}.

\begin{theorem}[Jarn\'ik, 1931]\label{Jarnik}
	Let $\psi$ be an approximating function and $s\in(0,n)$. Then
	\begin{equation*}
		\cH^s(W_n(\psi))=
		\begin{dcases}
			\infty\quad &\text{ if }\quad \sum\limits_{q=1}^{\infty}q^{n-s}\psi(q)^s=\infty,\\[2ex]
			0\quad &\text{ if }\quad \sum\limits_{q=1}^{\infty}q^{n-s}\psi(q)^s<\infty.
		\end{dcases}
		\vspace{2ex}
	\end{equation*}
\end{theorem}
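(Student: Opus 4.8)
The plan is to prove the two halves separately: the convergence half is a routine Hausdorff-measure analogue of the Borel--Cantelli argument, while the divergence half is the substantial one. For convergence I would generalise the computation preceding Lemma~\ref{BCHd} from $\RR$ to $\RR^n$. Recall $W_n(\psi)=\limsup_{q\to\infty}A_n(\psi,q)$, where $A_n(\psi,q)$ is a union of at most $(2q+1)^n\le(3q)^n$ balls, each of diameter $2\psi(q)/q$. Fixing $t\in\NN$, the balls occurring in $\bigcup_{q\ge t}A_n(\psi,q)$ cover $W_n(\psi)$, and since $\psi$ is decreasing their diameters are at most $2\psi(t)/t\to0$; hence they form a $\rho$-cover for any $\rho>2\psi(t)/t$, and
\begin{equation*}
	\cH^{s}_{\rho}(W_n(\psi))\le\sum_{q=t}^{\infty}(3q)^{n}\left(\frac{2\psi(q)}{q}\right)^{\!s}=3^{n}2^{s}\sum_{q=t}^{\infty}q^{\,n-s}\psi(q)^{s}.
\end{equation*}
If $\sum_{q\ge1}q^{n-s}\psi(q)^{s}<\infty$ this is the tail of a convergent series and tends to $0$ as $t\to\infty$, so $\cH^{s}(W_n(\psi))=0$.

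For the divergence half I would first dispose of the easy case: if $\sum_{q\ge1}\psi(q)^{n}=\infty$, then $W_n(\psi)$ has full $n$-dimensional Lebesgue measure by Khintchine's Theorem (Theorem~\ref{Khintchine}), hence $\cH^{s}(W_n(\psi))=\infty$ for every $s<n$ (since then $\dim W_n(\psi)=n$). So assume $\sum_{q\ge1}\psi(q)^{n}<\infty$; as $\psi$ is decreasing this forces $q\psi(q)^{n}\to0$, i.e.\ $\psi(q)=o(q^{-1/n})$. Now I would invoke the Mass Transference Principle of Beresnevich and Velani for the $\limsup$ set of the balls $B(\bp/q,\psi(q)/q)$ with dimension function $t\mapsto t^{s}$: it reduces the claim $\cH^{s}(W_n(\psi))=\infty$ to showing that the $\limsup$ set obtained by enlarging each radius $r=\psi(q)/q$ to $r^{s/n}$ has full Lebesgue measure. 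Since $s<n$ we have $s/n<1$, so this enlarged set is precisely $W_n(\Psi)$ with $\Psi(q)=q^{1-s/n}\psi(q)^{s/n}$, and
\begin{equation*}
	\sum_{q=1}^{\infty}\Psi(q)^{n}=\sum_{q=1}^{\infty}q^{\,n-s}\psi(q)^{s}=\infty
\end{equation*}
by hypothesis. Thus $\lambda_n(W_n(\Psi))=1$ by Khintchine's Theorem --- via Gallagher's monotonicity-free version \cite{Gallagherkt} when $n\ge2$, since $\Psi$ need not be decreasing (the case $n=1$, where $\psi$ is itself monotone, needs a little more care, or one falls back on Jarn\'ik's original construction) --- and the Mass Transference Principle then delivers $\cH^{s}(W_n(\psi))=\infty$.

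If one prefers a self-contained argument, the classical alternative is Jarn\'ik's \cite{Jarnik}: under the same reduction $\psi(q)=o(q^{-1/n})$, build a Cantor-type set $K\subseteq W_n(\psi)$ by selecting, at each stage and inside every surviving ball, a large pairwise-disjoint family of balls $B(\bp/q,\psi(q)/q)$ with $q$ ranging over a carefully chosen dyadic window; the number of such balls that fit is controlled by the ubiquity of rational points --- essentially Dirichlet's Theorem --- which guarantees that rationals of bounded denominator are spread evenly (see \cite{DAaspects}). Distributing mass uniformly across the levels produces a probability measure $\mu$ on $K$ with $\mu(B(x,r))\le C r^{s}$ for all small $r$ and a fixed constant $C$, so the Mass Distribution Principle (see \cite{Falconer}) gives $\cH^{s}(W_n(\psi))\ge\cH^{s}(K)>0$; since the construction can be run inside an arbitrary ball one in fact obtains the locally uniform bound $\cH^{s}(W_n(\psi)\cap B)\ge\kappa\,r(B)^{s}$, and summing this over a fine partition of $\I^{n}$ forces $\cH^{s}(W_n(\psi))=\infty$ because $s<n$. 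Either way the divergence direction is the crux, and the step I expect to be hardest is the quantitative one: the Mass Transference Principle itself (together with the monotonicity bookkeeping for $\Psi$) in the first approach, and in the second the counting-and-separation estimate that must simultaneously guarantee enough disjoint sub-balls at every stage and the measure bound on $\mu$ needed to apply the Mass Distribution Principle.
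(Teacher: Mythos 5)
Your proof follows essentially the same strategy as the paper: a Hausdorff covering argument for the convergence half, and the Mass Transference Principle applied to $W_n(\Psi)$ with $\Psi(q)=q^{1-s/n}\psi(q)^{s/n}$ for the divergence half. The convergence computation is correct and matches the paper's $n$-dimensional analogue of Lemma~\ref{BCHd}. The divergence half is also fine for $n\ge2$, where Gallagher's theorem absorbs the possible non-monotonicity of $\Psi$, exactly as the paper does.

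For $n=1$, however, you leave a genuine gap. You correctly identify the obstruction: $\Psi(q)=q^{1-s}\psi(q)^{s}$ is the product of the increasing factor $q^{1-s}$ and the decreasing factor $\psi(q)^{s}$, so it need not be monotone, and in dimension one Khintchine's theorem requires monotonicity while Gallagher's theorem is unavailable. But saying this ``needs a little more care, or one falls back on Jarn\'ik's original construction'' does not close the hole, and your Cantor-set sketch is too coarse to be checked --- the separation and counting estimates needed to produce a mass distribution $\mu$ with $\mu(B(x,r))\le Cr^s$ are precisely the hard content you are waving past. The paper closes this exact gap via the Duffin--Schaeffer Theorem (Theorem~\ref{Thm:DS}): one brackets $\Psi(q)$ above and below along dyadic blocks $2^{k-1}<q\le 2^k$ using the separate monotonicity of $q^{1-s}$ and $\psi(q)^s$, and invokes the asymptotic $\sum_{q\le Q}\varphi(q)/q=\tfrac{6}{\pi^2}Q+\mathcal{O}(\log Q)$ for the Euler totient function to show that $\sum_q\varphi(q)\Psi(q)/q$ and $\sum_q\Psi(q)$ are comparable, i.e.\ that condition~\eqref{Eqn:AssumDS} holds. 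The Duffin--Schaeffer Theorem then gives $\lambda(W'(\Psi))=1$, hence $\lambda(W(\Psi))=1$, and the Mass Transference Principle delivers $\cH^s(W(\psi))=\infty$. You should replace the hand-wave in the $n=1$ case with an argument of this kind.
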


\begin{remark*}
	Here we have to exclude the case when $s=n$ since Khintchine's Theorem shows that
	\begin{equation*}
		\cH^n(W_n(\psi))=\cH^n(\I^n)<\infty\quad \text{ if }\quad \sum\limits_{q=1}^{\infty}\psi(q)^n=\infty.
		\vspace{1ex}
	\end{equation*}
	It is worth noting that the original statement required a stronger monotonicity condition and other additional conditions were imposed on $\psi$. Those restrictions were removed in \cite{limsup}. Again, $\psi$ being monotonic is only needed for the divergence case.
\end{remark*}

As an immediate consequence of Theorem \ref{Jarnik}, we can generalise Theorem \ref{JB} to arbitrary approximation functions. 
\begin{corollary}
	Let $\psi$ be an approximating function. Then
	\begin{equation*}
		\dim(W_n(\psi))=\inf\left\{s>0:\sum\limits_{q=1}^{\infty}q^{n-s}\psi(q)^s<\infty\right\}.
	\end{equation*}
	Moreover, $\cH^s(W_n(\tau))=\infty$, where $s=\frac{n+1}{\tau+1}$. 
\end{corollary}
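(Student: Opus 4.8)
The plan is to read off both assertions directly from Jarn\'ik's Theorem (Theorem~\ref{Jarnik}) together with the definition of Hausdorff dimension as the critical exponent. Set
\[
  s_0=\inf\left\{s>0:\sum_{q=1}^{\infty}q^{n-s}\psi(q)^s<\infty\right\}.
\]
First I would record a monotonicity observation about the series as a function of $s$. Since $\psi$ is decreasing we have $\psi(q)\le\psi(1)$, so $\psi(q)/q\to 0$; hence for all $q$ beyond some $q_0$ the term $q^{n-s}\psi(q)^s=q^n(\psi(q)/q)^s$ is a decreasing function of $s$, and therefore the tail $\sum_{q\ge q_0}q^{n-s}\psi(q)^s$ is non-increasing in $s$. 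As the finitely many remaining terms are harmless, the set of $s$ for which the series converges is exactly $(s_0,\infty)$ or $[s_0,\infty)$; in particular the series diverges for every $s<s_0$ and converges for every $s>s_0$.

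For the upper bound $\dim W_n(\psi)\le s_0$, fix any $s$ with $s_0<s<n$ (if $s_0\ge n$ there is nothing to prove, since $W_n(\psi)\subseteq\I^n$ already forces $\dim W_n(\psi)\le n$). The series converges, so Theorem~\ref{Jarnik} gives $\cH^s(W_n(\psi))=0$, whence $\dim W_n(\psi)\le s$; letting $s\downarrow s_0$ yields the claim. For the lower bound, fix any $s$ with $0<s<\min\{s_0,n\}$. Now the series diverges, so Theorem~\ref{Jarnik} gives $\cH^s(W_n(\psi))=\infty$, in particular $\cH^s(W_n(\psi))>0$, so $\dim W_n(\psi)\ge s$; letting $s\uparrow\min\{s_0,n\}$ gives $\dim W_n(\psi)\ge\min\{s_0,n\}$. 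Together with the trivial bound $\dim W_n(\psi)\le n$ and the upper bound above, this gives $\dim W_n(\psi)=\min\{s_0,n\}$, which equals $s_0$ in the range $s_0\le n$ that the statement concerns (the interesting case, where $W_n(\psi)\neq\I^n$).

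For the last assertion, take $\psi(q)=q^{-\tau}$ with $\tau>1/n$ and put $s=\frac{n+1}{\tau+1}$, so that $0<s<n$ precisely because $\tau>1/n$. Then $s(1+\tau)=n+1$, and therefore
\[
  \sum_{q=1}^{\infty}q^{n-s}\psi(q)^s=\sum_{q=1}^{\infty}q^{\,n-s(1+\tau)}=\sum_{q=1}^{\infty}q^{-1}=\infty,
\]
so the divergence case of Theorem~\ref{Jarnik} yields $\cH^s(W_n(\tau))=\infty$. The same computation with $s$ replaced by any larger exponent shows the series then converges, so in fact $s_0=\frac{n+1}{\tau+1}$ and the first part recovers Theorem~\ref{JB}. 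The only genuinely delicate point is bookkeeping: Theorem~\ref{Jarnik} is stated only for $s\in(0,n)$, so the test exponents must be kept strictly below $n$ and the borderline $s=n$ handled separately via $\dim W_n(\psi)\le n$ (and, if one wants the measure there, via Khintchine's Theorem~\ref{Khintchine}); everything else is a direct application of Jarn\'ik's dichotomy.
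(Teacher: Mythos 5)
Your proof is correct and takes the same route the paper intends: read both claims directly off Jarn\'ik's Theorem, using convergence of $\sum q^{n-s}\psi(q)^s$ for the upper bound on the dimension and divergence for the lower bound, then specialise to $\psi(q)=q^{-\tau}$ for the final assertion. The monotonicity-in-$s$ observation (so that the convergence set is a half-line and $s_0$ genuinely is a threshold) and the boundary bookkeeping at $s=n$ are the right details to pin down, and your remark that the argument actually yields $\dim W_n(\psi)=\min\{s_0,n\}$ — agreeing with the stated formula precisely when $s_0\le n$ — is a fair caveat about how the corollary should be read.
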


As with Khintchine's Theorem, we will see how Jarn\'ik's Theorem can be derived from ubiquity theory. However, it can also be deduced as a consequence of Khintchine's Theorem using the \textit{Mass Transference Principle}, which in turn implies we can completely remove the monotonicity condition if $n\geq 2$. Before we go to that proof, we notice that we can combine the statements of Khintchine and Jarn\'ik by making use of the fact that
\begin{equation*}
	\cH^s(\I^n)=
	\begin{dcases}
		\infty\quad &\text{ if }\quad s<n,\\
		C(n)\quad &\text{ if }\quad s=n,
	\end{dcases}	
	\vspace{2ex}
\end{equation*}
where $C(n)$ is a constant only depending on $n$ since $\cH^n$ is a constant multiple of $\lambda_n$. This gives rise to the following concise theorem, which describes the measure theoretic behaviour of $W_n(\psi)$.

\begin{theorem}[Khintchine--Jarn\'ik]\label{KhiJa}
	Let $\psi$ be an approximating function and $s\in(0,n]$. Then
	\begin{equation*}
		\cH^s(W_n(\psi))=
		\begin{dcases}
			\cH^s(\I^n)\ &\text{ if }\quad \sum\limits_{q=1}^{\infty}q^{n-s}\psi(q)^s=\infty,\\[2ex]
			0\quad &\text{ if }\quad \sum\limits_{q=1}^{\infty}q^{n-s}\psi(q)^s<\infty.
		\end{dcases}
		\vspace{2ex}
	\end{equation*}
\end{theorem}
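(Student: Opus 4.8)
The plan is to deduce the combined statement directly from the two theorems already at our disposal, namely Khintchine's Theorem (Theorem \ref{Khintchine}) and Jarník's Theorem (Theorem \ref{Jarnik}), together with the recorded facts about the Hausdorff measure of the cube $\I^n$. The only new work is bookkeeping: matching up the ranges of $s$ and showing that the two theorems overlap consistently on the common parameter region, and that the endpoint $s=n$ is covered by Khintchine alone. So I would split the argument according to whether $s<n$ or $s=n$, and within each case according to convergence or divergence of the series $\sum_{q=1}^\infty q^{n-s}\psi(q)^s$.

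First I would treat the convergence case, for all $s\in(0,n]$ at once. If $\sum_{q=1}^\infty q^{n-s}\psi(q)^s<\infty$ and $s<n$, then Jarník's Theorem gives $\cH^s(W_n(\psi))=0$ immediately. If $s=n$, the series is $\sum \psi(q)^n$, so convergence and the Hausdorff-measure version of the convergence Borel--Cantelli argument — equivalently \eqref{Eqn:KhinCon}, since $\cH^n$ is a constant multiple of $\lambda_n$ — give $\cH^n(W_n(\psi))=\lambda_n$-null $=0$. Either way the lower branch of the displayed formula holds.

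Next the divergence case. Suppose $\sum_{q=1}^\infty q^{n-s}\psi(q)^s=\infty$. For $s<n$, Jarník's Theorem yields $\cH^s(W_n(\psi))=\infty$; and since $s<n$ we recorded that $\cH^s(\I^n)=\infty$ as well, so $\cH^s(W_n(\psi))=\cH^s(\I^n)$, which is exactly the top branch. For $s=n$, the series is $\sum\psi(q)^n=\infty$, so Khintchine's Theorem gives $\lambda_n(W_n(\psi))=1$, whence $W_n(\psi)$ has full $\lambda_n$-measure in $\I^n$; as $\cH^n=C(n)\lambda_n$ this says $\cH^n(W_n(\psi))=C(n)=\cH^n(\I^n)$, again the top branch. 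Combining the two cases across the two ranges of $s$ completes the proof.

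The only point requiring a little care — and the one I would flag as the main obstacle, though it is minor — is the endpoint $s=n$, which is genuinely excluded from the hypotheses of Jarník's Theorem (as the remark following Theorem \ref{Jarnik} stresses) and must be handled separately via Khintchine's Theorem; one has to be sure the normalising constant $C(n)$ relating $\cH^n$ and $\lambda_n$ is used consistently so that "full $\lambda_n$-measure" translates correctly into "$\cH^n(W_n(\psi))=\cH^n(\I^n)$". Everything else is a direct invocation of results already established in the excerpt.
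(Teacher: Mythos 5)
Your argument is correct and is essentially the paper's own: the paper presents Theorem~\ref{KhiJa} as a direct reformulation obtained by combining Theorem~\ref{Khintchine} (for the endpoint $s=n$, using $\cH^n=C(n)\lambda_n$) with Theorem~\ref{Jarnik} (for $s<n$) together with the stated dichotomy for $\cH^s(\I^n)$. Your case split and the separate treatment of $s=n$ via Khintchine is exactly the intended reading, just written out more explicitly than the paper bothers to.
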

This notation allows us to get rid of the problem when $s=n$ and illustrates how closely the Theorems of Khintchine and Jarn\'ik are related. While one could think of the Hausdorff theory as a subtle refinement of the Lebesgue measure theory, the following section shows how the Hausdorff theory can actually be derived as a consequence of the Lebesgue theory.

\section{The Mass Transference Principle}\label{secMTP}

In this section, we describe a general principle that allows us to deduce Jarn\'ik's Theorem from Khintchine's Theorem. Let $(\Omega,d)$ be a locally compact metric space and suppose there exist constants 
\begin{equation*}
	\delta>0,\quad 0<c_1<1<c_2<\infty,\quad \text{ and }\ r_0>0,
\end{equation*}
such that the inequalities
\begin{equation}\label{MTdim}
	c_1r^{\delta}<\cH^{\delta}(B(\bx,r))<c_2r^{\delta}
\end{equation}
are satisfied for any ball $B(\bx,r)\subset\Omega$ with $\bx\in\Omega$ and $r<r_0$. We have only defined Hausdorff measures and dimension for $\RR^n$, but the theory can be extended to arbitrary metric spaces (see \cite{Falconer}, \cite{Mattila}). It follows from \eqref{MTdim} that
\begin{equation*}
	0<\cH^{\delta}(\Omega)\leq\infty\quad \text{ and }\quad \dim\Omega=\delta.
\end{equation*}
Now, given a dimension function $f$ and a ball $B=B(\bx,r)\subset\Omega$, we define the scaled balls
\begin{equation*}
	B^f=B\left(\bx,f(r)^\frac{1}{\delta}\right)\quad \text{ and }\quad B^s=B\left(\bx,r^\frac{s}{\delta}\right)
	\vspace{2ex}
\end{equation*}
in the special case where $f(r)=r^s$ for some $s>0$, respectively. Clearly, $B=B^\delta$. When, dealing with $\limsup$ sets in $\Omega$, the Mass Transference Principle allows us to derive $\cH^f$-measure theoretic results from statements concerning $\cH^{\delta}$-measure. In the `typical' case where $\delta=n\in\NN$ and $\Omega=\RR^n$ this means we can transform Lebesgue measure theoretic statements to results on Hausdorff measures. The following has been established by Beresnevich and Velani in 2006. The complete theory and proofs can be found in \cite{MassTrans}.

\begin{samepage}
\begin{theorem}[Mass Transference Principle]\label{MTP}
	Let $\{B_k\}_{k\in\NN}$ be a sequence of balls in $\Omega$ with $r(B_k)\rightarrow 0$ as $k\rightarrow\infty$. Let $f$ be a dimension function such that $t^{-\delta}f(t)$ is monotonic. For any ball $B\subset\Omega$ with $\cH^{\delta}(B)>0$,
	\begin{equation}\label{Eqn:MTPcond}
		\text{ if}\quad \cH^{\delta}\left(B\cap\limsup\limits_{k\rightarrow\infty}B_k^f\right)=\cH^{\delta}(B),
	\end{equation}
	\begin{equation*}
		\text{ then }\quad \cH^f\left(B\cap\limsup\limits_{k\rightarrow\infty}B_k^{\delta}\right)=\cH^f(B).
	\end{equation*}
\end{theorem}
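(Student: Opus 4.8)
Since $B\cap\limsup_k B_k^{\delta}\subseteq B$, the bound $\cH^f\bigl(B\cap\limsup_k B_k^{\delta}\bigr)\le\cH^f(B)$ is automatic, so the whole content is the reverse inequality; write $E=\limsup_k B_k^{\delta}$. I would first dispose of the degenerate cases. Since $t\mapsto t^{-\delta}f(t)$ is monotonic, the limit $L=\lim_{t\to0^+}t^{-\delta}f(t)$ exists in $[0,\infty]$; when $L<\infty$ one has $f(t)\lesssim t^{\delta}$ or $f(t)\asymp t^{\delta}$ for all small $t$, so $\cH^f$ and $\cH^{\delta}$ are sufficiently intertwined that the conclusion drops out of the hypothesis by a covering argument. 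So I concentrate on the main case $L=+\infty$, where the scaled balls $B_k^f$ are genuinely much larger than the $B_k^{\delta}$. I would also record the useful remark that the hypothesis passes to every sub-ball: if $B^{*}\subseteq B$ then $\cH^{\delta}(B^{*}\setminus\limsup_k B_k^f)\le\cH^{\delta}(B\setminus\limsup_k B_k^f)=0$ --- here $\cH^{\delta}(B)<\infty$ by \eqref{MTdim} --- hence $\cH^{\delta}\bigl(B^{*}\cap\limsup_k B_k^f\bigr)=\cH^{\delta}(B^{*})$.

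The lower bound for $\cH^f$ will come from the Mass Distribution Principle: if a finite Borel measure $\mu$ is supported on a set $K$ and $\mu(A)\le c\,f\bigl(r(A)\bigr)$ for every ball $A$ of radius below a fixed threshold, then $\cH^f(K)\ge c^{-1}\mu(K)$ (see \cite{Falconer}). Accordingly, the plan is: inside an arbitrary ball $B^{*}\subseteq B$ build a compact set $K_\infty\subseteq B^{*}\cap E$ and a measure $\mu$ on it with $\mu(K_\infty)>0$ and $\mu(A)\ll f\bigl(r(A)\bigr)$ for all small $A$; by the inheritance remark this yields $\cH^f(B^{*}\cap E)>0$ for every sub-ball $B^{*}$, and a covering/exhaustion argument (again using \eqref{MTdim}) then promotes this to $\cH^f(B\setminus E)=0$, the required equality.

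The set $K_\infty$ is obtained as $\bigcap_{n\ge0}\bigcup_{A\in\mathcal{C}_n}A$ for nested finite families $\mathcal{C}_0=\{B^{*}\}\supseteq\mathcal{C}_1\supseteq\cdots$ of balls: every member of $\mathcal{C}_{n+1}$ lies in a unique member of $\mathcal{C}_n$, every ball of $\mathcal{C}_n$ with $n\ge1$ is some $B_k^{\delta}$ with $k\ge n$, and radii shrink to $0$ down the tree, so any point of $K_\infty$ lies in $B_k^{\delta}$ for infinitely many $k$ and $K_\infty\subseteq B^{*}\cap E$. To pass from $A\in\mathcal{C}_n$ to its children one uses $\cH^{\delta}\bigl(A\cap\bigcup_{k\ge N}B_k^f\bigr)=\cH^{\delta}(A)$ (valid for all $N$, by the inheritance remark) and the $5r$-covering lemma to extract finitely many pairwise disjoint scaled balls $B_{k_1}^f,\dots,B_{k_M}^f\subseteq A$ with $k_i\ge N$ ($N$ so large that their radii are tiny next to $r(A)$) whose $\cH^{\delta}$-masses capture a proportion of $\cH^{\delta}(A)$ that iterating the extraction can push as close to $1$ as one wishes; the concentric balls $B_{k_i}^{\delta}$ --- automatically pairwise disjoint, being nested in the disjoint $B_{k_i}^f$ --- become the children of $A$, and $\mu(A)$ is apportioned among them.

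The crux --- and the genuine content of the Mass Transference Principle --- is the bookkeeping of $\mu$ so that $\mu(A)\ll f\bigl(r(A)\bigr)$ survives through every generation. The naive recipe of splitting $\mu(A)$ in proportion to $\cH^{\delta}(B_{k_i}^f)$ fails: in the case $L=+\infty$ the scaled balls carry far more $\cH^{\delta}$-mass than the originals, $\cH^{\delta}(B_{k_i}^f)\asymp f(r_{k_i})\gg r_{k_i}^{\delta}\asymp\cH^{\delta}(B_{k_i}^{\delta})$, so there is a genuine mass deficit between what the children ought to carry and what the disjointness of the $B_{k_i}^f$ inside $A$ lets them redistribute at the next stage, and an unbalanced split makes the constant in $\mu(A)\ll f(r(A))$ deteriorate geometrically with depth. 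Overcoming this needs a substantially more intricate, multi-scale construction --- inserting auxiliary generations between the effective levels and choosing the branching and the radii $r_{k_i}$ with foresight, so that $\mu$ is rebalanced at each step --- and this, rather than the reductions or the Cantor skeleton, is where I expect the real difficulty; for the details I would follow \cite{MassTrans}.
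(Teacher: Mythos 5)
The paper does not prove Theorem \ref{MTP}; it quotes it and explicitly refers to Beresnevich--Velani for the argument (``The complete theory and proofs can be found in \cite{MassTrans}''), so there is no in-paper proof to compare against. With that caveat, your sketch faithfully reproduces the architecture of the Beresnevich--Velani proof: the reduction to the nondegenerate case $\lim_{t\to 0^+}t^{-\delta}f(t)=\infty$ (your handling of the degenerate case via $f(t)\lesssim t^{\delta}$ is fine), the observation that hypothesis \eqref{Eqn:MTPcond} is inherited by sub-balls because $\cH^{\delta}(B)<\infty$ by \eqref{MTdim}, the nested Cantor-type construction of a compact $K_\infty\subseteq B^{*}\cap\limsup_k B_k^{\delta}$ whose levels consist of concentric balls $B_{k_i}^{\delta}$ sitting inside disjoint $B_{k_i}^f$ extracted by a $5r$-covering argument, and the final appeal to the Mass Distribution Principle to get $\cH^f(K_\infty)>0$. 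You also correctly diagnose the crux: a proportional split of $\mu$ among the children loses a geometric factor per generation precisely because $\cH^{\delta}(B_{k_i}^f)\gg\cH^{\delta}(B_{k_i}^{\delta})$, so the construction must interleave auxiliary scales and rebalance $\mu$ to keep $\mu(A)\ll f(r(A))$ uniform in depth. However, you explicitly stop there and defer that step to \cite{MassTrans}. Since the rebalancing \emph{is} the Mass Transference Principle --- the reductions, the sub-ball inheritance, and the Cantor skeleton are all routine --- what you have written is an accurate and well-organised road-map rather than a proof, and the central construction remains a gap in your write-up. That said, the paper itself treats the theorem as a black box, so for the purposes of this thesis no more is required.
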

\end{samepage}

It is worth noting that Theorem \ref{MTP} has no monotonicity requirements on the radii of balls and even the condition that $r(B_k)\rightarrow 0$ as $k\rightarrow\infty$ is simply of cosmetic nature. Before showing how the Mass Transference Principle can be used to deduce Jarn\'ik's Theorem from Khintchine's Theorem, we will prove the Jarn\'ik--Besicovitch Theorem as a consequence of Dirichlet's Theorem.

\begin{proof}[Proof of Theorem \ref{JB}]
	Let $\psi(q)=q^{-\tau}$ for $\tau>1/n$. This means, given $q\in\NN$, we are dealing with balls of radius $r=q^{-(\tau+1)}$ centred at rational points $\bp/q$. Corollary \ref{DirCor} of Dirichlet's Theorem states that for any $\balpha\in\RR^n$ there are infinitely many rational points $\bp/q$, $q\in\NN$, satisfying
	\begin{equation*}
		\left|\balpha-\frac{\bp}{q}\right|<q^{-\left(1+\frac{1}{n}\right)}.
	\end{equation*}
	Letting $s=\frac{n+1}{\tau+1}$, we see that
	\begin{equation*}
		r^{\frac{s}{n}}=\left(q^{-(\tau+1)}\right)^{\frac{n+1}{n(\tau+1)}}=\left(q^{-(\tau+1)}\right)^{\frac{1+1/n}{\tau+1}}=q^{-(1+\frac{1}{n})}
	\end{equation*}
	and thus \eqref{Eqn:MTPcond} is satisfied with $f(r)=r^s$. This imples that $\dim W_n(\tau)\geq s$. The upper bound follows easily using the Borel--Cantelli Lemma.
\end{proof}

\begin{remark*}
	This actually proves more than just Theorem \ref{JB}. We have also showed that $\cH^s(W_n(\tau))=\infty$, a fact we were previously only able to deduce once Jarn\'ik's Theorem was established. We note that we will also apply the Mass Transference Principle to a similar setting to complete the proof of Theorem \ref{HDfibres} in Section \ref{Sec:HDfibres}.
\end{remark*}

\begin{remark*}
In the following proof and later throughout the text, we will be using the Vinogradov symbol $\ll$. Given two real-valued functions $f$ and $g$, $f(q)\ll g(q)$ means there exist positive constants $c$ and $Q$ such that $f(q)\leq c g(q)$ for all $q\geq Q$. When applied to infinite sums,
\begin{equation*}
	\sum\limits_{q=1}^{\infty}f(q)\ll\sum\limits_{q=1}^{\infty}g(q)\quad :\Longleftrightarrow\quad \sum\limits_{q=1}^{Q}f(q)\leq c\sum\limits_{q=1}^{Q}g(q)
	\vspace{2ex}
\end{equation*}
for a constant $c$ and all $Q$ large enough.
\end{remark*}

\newpage

\begin{proof}[Proof of Jarn\'ik's Theorem modulo Khintchine's Theorem]
	Without loss of generality we can assume that $\psi(q)/q\rightarrow 0$ as $q\rightarrow\infty$. Otherwise, $W_n(\psi)=\I^n$ and obviously $\cH^s(W_n(\psi))=\infty$ for any $s<n$. Hence, the decay condition on the radii in Theorem \ref{MTP} is satisfied. With respect to the above setup, $(\Omega,d)$ is the unit cube $\I^n$ equipped with the supremum norm, $\delta=n$ and $f(r)=r^s$ with $s\in(0,n)$. We assume that $\sum_{q\in\NN} q^{n-s}\psi(q)^s=\infty$. Letting 
	\begin{equation*}
		\vartheta(q)=\left(q^{n-s}\psi(q)^s\right)^{\frac{1}{n}}=q^{1-\frac{s}{n}}\psi(q)^{\frac{s}{n}},
	\end{equation*}
	we see that $\sum_{q\in\NN}\vartheta(q)^n=\infty$. For now, we suppose that either $\vartheta$ is decreasing or that $n\geq 2$. Hence, Khintchine's Theorem implies that 
	\begin{equation*}
		\cH^n\left(B\cap W_n(\vartheta)\right)=\cH^n(B)
	\end{equation*}
	for any ball $B$ in $\RR^n$. Here we are dealing with a $\limsup$ set of balls with radii $r(B_k^s)=\vartheta(q)/q=q^{-s/n}\psi(q)^{s/n}$ for some $q\in\NN$ and thus the Mass Transference Principle tell us that\vspace{2ex}
	\begin{equation*}
		\cH^s\left(B\cap\limsup\limits_{k\rightarrow\infty}B_k^{n}\right)=\cH^s(B),
		\vspace{2ex}
	\end{equation*}
	where $r(B_k^n)=q^{-1}\psi(q)$ for $q\in\NN$. Those are exactly the balls that contribute to the $\limsup$ set $W_n(\psi)$ and thus $\cH^s(W_n(\psi))=\infty$.
	
	In the case where $n=1$ and $\vartheta$ is non-monotonic, we will make use of the Duffin--Schaeffer Theorem (see Theorem \ref{Thm:DS}). For any $q\in\NN$, there is an integer $k\geq 0$ such that $q\in(2^{k-1},2^k]$. As both $q^{1-s}$ and $\psi(q)$ are monotonic functions, we see that
	\begin{align*}
		\vartheta(q)&=q^{1-s}\psi(q)^s\\[1ex] 
		&\leq \left(2^k\right)^{1-s}\psi\left(2^{k-1}\right)^s\\[1ex]
		&=2^{1-s}\left(2^{k-1}\right)^{1-s}\psi\left(2^{k-1}\right)^s.
	\end{align*}
	There are $2^{k-1}$ integers in the interval $q\in(2^{k-1},2^k]$ and, using a shift in summation, this shows that\vspace{2ex}
	\begin{equation*}
		\sum\limits_{q=1}^{\infty}\vartheta(q)\ll\sum\limits_{k=1}^{\infty}\left(2^k\right)^{2-s}\psi\left(2^{k}\right)^s.
		\vspace{2ex}
	\end{equation*}
	On the other hand, the monotonicity of the functions $q^{1-s}$ and $\psi(q)$ also gives us the lower bound
	\begin{align*}
		\vartheta(q)\frac{\varphi(q)}{q}&=q^{1-s}\psi(q)^s\frac{\varphi(q)}{q}\\[1ex]
		&\geq \left(2^{k-1}\right)^{1-s}\psi\left(2^k\right)^s\frac{\varphi(q)}{q}\\[1ex]
		&=2^{s-1}\left(2^{k}\right)^{1-s}\psi\left(2^k\right)^s\frac{\varphi(q)}{q}.
	\end{align*}
	This implies that
	\begin{align*}
		\sum\limits_{q=1}^{\infty}\vartheta(q)\frac{\varphi(q)}{q}&\gg\sum\limits_{k=1}^{\infty}\left(2^{k}\right)^{1-s}\psi\left(2^k\right)^s\sum\limits_{2^{k-1}<q\leq 2^k}\frac{\varphi(q)}{q}\\[1ex]
		&\gg\sum\limits_{k=1}^{\infty}\left(2^k\right)^{2-s}\psi\left(2^{k}\right)^s\\[1ex]
		&\gg\sum\limits_{q=1}^{\infty}\vartheta(q),
	\end{align*}
	where we use the fact that the Euler totient function $\varphi$ satisfies
	\begin{equation*}
		\sum\limits_{q=1}^Q\frac{\varphi(q)}{q}=\frac{6}{\pi^2}Q+\mathcal{O}(\log Q).
	\end{equation*}
	For a proof of this property, see \cite[Theorem 111]{HardyWright}. Thus, the function $\varphi$ satisfies the condition \eqref{Eqn:AssumDS} and, as $\sum_{q\in\NN}\vartheta(q)=\infty$, the Duffin--Schaeffer Theorem implies that $\lambda(W'(\vartheta))=1$. The rest of the argument works as in the previous case, which completes the proof of the divergence part of Jarn\'ik's Theorem. The convergence part follows directly from the $n$-dimensional analogue of Lemma~\ref{BCHd}.
\end{proof}

\section[Inhomogeneous approximation: standard and twisted case]{Inhomogeneous approximation:\\ standard and twisted case}

We introduced simultaneous Diophantine approximation as the study of rational points $\bp/q\in\QQ^n$ lying close to our point of consideration $\balpha\in\RR^n$. However, already from the identity $\parallel q \balpha \parallel<\psi(q)$ we can obtain another point of view to characterise this problem. We take a point $\balpha\in\RR^n$ and want to investigate how close its natural multiples get to integers in $\ZZ^n$, or in other words, how closely their fractional parts $\{q\balpha\}=(\{q\alpha_1\},\dots,\{q\alpha_n\})$ approach points in the set $\{0,1\}^n\subset\RR^n$. Essentially, this corresponds to rotations of the torus $\mathbb{T}^n=\RR^n/\ZZ^n$ by the angle $\balpha\in\RR^n$ and to the question of how often the trajectory of the point $\boldsymbol{0}$ under this action returns to a small neighbourhood of the origin. We will mostly stay clear of this dynamical point of view, but a formal introduction as well as many far reaching consequences can be found in \cite{Einsiedler}.

All of what we have done so far focusses on approximation of the origin. However, we might as well fix any point $\bgamma=(\gamma_1,\dots,\gamma_1)\in \I^n$ and investigate how close we can get with terms of the form $\{q\balpha\}$, where $q$ is in $\NN$. Formally speaking, given an approximating function $\psi$ and a point $\bgamma\in \I^n$, let
\begin{equation*}
	W_n(\psi,\bgamma)=\left\{\balpha\in \I^n:\parallel q\balpha-\bgamma\parallel<\psi(q)\text{ infinitely often}\right\}
\end{equation*}
denote the \emph{inhomogeneous} set of \emph{$\psi$-approximable} points in $\I^n$. Hence, a point $\balpha$ is in $W_n(\psi,\bgamma)$ if there exist infinitely many `shifted' rational points
\begin{equation*}
	\frac{\bp-\bgamma}{q}=\left(\frac{p_1-\gamma_1}{q},\dots,\frac{p_n-\gamma_n}{q}\right),\quad q\in\NN,\ p_j\in\ZZ \text{ for } j\in\{1,\dots,n\} 
\end{equation*}
such that the inequalities
\begin{equation*}
	\left| \alpha_j-\frac{(p_j-\gamma_j)}{q} \right|<\frac{\psi(q)}{q}
	\vspace{1ex}
\end{equation*}
are simultaneously satisfied for all $j\in\{1,\dots,n\}$. As above, when $\psi$ has the form $\psi(q)=q^{-\tau}$ with $\tau>0$, we write $W_n(\tau,\bgamma)$ for $W_n(\psi,\bgamma)$. Of course, in the case where $\bgamma=\0$, we are dealing with the classical homogeneous theory concerning the sets $W_n(\psi)$ and $W_n(\tau)$, respectively. The analogue of Khintchine's Theorem with a fixed inhomogeneous constant $\bgamma\in\I^n$ has been proved by Sz\"usz \cite{Szusz}. From this result we can deduce a Jarn\'ik type statement using the Mass Transference Principle. The proof is identical to the homogeneous case, we simply consider balls with shifted centres. Thus, we obtain the following generalisation of Theorem \ref{KhiJa}. Originally, the part where $s\in(0,n)$ was proved by Schmidt using classical methods \cite{Schmidtjarnik}.

\begin{samepage}
\begin{theorem}[Inhomogeneous Khintchine--Jarn\'ik]\label{Thm:InhomKJ}
	Let $\psi$ be an approximating function, $\bgamma\in\I^n$ and $s\in(0,n]$. Then
	\begin{equation*}
		\cH^s(W_n(\psi,\bgamma))=
		\begin{dcases}
			\cH^s(\I^n)\ &\text{ if }\quad \sum\limits_{q=1}^{\infty}q^{n-s}\psi(q)^s=\infty,\\[2ex]
			0\quad &\text{ if }\quad \sum\limits_{q=1}^{\infty}q^{n-s}\psi(q)^s<\infty.
		\end{dcases}
		\vspace{1ex}
	\end{equation*}
\end{theorem}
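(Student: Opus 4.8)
The plan is to prove the two halves separately, mirroring the homogeneous Khintchine--Jarn\'ik theorem (Theorem \ref{KhiJa}) but carrying the shift $\bgamma$ through every step.

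For the convergence half I would first note that $W_n(\psi,\bgamma)$ is the $\limsup$ of the sets $\bigcup_{|\bp|\le q+1} B((\bp-\bgamma)/q,\psi(q)/q)\cap\I^n$, exactly as in the homogeneous case but with centres translated by $-\bgamma/q$. Since translation does not change diameters, the covering computation behind the inhomogeneous analogue of Lemma \ref{BCHd} goes through verbatim: for each large $t$ the balls coming from indices $q\ge t$ form a $\rho$-cover of $W_n(\psi,\bgamma)$, and summing $f(d_k)=d_k^s$ over them gives $\cH^s(W_n(\psi,\bgamma))\ll\sum_{q\ge t}q^{n-s}\psi(q)^s\to 0$. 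No monotonicity of $\psi$ is needed here, so this settles the case $\sum q^{n-s}\psi(q)^s<\infty$ for all $s\in(0,n]$.

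For the divergence half I would argue as in the proof of Jarn\'ik's theorem modulo Khintchine's theorem, using the Mass Transference Principle with shifted ball centres. If $\psi(q)/q\not\to 0$ then $W_n(\psi,\bgamma)=\I^n$ and the statement is trivial, so assume $\psi(q)/q\to 0$. When $s=n$ the statement is precisely the inhomogeneous Khintchine theorem of Sz\"usz \cite{Szusz}, so suppose $s\in(0,n)$ and $\sum q^{n-s}\psi(q)^s=\infty$. Set $\vartheta(q)=(q^{n-s}\psi(q)^s)^{1/n}=q^{1-s/n}\psi(q)^{s/n}$, so that $\sum\vartheta(q)^n=\infty$; Sz\"usz's theorem then gives $\cH^n(B\cap W_n(\vartheta,\bgamma))=\cH^n(B)$ for every ball $B\subset\RR^n$. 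The balls producing $W_n(\vartheta,\bgamma)$ have radii $\vartheta(q)/q=q^{-s/n}\psi(q)^{s/n}=(\psi(q)/q)^{s/n}$, i.e.\ they are the scaled balls $B_k^f$ with $f(r)=r^s$ and $\delta=n$ attached to the balls of radius $\psi(q)/q$ that produce $W_n(\psi,\bgamma)$. Hence Theorem \ref{MTP} applies (the working space being $\I^n$ with the sup norm, the centres translated by $-\bgamma/q$, which is allowed since the principle imposes no restriction on the location of the balls) and yields $\cH^s(W_n(\psi,\bgamma))=\cH^s(\I^n)=\infty$.

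The main obstacle is the case $n=1$ with $\vartheta$ non-monotonic: $\vartheta(q)=q^{1-s}\psi(q)^s$ is a product of the increasing function $q^{1-s}$ and the decreasing function $\psi(q)^s$, so even though $\psi$ is an approximating function $\vartheta$ need not be, while the inhomogeneous Khintchine theorem is stated for monotone approximating functions. In the homogeneous case this was circumvented via the Duffin--Schaeffer theorem; here I would instead pass to dyadic blocks $q\in(2^{k-1},2^k]$ and compare, using the monotonicity of $q^{1-s}$ and of $\psi$, the divergence of $\sum\vartheta(q)$ with that of $\sum_k (2^k)^{2-s}\psi(2^k)^s$, and then apply an inhomogeneous Duffin--Schaeffer/Gallagher-type statement (or, failing a convenient reference, simply invoke Schmidt's classical resolution of the $s\in(0,n)$ case \cite{Schmidtjarnik}) to recover full measure for a monotone approximating function dominating $\vartheta$ along the blocks. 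Once this is in place the Mass Transference step closes the argument exactly as above, and the inhomogeneous analogue of Lemma \ref{BCHd} supplies the convergence part.
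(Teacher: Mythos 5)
Your proposal follows the same route the paper sketches: the inhomogeneous Khintchine theorem of Sz\"usz feeds into the Mass Transference Principle for the divergence part, a translated covering/Borel--Cantelli estimate handles the convergence part, and Schmidt's classical theorem \cite{Schmidtjarnik} serves as the fallback for $s\in(0,n)$. The paper's own ``proof'' is only a one-line remark (``the proof is identical to the homogeneous case, we simply consider balls with shifted centres'') together with these two references, so you are filling in exactly the details the paper leaves implicit, and the details you supply are correct.

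One subtlety you flag correctly but localise too narrowly: the auxiliary function $\vartheta(q)=q^{1-s/n}\psi(q)^{s/n}$ need not be monotonic, yet Sz\"usz's theorem is stated for approximating (hence non-increasing) functions. You present this as an obstacle only for $n=1$, but the same issue arises for every $n\geq 2$ as well, since $1-s/n>0$ whenever $s<n$. In the homogeneous proof the paper resolves it for $n\geq 2$ by invoking Gallagher's removal of the monotonicity hypothesis; an inhomogeneous analogue of Gallagher's theorem would therefore be needed to make the $n\geq 2$ MTP step airtight, and neither your write-up nor the paper supplies one. The paper sidesteps this entirely by citing Schmidt for the whole $s\in(0,n)$ range, and your own fallback to Schmidt does the same job, so the argument closes; it would just be cleaner to acknowledge that the monotonicity problem is present in all dimensions and that the Schmidt reference is what actually bridges the gap.
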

\end{samepage}

\begin{remark}\label{Rmk:InhomDir}
	Theorem \ref{Thm:InhomKJ} shows that given any inhomogeneous constant $\bgamma$, we obtain the same measure theoretic statements for $W_n(\psi,\bgamma)$ as in the homogeneous case. However, the inhomogeneous analogue to Dirichlet's Theorem is not true for arbitrary $\bgamma\in\I^n$. Even stronger, Cassels showed the following \cite[Chapter III, Theorem III]{Cassels}.
	\begin{samepage}
	\begin{theorem}\label{Thm:NoInhomDir}
		Let $\psi$ be a positive real-valued function with $\psi(q)\rightarrow 0$ as $q\rightarrow\infty$. Then, there exist $\alpha\notin\QQ$ and $\gamma\in\RR$ such that the system
		\begin{equation*}
			\norm{q\alpha-\gamma}<\psi(Q),\quad 1\leq q\leq Q
		\end{equation*}
		has no integer solution $q$ for infinitely many $Q\in\NN$.
	\end{theorem}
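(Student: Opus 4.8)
The plan is to construct $\alpha$ and $\gamma$ simultaneously by a nested-interval argument steered by the continued fraction expansion of $\alpha$. I would write $\alpha=[0;a_1,a_2,\dots]$ with convergents $p_k/q_k$, choosing the partial quotients recursively, each permitted to be as large as convenient. The geometric heart of the argument is a three-distance type fact. Fix $k$; by the standard continued fraction estimate $\abs{\alpha-p_k/q_k}<1/(q_kq_{k+1})$, and since $\alpha-p_k/q_k$ is a fixed nonzero real, the number $q(\alpha-p_k/q_k)$ has the same sign for every $q\ge 1$. Hence every $\{q\alpha\}$ with $1\le q\le N$ lies within $\delta_N:=N/(q_kq_{k+1})$ of the point $(qp_k\bmod q_k)/q_k$ of the lattice $\{j/q_k:0\le j<q_k\}$, and always on the same side of it. As soon as $N\le q_{k+1}/2$, so that $\delta_N\le 1/(2q_k)$, this forces the orbit $\{\,\{q\alpha\}:1\le q\le N\,\}$ to avoid, inside each period $[j/q_k,(j+1)/q_k)$, a closed sub-interval $F_{k,j}$ of length $1/(2q_k)$, namely the half of the period opposite the occupied side. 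The point I would stress is that this family $\{F_{k,j}\}_{0\le j<q_k}$ can be chosen \emph{independently of $N$} throughout the window $N\le q_{k+1}/2$ and, once $a_1,\dots,a_k$ are fixed, independently of all later partial quotients.

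Granting this, I would run the following recursion. Suppose $a_1,\dots,a_k$ have been chosen, so $q_k$ and the level-$k$ intervals $F_{k,j}$ are determined, and suppose we are also given a closed interval $H_k\subseteq F_{k,j_k}$ (for some $j_k$) equal to the closed middle third of $F_{k,j_k}$, so $\abs{H_k}=1/(6q_k)$ and $\operatorname{dist}(H_k,\partial F_{k,j_k})\ge 1/(6q_k)$. Then every $\gamma\in H_k$ satisfies $\norm{q\alpha-\gamma}\ge 1/(6q_k)$ for all $q$ with $1\le q\le N$ and all $N\le q_{k+1}/2$, because each such $\{q\alpha\}$ lies outside $F_{k,j_k}$. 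Using only $\psi(q)\to 0$, I would pick an integer $Q_k\ge q_k$ so large that $\psi(Q_k)<1/(6q_k)$, and then pick $a_{k+1}$ so large that $q_{k+1}\ge 2Q_k$ (so $Q_k\le q_{k+1}/2$) and $q_{k+1}\ge 12q_k$ (so $\abs{H_k}=1/(6q_k)\ge 2/q_{k+1}$, whence $H_k$ contains an entire period $[j/q_{k+1},(j+1)/q_{k+1}]$, hence a full level-$(k+1)$ interval $F_{k+1,j_{k+1}}$). Setting $H_{k+1}$ equal to the closed middle third of $F_{k+1,j_{k+1}}$ gives $H_{k+1}\subseteq H_k$ with the same separation property one level up. The recursion starts trivially, e.g.\ with $a_1=1$ (level-$1$ interval $[0,1/2]$) and $H_1=[1/6,1/3]$.

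Finally I would put $\alpha=[0;a_1,a_2,\dots]$, which is irrational because its expansion is infinite, and choose $\gamma\in\bigcap_k H_k$, a non-empty intersection of nested non-empty compact intervals. For each $k$ we have $\gamma\in H_k$ and $Q_k\le q_{k+1}/2$, so $\min_{1\le q\le Q_k}\norm{q\alpha-\gamma}\ge 1/(6q_k)>\psi(Q_k)$; hence the system $\norm{q\alpha-\gamma}<\psi(Q_k)$, $1\le q\le Q_k$, has no integer solution $q$. Since $Q_k\ge q_k\to\infty$, this occurs for infinitely many $Q\in\NN$, which is exactly the assertion of Theorem~\ref{Thm:NoInhomDir}.

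The step I expect to be the main obstacle is the geometric lemma of the first paragraph: one has to describe precisely how the orbit $\{q\alpha\}_{q\le N}$ distributes relative to the convergent grid and, above all, verify that the excluded intervals $F_{k,j}$ persist \emph{uniformly} over the whole range $N\le q_{k+1}/2$ and are insensitive to partial quotients chosen later, since this uniformity is what makes the inductive choices mutually consistent. A minor nuisance is keeping track of the sign of $\alpha-p_k/q_k$, equivalently the parity of $k$, which merely translates the $F_{k,j}$ within each period without altering their lengths; and one must keep the numerical margins explicit enough to force $H_{k+1}\subseteq H_k$ at every stage, which is the purpose of the bookkeeping choices $\abs{H_k}=1/(6q_k)$ and $q_{k+1}\ge 12q_k$.
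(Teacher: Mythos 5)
The paper does not actually prove Theorem~\ref{Thm:NoInhomDir}; it only cites Cassels (Chapter III, Theorem III of \emph{An introduction to Diophantine approximation}), so there is no ``paper's own proof'' to compare against. On its merits, your construction is correct and is of the same continued-fraction flavour as Cassels' original argument. The geometric lemma you single out as the main obstacle is sound: writing $\eps_k=\alpha-p_k/q_k$, one has $|\eps_k|<1/(q_kq_{k+1})$ with $\sgn\eps_k=(-1)^k$ fixed, so on the torus $\{q\alpha\}$ sits within $|q\eps_k|<\delta_N$ of $(qp_k\bmod q_k)/q_k$ on the fixed side determined by the parity of $k$; when $N\le q_{k+1}/2$ this displacement is strictly less than $1/(2q_k)$, so each period $[j/q_k,(j+1)/q_k)$ has a closed half $F_{k,j}$ that the orbit avoids, and these $F_{k,j}$ depend only on $q_k$ and on the parity of $k$ --- hence on $a_1,\dots,a_k$ alone --- not on $N$ or on $a_{k+1},a_{k+2},\dots$. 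The nested-interval bookkeeping is consistent: $q_{k+1}\ge 12 q_k$ forces $|H_k|\ge 2/q_{k+1}$, so $H_k$ swallows a full period and hence a full $F_{k+1,j_{k+1}}$, giving $H_{k+1}\subseteq H_k$; the torus-metric estimate $\norm{q\alpha-\gamma}\ge 1/(6q_k)$ holds because $F_{k,j_k}$ has length $<1/2$, so no wrap-around shortcut can beat the $1/(6q_k)$ margin from $H_k$ to $\partial F_{k,j_k}$; and $Q_{k+1}\ge q_{k+1}\ge 2Q_k$ guarantees the $Q_k$ are distinct and tend to infinity. This is a complete and correct proof.
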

	\end{samepage}
	Obviously, for any $\alpha\in\QQ$ the sequence $(\{q\alpha\})_{q\in\NN}$ only takes finitely many distinct values and so cannot approximate any other points. Hence, the irrationality of $\alpha$ is a central part of this result. Still, Theorem \ref{Thm:NoInhomDir} is not a big impediment. For any irrational $\alpha$ and real $\gamma$ there are infinitely many $q\in\NN$ satisfying
	\begin{equation*}
		\norm{q\alpha-\gamma}<\frac{1+\eps}{\sqrt{5}q},
	\end{equation*}
	where $\eps>0$ can be chosen arbitrarily small \cite{KhintchineFr}. The result in higher dimensions looks slightly different. Cassels proved the existence of $\balpha$ and $\bgamma$ in $\I^n$ such that the inequality
	\begin{equation}\label{Eqn:Cassels}
		\max\limits_{1\leq j\leq n}\norm{q\alpha_j-\gamma_j}<Cq^{-1/n}
	\end{equation}
	has only finitely many solutions $q\in\NN$ for arbitrarily large $C>0$. Namely, such a $\bgamma$ exists for \emph{singular} $\balpha$, a notion which will be introduced in Chapter \ref{dirichlet} and coincides with rational points if and only if $n=1$. For the precise statement involving \eqref{Eqn:Cassels}, see Theorem \ref{Thm:Casselsnonsing}.
\end{remark}

As well as fixing the inhomogeneous constant $\bgamma\in\I^n$ and investigating properties of $W_n(\psi,\gamma)$, we can also consider a converse point of view. Given a fixed $\balpha\in\I^n$, we would like to know the distribution of the sequence $(\{q\balpha\})_{q\in\NN}$ inside $\I^n$. Concretely, given an approximating function $\psi$ and $\balpha\in\I^n$, we are interested in the set
\begin{equation*}
	W_n^{\balpha}(\psi)=\left\{\bgamma\in\I^n:\norm{q\balpha-\bgamma}<\psi(q)\text{ infinitely often}\right\}.
\end{equation*}
As above, when $\psi$ has the form $\psi(q)=q^{-\tau}$ with $\tau>0$, we write $W_n^{\balpha}(\tau)$ for $W_n^{\balpha}(\psi)$. This situation is slightly different from the previously considered theory. Given $q$, we are dealing with a single ball of radius $\psi(q)$ centred at $\{q\balpha\}$ rather than having $q$ balls of size $\psi(q)/q$ around (shifted) rational points. Still, the total measures for each $q$ are identical and so the Borel--Cantelli Lemma tells us that
\begin{equation}\label{Eqn:BCtwist}
	\lambda_n\left(W_n^{\balpha}(\psi)\right)=0 \quad \text{ if } \quad \sum\limits_{q=1}^{\infty}\psi(q)^n<\infty.
\end{equation}
The converse situation is a bit more complicated. A point $\balpha=(\alpha_1,\dots,\alpha_n)\in\I^n$ is called \emph{totally irrational} if the numbers $1,\alpha_1,\dots,\alpha_n$ are linearly independent over $\QQ$. If $\balpha$ is not totally irrational, then the elements of $(\{q\balpha\})_{q\in\NN}$ are contained in a finite union of $\QQ$-linear subspaces of $\RR^n$ intersected with $\I^n$ and so $\lambda_n\left(W_n^{\balpha}(\psi)\right)=0$ for any approximation function $\psi$. On the other hand, it is known that the sequence $(\{q\balpha\})_{q\in\NN}$ is equidistributed in $\I^n$ when $\balpha$ is totally irrational \cite{Weyl}. Roughly speaking, this means that for any ball $B\in\I^n$, the asymptotic proportion of elements of $(\{q\balpha\})_{q\in\NN}$ contained in $B$ equals $\lambda_n(B)$. However, having this property is still not strong enough to guarantee $\lambda_n\left(W_n^{\balpha}(\psi)\right)=1$ for any approximating function $\psi$ with ${\sum_{q\in\NN}\psi(q)^n=\infty.}$ To see this, define the set of \emph{twisted $\psi$-approximable} points as
\begin{equation*}
	W_n^{\times}(\psi)=\left\{\balpha\in\I^n:\lambda_n(W_n^{\balpha}(\psi))=1\right\}.
\end{equation*}
The twisted analogue to Khintchine's Theorem was proved by Kurzweil among other results in his fundamental paper on inhomogeneous Diophantine approximation \cite{Kurzweil}. 

\begin{theorem}[Twisted Khintchine]\label{Thm:TwistedKhin}
	Let $\psi$ be an approximating function. Then we have
	\begin{equation*}
	\lambda_n(W_n^{\times}(\psi))=1\quad \text{ if }\quad \sum\limits_{q=1}^{\infty} \psi(q)^n=\infty
	\end{equation*}
	and
	\begin{equation*}
	W_n^{\times}(\psi)=\emptyset\quad \text{ if }\quad \sum\limits_{q=1}^{\infty}\psi(q)^n<\infty.
	\vspace{2ex}
	\end{equation*}
\end{theorem}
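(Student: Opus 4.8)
The convergence part follows immediately from \eqref{Eqn:BCtwist}: if $\sum_q \psi(q)^n < \infty$, then $\lambda_n(W_n^{\balpha}(\psi)) = 0$ for \emph{every} $\balpha \in \I^n$, so there is no $\balpha$ with $\lambda_n(W_n^{\balpha}(\psi)) = 1$, i.e.\ $W_n^{\times}(\psi) = \emptyset$. So the whole content is in the divergence statement, and there the natural approach is a Fubini/double-counting argument on the product space $\I^n \times \I^n$ with coordinates $(\balpha, \bgamma)$, together with a zero-one law.

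The plan is as follows. First I would fix $\bgamma$ and apply the inhomogeneous Khintchine theorem (Theorem \ref{Thm:InhomKJ} with $s = n$): since $\sum_q \psi(q)^n = \infty$, we get $\lambda_n(W_n(\psi,\bgamma)) = \lambda_n(\I^n) = 1$ for \emph{every} fixed $\bgamma \in \I^n$. Now observe that the set $E = \{(\balpha,\bgamma) : \balpha \in W_n(\psi,\bgamma)\}$ is exactly the same as $\{(\balpha,\bgamma) : \bgamma \in W_n^{\balpha}(\psi)\}$, because both say $\norm{q\balpha - \bgamma} < \psi(q)$ infinitely often. This set $E$ is a $\limsup$ of open sets, hence Borel, so Fubini applies. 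Integrating first in $\balpha$ using the inhomogeneous Khintchine result gives $(\lambda_n \times \lambda_n)(E) = \int_{\I^n} \lambda_n(W_n(\psi,\bgamma))\, d\bgamma = 1$. Integrating the other way, $1 = \int_{\I^n} \lambda_n(W_n^{\balpha}(\psi))\, d\balpha$, which forces $\lambda_n(W_n^{\balpha}(\psi)) = 1$ for $\lambda_n$-almost every $\balpha$. That already shows $\lambda_n(W_n^{\times}(\psi)) = 1$, modulo one subtlety I address next.

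The one gap is measurability of $W_n^{\times}(\psi)$ itself (so that "$\lambda_n(W_n^{\times}(\psi)) = 1$" even makes sense as a statement rather than just "the complement is contained in a null set"). I would handle this by invoking a $0$--$1$ law: the function $\balpha \mapsto \lambda_n(W_n^{\balpha}(\psi))$ is measurable (it is $\int_{\I^n} \mathbf{1}_E(\balpha,\bgamma)\,d\bgamma$, measurable in $\balpha$ by Fubini), and the set $W_n^{\times}(\psi)$ is invariant under the relevant structure—translation of $\balpha$ by rationals, or an ergodicity argument on the torus—so it has measure $0$ or $1$; the Fubini computation above rules out $0$. Alternatively, one can avoid a separate $0$--$1$ law and simply cite that $W_n^{\times}(\psi)$ differs from a full-measure set by a null set, which is all that is needed in applications.

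The main obstacle I anticipate is \emph{not} the Fubini skeleton, which is routine, but two finer points. First, Theorem \ref{Thm:InhomKJ} as stated must genuinely hold for \emph{every} $\bgamma$ with a uniform conclusion (full measure), not merely almost every $\bgamma$; fortunately the inhomogeneous Khintchine theorem of Sz\"usz does give this for each fixed $\bgamma$, so the hypothesis is met. Second, and more delicate, is proving the \emph{pointwise} (every-$\balpha$) strengthening that Theorem \ref{Thm:TwistedKhin} hints at in applications—Kurzweil's original results actually characterise for which $\balpha$ one gets $\lambda_n(W_n^{\balpha}(\psi)) = 1$ via a badly-approximable/singularity condition—but the statement as written only claims full measure for the set of such $\balpha$, so the Fubini argument suffices and I would not pursue the pointwise refinement here.
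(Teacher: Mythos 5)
The paper does not actually prove this theorem; it states it and cites Kurzweil's 1955 paper. Your Fubini argument is nonetheless a correct and complete proof, given Sz\"usz's inhomogeneous Khintchine theorem (Theorem~\ref{Thm:InhomKJ} with $s=n$, which the paper does state with the conclusion holding for \emph{every} $\bgamma$). The symmetry $E=\{(\balpha,\bgamma):\balpha\in W_n(\psi,\bgamma)\}=\{(\balpha,\bgamma):\bgamma\in W_n^{\balpha}(\psi)\}$ is exactly right, $E$ is Borel so Tonelli applies, and $\int_{\I^n}\lambda_n(W_n^{\balpha}(\psi))\,d\balpha=1$ with the integrand bounded by $1$ does force the integrand to equal $1$ almost everywhere. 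Two minor remarks. First, your detour through a $0$--$1$ law is unnecessary: Fubini already tells you that $\balpha\mapsto\lambda_n(W_n^{\balpha}(\psi))=\int_{\I^n}\mathbf{1}_E(\balpha,\bgamma)\,d\bgamma$ is a measurable function, so $W_n^{\times}(\psi)=\{f=1\}$ is a measurable set, and the almost-everywhere statement immediately gives $\lambda_n(W_n^{\times}(\psi))=1$ — there is no residual measurability gap to patch. Second, this proof is genuinely different from Kurzweil's original one, and necessarily so: Kurzweil (1955) predates Sz\"usz (1958), so he could not invoke the inhomogeneous Khintchine theorem and had to argue directly. The trade-off is that your route is short and transparent but imports a non-trivial input (Sz\"usz's theorem), whereas Kurzweil's is self-contained; since the paper itself already states Sz\"usz's theorem as a black box, your approach is well suited to the exposition here.
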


\begin{remark*}
	Of course, the convergence part of Theorem \ref{Thm:TwistedKhin} simply follows from \eqref{Eqn:BCtwist}, which is a consequence of the Borel--Cantelli Lemma, and the divergence part is the main achievement. It is worth noting that for this problem there is no Jarn\'ik type theory since $W_n^{\times}(\psi)$ either has full measure or is the empty set.
\end{remark*}

Remarkably, Kurzweil also proved the following identity, establishing a deep connection between homogeneous and twisted inhomogeneous Diophantine Approximation.

\newpage

\begin{theorem}[Kurzweil]\label{Thm:Kurzweil}
	Denote by $\Psi^{\infty}_n$ the set of all approximating functions $\psi$ such that $\sum\psi(q)^n=\infty$. Then,\vspace{-2ex}
	\begin{equation*}
		\bigcap\limits_{\psi\in\Psi^{\infty}_n}W_n^{\times}(\psi)=\bad_n.
	\end{equation*}
\end{theorem}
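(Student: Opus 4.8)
The statement is an equality of sets, so the plan is to prove the two inclusions separately.

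\textbf{Direction 1: $\bad_n\subseteq\bigcap_{\psi\in\Psi^\infty_n}W_n^\times(\psi)$.} The plan is to fix $\balpha\in\bad_n$ and a divergent approximating function $\psi$, and show $\lambda_n(W_n^{\balpha}(\psi))=1$ by a quasi-independence argument. So let $c:=\inf_{m\in\NN}m^{1/n}\norm{m\balpha}>0$. First I would record that such an $\balpha$ is totally irrational: a nontrivial $\QQ$-linear relation among $1,\alpha_1,\dots,\alpha_n$ would confine the orbit $(\{q\balpha\})$ to a lower-dimensional coset in $\TT^n$, and Dirichlet's theorem in that smaller dimension would then give $\norm{q\balpha}=o(q^{-1/n})$ along a subsequence, contradicting $\balpha\in\bad_n$; hence the rotation $T_\balpha\colon\bgamma\mapsto\bgamma+\balpha$ is ergodic on $\I^n$ and $(\{q\balpha\})$ is equidistributed. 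For the given $\psi$ I may assume $\psi(q)<\tfrac12$ for all $q$ (truncating finitely many terms does not change the $\limsup$, and $\psi\ge\tfrac12$ throughout gives $W_n^{\balpha}(\psi)=\I^n$) and $\psi(q)\to0$ (otherwise $\inf_q\psi(q)=\ell>0$ and by equidistribution every $\bgamma$ has $\{q\balpha\}\in B(\bgamma,\ell/2)$ infinitely often, so again $W_n^{\balpha}(\psi)=\I^n$). Write $A_q=B(\{q\balpha\},\psi(q))$, so $W_n^{\balpha}(\psi)=\limsup_q A_q$ and $S_Q:=\sum_{q\le Q}\lambda_n(A_q)=2^n\sum_{q\le Q}\psi(q)^n\to\infty$. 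The heart of the matter is to prove the quasi-independence estimate $\sum_{q,q'\le Q}\lambda_n(A_q\cap A_{q'})\ll_{n,c}S_Q^2$ and then quote the divergence form of the Borel--Cantelli lemma for quasi-independent sequences (see \cite{DAaspects}) to get $\lambda_n(W_n^{\balpha}(\psi))>0$. The input that makes this work is the separation coming from $\balpha\in\bad_n$: if $m_1<m_2$ both satisfy $\norm{m\balpha}<\delta$ then $\norm{(m_2-m_1)\balpha}<2\delta$, so $m_2-m_1>(c/2\delta)^n$; thus the solutions of $\norm{m\balpha}<\delta$ in $\{1,\dots,Q\}$ are $(c/2\delta)^n$-separated and number at most $1+Q(2\delta/c)^n$. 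For $q'<q$, $A_q\cap A_{q'}\ne\emptyset$ forces $\norm{(q-q')\balpha}<2\psi(q')$ and then $\lambda_n(A_q\cap A_{q'})\le\lambda_n(A_q)=(2\psi(q))^n$; summing over the admissible $m=q-q'$ (which are $(c/4\psi(q'))^n$-separated), and using that $\psi^n$ is non-increasing, the inner sum over $q$ comes out $\ll_{n,c}\psi(q')^n(1+S_Q)$, the finitely many $q'$ with $(c/4\psi(q'))^n<1$ contributing only $\ll_\balpha S_Q$ altogether; summing over $q'\le Q$ then yields $\ll_{n,c}S_Q+S_Q^2\ll_{n,c}S_Q^2$, as wanted. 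Finally, $\bgamma+\balpha\in W_n^{\balpha}(\psi)$ implies $\norm{(q-1)\balpha-\bgamma}<\psi(q)$ infinitely often, hence (as $\psi$ decreases) $\norm{q\balpha-\bgamma}<\psi(q)$ infinitely often, i.e.\ $\bgamma\in W_n^{\balpha}(\psi)$; so $T_\balpha^{-1}W_n^{\balpha}(\psi)\subseteq W_n^{\balpha}(\psi)$, and since $T_\balpha$ preserves $\lambda_n$, $W_n^{\balpha}(\psi)$ is $T_\balpha$-invariant modulo a null set. Ergodicity of $T_\balpha$ then upgrades positive measure to full measure, so $\balpha\in W_n^\times(\psi)$; as $\psi$ was arbitrary in $\Psi^\infty_n$, this gives the inclusion.

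\textbf{Direction 2: the reverse inclusion.} The plan here is, given $\balpha\notin\bad_n$, to exhibit a single $\psi\in\Psi^\infty_n$ with $\lambda_n(W_n^{\balpha}(\psi))=0$, forcing $\balpha\notin W_n^\times(\psi)$. Since $\liminf_q q^{1/n}\norm{q\balpha}=0$, the set $\{q:q^{1/n}\norm{q\balpha}<2^{-k(1+1/n)}\}$ is infinite for every $k$, so I would pick $q_1<q_2<\cdots$ with $\norm{q_k\balpha}<2^{-k(1+1/n)}q_k^{-1/n}$, $q_k>2^{k-1}q_{k-1}$ and $q_1\ge2$, and define $\psi$ by
\[
\psi(q)^n=
\begin{cases}
(q_k2^k)^{-1}, & q_k\le q\le q_k2^k,\\
(q_{k+1}2^{k+1})^{-1}, & q_k2^k<q<q_{k+1},
\end{cases}
\]
with $\psi(q)^n=(2q_1)^{-1}$ for $q<q_1$. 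Then $\psi$ is positive, non-increasing and $<\tfrac12$, and $\sum_q\psi(q)^n\ge\sum_k(q_k2^k-q_k)(q_k2^k)^{-1}=\sum_k(1-2^{-k})=\infty$, so $\psi\in\Psi^\infty_n$. The point is that $q_k$ being an exceptionally good denominator makes the orbit nearly $q_k$-periodic over the block $[q_k,q_k2^k]$: writing $q=q'+jq_k$ with $0\le q'<q_k$, $0\le j\le2^k$, one has $q\balpha-q'\balpha=jq_k\balpha$ with $\norm{jq_k\balpha}\le j\norm{q_k\balpha}\le2^k\cdot2^{-k(1+1/n)}q_k^{-1/n}=(q_k2^k)^{-1/n}=\psi(q_k)$, so
\[
\bigcup_{q_k\le q\le q_k2^k}B(\{q\balpha\},\psi(q))\ \subseteq\ \bigcup_{q'=0}^{q_k-1}B\bigl(\{q'\balpha\},2\psi(q_k)\bigr),
\]
a set of measure $\le q_k\bigl(4\psi(q_k)\bigr)^n=4^n2^{-k}$; while the balls indexed by the gap $q_k2^k<q<q_{k+1}$ contribute at most $q_{k+1}\cdot2^n(q_{k+1}2^{k+1})^{-1}=2^{n-1-k}$. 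Summing, $\lambda_n\bigl(\bigcup_{q\ge q_K}B(\{q\balpha\},\psi(q))\bigr)\le(4^n+2^{n-1})\sum_{k\ge K}2^{-k}\to0$ as $K\to\infty$, so $\lambda_n(W_n^{\balpha}(\psi))=0$.

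\textbf{The main difficulty.} The crux of the whole proof is the quasi-independence estimate in Direction 1: one has to convert the metric separation $\norm{m\balpha}\gg m^{-1/n}$ into a summable bound for the pairwise overlaps $\lambda_n(A_q\cap A_{q'})$ while assuming nothing about the rate of decay of $\psi$ (only that it is monotone), so that overlaps between balls whose indices are far apart but whose centres on $\TT^n$ are close have to be weighed carefully against the slowly-diverging tail of $\sum\psi^n$. A secondary, though genuine, point is the classical fact that badly approximable vectors are totally irrational, which is needed to run the ergodic $0$--$1$ law (alternatively one could localise the quasi-independence estimate to an arbitrary box and appeal to the Lebesgue density theorem, using the low discrepancy of $(\{q\balpha\})$ for $\balpha\in\bad_n$). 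Direction 2, by contrast, is comparatively soft once one spots that a single sufficiently good rational approximation of $\balpha$ renders the orbit $(\{q\balpha\})$ nearly periodic over a long block of indices and thereby collapses the corresponding union of balls.
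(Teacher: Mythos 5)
The thesis states Kurzweil's theorem as a cited result and gives no proof, so there is no in-paper argument to compare against; your proposal is, however, a correct self-contained proof, and your diagnosis of where the weight lies is accurate. In Direction 1 the separation $\norm{m\balpha}\geq cm^{-1/n}$ does make the solutions of $\norm{m\balpha}<2\psi(q')$ at least $(c/4\psi(q'))^n$-separated, and your overlap bound $\sum_{q,q'\le Q}\lambda_n(A_q\cap A_{q'})\ll S_Q^2$ (with the implied constant depending also on $\psi$, through the finitely many $q'$ with $\psi(q')>c/4$ — harmless, since $\psi$ is fixed) is enough for the divergence Borel--Cantelli lemma; the ergodic upgrade via $T_\balpha^{-1}W_n^{\balpha}(\psi)\subseteq W_n^{\balpha}(\psi)$ and measure preservation is clean. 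You should spell out the one-line reason that $\bad_n$ contains only totally irrational vectors: from a rational dependence $a_0+\sum_i a_i\alpha_i=0$ with, say, $a_n\ne 0$, Dirichlet in $n-1$ variables applied to $(\alpha_1,\dots,\alpha_{n-1})$ produces infinitely many $q$ with $\norm{q\alpha_i}<q^{-1/(n-1)}$ for $i<n$, and then $q'=|a_n|q$ satisfies $\norm{q'\balpha}\ll q'^{-1/(n-1)}$, so $q'^{1/n}\norm{q'\balpha}\to 0$. Direction 2 is also correct and your near-periodicity over the block $[q_k,2^kq_k]$ is exactly the right mechanism; the only cosmetic slip is that $q_1\geq 2$ guarantees $\psi<\tfrac12$ only when $n=1$, so take $q_1>2^{n-1}$ to ensure $\psi(q)\leq(2q_1)^{-1/n}<\tfrac12$ and keep the volume counts on the torus as stated.
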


Rewritten in a contrapositive way, Theorem \ref{Thm:Kurzweil} tells us that
\begin{equation*}
	\balpha\notin\bad_n\ \Longleftrightarrow\ \exists\ \psi\in\Psi^{\infty}_n\ \text{ such that }\ \lambda_n\left(W_n^{\balpha}(\psi)\right)<1,
\end{equation*}
where the choice of the exceptional $\psi$ might depend on $\balpha$.

\begin{remark*}
	The original version of Theorem \ref{Thm:Kurzweil} was stated in a different and more general fashion than above. This might be a reason why the result and its significance were somewhat overlooked for a long period of time. However, in recent years the work of Kurzweil has become a focal point in Diophantine approximation and advances were made in different directions by Fayad \cite{Fayad}, Kim \cite{Kim}, \cite{Fuchs}, Tseng \cite{Tseng}, Chaika \cite{Chaika} and Simmons \cite{SimmonsKurzweil}. An important extension due to Harrap will be discussed shortly.
\end{remark*}

\section[Weighted Diophantine approximation: standard and twisted case]{Weighted Diophantine approximation:\\ standard and twisted case}\label{sec:weighted}

While so far we have only been concerned with equidistant approximation, i.e. $\limsup$ sets built from balls around rational points (or indeed `shifted' rational points in the case of inhomogeneous approximation) according to the $\max$-norm, we can also vary this approach by applying different weights to different coordinate directions. In the following, an $n$-tuple $\bi=(i_1,\dots,i_n)\in\RR^n$ satisfying
\begin{equation}\label{Eqn:WV1}
	0\leq i_j\leq 1,\quad (1\leq j\leq n)
	\vspace{-2ex}
\end{equation}
and
\begin{equation}\label{Eqn:WV2}
	i_1+i_2+\dots+i_n=1
\end{equation}
will be called a \textit{weight vector}. We will consider simultaneous approximation with respect to rectangles given by a weight vector $\bi$ rather than balls given by $\max$-norm. In this case we have the following generalisation of Dirichlet's Theorem.

\begin{theorem}[Weighted Dirichlet]\label{wDir}
	Let $\bi\in\RR^n$ be a weight vector. Then, for any $\balpha\in\RR^n$ and $Q\in\NN$, there exists $q\in\NN$, $q\leq Q$ such that
	\begin{equation}\label{Eqn:wDir}
		\parallel q\alpha_j\parallel<Q^{-i_j},\quad (1\leq j\leq n).
	\end{equation}
\end{theorem}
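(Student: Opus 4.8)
The statement is the weighted analogue of Dirichlet's Theorem, and the natural route is exactly the one the text has advertised: derive it from Minkowski's linear forms theorem. I would set up a convex symmetric body in $\RR^{n+1}$ whose integer points correspond to the desired solutions. Concretely, consider the variables $(q,p_1,\dots,p_n)$ and the linear forms $L_0 = q$ together with $L_j(q,\bp) = q\alpha_j - p_j$ for $1 \le j \le n$. These form a system with determinant $1$. I would then look for a nonzero integer point in the region defined by $|L_0| \le Q$ and $|L_j| \le Q^{-i_j}$ for each $j$. The product of the bounds is $Q \cdot \prod_j Q^{-i_j} = Q^{1 - (i_1 + \dots + i_n)} = Q^0 = 1$ by the normalisation \eqref{Eqn:WV2}, so Minkowski's theorem (in the $\le$ form, using that the body is closed) guarantees a nonzero integer solution $(q,\bp)$.

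The remaining work is to check that this integer point has the properties claimed. First, $q \ne 0$: if $q = 0$ then $|p_j| = |L_j| \le Q^{-i_j} \le 1$ for all $j$, and since $i_j \ge 0$ we get $|p_j| < 1$ unless $i_j = 0$, in which case $|p_j| \le 1$. A little care is needed here — one wants to rule out the point being $(\0)$ or having $q=0$ with some $p_j = \pm 1$. The standard fix is to work with the half-open body or to note that one may take all the inequalities except $|L_0| \le Q$ to be strict by shrinking $Q^{-i_j}$ to $Q^{-i_j} - \eps$ and letting $\eps \to 0$ along a sequence, extracting a limit integer point (the integer points form a discrete set, so a nonzero solution persists). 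Once $q \ne 0$ is established, replacing $q$ by $|q|$ and $\bp$ by the correspondingly signed vector gives $1 \le q \le Q$, and then $\parallel q\alpha_j \parallel \le |q\alpha_j - p_j| = |L_j| \le Q^{-i_j}$, which is \eqref{Eqn:wDir} (with $\le$; to get strict inequality one again uses the $\eps$-shrinking argument on the $j$-components).

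The main obstacle — really the only subtlety — is the boundary/strictness issue: Minkowski's theorem in its clean form gives a point in a half-open or closed body, and one must argue that $q$ is genuinely nonzero and that the inequalities can be taken in the form stated (strict in the $\alpha_j$ coordinates if that is what is wanted). This is entirely routine but deserves a sentence. Everything else is just bookkeeping: verifying the determinant of the linear system is $1$, computing the volume of the box as $2^{n+1} \cdot 1$, and invoking Minkowski. I would present it compactly, deferring to the later Section \ref{sec:weighted} statement of Minkowski's Theorem (Theorem \ref{Minkowski}) for the precise hypotheses, exactly as the text does for the unweighted Theorem \ref{Dir}.
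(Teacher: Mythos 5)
Your plan is the paper's plan: Theorem~\ref{wDir} is derived from Minkowski's Linear Forms Theorem (Theorem~\ref{Minkowski}) by taking $m = n+1$, the forms $L_j(q,\bp) = q\alpha_j + p_j$ with bounds $A_j = Q^{-i_j}$ for $1 \le j \le n$, and $L_{n+1} = q$ with bound $A_{n+1} = Q$; the normalisation $\sum_j i_j = 1$ gives $\prod_j A_j = 1$. The paper presents this only as a choice of coefficients for Theorem~\ref{Minkowski}, so your check that $q \neq 0$ (hence $1 \le q \le Q$ after a sign flip) is exactly the bookkeeping it leaves implicit; and that check is immediate because Theorem~\ref{Minkowski} already delivers \emph{strict} inequality on the first $m-1$ forms, so $q=0$ would force $|p_j| < Q^{-i_j} \le 1$ and hence $\bp = \0$, a contradiction.

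One sentence in your discussion of the boundary issue has the argument going the wrong way. You propose to obtain strict inequalities in the $L_j$-directions by ``shrinking $Q^{-i_j}$ to $Q^{-i_j} - \eps$ and letting $\eps \to 0$\ldots extracting a limit integer point.'' Shrinking those bounds drops $\prod_j A_j$ below $1$, so Minkowski's Convex Body Theorem no longer guarantees a nonzero integer point, and there is no sequence from which to extract a limit. The correct compactness argument --- which the paper carries out inside the proof of Theorem~\ref{Minkowski} itself --- goes the other way: enlarge the one remaining bound to $A_m(1+\eps)$, so that for each $\eps > 0$ a nonzero integer point exists inside a fixed bounded region, and then extract a constant subsequence as $\eps_k \to 0$. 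Since your final step is to invoke Theorem~\ref{Minkowski} in the form the paper states it, the plan is sound; but the parenthetical justification you give for its strict/non-strict structure would not survive being written out.
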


Analogously to Corollary \ref{DirCor} in the non-weighted case, Theorem \ref{wDir} immediately implies the following.

\begin{corollary}\label{Cor:wDir}
	Let $\bi\in\RR^n$ be a weight vector. Then, for any $\balpha\in\RR^n$ there exist infinitely many $q\in\NN$ such that
	\begin{equation*}
		\max\limits_{1\leq j\leq n}q^{i_j}\parallel q\alpha_j\parallel<1.
		\vspace{2ex}
	\end{equation*}
\end{corollary}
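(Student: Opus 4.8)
The plan is to deduce the corollary from Theorem~\ref{wDir} in precisely the way Corollary~\ref{DirCor} follows from Theorem~\ref{Dir}; the only point needing a little care is upgrading ``there is a solution'' to ``there are infinitely many''.

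\textbf{Producing solutions.} Fix $Q\in\NN$ and apply Theorem~\ref{wDir} to get $q=q(Q)\in\NN$ with $q(Q)\le Q$ and $\norm{q(Q)\alpha_j}<Q^{-i_j}$ for all $j$. Since $q(Q)\le Q$ and $i_j\ge 0$ by \eqref{Eqn:WV1}, we have $q(Q)^{i_j}\le Q^{i_j}$, and therefore
\begin{equation*}
	q(Q)^{i_j}\,\norm{q(Q)\alpha_j}<q(Q)^{i_j}Q^{-i_j}=\bigl(q(Q)/Q\bigr)^{i_j}\le 1\qquad(1\le j\le n).
\end{equation*}
Taking the maximum over $j$ shows that $q(Q)$ satisfies the asserted inequality, so the assignment $Q\mapsto q(Q)$ always produces solutions.

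\textbf{Getting infinitely many.} If $\{q(Q):Q\in\NN\}$ is infinite we are done, since each of its members is a solution. Otherwise the Pigeonhole Principle gives a single $q^*\in\NN$ with $q(Q)=q^*$ for $Q$ ranging over an infinite, hence unbounded, set of integers; along that set $\norm{q^*\alpha_j}<Q^{-i_j}$, so letting $Q\to\infty$ forces $\norm{q^*\alpha_j}=0$, i.e.\ $q^*\alpha_j\in\ZZ$, for every $j$ with $i_j>0$. Then for every $\ell\in\NN$ the integer $\ell q^*$ is again a solution: a coordinate with $i_j>0$ contributes $(\ell q^*)^{i_j}\norm{\ell q^*\alpha_j}=0$, while a coordinate with $i_j=0$ contributes $\norm{\ell q^*\alpha_j}\le\tfrac{1}{2}<1$. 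Hence there are infinitely many solutions.

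I do not anticipate any genuine obstacle. The only mildly delicate moment is the degenerate branch of the last step, in which $q(Q)$ stays bounded as $Q\to\infty$; it is exactly the pigeonhole argument that then pins $q^*\balpha$ down as an integer vector in every positively weighted coordinate, after which passing to multiples of $q^*$ closes the argument, using nothing beyond the facts that a weight vector has nonnegative entries and that every real number lies within $1/2$ of an integer.
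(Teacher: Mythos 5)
Your proof is correct. The paper does not actually write out a proof of this corollary; it says only that it follows from Theorem~\ref{wDir} ``analogously to Corollary~\ref{DirCor}'', and for the latter the paper's device is to set $Q=q$ in Dirichlet's inequality, then note that $\norm{q\balpha}$ is bounded away from zero for fixed $q$ and irrational $\balpha$, disposing of rational $\balpha$ separately via multiples of the common denominator. Your first step is precisely the ``set $Q=q$'' observation, rewritten as $(q(Q)/Q)^{i_j}\le 1$. Where you genuinely add something is in the degenerate branch: a literal transcription of the paper's argument would split on $\balpha$ rational versus irrational, but in the weighted setting a coordinate with $i_j=0$ imposes no constraint, so $q(Q)$ can stay bounded even when $\balpha\notin\QQ^n$ (e.g.\ $\balpha=(1/2,\pi)$ with $\bi=(1,0)$), and one can only conclude $q^*\alpha_j\in\ZZ$ for the positively weighted $j$. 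Your pigeonhole-plus-multiples argument, with the observation that $\norm{\ell q^*\alpha_j}\le\tfrac12<1$ takes care of the zero-weight coordinates automatically, is exactly the right way to close this small gap that the paper's ``analogously'' glosses over.
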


Theorem \ref{wDir} shows that we have 
\begin{equation*}
	\max\left\{\parallel q\alpha_1\parallel^{1/i_1},\dots,\parallel q\alpha_n\parallel^{1/i_n}\right\}<Q^{-1},
\end{equation*}
which illustrates that instead of a cube given by $\max$-norm, we are dealing with an $n$-dimensional rectangle, the side lengths of which are scaled by the powers $i_j$. Note that while the shape of this rectangle depends on the weight vector $\bi$, the conditions \eqref{Eqn:WV1} and \eqref{Eqn:WV2} ensure that the volume is the same for any choice of $\bi$. The proof of Theorem \ref{wDir} follows from a surprisingly simple geometric observation by Minkowski. 

\begin{samepage}
\begin{theorem}[Minkowski's Convex Body Theorem]\label{Thm:MinkCBT}
	Let $K$ be a bounded convex set in $\RR^m$, symmetric about $\0$, i.e. $\bx\in K\Leftrightarrow -\bx\in K$. Assume that either $\lambda_m(K)>2^m$ or that $K$ is closed and $\lambda_m(K)\geq 2^m$. Then $K$ contains an integer point $\bz\neq\0$. 
\end{theorem}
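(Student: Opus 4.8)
The plan is to deduce the statement from a volume‑counting (pigeonhole) argument in the spirit of Blichfeldt, treating first the case of strict inequality $\lambda_m(K)>2^m$ and then obtaining the closed borderline case $\lambda_m(K)=2^m$ by a dilation‑and‑limit argument. Note that a bounded convex set is Lebesgue measurable (it differs from its interior only along a boundary of measure zero), so all the volumes below make sense, and a set of positive volume is nonempty.

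First I would rescale: assuming $\lambda_m(K)>2^m$, put $S=\tfrac12 K$, so that $\lambda_m(S)=2^{-m}\lambda_m(K)>1$. Decompose $S$ according to which unit cube each point falls in, i.e. for $\bz\in\ZZ^m$ set
\begin{equation*}
	S_\bz=\big(S\cap([0,1)^m+\bz)\big)-\bz\subseteq[0,1)^m .
\end{equation*}
Since the cubes $[0,1)^m+\bz$ partition $\RR^m$ and translation preserves measure, $\sum_{\bz\in\ZZ^m}\lambda_m(S_\bz)=\lambda_m(S)>1=\lambda_m([0,1)^m)$. If the sets $S_\bz$ were pairwise disjoint up to null sets, their union — which sits inside $[0,1)^m$ — would then have measure exceeding $1$, a contradiction. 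Hence there are distinct $\bz_1,\bz_2\in\ZZ^m$ with $S_{\bz_1}\cap S_{\bz_2}\neq\emptyset$. Unwinding the definitions, a common point yields $\bx_1,\bx_2\in K$ with $\tfrac12\bx_1-\bz_1=\tfrac12\bx_2-\bz_2$, so that $\tfrac12(\bx_1-\bx_2)=\bz_1-\bz_2\in\ZZ^m\setminus\{\0\}$. Now symmetry of $K$ gives $-\bx_2\in K$, and convexity then gives $\tfrac12\bx_1+\tfrac12(-\bx_2)=\tfrac12(\bx_1-\bx_2)\in K$; this is the desired nonzero integer point.

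For the closed case with $\lambda_m(K)=2^m$, I would apply the above to the dilates $(1+\tfrac1k)K$, $k\in\NN$, which are still bounded, convex and symmetric and have volume $(1+\tfrac1k)^m 2^m>2^m$, producing integer points $\bz_k\in\ZZ^m\setminus\{\0\}$ with $\bz_k\in(1+\tfrac1k)K\subseteq 2K$. Since $K$ is bounded, $2K$ contains only finitely many integer points, so some fixed $\bz\in\ZZ^m\setminus\{\0\}$ lies in $(1+\tfrac1k)K$ for infinitely many $k$. Writing $\bz=(1+\tfrac1k)\by_k$ with $\by_k\in K$ along that subsequence, we get $\by_k\to\bz$, and closedness of $K$ forces $\bz\in K$. (That $\0\in K$, used implicitly, follows from $K$ being nonempty, symmetric and convex.)

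The only genuinely delicate point is the measure‑theoretic pigeonhole in the middle step — the observation that a subset of $\RR^m$ of volume exceeding $1$ cannot be carried injectively, even modulo null sets, into the unit cube by reduction mod $\ZZ^m$; this rests solely on countable additivity and the fact that all the pieces $S_\bz$ land inside $[0,1)^m$. Once the collision is produced, the return into $K$ is a one‑line use of symmetry and convexity, and the closed case is a routine limiting argument.
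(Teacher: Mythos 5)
Your proof is correct, and it takes a genuinely different route from the one in the text. For the case $\lambda_m(K)>2^m$, you use the classical Blichfeldt-style argument: scale to $S=\tfrac12 K$, cut $S$ into the pieces $S_{\bz}=\big(S\cap([0,1)^m+\bz)\big)-\bz$, and observe that $\sum_{\bz}\lambda_m(S_{\bz})=\lambda_m(S)>1$ forces two pieces to overlap, which produces $\bx_1,\bx_2\in K$ with $\tfrac12(\bx_1-\bx_2)\in\ZZ^m\setminus\{\0\}$; symmetry and convexity finish. The text instead follows Schmidt's discrete version of the same idea: it counts the rational points $\bp/q\in K$, uses the asymptotic $\#Z_q(K)\sim q^m\lambda_m(K)>(2q)^m$ for large $q$, and applies the pigeonhole principle modulo $2q$ to two such points $\bp/q,\tilde{\bp}/q$, arriving at $(\bp-\tilde{\bp})/(2q)\in K\cap\ZZ^m\setminus\{\0\}$. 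Your approach works directly at the level of Lebesgue measure (the only subtlety, correctly flagged, being that a bounded convex set is measurable and that countable additivity rules out an essentially disjoint family of subsets of $[0,1)^m$ with total measure exceeding $1$); Schmidt's approach trades that for a lattice-point count and a limiting density fact, which some readers find more elementary because it avoids invoking measure theory explicitly. For the closed borderline case $\lambda_m(K)=2^m$ your argument is also different: you dilate to $(1+\tfrac1k)K$, apply the strict case to get nonzero integer points $\bz_k$, confine them to the finite set $2K\cap\ZZ^m$ (here $\0\in K$ is indeed needed so that $(1+\tfrac1k)K\subseteq 2K$, and your justification of $\0\in K$ is right), pass to a constant subsequence, and use closedness to push the limit into $K$. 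The text instead uses normality of $\RR^m$ to enclose the compact $K$ in a convex symmetric open neighbourhood $K'$ that avoids $\ZZ^m\setminus K$, so that $\lambda_m(K')>2^m$ and the strict case applied to $K'$ hands back an integer point already in $K$. Both reductions are standard; yours is the more common textbook route and avoids the slightly delicate step of arranging the enlarged neighbourhood to be convex, symmetric, and of strictly larger measure.
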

\end{samepage}

\newpage

\begin{proof}
	We will follow the argument given by Schmidt \cite{schmidt}. Suppose that $K$ satisfies ${\lambda_m(K)>2^m}$. For $q\in\NN$, define
	\begin{equation*}
		Z_q(K)=\left\{\frac{\bp}{q}=\left(\frac{p_1}{q},\dots,\frac{p_m}{q}\right)\in K:\bp\in\ZZ^m\right\}.
	\end{equation*}
	Then it follows that
	\begin{equation*}
		\lim\limits_{q\rightarrow\infty}\frac{\# Z_q(K)}{q^m\lambda_m(K)}=1
	\end{equation*}
	and thus we have
	\begin{equation*}
		\# Z_q(K)>q^m2^m=(2q)^m
	\end{equation*}
	for $q$ large enough. There are $2q$ different residue classes modulo $2q$ and every point in $Z_q(K)$ has $m$ coordinates, so there must be two distinct points $\bp/q$ and $\tilde{\bp}/q$ in $Z_q(K)$ satisfying
	\begin{equation}\label{Eqn:Mod}
		p_j\equiv\tilde{p}_j\mod 2q,\quad \quad (1\leq j\leq m).
	\end{equation}
	By symmetry, $-\tilde{\bp}/q\in K$ and by convexity,
	\begin{equation*}
		\bz=\frac{1}{2}\frac{\bp}{q}+\frac{1}{2}\frac{-\tilde{\bp}}{q}=\frac{\bp-\tilde{\bp}}{2q}\in K.
	\end{equation*}
	Clearly, $\bz\neq\0$ and \eqref{Eqn:Mod} shows that $\bz$ is contained in $\ZZ^n$, which completes the proof.	
	
	The case when $K$ is closed and $\lambda_m(K)= 2^m$ can be reduced to the case when $\lambda_m(K)>2^m$. $K$ and $\ZZ^m\setminus K$ are closed disjoint subsets of $\RR^m$. Since $\RR^m$ is a normal space, $K$ is contained in an open neighbourhood $K'$ satisfying $K'\cap(\ZZ^m\setminus K)=\emptyset$. Clearly, $K'$ can be assumed to be convex and symmetric about zero. It follows that $\lambda_m(K')>\lambda_m(K)=2^m$ and so, by the previous case, $K'$ contains a non-zero integer point $\bz$, which must be contained in $K$. 
\end{proof}

We now use Theorem \ref{Thm:MinkCBT} to deduce the following fundamental theorem in the geometry of numbers.

\newpage

\begin{theorem}[Minkowski's Linear Forms Theorem]\label{Minkowski}
	Given $m\in\NN$, suppose that $a_{j,k}$ with $1\leq j,k\leq m$, are real numbers with determinant $\det (a_{j,k})=\pm 1$ and let $A_1,\dots,A_m$ be positive real numbers with product $A_1 A_2\cdots A_m=1$. Then, there exists an integer point $\bz=(z_1,\dots,z_m)\in\ZZ^m\setminus\{\0\}$ such that the inequalities
	\begin{equation}\label{Eqn:Mink}
		\begin{aligned}
			|a_{j,1}z_1+\dots+a_{j,m}z_m&|<A_1,\quad \quad (1\leq j\leq m-1) \\
			\text{and }\quad \quad |a_{m,1}z_1+\dots+a_{m,m}z_m&|\leq A_m
		\end{aligned}
	\end{equation}
	are satisfied simultaneously.
\end{theorem}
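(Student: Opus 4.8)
The plan is to recast the inequalities \eqref{Eqn:Mink} as the statement that a non-zero integer point lies in a suitable convex body, and then invoke Theorem \ref{Thm:MinkCBT}. Write $L_j(\bx)=a_{j,1}x_1+\dots+a_{j,m}x_m$ for the $j$-th linear form; since $\det(a_{j,k})=\pm1$, the linear map $L=(L_1,\dots,L_m):\RR^m\to\RR^m$ is invertible and preserves Lebesgue measure. The natural candidate body is the preimage under $L$ of the box $\{\by\in\RR^m:|y_j|<A_j\ (1\leq j\leq m-1),\ |y_m|\leq A_m\}$, which is bounded, convex and symmetric about $\0$, with volume $2^m A_1\cdots A_m=2^m$. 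The snag is that this body is neither open nor closed and its volume is exactly the critical value $2^m$, so Theorem \ref{Thm:MinkCBT} does not apply to it directly; circumventing this while keeping the first $m-1$ inequalities strict is the main obstacle.

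To get around it I would inflate only the last coordinate. For $N\in\NN$ set
\begin{equation*}
	K_N=\left\{\bx\in\RR^m:|L_j(\bx)|<A_j\ (1\leq j\leq m-1),\ |L_m(\bx)|\leq A_m+\tfrac1N\right\}.
\end{equation*}
Each $K_N$ is the preimage under the invertible map $L$ of a bounded box, hence bounded, and it is evidently convex and symmetric about $\0$. Using $A_1\cdots A_{m-1}=A_m^{-1}$, its volume equals
\begin{equation*}
	2^m A_1\cdots A_{m-1}\left(A_m+\tfrac1N\right)=2^m\left(1+\tfrac{1}{NA_m}\right)>2^m,
\end{equation*}
so the first case of Theorem \ref{Thm:MinkCBT} produces a point $\bz_N\in(K_N\cap\ZZ^m)\setminus\{\0\}$.

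Finally I would let $N\to\infty$. Since $K_1\supseteq K_2\supseteq\cdots$, every $\bz_N$ lies in the bounded set $K_1$, which contains only finitely many integer points; hence some fixed $\bz\in\ZZ^m\setminus\{\0\}$ coincides with $\bz_N$ for infinitely many $N$. For this $\bz$ the strict inequalities $|L_j(\bz)|<A_j$ $(1\leq j\leq m-1)$ hold outright, since they follow from membership in a single $K_N$ and need no limiting argument, while $|L_m(\bz)|\leq A_m+\tfrac1N$ for infinitely many $N$ forces $|L_m(\bz)|\leq A_m$. Thus $\bz$ satisfies \eqref{Eqn:Mink}, which completes the proof. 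The only points requiring care are the volume computation, which relies on $|\det(a_{j,k})|=1$ so that $L$ preserves Lebesgue measure, and the observation that inflating just the $m$-th bound is exactly what allows the first $m-1$ inequalities to survive the limiting step as strict inequalities.
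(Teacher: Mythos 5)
Your proof is correct and takes essentially the same approach as the paper: both inflate only the $m$-th bound to push the volume above the critical value $2^m$, apply the open case of Theorem \ref{Thm:MinkCBT}, and use the finiteness of integer points in the largest body to extract a single $\bz$ working for a sequence tending to the original bound. The only difference is cosmetic — the paper first rescales the forms so that $A_1=\dots=A_m=1$, whereas you compute the volume directly from $A_1\cdots A_m=1$ — and your observation that the strict inequalities for $j\le m-1$ need no limiting argument is a small tidying of the paper's phrasing.
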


\begin{proof}
	Denote the linear forms in question by
	\begin{equation*}
		L_j(\bx)=a_{j,1}x_1+\dots+a_{j,m}x_m,\quad \quad (1\leq j\leq m)
	\end{equation*}
		and let
	\begin{equation*}
		\tilde{L}_j(\bx)=\frac{1}{A_j}L_j(\bx),\quad \quad (1\leq j\leq m).
		\vspace{2ex}
	\end{equation*}
	Then \eqref{Eqn:Mink} can be rewritten as
	\begin{align*}
		|\tilde{L}_j(\bx)&|<1,\quad \quad (1\leq j\leq m-1)\\
		\text{and }\quad \quad |\tilde{L}_m(\bx)&|\leq 1.
	\end{align*}
	This modified system of linear forms still has determinant $\det(\tilde{a}_{j,k})=\pm 1$, so we can restrict ourselves to the case when $A_1=\dots=A_m=1$. The set $K\subset\RR^n$ of all $\bx\in\RR^n$ satisfying
	\begin{equation*}
		|L_j(\bx)|\leq 1,\quad \quad (1\leq j\leq m)
	\end{equation*}
	is the image of the closed unit cube $\I^n$ under a linear transformation of determinant $\pm 1$ and thus $K$ is a closed parallelepiped, symmetric about $\0$ with volume $\lambda_m(K)=2^m$. By Theorem \ref{Thm:MinkCBT}, there is an integer point $\bz\neq \0$ contained in $K$.
	
	To get inequality for the first $m-1$ linear forms, we need to slightly modify this argument.
	
	For each $\eps>0$, the system of inequalities
	\begin{align*}
		|{L}_j(\bx)&|<1,\quad \quad \quad (1\leq j\leq m-1)\\
		\text{and }\quad \quad |{L}_m(\bx)&|\leq 1+\eps.
	\end{align*}
	defines a symmetric parallelepiped $K_{\eps}$ of volume $\lambda_m(K_{\eps})=2^m(1+\eps)>2^m$. By Theorem \ref{Thm:MinkCBT}, there is an integer point $\bz_{\eps}\neq \0$ contained in $K_{\eps}$. For $\eps<1$, all the sets $K_{\eps}$ will be contained in $K_1$. As a bounded body, $K_1$ only contains finitely many integer points and so there must be a sequence $(\eps_k)_{k\in\NN}$ tending to zero as $k\rightarrow\infty$, such that all the $\bz_{\eps_k}$ are the same, say $\bz$. On letting $k\rightarrow\infty$ we see that $\bz$ must satisfy \eqref{Eqn:Mink}.
\end{proof}

\begin{samepage}
We will often refer to this result simply as Minkowski's Theorem. Theorem \ref{wDir} follows from Theorem \ref{Minkowski} by setting $m=n+1$ and the following choice of coefficients:
\begin{align*}
	a_{j,1}=\alpha_j,\quad a_{j,j+1}=-1,\quad A_j&=Q^{-i_j},\quad (1\leq q\leq n),\\
	a_{n+1,1}=1,\quad A_{n+1}&=Q
\end{align*}
and all the other coefficients $a_{j,k}=0$. The special case where $A_1=\dots=A_n=Q^{1/n}$ proves the original version of Dirichlet's Theorem. 
\end{samepage}

Analogously to the standard theory in Section \ref{Sec:Basic}, we can define the class of \textit{simultaneously $(\bi,\psi)$-approximable} numbers in $\I^n$ as
\begin{equation*}
		W_n(\bi,\psi)=\left\{\balpha\in \I^n: \max\limits_{1\leq j\leq n}\psi(q)^{-i_j}\parallel q\alpha_j\parallel<1\text{ infinitely often}\right\},
\end{equation*}
where $\psi$ is an approximating function and $\bi=(i_1,\dots,i_n)$ is a weight vector. Observe that, by Corollary \ref{Cor:wDir}, $W_n(\bi,\psi)=\I^n$ for $\psi=q^{-1}$ and any weight vector $\bi$. More generally, Khintchine himself extended his classical result from $\bi=(1/n,\dots,1/n)$ to arbitrary weight vectors \cite{Khintchine2}.

\begin{samepage}
\begin{theorem}[Weighted Khintchine]\label{Thm:WeightedKhin}
	Let $\psi$ be an approximating function and $\bi$ an $n$-dimensional weight vector. Then
	\begin{equation*}
		\lambda_n(W_n(\bi,\psi))=
		\begin{dcases}
			1\quad &\text{ if }\quad \sum\limits_{q=1}^{\infty} \psi(q)=\infty,\\[2ex]
			0\quad &\text{ if }\quad \sum\limits_{q=1}^{\infty} \psi(q)<\infty.
		\end{dcases}
		\vspace{1ex}
	\end{equation*}	
\end{theorem}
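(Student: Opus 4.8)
The plan is to establish the two halves separately: the convergence half is immediate from the Borel--Cantelli Lemma, and the divergence half, where all the difficulty lies, is handled by a quasi-independence argument.

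\textbf{Convergence.} For fixed $q$ the set $A_n(\bi,\psi,q):=\{\balpha\in\I^n:\max_j\psi(q)^{-i_j}\parallel q\alpha_j\parallel<1\}$ is a product $E_1(q)\times\dots\times E_n(q)$, where $E_j(q)=\{x\in\I:\parallel qx\parallel<\psi(q)^{i_j}\}$ is a union of $\asymp q$ arcs of length $2\psi(q)^{i_j}/q$, so $\lambda_1(E_j(q))\le 2\psi(q)^{i_j}$. Using the normalisation $i_1+\dots+i_n=1$,
\begin{equation*}
	\lambda_n(A_n(\bi,\psi,q))\le\prod_{j=1}^{n}2\psi(q)^{i_j}=2^n\psi(q)^{\,i_1+\dots+i_n}=2^n\psi(q).
\end{equation*}
Since $W_n(\bi,\psi)=\limsup_q A_n(\bi,\psi,q)$, Lemma \ref{BoCa} gives $\lambda_n(W_n(\bi,\psi))=0$ whenever $\sum_q\psi(q)<\infty$; as with \eqref{Eqn:KhinCon}, monotonicity plays no role here.

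\textbf{Divergence: reductions.} Assume $\sum_q\psi(q)=\infty$. If $i_j=0$ for some $j$, the $j$-th condition $\parallel q\alpha_j\parallel<\psi(q)^0=1$ is vacuous, so by Fubini's theorem (after permuting coordinates) $W_n(\bi,\psi)=W_{n'}(\bi',\psi)\times\I^{\,n-n'}$, where $\bi'$ is the weight vector of the positive weights (still of unit sum); thus we may assume every $i_j>0$. If the resulting dimension is $1$, the statement is the classical one-dimensional Khintchine Theorem (Theorem \ref{Khintchine} with $n=1$), which we take as given, so from now on $n\ge2$ and every $i_j>0$. Finally, by Corollary \ref{Cor:wDir} we have $W_n(\bi,q\mapsto q^{-1})=\I^n$, so replacing $\psi(q)$ by $\min\{\psi(q),q^{-1}\}$ only shrinks $W_n(\bi,\psi)$, and one checks (here monotonicity is essential) that $\sum_q\psi(q)$ still diverges; hence we may also assume $\psi(q)\le q^{-1}$, so that for large $q$ the arcs constituting $E_j(q)$ are disjoint and $\lambda_n(A_n(\bi,\psi,q))=2^n\psi(q)$ exactly.

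\textbf{Divergence: quasi-independence on average.} Write $A_q:=A_n(\bi,\psi,q)$. The product structure yields, for $q<q'$, $\lambda_n(A_q\cap A_{q'})=\prod_{j}\lambda_1(E_j(q)\cap E_j(q'))$. A one-dimensional arc count --- the centres $p/q$ and $p'/q'$ can resonate for $\ll\psi(q)^{i_j}q'$ choices of numerators, together with the $\gcd(q,q')$ ``diagonal'' pairs with $p/q=p'/q'$, each overlap of length $\ll\psi(q')^{i_j}/q'$ --- gives
\begin{equation*}
	\lambda_1\bigl(E_j(q)\cap E_j(q')\bigr)\ll\psi(q)^{i_j}\psi(q')^{i_j}+\frac{\gcd(q,q')}{q'}\,\psi(q')^{i_j}.
\end{equation*}
Multiplying over $j$ and using $\sum_j i_j=1$ again collapses the principal factors to $\psi(q)\psi(q')$; the remaining terms have the shape $(\gcd(q,q')/q')^{|S|}\,\psi(q)^{1-i(S)}\psi(q')$ for non-empty $S\subseteq\{1,\dots,n\}$, where $i(S)=\sum_{j\in S}i_j$. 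The goal is then to establish $\sum_{q,q'\le N}\lambda_n(A_q\cap A_{q'})\ll\bigl(\sum_{q\le N}\lambda_n(A_q)\bigr)^2+\sum_{q\le N}\lambda_n(A_q)$; granting this, the Kochen--Stone (``divergence Borel--Cantelli'') inequality gives $\lambda_n(\limsup_q A_q)>0$, and running the same estimates with each $A_q$ intersected with an arbitrary ball $B$ --- on which the $\asymp q^n$ boxes of $A_q$ remain essentially equidistributed --- yields $\lambda_n(W_n(\bi,\psi)\cap B)\gg\lambda_n(B)$ uniformly in $B$, hence $\lambda_n(W_n(\bi,\psi))=1$. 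This localisation replaces any separate zero--one law.

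\textbf{Expected main obstacle.} The heart of the proof is summing the $\gcd$-terms. For a proper subset $S$ the exponent $1-i(S)$ lies strictly in $(0,1)$; after expressing $\gcd(q,q')^{|S|}$ through a Dirichlet convolution, summing the inner progression (using $\sum_m\psi(em)^{\alpha}\ll e^{-1}\sum_j\psi(j)^{\alpha}$, which needs monotonicity), and applying H\"older's inequality in the form $\sum_{j<q'}\psi(j)^{1-i(S)}\le\bigl(\sum_{j<q'}\psi(j)\bigr)^{1-i(S)}(q')^{i(S)}$, one brings this contribution under control uniformly in the weight vector $\bi$. The genuinely delicate case is $S=\{1,\dots,n\}$, which reduces to $\sum_{q'\le N}\psi(q')q'^{-n}\sum_{d\mid q'}d^{n}\varphi(q'/d)$; here one invokes the average-order bound $\sum_{m\le x}\sum_{d\mid m}d^{n}\varphi(m/d)\asymp x^{n+1}$ (valid for $n\ge2$) and partial summation against the decreasing $\psi$. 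That this last step breaks down for $n=1$ --- the corresponding divisor sum carries an extra logarithm --- is exactly why the one-dimensional case cannot be treated this way and is imported as the classical theorem instead. A conceptually cleaner, though not genuinely shorter, alternative would be to deduce the theorem from the ubiquity machinery of Section \ref{Sec:ClassicalResults}, the rationals $\{\bp/q\}$ being locally ubiquitous relative to the family of $\bi$-weighted rectangles.
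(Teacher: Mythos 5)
The paper does not prove this theorem; it is attributed to Khintchine and simply cited to \cite{Khintchine2}, with a surrounding remark but no argument. Your submission therefore supplies a proof where the paper offers none, and it is sound in outline.

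The convergence half is correct and routine: $\prod_{j}2\psi(q)^{i_j}=2^n\psi(q)$ is exactly the point of the normalisation $\sum_j i_j=1$, and Borel--Cantelli does the rest. For divergence, your reductions (dropping zero weights by Fubini, importing the classical $n=1$ case, truncating to $\psi(q)\leq q^{-1}$) are all valid, and the truncation step genuinely uses monotonicity via $q\psi(q)\to 0$. Your one-dimensional overlap estimate
\begin{equation*}
  \lambda_1\bigl(E_j(q)\cap E_j(q')\bigr)\ll\psi(q)^{i_j}\psi(q')^{i_j}+\frac{\gcd(q,q')}{q'}\,\psi(q')^{i_j}
\end{equation*}
is correct --- the resonance term comes from the $\gcd(q,q')$-fold multiplicity of the values $qp'/q'\bmod 1$ --- and expanding the product over $j$ by subsets $S\subseteq\{1,\dots,n\}$ is the right decomposition. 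You have correctly located the real obstacle in the full-set term: the average-order bound $\sum_{m\leq x}\sum_{d\mid m}d^n\varphi(m/d)\asymp x^{n+1}$ for $n\geq 2$, together with Abel summation against the decreasing sequence $\psi(q')(q')^{-n}$, brings that contribution down to $O\bigl(\sum_{q\leq N}\psi(q)\bigr)$, which suffices for Kochen--Stone. The intermediate $\emptyset\neq S\subsetneq\{1,\dots,n\}$ are in fact easier than your sketch suggests: after the truncation one has $\sum_{j<q'}\psi(j)^{1-i(S)}\ll (q')^{i(S)}$ directly from $\psi(j)\leq j^{-1}$, so the H\"older step is unnecessary. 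The localisation of the quasi-independence estimate to an arbitrary ball followed by Lebesgue density is the standard way to upgrade positive measure to full measure, and it works here. What remains to be written out is bookkeeping.

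The alternative route you mention at the end --- a weighted analogue of Lemma~\ref{K-J-lemma} deduced from Theorem~\ref{wDir} instead of Theorem~\ref{Dir}, followed by Corollary~\ref{cor2} --- is the one best suited to the paper's own apparatus and avoids the $\gcd$-sum analysis entirely; the paper, however, does not carry that out either, but relies on the citation.
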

\end{samepage}

\begin{remark*}
	When $n=1$, then $i=1$ is the only $1$-dimensional weight vector and so this is just Khintchine's Theorem for $\RR$. For $n\geq 2$, we get the classical result by choosing $\bi=(1/n,\dots,1/n)$. Note that in this formulation the exponent within the sum does not depend on $n$. This is simply because the condition $\norm{q\alpha_j}<\psi(q)$ has been replaced by $\norm{q\alpha_j}<\psi(q)^{i_j}$ and these powers satisfy $i_1+\dots+i_n=1$. For $n\geq 2$ we can drop the monotonicity assumption on $\psi$ in accordance with the previously developed theory. 
\end{remark*}

Analogously to the non-weighted case we can introduce the problem of twisted Diophantine Approximation. For any fixed $\balpha\in\RR^n$, let
\begin{equation*}
	W_n^{\balpha}(\bi,\psi)=\left\{\bbeta\in \I^n: \max\limits_{1\leq j\leq n}\psi(q)^{-i_j}\parallel q\alpha_j-\beta_j\parallel<1\text{ infinitely often}\right\}
\end{equation*}
and
\begin{equation*}
W_n^{\times}(\bi,\psi)=\left\{\balpha\in\I^n: \lambda_n\left(W_n^{\balpha}(\bi,\psi)\right)=1\right\}.
\end{equation*}
As for the previous results, the Borel--Cantelli Lemma implies that $\lambda_n(W_n^{\balpha}(\bi,\psi))=0$ if $\sum_{q\in\NN}\psi(q)<\infty$, independent of the choice of $\balpha$. As for non-weighted approximation, it follows that in this case the set $W_n^{\times}(\bi,\psi)$ will be empty. This gives us the convergence part for a Khintchine-type result. The divergence part was proved by Harrap in a recent paper \cite{Harraptwisted}.

\newpage

\begin{samepage}
\begin{theorem}[Weighted Twisted Khintchine]
	Let $\psi$ be an approximating function. Then we have
	\begin{equation*}
	\lambda_n(W_n^{\times}(\bi,\psi))=1\quad \text{ if }\quad \sum\limits_{q=1}^{\infty} \psi(q)=\infty
	\end{equation*}
	and
	\begin{equation*}
	W_n^{\times}(\bi,\psi)=\emptyset\quad \text{ if }\quad \sum\limits_{q=1}^{\infty}\psi(q)<\infty.
	\vspace{2ex}
	\end{equation*}
\end{theorem}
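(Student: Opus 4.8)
\emph{Proof proposal.}

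The convergence part is immediate and was in effect already observed above: for \emph{any} fixed $\balpha\in\I^n$ one has $W_n^{\balpha}(\bi,\psi)=\limsup_{q}B_q$, where $B_q\subseteq\I^n$ is the box centred at $\{q\balpha\}$ with $j$-th side $\min(1,2\psi(q)^{i_j})$; since $\sum_q\psi(q)<\infty$ forces $\psi(q)\to 0$, we get $\lambda_n(B_q)\ll\psi(q)$ for large $q$, so $\sum_q\lambda_n(B_q)<\infty$ and Lemma \ref{BoCa} yields $\lambda_n(W_n^{\balpha}(\bi,\psi))=0$. As this holds for every $\balpha$, no $\balpha$ can lie in $W_n^{\times}(\bi,\psi)$, i.e.\ $W_n^{\times}(\bi,\psi)=\emptyset$.

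For the divergence part the plan is a Fubini argument on $\I^{2n}$ that trades the ``twisted'' problem (fix $\balpha$, vary the shift) for the ``inhomogeneous'' one (fix the shift, vary $\balpha$). First I would reduce to the case $\psi(q)<1$ for all $q$: if $\psi(q)\ge 1$ for all large $q$ then, since $0\le i_j\le 1$, also $\psi(q)^{i_j}\ge 1$ there, so $\parallel q\alpha_j-\beta_j\parallel<\psi(q)^{i_j}$ holds trivially and the statement is immediate; otherwise $\psi$, being decreasing, satisfies $\psi(q)<1$ for all large $q$, and deleting finitely many indices changes neither the relevant $\limsup$ sets nor the divergence of $\sum_q\psi(q)$. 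Then I would set
\[
 S:=\Bigl\{(\balpha,\bbeta)\in\I^{2n}:\ \max_{1\le j\le n}\psi(q)^{-i_j}\parallel q\alpha_j-\beta_j\parallel<1\ \text{for infinitely many }q\in\NN\Bigr\},
\]
a $G_\delta$ (hence Borel) subset of $\I^{2n}$. Its $\balpha$-section is $S_{\balpha}=W_n^{\balpha}(\bi,\psi)$, while its $\bbeta$-section is
\[
 S^{\bbeta}=\bigl\{\balpha\in\I^n:\ \parallel q\alpha_j-\beta_j\parallel<\psi(q)^{i_j}\ (1\le j\le n)\ \text{for infinitely many }q\bigr\},
\]
which is exactly the weighted \emph{inhomogeneous} set of $(\bi,\psi)$-approximable points with shift $\bbeta$.

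The decisive input is then a weighted inhomogeneous Khintchine theorem: whenever $\sum_q\psi(q)=\infty$, the set $S^{\bgamma}$ has full Lebesgue measure for every (or at least almost every) shift $\bgamma\in\I^n$. This is the weighted analogue of the $s=n$ case of Theorem \ref{Thm:InhomKJ}; it may be quoted from Harrap's paper \cite{Harraptwisted}, or obtained by running the covering/ubiquity argument behind the homogeneous weighted Khintchine Theorem \ref{Thm:WeightedKhin} with rational points replaced by shifted rational points. Granting it, Fubini's theorem applied to $\mathbf 1_S$ gives, successively,
\[
 \int_{\I^n}\lambda_n\bigl(W_n^{\balpha}(\bi,\psi)\bigr)\,d\balpha=\lambda_{2n}(S)=\int_{\I^n}\lambda_n\bigl(S^{\bbeta}\bigr)\,d\bbeta=\int_{\I^n}1\,d\bbeta=1,
\]
and since the first integrand is everywhere at most $1$, it must equal $1$ for Lebesgue-almost every $\balpha$; that is precisely $\lambda_n(W_n^{\times}(\bi,\psi))=1$.

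Thus the one genuinely hard step is the weighted inhomogeneous Khintchine theorem; everything else is Fubini--Tonelli and bookkeeping. If one insisted on a self-contained proof, the alternative is to show $\lambda_{2n}(S)=1$ directly: the events $E_q=\{(\balpha,\bbeta):\parallel q\alpha_j-\beta_j\parallel<\psi(q)^{i_j}\ \forall j\}$ satisfy $\lambda_{2n}(E_q)\asymp\psi(q)$, hence $\sum_q\lambda_{2n}(E_q)=\infty$, and one would prove a quasi-independence-on-average bound $\sum_{q,q'\le Q}\lambda_{2n}(E_q\cap E_{q'})\ll\bigl(\sum_{q\le Q}\lambda_{2n}(E_q)\bigr)^2$ for infinitely many $Q$ --- the cross term being, after integrating out $\bbeta$, controlled by $\int\prod_j\min\bigl(\psi(q)^{i_j},\psi(q')^{i_j}\bigr)\,d\balpha$ over the $\balpha$ for which all $\parallel(q-q')\alpha_j\parallel$ are small --- and then upgrade positive to full measure via a zero-one law. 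This route is, however, essentially as difficult, being in effect a reproof of the weighted inhomogeneous Khintchine theorem.
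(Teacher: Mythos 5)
Your convergence argument matches the paper's (Borel--Cantelli). For the divergence part, the paper gives no proof at all, simply citing Harrap \cite{Harraptwisted}, so your Fubini reduction is a genuine alternative, and it is sound: the set $S$ is $G_\delta$, hence measurable; Fubini gives $\int_{\I^n}\lambda_n(S_{\balpha})\,d\balpha=\lambda_{2n}(S)=\int_{\I^n}\lambda_n(S^{\bbeta})\,d\bbeta$; and once a weighted inhomogeneous Khintchine divergence theorem gives $\lambda_n(S^{\bbeta})=1$ for (almost) every shift $\bbeta$, the pointwise bound $\lambda_n(S_{\balpha})\le 1$ forces $\lambda_n(S_{\balpha})=1$ for a.e.\ $\balpha$, which is exactly $\lambda_n(W_n^{\times}(\bi,\psi))=1$. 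You have correctly isolated the one nontrivial ingredient, and it is genuinely available as a prior result --- via ubiquity with resonant sets the shifted rationals $(\bp+\bgamma)/q$ and rectangular (weighted) neighbourhoods, the local ubiquity estimate being a shifted variant of Lemma~\ref{K-J-lemma}; see \cite{limsup}, \cite{Sprindzuk2}, or the survey \cite{BBDV}. You should cite one of those rather than Harrap: his paper is the source of the twisted statement itself and the suggestion reads as a circle, and in fact his argument works directly with quasi-independence of the orbit $(\{q\balpha\})_q$ rather than through a reduction to the fixed-shift inhomogeneous problem. Your Fubini route is shorter and more transparent; the trade-off is that it delegates all the hard work to the inhomogeneous machinery and would not, by itself, give the stronger Weighted Kurzweil Theorem, for which Harrap's direct orbit argument is essential.
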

\end{samepage}

In the same paper, Harrap also extended Kurzweil's Theorem to the more general setting of arbitrary weight vectors, giving us the following result.

\begin{samepage}
\begin{theorem}[Weighted Kurzweil]
	Denote by $\Psi^{\infty}$ the set of all approximating functions $\psi$ such that $\sum_{q\in\NN}\psi(q)=\infty$. Then, for any weight vector $\bi\in\I^n$,
	\begin{equation*}
		\bigcap\limits_{\psi\in\Psi^{\infty}}W_n^{\times}(\bi,\psi))=\bad_n(\bi).
	\end{equation*}
\end{theorem}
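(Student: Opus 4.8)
The plan is to establish the two inclusions $\bad_n(\bi)\subseteq\bigcap_{\psi\in\Psi^\infty}W_n^\times(\bi,\psi)$ and $\bigcap_{\psi\in\Psi^\infty}W_n^\times(\bi,\psi)\subseteq\bad_n(\bi)$, where $\bad_n(\bi)=\{\balpha\in\I^n:\liminf_{q\to\infty}\max_{1\le j\le n}q^{i_j}\norm{q\alpha_j}>0\}$ is the set whose nonemptiness is the weighted analogue of Corollary \ref{Cor:wDir}. Throughout I may assume each $i_j>0$, since a coordinate with $i_j=0$ imposes no constraint and reduces the problem to dimension $n-1$. For fixed $\balpha$ and $\psi\in\Psi^\infty$ it is convenient to work on $\TT^n=\I^n$ and, for $q\in\NN$, to put
\[
	E_q=E_q(\balpha,\psi)=\{\bbeta\in\TT^n:\ \norm{q\alpha_j-\beta_j}<\psi(q)^{i_j}\text{ for }1\le j\le n\},
\]
a box centred at $\{q\balpha\}$ with $j$th half-side $\psi(q)^{i_j}$. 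Since $\sum_j i_j=1$, as soon as $\psi(q)<1/2$ we have $\lambda_n(E_q)=2^n\psi(q)$, while $W_n^\balpha(\bi,\psi)=\limsup_{q\to\infty}E_q$, so $\sum_q\lambda_n(E_q)=\infty$. I will also use the zero--one law $\lambda_n(W_n^\balpha(\bi,\psi))\in\{0,1\}$ for every $\psi\in\Psi^\infty$; it follows from ergodicity of the relevant toral rotation together with the monotonicity of $\psi$ (which makes $W_n^\balpha(\bi,\psi)$ invariant up to a null set), the non-ergodic case being immediate.

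For the first inclusion, fix $\balpha\in\bad_n(\bi)$, so $\max_j q^{i_j}\norm{q\alpha_j}\ge c$ for all $q\ge1$ and some $c=c(\balpha)>0$, and fix $\psi\in\Psi^\infty$. By the zero--one law it suffices to show $\lambda_n(\limsup_q E_q)>0$, for which I would run the divergence Borel--Cantelli argument (the Chung--Erd\H{o}s inequality) on the $E_q$, or on a suitable subcollection of them. The key input is the spacing of the orbit: if $q\ne r$ then $\max_j|q-r|^{i_j}\norm{(q-r)\alpha_j}\ge c$ forces $\norm{(q-r)\alpha_{j_0}}\ge c\max(q,r)^{-i_{j_0}}$ for some $j_0$, so $\{q\balpha\}$ and $\{r\balpha\}$ cannot be too close in the $\bi$-weighted sense; since $E_q\cap E_r\neq\emptyset$ requires $\norm{(q-r)\alpha_j}<\psi(q)^{i_j}+\psi(r)^{i_j}$ for all $j$, the admissible differences $r-q$ are restricted, and iterating the triangle inequality shows that the indices overlapping a given $E_r$ are separated enough to be sparse. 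Together with the volume bound $\lambda_n(E_q\cap E_r)\le 2^n\min(\psi(q),\psi(r))$, this should give a quasi-independence-on-average estimate $\sum_{q,r\le N}\lambda_n(E_q\cap E_r)\ll\bigl(\sum_{q\le N}\lambda_n(E_q)\bigr)^2$ along a sequence of $N$, whence $\lambda_n(\limsup_q E_q)>0$, and the zero--one law upgrades this to $1$; thus $\balpha\in W_n^\times(\bi,\psi)$.

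For the reverse inclusion I would argue contrapositively. Given $\balpha\notin\bad_n(\bi)$, choose $q_1<q_2<\cdots$ with $\delta_k:=\max_j q_k^{i_j}\norm{q_k\alpha_j}\to 0$ very rapidly and with $q_{k+1}/q_k$ growing very rapidly. Along such a sequence the orbit $\{q\balpha:q\le q_{k+1}\}$ has highly uneven gaps — the weighted incarnation of the three-distance phenomenon, or of the continued-fraction picture when $n=1$ — which lets me build a nested Cantor-type family of boxes whose intersection $F$ has positive measure and which, at stage $k$, avoids every box $E_q$ with $q_k<q\le q_{k+1}$; here $\psi$ (decreasing, $\sum_q\psi(q)=\infty$) is arranged to be so small on each block $(q_k,q_{k+1}]$ that the measure removed at stage $k$ is summable, yet large enough off these blocks that its sum still diverges. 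Then $\lambda_n(W_n^\balpha(\bi,\psi))<1$, hence $=0$ by the zero--one law, so $\balpha\notin W_n^\times(\bi,\psi)$.

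The main obstacle is the measure estimate in the first inclusion: the crude bound $\lambda_n(E_q\cap E_r)\le\min(\lambda_n(E_q),\lambda_n(E_r))$ is by itself too wasteful to beat the diagonal contribution for slowly divergent $\psi$ (for instance $\psi(q)\asymp 1/(q\log q)$), so one must genuinely exploit the quantitative separation of $\{q\balpha\}$ coming from $\balpha\in\bad_n(\bi)$ — most cleanly, I expect, through a multi-scale or renewal-type decomposition of the index set $\NN$ (Ostrowski-type when $n=1$) rather than a single global second-moment computation. This is exactly the step where the hypothesis $\balpha\in\bad_n(\bi)$, as opposed to a weaker Diophantine condition, is indispensable, in accordance with the second inclusion. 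A secondary difficulty is calibrating the decay of $\psi$ in the Cantor construction so as to lose only a summable amount of measure at each stage while keeping $\sum_q\psi(q)=\infty$.
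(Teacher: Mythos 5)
This theorem is quoted from Harrap \cite{Harraptwisted} and the paper gives no proof of it, so there is no in-text argument to compare against. Your outline has the standard two-inclusion shape — a second-moment (quasi-independence on average) argument for $\bad_n(\bi)\subseteq\bigcap_\psi W_n^\times(\bi,\psi)$ driven by the separation of the orbit $(\{q\balpha\})_{q\in\NN}$, a zero--one law to upgrade positive to full measure, and a Cantor-type construction of an exceptional $\psi\in\Psi^\infty$ for the converse — and it correctly locates where $\balpha\in\bad_n(\bi)$ enters. But as written it is a strategy, not a proof: the two steps you yourself flag as obstacles are where essentially all of the content of the theorem lies, and neither is carried out.

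For the forward inclusion, the needed estimate $\sum_{q,r\le N}\lambda_n(E_q\cap E_r)\ll\bigl(\sum_{q\le N}\lambda_n(E_q)\bigr)^2$ along a subsequence is not derived. The separation coming from $\bad_n(\bi)$ says only that \emph{some} coordinate $j_0=j_0(q,r)$ satisfies $\norm{(q-r)\alpha_{j_0}}\ge c\,|q-r|^{-i_{j_0}}$; since $j_0$ varies with the pair, counting the admissible differences $r-q$ and bounding the overlap volumes is a genuinely anisotropic problem across all coordinates, and the crude bound $\lambda_n(E_q\cap E_r)\le\min(\lambda_n(E_q),\lambda_n(E_r))$ is, as you acknowledge, insufficient for slowly divergent $\psi$. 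The converse inclusion is in the same state: there is no clean weighted three-distance theorem in $n\geq 2$ to appeal to, and the two competing demands on $\psi$ in the Cantor construction (small on each block so the excised measure is summable, yet large off the blocks so $\sum_q\psi(q)$ still diverges) must be reconciled by an explicit construction, not a description. One further caution: the zero--one law via ergodicity of the rotation requires $\balpha$ to be totally irrational, so before relying on it in the forward inclusion you must check that membership in $\bad_n(\bi)$, for a weight vector with every $i_j>0$, actually forces total irrationality; your remark that the non-ergodic case is immediate only disposes of the converse direction.
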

\end{samepage}

The set $\bad_n(\bi)$ mentioned here is the natural generalisation of $\bad_n$ to the setting of weighted Diophantine Approximation. Corollary \ref{Cor:wDir} states that for any weight vector $\bi$ and $\balpha\in\RR^n$ there are infinitely many integers $q$ satisfying
\begin{equation}\label{Eqn:WBad}
\norm{q\alpha_j}<q^{-i_j},\quad (1\leq j\leq n).
\end{equation}
Analogously to the basic theory in Section \ref{Sec:Basic}, we can ask if we still get infinitely many solutions $q$ for \eqref{Eqn:WBad} if we multiply the right-hand side by a constant factor $c<1$. If this is not true for arbitrarily small constants, we call $\balpha$ \emph{$\bi$-badly approximable} or say $\balpha\in \bad_n(\bi)$. More formally,
\begin{equation*}
	\balpha\in\bad_n(\bi)\ \Leftrightarrow\ \liminf\limits_{q\rightarrow\infty}\max\limits_{1\leq j\leq n}q^{i_j}\norm{q\alpha_j}>0.
\end{equation*}
Of course, the set $\bad_n(1/n,\dots,1/n)$ is simply the previously introduced $\bad_n$. As one would expect, these more general sets $\bad_n(\bi)$ have many of the properties of $\bad_n$. As in the non-weighted case, we can use Theorem \ref{Thm:WeightedKhin} to show that $\lambda_n(\bad_n(\bi))=0$ for any weight vector $\bi\in\RR^n$. The proof is completely analogous to Theorem \ref{Thm:DimBad} and will be skipped here. Regarding the dimension theory, Pollington and Velani showed in \cite{PolVel} that $\dim\bad_n(\bi)=n$ for any weight vector $\bi$. Special interest has been taken in the intersection of sets of the form $\bad_2(\bi)$, in particular the following famous conjecture of Schmidt \cite{SchmidtBad}.

\begin{Conjecture}[Schmidt, 1983]\label{ConSchmidt}
	Let $i,j$ be in $\I$ with $i+j=1$. Then,
	\begin{equation}\label{Eqn:SchmidtCon}
		\bad_2(i,j)\cap\bad_2(j,i)\neq\emptyset.
	\end{equation}
\end{Conjecture}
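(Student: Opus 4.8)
This is a well-known conjecture; the strategy I would follow is that of the resolution by Badziahin, Pollington and Velani. Assume $0<i\le j<1$: the boundary case is classical, since $\bad_2(1,0)\supseteq\bad\times\RR$ and $\bad_2(0,1)\supseteq\RR\times\bad$ (in each case one coordinate being classically badly approximable already forces the relevant $\liminf$ to be positive), so $\bad\times\bad$ witnesses \eqref{Eqn:SchmidtCon}. For the interior case I would first recast the problem as one of avoiding a countable family of rectangles. Write $\bad_2(i,j)=\bigcup_{c>0}\bad_2(c;i,j)$, where $\bad_2(c;i,j)$ consists of the $\balpha\in\I^2$ that avoid every rectangle
\[
\Delta_c(\bp/q)=\Bigl\{\bx\in\I^2:\abs{x_1-p_1/q}<c\,q^{-1-i},\ \abs{x_2-p_2/q}<c\,q^{-1-j}\Bigr\}\qquad(q\in\NN,\ \bp\in\ZZ^2).
\]
It then suffices to fix a single small constant $c=c(i)>0$ and exhibit one point in $\bad_2(c;i,j)\cap\bad_2(c;j,i)$; in fact I would aim to construct a Cantor-type subset of positive Hausdorff measure sitting inside this intersection, which is cleaner than chasing a bare point and immediately gives the nonemptiness required in \eqref{Eqn:SchmidtCon}.

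Next I would set up the Cantor construction on an $R$-adic grid in $\I^2$ for a large integer $R=R(i,c)$. Define nested families $\mathcal{K}_n$ of closed squares of side $R^{-n}$ by $\mathcal{K}_0=\{\I^2\}$, and obtain $\mathcal{K}_{n+1}$ by splitting each $Q\in\mathcal{K}_n$ into its $R^2$ subsquares of side $R^{-n-1}$ and discarding those that meet a ``level-$n$ dangerous rectangle'' from either weight class. Here, for the $(i,j)$ class, level $n$ corresponds to $q$ with $c\,q^{-1-i}\asymp R^{-n}$, i.e.\ $q\asymp(cR^{n})^{1/(1+i)}$, so $\Delta_c(\bp/q)$ has horizontal side $\asymp R^{-n}$ and a much smaller vertical side $\asymp R^{-n(1+j)/(1+i)}$ (because $j>i$): a long thin rectangle. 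For the $(j,i)$ class one uses the reflected picture. Choosing the level ranges to partition all sufficiently large $q$ (the finitely many small $q$ being harmless after shrinking $c$ or translating the initial square), any point of $\mathcal{K}:=\bigcap_n\bigcup_{Q\in\mathcal{K}_n}Q$ avoids every $\Delta_c(\bp/q)$ of both types, hence lies in $\bad_2(c;i,j)\cap\bad_2(c;j,i)$. So it remains to show $\mathcal{K}_n\neq\emptyset$ for all $n$, which follows once each $Q\in\mathcal{K}_n$ retains at least one child.

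The crux is the counting/geometry lemma that guarantees a surviving child. Within a single weight class each level-$n$ rectangle is $\asymp R^{-n}$ long and extremely thin, so it meets $O(1)$ subsquares unless it crosses $Q$ transversally, in which case it can meet up to $\asymp R$ of them; the danger is many such rectangles at once. The key fact is a dichotomy, proved by a Minkowski-type determinant/area estimate: the dangerous rationals $\bp/q$ (with $q$ in the level-$n$ range) whose rectangle meets $Q$ are either few — at most $\asymp R$ of them — or they all lie within $O(R^{-n})$ of a single affine line $L\subset\RR^2$, since any two non-collinear such rationals force $\abs{p_1q'-p'_1q}\ge1$ (and similarly in the other coordinate), which is incompatible with both rectangles meeting a square this small once $c$ is small enough. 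A line meets at most $2R$ of the $R^2$ children of $Q$. Summing over the two weight classes and the two alternatives, the number of discarded children is $\le CR$ for an absolute $C$, hence $<R^2$ for $R$ large, so a child survives; refining this to ``a fixed positive proportion of children survives, uniformly in $n$'' would upgrade the conclusion to $\dim\bigl(\bad_2(i,j)\cap\bad_2(j,i)\bigr)=2$.

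\textbf{Main obstacle.} The dichotomy lemma is where essentially all the work sits, for two reasons. First, extracting the quantitative collinearity statement with constants that remain absolute as $n\to\infty$ requires a careful three-way balancing of $R$, of $c=c(i)$, and of the scale ranges. Second, and more subtly, the two weight classes pull in transverse directions: a near-collinear cluster of $(i,j)$-rectangles lies along a line that is generically transverse to the thin direction of the $(j,i)$-rectangles, so one must preclude a conspiracy in which lines arising from one class repeatedly align with dense clusters of the other at every scale of the tree. Handling this — in effect, a ``generic versus non-generic square'' bookkeeping along the Cantor construction — is the genuinely hard part; the reformulation, the measure-theoretic packaging, and the boundary cases are routine by comparison.
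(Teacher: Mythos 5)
The paper does not prove this statement. It is labelled a \emph{Conjecture}, attributed to Schmidt, and the surrounding text records that its resolution is due to Badziahin, Pollington and Velani~\cite{Badziahin}; the paper then quotes their stronger Theorem~\ref{Thm:Badziahin} without proof and deduces \eqref{Eqn:SchmidtCon} from it. So there is no argument in the paper against which to compare your sketch — you are, in effect, outlining the argument of the cited reference, not something the paper attempts.

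As an outline of the BPV strategy your structure is right — recast as avoidance of rectangles dual to the rationals, build a Cantor set on an $R$-adic tree, and drive the survival bound with a ``few-or-collinear'' dichotomy — but the counting does not close as you have written it, and the gap is exactly where the real work in~\cite{Badziahin} lives. In your own notation, a level-$n$ rectangle of the $(i,j)$ class that meets a square $Q\in\mathcal{K}_n$ has horizontal extent $\asymp R^{-n}$, the full side length of $Q$, while its vertical extent is already far below the child scale $R^{-n-1}$ once $n$ is large. So generically such a rectangle meets $\asymp R$ of $Q$'s $R^2$ children, not $O(1)$; the transversality caveat you attach does not help, since all rectangles in one weight class share the same orientation and a rectangle that genuinely intersects $Q$ will typically span it. Your dichotomy, as stated, permits up to $\asymp R$ dangerous rationals in the ``few'' branch, and $\asymp R$ rectangles each removing $\asymp R$ children is $\asymp R^2$ children gone — the whole square — so the claimed bound ``$\le CR$ discarded'' is not a consequence of the dichotomy you stated but is precisely the estimate that has to be established. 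To make the scheme run, the dichotomy must be sharpened (either a \emph{bounded} number of dangerous rationals meet $Q$ outside the collinear case, or one needs a genuinely joint bound on how many children the whole collection can remove, built recursively across scales). Your closing paragraph correctly identifies this as the hard part, but the sketch as written presents the desired conclusion of that lemma as if it followed from a weaker hypothesis.
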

 
\begin{remark*}
	To be exact, Schmidt formulated Conjecture \ref{ConSchmidt} for the pair $i=1/3$ and $j=2/3$, but, of course, the question is relevant for any choice of weight vector.
\end{remark*}

The proof of Schmidt's Conjecture was part of a fairly recent paper by  Badziahin, Pollington and Velani \cite{Badziahin}. In fact, they established the following much more general result. 

\begin{theorem}\label{Thm:Badziahin}
	Let $(i,j)_{k\in\NN}$ be a sequence of two-dimensional weight vectors in $\I^2$ and assume that
	\begin{equation}\label{Eqn:BPV1}
		\liminf\limits_{k\rightarrow\infty}\min\{i_k,j_k\}>0.
	\end{equation}
	Then,
	\begin{equation}\label{Eqn:Badziahin}
		\bigcap\limits_{k=1}^{\infty}\bad_2(i_k,j_k)\neq\emptyset.
	\end{equation}
\end{theorem}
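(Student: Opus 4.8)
The plan is to construct a single pair $(\alpha,\beta)$ lying in every $\bad_2(i_k,j_k)$ simultaneously, by building a Cantor-type set on one fixed vertical fibre. First I would fix a suitable badly approximable $\alpha\in\bad$, so that $\norm{q\alpha}\gg q^{-1}$ for all $q\in\NN$, and reduce the problem to producing a single $\beta\in\I$ with $(\alpha,\beta)\in\bad_2(i_k,j_k)$ for all $k$. For a fixed weight vector $(i,j)$ and a constant $c>0$, the values of $\beta$ on this fibre for which $(\alpha,\beta)$ is \emph{not} $(i,j)$-badly approximable with constant $c$ are covered by the intervals $\Delta(p,q):=\{\beta\in\I:|q\beta-p|<c\,q^{-j}\}$, where $p\in\ZZ$ and $q$ ranges over the \emph{dangerous} denominators, namely those additionally satisfying $\norm{q\alpha}<c\,q^{-i}$. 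The goal is to choose $c$ small (and an integer base $R$ large) so that a carefully selected $\beta$ avoids all of these intervals.

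Next I would set up the construction. Fix a large integer $R$ and build finite collections $\mathcal{K}_0,\mathcal{K}_1,\mathcal{K}_2,\dots$ of closed intervals in $\I$, where each $I\in\mathcal{K}_n$ has length $R^{-n}$ and is contained in a unique interval of $\mathcal{K}_{n-1}$; the object of interest is the limit set $\mathcal{K}_\infty:=\bigcap_{n\ge 0}\bigcup_{I\in\mathcal{K}_n}I$. To pass from $\mathcal{K}_{n-1}$ to $\mathcal{K}_n$ one partitions each $I\in\mathcal{K}_{n-1}$ into its $R$ children of length $R^{-n}$ and then deletes every child that meets a bad interval $\Delta(p,q)$ with $q$ lying in the dyadic-type range assigned to level $n$ (the ranges chosen so that the widths of the relevant $\Delta(p,q)$ are $\ll R^{-n}$). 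The countably many weight vectors $(i_k,j_k)$ are processed on a schedule of consecutive blocks of levels; since $\liminf_k\min\{i_k,j_k\}>0$ forces every $i_k$ and every $j_k$ to lie in a fixed subinterval $[\eps_0,1-\eps_0]$ of $\I$, all exponents appearing in the construction are uniformly controlled. Any $\beta\in\mathcal{K}_\infty$ then avoids all the intervals $\Delta(p,q)$, hence satisfies $\liminf_{q\to\infty}\max\{q^{i_k}\norm{q\alpha},\,q^{j_k}\norm{q\beta}\}\ge c$ for every $k$, i.e. $(\alpha,\beta)\in\bigcap_k\bad_2(i_k,j_k)$.

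The crux, and the step I expect to be the main obstacle, is showing that $\mathcal{K}_\infty\neq\emptyset$, which amounts to showing that not too many children are deleted from any one parent interval. A naive measure estimate does not suffice: summing the lengths of all $\Delta(p,q)$ with $q$ in a single level range produces a quantity of order $c$ rather than one tending to $0$ (this is forced by $i+j=1$), so one cannot simply invoke the Borel--Cantelli Lemma. The resolution, following Badziahin--Pollington--Velani, is a local combinatorial analysis: (i) for fixed $q$ the intervals $\Delta(p,q)$ are $1/q$-separated, so at most $O(R^{-n}q+1)$ of them can meet a given interval of length $R^{-n}$; (ii) the dangerous $q$ in a dyadic range are sparse — their number is bounded via the three-distance theorem using that $\alpha$ is badly approximable — which keeps the number of children deleted from any one parent at its own level bounded; and (iii) a deferral mechanism, in which the excess deletions a parent would suffer are redistributed among finitely many of its descendants, together with a proof that the accumulated ``debt'' inherited by any interval stays below an absolute constant. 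Choosing $R$ large enough (in terms of that constant and of $\eps_0$) and $c$ small then guarantees that at least two children survive under every parent, so the construction never terminates and $\mathcal{K}_\infty\neq\emptyset$; in fact the surviving tree is rich enough to give $\dim\bigcap_k\bad_2(i_k,j_k)=1$. This yields \eqref{Eqn:Badziahin}, and Schmidt's Conjecture \eqref{Eqn:SchmidtCon} follows at once by applying the theorem to the sequence alternating between $(i,j)$ and $(j,i)$, whose terms evidently satisfy \eqref{Eqn:BPV1}.
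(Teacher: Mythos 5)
The thesis does not prove this theorem --- it is stated as a known result and credited to Badziahin, Pollington and Velani, so there is no in-paper proof to compare against. As a reconstruction of the argument from that reference, your sketch is accurate at the conceptual level: fibring over a fixed $\alpha\in\bad$ to reduce to a one-dimensional problem, constructing a generalised Cantor set by deleting children of level-$n$ intervals that meet dangerous resonance intervals $\Delta(p,q)$, the observation that a naive Borel--Cantelli count fails because the total length removed at each scale is of order $c$ rather than $o(1)$ (a consequence of $i+j=1$), and the device of deferring excess deletions to descendants are all genuine features of the BPV proof. Your final reduction --- Schmidt's Conjecture via the alternating sequence $(i,j),(j,i),(i,j),\dots$ --- is also exactly how the implication is usually drawn.

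That said, the proposal is a skeleton, and the gap lies precisely where BPV's novelty lies. The assertion that ``the accumulated debt inherited by any interval stays below an absolute constant, so at least two children survive under every parent'' is the entire content of the theorem; it is not at all obvious and requires a delicate multi-level induction (in BPV, an analysis organised around ``classes'' of dangerous $(p,q)$ and, in later simplifications, the three-distance structure of $\{q\alpha\}$). Without that estimate the construction could terminate, so the claim $\mathcal{K}_\infty\neq\emptyset$ is not yet established. Two smaller points worth fixing: (a) the hypothesis $\liminf_k\min\{i_k,j_k\}>0$ controls only the tail of the weight sequence, so you should dispose of the finitely many initial indices separately (note that if $\min\{i_k,j_k\}=0$ the corresponding $\bad_2(i_k,j_k)$ degenerates to $\bad\times\I$ or $\I\times\bad$, which is easy to accommodate); (b) the sparsity of dangerous denominators $q$ in a dyadic block does use $\alpha\in\bad$, but the cleanest way to make it precise is via the geometric growth of the continued-fraction denominators of $\alpha$ rather than a direct appeal to the three-distance theorem, which by itself does not give the required uniform count.
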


In particular, condition \eqref{Eqn:BPV1} is trivially satisfied for any finite sequence of weight vectors, which proves \eqref{Eqn:SchmidtCon}. It was later shown by An that \eqref{Eqn:Badziahin} still holds without requiring condition \eqref{Eqn:BPV1} \cite{An}. He actually proved the stronger condition that $\bad_2(i,j)$ is \textit{winning} in the sense of Schmidt games. Theorem \ref{Thm:Badziahin} was subsequently extended to arbitrary dimensions by Beresnevich \cite{Berschmidt}. As in dimension two, the higher dimensional analogue originally required a condition on the weights. This condition was removed by Yang \cite{Yang}. However, the question of winning is still open in higher dimensions. Schmidt's Conjecture is closely related to one of the most famous and deepest open problems in Diophantine approximation.

\begin{Conjecture}[Littlewood, 1930s]
	For any pair $(\alpha,\beta)\in\I^2$,
	%\vspace{-2ex} 
	\begin{equation}\label{Eqn:LW}
		\liminf\limits_{q\rightarrow\infty}q\norm{q\alpha}\norm{q\beta}=0.
	\end{equation}
\end{Conjecture}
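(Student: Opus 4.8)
This statement is Littlewood's conjecture, which is one of the central open problems in the subject; accordingly, what follows is a description of the line of attack I would pursue rather than a completed argument. \textbf{Reductions.} First one disposes of the easy cases. If $\alpha\in\QQ$ (or $\beta\in\QQ$) then $\norm{q\alpha}=0$ for infinitely many $q$ and \eqref{Eqn:LW} holds trivially. If $\alpha\notin\bad$, then by definition $q^{1}\norm{q\alpha}$ is not bounded away from $0$, so along a subsequence $q_k\norm{q_k\alpha}\to 0$; since $\norm{q_k\beta}\le 1/2$ always, this forces $\liminf q\norm{q\alpha}\norm{q\beta}=0$, and the symmetric argument handles $\beta\notin\bad$. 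Hence it suffices to treat pairs with $\alpha,\beta\in\bad$, and one may further assume $1,\alpha,\beta$ linearly independent over $\QQ$, since otherwise the problem reduces to a one-variable statement covered by Corollary \ref{DirCor}.

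\textbf{Dynamical reformulation.} The plan is to translate the problem into the dynamics of the full diagonal subgroup acting on the space $X=\SL_3(\RR)/\SL_3(\ZZ)$ of unimodular lattices in $\RR^3$. To $(\alpha,\beta)$ one attaches the lattice $x_{\alpha,\beta}=u_{\alpha,\beta}\,\ZZ^3$, where $u_{\alpha,\beta}$ is the unipotent matrix with first row $(1,\alpha,\beta)$ and identity below; let $A\subset\SL_3(\RR)$ be the group of diagonal matrices with positive entries and determinant one. A Mahler-compactness computation shows that \eqref{Eqn:LW} \emph{fails} for $(\alpha,\beta)$ precisely when the orbit $A\cdot x_{\alpha,\beta}$ is bounded (relatively compact) in $X$: the quantity $q\,\norm{q\alpha}\,\norm{q\beta}$ being small corresponds, for a suitably chosen $a\in A$, to the lattice $a\,x_{\alpha,\beta}$ containing a short nonzero vector. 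So the conjecture is equivalent to the assertion that there is no bounded $A$-orbit through any $x_{\alpha,\beta}$ with $1,\alpha,\beta$ rationally independent.

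\textbf{Measure rigidity.} Next I would invoke the classification of $A$-invariant, $A$-ergodic probability measures on $X$ due to Einsiedler--Katok--Lindenstrauss: any such measure for which some one-parameter subgroup of $A$ acts with positive entropy must be the Haar measure. If $A\cdot x_{\alpha,\beta}$ were bounded, then weak-$*$ limits of averages over larger and larger pieces of the orbit produce an $A$-invariant probability measure $\mu$ supported on the compact orbit closure; if $\mu$ had positive entropy in some direction it would be Haar, contradicting boundedness, so $\mu$ has zero entropy in every direction. A variational argument then bounds the upper box/Hausdorff dimension of the exceptional set $\{(\alpha,\beta)\in\I^2:\ \eqref{Eqn:LW}\text{ fails}\}$ by $0$. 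This is genuine progress, but it only shows the exceptional set is \emph{small}, not that it is empty.

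\textbf{The main obstacle.} To finish one must upgrade the positive-entropy measure classification to a statement covering \emph{all} $A$-orbits, not merely the orbits supporting positive-entropy invariant measures: concretely, a topological rigidity theorem asserting that every bounded $A$-orbit in $X$ is compact (periodic), together with an explicit check that the only periodic $A$-orbits are those through lattices $x_{\alpha,\beta}$ with $\alpha,\beta$ rational. Removing the positive-entropy hypothesis is exactly the long-standing gap in the measure-rigidity programme for higher-rank torus actions (the analogue of Furstenberg's $\times 2,\,\times 3$ problem), and I expect it to be the real difficulty; no purely Diophantine or continued-fraction substitute is currently known. It is perhaps reassuring that Gallagher's theorem gives \eqref{Eqn:LW}, even with a logarithmic factor to spare, for Lebesgue-almost every $(\alpha,\beta)$, so the obstruction lies entirely in controlling the measure-zero exceptional set.
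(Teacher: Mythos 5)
You correctly recognize that this statement is Littlewood's conjecture, and that the paper proves nothing here: it appears only as a conjecture followed by a survey of partial results (Pollington--Velani and Einsiedler--Katok--Lindenstrauss). Your easy reductions --- the case $\alpha\in\QQ$, and the observation that $\alpha\notin\bad$ forces \eqref{Eqn:LW} since $\norm{q\beta}\le 1/2$ --- match the ones the text itself records, and your dynamical reformulation together with the entropy-rigidity input of Einsiedler--Katok--Lindenstrauss is an accurate description of the state of the art; the dimension-zero theorem for the exceptional set that you arrive at is exactly the result the paper cites.

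Two points of precision are worth flagging. First, the correct dynamical dictionary translates failure of \eqref{Eqn:LW} into boundedness of the orbit under a positive Weyl chamber, a proper sub-semigroup of the full diagonal group $A$, not under all of $A$. Second, your final step --- ``a variational argument then bounds the Hausdorff dimension of the exceptional set by $0$'' --- is where essentially all of the work of Einsiedler--Katok--Lindenstrauss lives; passing from ``every invariant measure arising from a single bounded orbit closure has zero entropy'' to a dimension bound on the union of all bounded orbits is not routine, and the actual argument does not proceed one orbit closure at a time. You have also put your finger on the genuine obstruction: the measure classification is conditional on positive entropy, and upgrading it to a topological rigidity statement covering all bounded $A$-orbits is an open problem comparable in depth to Furstenberg's $\times 2,\times 3$ conjecture. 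There is no proof to check against here; your account is an honest and essentially accurate description of why the problem remains open.
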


It is easily seen that a counterexample to Schmidt's Conjecture would have implied the truth of Littlewood's Conjecture. Indeed, assume there exists a weight vector $(i,j)$ such that
	\begin{equation*}
		\bad_2(i,j)\cap\bad_2(j,i)=\emptyset.
	\end{equation*}
Then, any element of $\I^2$ is either not contained in $\bad_2(i,j)$ or not in $\bad_2(j,i)$. Fix $(\alpha,\beta)\in\I^2$ and assume without loss of generality that $(\alpha,\beta)\notin\bad_2(i,j)$. This means that for any $\eps>0$, there are infinitely many $q\in\NN$ satisfying
\begin{equation*}
	\max\{\norm{q\alpha}^i,\norm{q\beta}^j\}<\varepsilon q^{-1}.
\end{equation*}
Making use of the fact that $\norm{\cdot}$ is always less than one, we see that there are infinitely many $q\in\NN$, for which
\begin{align*}
	q\norm{q\alpha}\norm{q\beta}&=q\norm{q\alpha}^i\norm{q\alpha}^j\norm{q\beta}^j\norm{q\beta}^i\\&<q\norm{q\alpha}^i\norm{q\beta}^j\\
	&<q\max\{\norm{q\alpha}^i,\norm{q\beta}^j\}\\
	&<\varepsilon
\end{align*}
and since $\eps$ was chosen arbitrarily small, this implies that
\begin{equation*}
	\liminf\limits_{q\rightarrow\infty}q\norm{q\alpha}\norm{q\beta}=0.
\end{equation*}
Even more generally, suppose that
\begin{equation}\label{Eqn:LWtrue}
	\bigcap\limits_{(i,j)}\bad_2(i,j)=\emptyset,
\end{equation}
where the intersection runs over all two-dimensional weight vectors. By the same argument this would immediately imply Littlewood's Conjecture. However, there is no indication why \eqref{Eqn:LWtrue} should be true and proving its negation would not shine any new light on Littlewood's Conjecture.

Despite having been studied for many decades, Littlewood's Conjecture has not been solved yet. However, many partial results have been obtained. It is easily seen that any pair $(\alpha,\beta)$ not satisfying \eqref{Eqn:LW} requires both $\alpha$ and $\beta$ to be badly approximable. Suppose $\alpha\notin\bad$. Then, for any $\eps>0$, there exist infinitely many $q\in\NN$ such that $q\norm{q\alpha}<\eps$, and since $\norm{q\beta}<1$ for any $q\in\NN$, it follows that
\begin{equation*}
	q\norm{q\alpha}\norm{q\beta}<q\norm{q\alpha}<\eps
\end{equation*}
infinitely often, which implies \eqref{Eqn:LW}. Hence, we can restrict our attention to badly approximable pairs, in which case the following statement was proved by Pollington and Velani \cite{Pollington}.

\begin{theorem}\label{Thm:PV}
	Let $\alpha\in\bad$. Then
	\begin{equation*}
		\dim\left(\left\{\beta\in\bad:\liminf\limits_{q\rightarrow\infty}q\log q\norm{q\alpha}\norm{q\beta}=0\right\}\right)=1.
	\end{equation*}
\end{theorem}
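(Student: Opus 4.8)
The plan is to construct, for every $\eta>0$, a Cantor-type subset of the set in question of Hausdorff dimension at least $1-\eta$; since this set lies in $\I$ this forces its dimension to be exactly $1$. First I would make a reduction. Let $p_m/q_m$ be the convergents of $\alpha$ and $M$ a bound for its partial quotients (which exists since $\alpha\in\bad$). Then $q_{m+1}\le(M+1)q_m$ and $\tfrac{1}{(M+2)q_m}\le\norm{q_m\alpha}\le\tfrac{1}{q_{m+1}}\le\tfrac1{q_m}$, so $q_m\norm{q_m\alpha}\asymp 1$ and $\log q_m\asymp m$. Taking $q=q_m$ in the $\liminf$ therefore gives $q_m\log q_m\,\norm{q_m\alpha}\,\norm{q_m\beta}\asymp\log q_m\cdot\norm{q_m\beta}$, so it suffices to produce a set of dimension $\ge 1-\eta$ consisting of $\beta\in\bad$ with $\norm{q_m\beta}\le m^{-2}$ for infinitely many $m$.

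Fix $M''=M''(\eta)$ large enough that $\{[0;b_1,b_2,\dots]:1\le b_i\le M''\}$ has dimension $\ge 1-\tfrac\eta2$, and build $\beta$ by prescribing its partial quotients $b_k\in\{1,\dots,M''\}$: at \emph{ordinary} levels I keep a fixed large proportion of the admissible continuations, so the limiting Cantor set $E$ has dimension $\ge 1-\eta$ and automatically $E\subset\bad(M'')$. Into this I splice a very sparse sequence of \emph{special} levels. At the $t$-th special level, chosen so that the current scale matches $q_{m_t}$ for a suitable index $m_t$, I pick a fraction $a/q_{m_t}$ with $\gcd(a,q_{m_t})=1$ whose initial partial quotients are all $\le M''$ up to denominators of size $S_t\asymp\sqrt{q_{m_t}/\varepsilon_{m_t}}$, where $\varepsilon_{m_t}=m_t^{-2}$; such $a$ exist for all large $m_t$ by a counting argument, since the $\asymp S_t^{2\delta(M'')}$ nice prefixes reaching scale $S_t$, each extended to a fraction of denominator exactly $q_{m_t}$, outweigh the $\asymp\varepsilon_{m_t}$ density loss caused by fixing the denominator. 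I then replace the current interval by the sub‑interval of $\beta$ with $\tfrac{c_0}{q_{m_t}^2}\le|\beta-a/q_{m_t}|<\tfrac{\varepsilon_{m_t}}{q_{m_t}}$ that avoids the $c_0/q'^{\,2}$–neighbourhoods of the convergents of $a/q_{m_t}$ with denominators $q'\in[S_t,q_{m_t}]$; since those denominators grow at least geometrically the total length removed is $O(c_0)$ times the length of the sub‑interval, so a definite proportion survives. For $\beta$ in this sub‑interval one checks, using that $a/q_{m_t}$ is badly approximable with a uniform constant below scale $S_t$, that $\norm{q'\beta}\gg 1/q'$ for all $q'<q_{m_t}$, while $\norm{q_{m_t}\beta}=q_{m_t}|\beta-a/q_{m_t}|<\varepsilon_{m_t}=m_t^{-2}$; resuming the ordinary levels then handles all $q'\ge q_{m_t}$.

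For the dimension count, the ordinary levels contribute dimension $\delta(M'')\ge 1-\eta$ while each special level costs only a bounded multiplicative ``pinch''. By spacing the special levels far enough apart (taking $m_{t+1}$ so large that a great number of ordinary levels intervene) the cumulative contribution of the pinches to $\liminf_j\frac{\log(\#\text{intervals at level }j)}{-\log(\text{length at level }j)}$ becomes negligible, so $\dim E\ge1-\eta$. Letting $\eta\to0$ and taking a countable union of the resulting sets gives the theorem.

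The main obstacle is the special step. The $\log q$ in the statement forces the pinch $\varepsilon_m=o(1/\log q_m)$ to tend to $0$, and once $\beta$ is confined to an interval of length $\varepsilon_m/q_m\ll1/q_m$ about a rational $a/q_m$ it inherits, at every scale below $\asymp\sqrt{q_m/\varepsilon_m}$, the Diophantine behaviour of $a/q_m$; keeping $\beta$ badly approximable there hinges on being able to choose $a$ with $a/q_m$ badly approximable with a \emph{uniform} constant throughout that initial range — an assertion of Zaremba type, available here only in the weaker ``initial segment'' form and needing to be combined carefully with the uncontrolled tail of $a/q_m$. Reconciling the three competing demands — the pinch small enough for the $\log q$ factor, the badness constant $c_0$ fixed and uniform over the infinitely many special steps, and the branching large enough to recover dimension $1$ — is the crux of the argument.
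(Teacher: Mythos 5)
First, a clarification: the paper does not prove this theorem. It states it and cites Pollington and Velani \cite{Pollington}; there is no internal proof to compare against, so your proposal must be judged against that source.

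Your opening reduction is sound: since $\alpha\in\bad(M)$ one has $q_m\norm{q_m\alpha}\asymp 1$ and $\log q_m\asymp m$ along the convergent denominators, so it suffices to find a full-dimension subset of $\bad$ on which $\norm{q_m\beta}\le m^{-2}$ infinitely often. The outer shape of the construction (a Cantor subset of $\bad(M'')$ with a sparse sequence of pinching levels, sparse enough that the cumulative dimension loss is negligible) is also the right shape and is roughly the shape of the Pollington--Velani argument.

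The gap is in the special step, exactly where you flag it, and it is more than a matter of care. Your counting of ``$\asymp S_t^{2\delta(M'')}$ nice prefixes reaching scale $S_t$'' is a global count over $[0,1]$; but the $a$ you need is constrained to lie in the \emph{current} Cantor cylinder, which at the special level you have arranged to have length $\asymp 1/q_{m_t}$. Such a cylinder contains only $O(1)$ fractions of denominator $q_{m_t}$, so there is no freedom to select a nice one: nothing forces any of those $O(1)$ candidates to continue with partial quotients bounded by $M''$ out to continuant $S_t\asymp m_t\sqrt{q_{m_t}}$. If you instead perform the pinch from a longer cylinder of length $1/Q$, $Q\ll q_{m_t}$, you do get $\asymp q_{m_t}/Q$ candidates, and you can make the \emph{expected} number of nice ones exceed $1$, but an average count does not give a positive proportion of cylinders containing a suitable $a$; a second-moment or explicit concentration argument is missing, and the deeper pinch then changes the dimension bookkeeping. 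In short, the assertion you describe as ``Zaremba type, available here only in the weaker initial-segment form'' is not available in the form you need (a good numerator in a prescribed short window, at the prescribed denominators $q_{m_t}$), and this is a genuine obstruction, not a loose end.

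Pollington and Velani sidestep exactly this by not selecting a single $a/q_m$ per cylinder. They work measure-theoretically: endowing a Cantor set $K\subset\bad(N)$ with its natural Ahlfors-regular measure $\mu$, they show that the shrinking neighbourhoods of the fractions $p/q_m$ have $\mu$-measure $\gg 1/\log q_m\asymp 1/m$ \emph{on average over $p$} (using equidistribution of $p/q_m$ and regularity of $\mu$), then run a divergence Borel--Cantelli / mass-distribution argument over the divergent series $\sum_m 1/m$. That averaged viewpoint is precisely what removes the need to exhibit an explicit Zaremba-like $a$ in each cylinder, and it is the ingredient your explicit construction is missing. So the approach differs in the crucial place, and as written the proposal does not close.
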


Note that the points in Theorem \ref{Thm:PV} satisfy an even stronger property than $\eqref{Eqn:LW}.$ In fact, this is true for any $\alpha$ and almost all $\beta\in\I$. Khintchine's Theorem tell us that
\begin{equation*}
	\liminf\limits_{q\rightarrow\infty}q\log q\norm{q\beta}=0
\end{equation*}
for almost all $\beta\in\I$ and $\norm{q\alpha}$ is bounded from above by $1$ for any $q\in\NN$. 
Regarding potential counterexamples to Littlewood's Conjecture, Einsiedler, Katok and Lindenstrauss proved the following fundamental result \cite{EKL}.

\begin{theorem}
	\begin{equation*}
		\dim\left(\left\{(\alpha,\beta)\in\I^2:\liminf\limits_{q\rightarrow\infty}q\norm{q\alpha}\norm{q\beta}>0\right\}\right)=0.
	\end{equation*}
\end{theorem}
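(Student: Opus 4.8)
The plan is to translate the statement into a question about the diagonal action on the space of unimodular lattices and then to invoke the measure rigidity theorem of Einsiedler, Katok and Lindenstrauss. Let $X_3=\SL_3(\RR)/\SL_3(\ZZ)$ be the space of unimodular lattices in $\RR^3$, and to a pair $(\alpha,\beta)\in\I^2$ associate the point $x_{\alpha,\beta}=u_{\alpha,\beta}\ZZ^3\in X_3$, where $u_{\alpha,\beta}$ is the unipotent matrix with first two columns $e_1,e_2$ and third column $(\alpha,\beta,1)^{\mathrm T}$. For $s,t\ge 0$ put $a_{s,t}=\mathrm{diag}(e^{s},e^{t},e^{-s-t})$ and $A^{+}=\{a_{s,t}:s,t\ge 0\}$. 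Using Mahler's compactness criterion together with the optimisation over the parameters $s,t$, one checks the Dani correspondence: the forward orbit $A^{+}x_{\alpha,\beta}$ is bounded in $X_3$ \emph{if and only if} $\liminf_{q\to\infty}q\norm{q\alpha}\norm{q\beta}>0$. Since the orbit map of the unipotent group $U=\{u_{\alpha,\beta}\}$ is a local bi-Lipschitz embedding, and $U$ is exactly the full unstable horospherical subgroup of $a_{1,1}=\mathrm{diag}(e,e,e^{-2})$, the exceptional set in the theorem is carried onto a subset $F$ of a single two-dimensional unstable leaf in $X_3$, and it suffices to prove $\dim F=0$.

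Suppose for contradiction that $\dim F>0$. Stratify the exceptional set by $E_n=\{(\alpha,\beta):\liminf_q q\norm{q\alpha}\norm{q\beta}\ge 1/n\}$; for $(\alpha,\beta)\in E_n$ the orbit $A^{+}x_{\alpha,\beta}$ is \emph{uniformly} bounded, and since the exceptional set is $\bigcup_n E_n$ some $E_n$ already has positive dimension. Fix such an $n$, let $F_n\subset F$ be the corresponding compact set of lattices, and let $\mathcal{K}_n:=\overline{A^{+}F_n}$, which is a compact proper subset of $X_3$. By Frostman's lemma there is a probability measure $\nu$ supported on $F_n$ with $\nu(B(x,r))\ll r^{\delta'}$ for some $\delta'>0$. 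Form the averages
\begin{equation*}
\mu_T=\frac{1}{T^{2}}\int_{0}^{T}\!\!\int_{0}^{T}(a_{s,t})_{*}\nu\,ds\,dt
\end{equation*}
and pass to a weak-$*$ limit $\mu$; then $\mu$ is an $A$-invariant probability measure supported on $\mathcal{K}_n$.

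The Frostman exponent $\delta'$ along the unstable leaf yields, via the relation between entropy and the dimension of the conditional measures on unstable horospheres (a Ledrappier–Young type estimate, which survives the averaging because $a_{s,t}$ expands the leaf roughly conformally), a lower bound $h_{\mu}(a_{1,1})\ge c\,\delta'>0$. By ergodic decomposition some $A$-ergodic component $\mu'$ of $\mu$ has $h_{\mu'}(a_{1,1})>0$. The measure rigidity theorem of Einsiedler–Katok–Lindenstrauss then forces $\mu'$ to be the $\SL_3(\RR)$-invariant Haar measure on $X_3$; but Haar measure is not supported on the compact proper subset $\mathcal{K}_n$, a contradiction. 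Hence the exceptional set has Hausdorff dimension $0$.

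\textbf{Main obstacle.} The genuinely deep ingredient is the measure rigidity theorem used in the last step, which is the heart of the Einsiedler–Katok–Lindenstrauss work: one exploits recurrence under one element of the rank-two group $A$ together with the commutation relations between $A$ and the unipotent subgroups it normalises to force invariance of the measure under a non-trivial unipotent subgroup, after which a Ratner-type argument identifies it with Haar measure. It is precisely here that the rank of $A$ being at least two — equivalently, working in $\SL_3$ rather than $\SL_2$ — is essential, since no such rigidity holds for the geodesic flow on the modular surface. The remaining points (the Dani correspondence with sharp constants, the uniform boundedness after stratification, and the survival of the entropy lower bound under averaging) are comparatively routine but require care.
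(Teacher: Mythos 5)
The paper offers no proof of this theorem: it is quoted as the main result of Einsiedler, Katok and Lindenstrauss \cite{EKL}, and the thesis simply cites it. So there is no ``paper proof'' to compare against; what you have done is reconstruct a sketch of the EKL argument itself, and in outline you have got the right ingredients: the Dani correspondence between $\liminf_{q}q\norm{q\alpha}\norm{q\beta}>0$ and boundedness of the forward $A^{+}$-orbit of $u_{\alpha,\beta}\ZZ^3$ in $\SL_3(\RR)/\SL_3(\ZZ)$; the stratification $\bigcup_n E_n$ by uniform lower bounds so that each stratum gives a compact $A^{+}$-invariant region; the construction of an $A$-invariant probability measure $\mu$ supported in that compact region; and finally measure rigidity, which says that a positive-entropy $A$-ergodic measure on $X_3$ must be Haar, contradicting compact support. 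You also correctly emphasise that the higher-rank hypothesis (rank $\ge 2$, i.e.\ $\SL_3$ rather than $\SL_2$) is indispensable.

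The place where I would push back is your assessment that ``the survival of the entropy lower bound under averaging'' is ``comparatively routine.'' It is not: entropy is \emph{not} lower semicontinuous with respect to weak-$*$ convergence, so the naive scheme ``Frostman exponent gives entropy $\ge c\delta'$ for $\mu_T$, then pass to the limit'' does not go through as stated. In fact in \cite{EKL} this step is handled by a direct covering argument at fixed Bowen scale: one counts $(\varepsilon,N)$-Bowen balls needed to cover the positive-dimensional set $F_n$, obtains an exponential lower bound that is uniform in $N$, and uses this to build a measure whose entropy for $a_{1,1}$ is bounded below; this is precisely why \cite{EKL} actually proves the stronger statement that the exceptional set is a countable union of sets of \emph{upper box} dimension zero, with Hausdorff dimension zero as a corollary. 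This passage from dimension to entropy is one of the genuine technical contributions of that paper, comparable in weight to the measure classification you single out as the only deep step. Two smaller points: the optimisation in the Dani correspondence can push the optimal $(s,t)$ outside the cone $s,t\ge 0$ for individual $q$, so a careful formulation is needed (it works for the $\liminf$ statement, but the ``sharp constants'' remark glosses over a real case analysis); and the expansion of $a_{s,t}$ on the unstable leaf is only conformal along the diagonal $s=t$, so ``roughly conformally'' should be replaced by an explicit handling of the two expansion rates $e^{2s+t}$ and $e^{s+2t}$ when you invoke a Ledrappier--Young type estimate.
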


\begin{remark*}
	An analogue to Littlewood's Conjecture can be formulated for arbitrary dimensions. Namely, given a point $\balpha=(\alpha_1,\dots,\alpha_n)\in\I^n$, we can ask whether
	\begin{equation}\label{Eqn:LWhigh}
		\liminf\limits_{q\rightarrow\infty}q\ \prod\limits_{j=1}^n\ \norm{q\alpha_j}=0.
	\end{equation}
	However, this question is of limited interest when $n\neq 2$. In the one-dimensional case, the set of exceptions to \eqref{Eqn:LWhigh} is simply equal to $\bad$. Furthermore, when $n\geq 3$, any counterexample to \eqref{Eqn:LWhigh} must satisfy
	\begin{equation*}
		\liminf\limits_{q\rightarrow\infty}q\norm{q\alpha_j}\norm{q\alpha_k}>0
	\end{equation*}
	whenever $j\neq k$ and thus proving Littlewood's Conjecture would immediately imply the higher-dimensional analogue.
\end{remark*}
	
\section{Outlook}
In Chapter \ref{ubiquity} we will introduce the concept of ubiquity and give proofs for the classical theorems of Khintchine and Jarn\'ik. Moreover, this will provide us with tools to advance these basic results to the more specific setting of affine coordinate subspaces. 

The Khintchine--Jarn\'ik Theorem is very powerful in the sense that it allows us to determine $\cH^s(W_n(\psi))$ for any given approximating function $\psi$ and $s\in(0,n]$. However, it does not reveal which exact points are $\psi$-approximable or not. In particular, if we consider a set $A\subset \I^n$ with $\cH^s(A)=0$, knowing $\cH^s(W_n(\psi))$ will not tell us anything about the intersection $W_n(\psi)\cap A$. This set could be all of $A$, a non-trivial subset of $A$, or even the empty set. In Chapter \ref{fibres} we will develop a theory for the case when the set in question is an affine coordinate subspace, i.e. a subset of $\I^n$ for which one or more coordinates are fixed. Theorems \ref{thm:subspaces} and \ref{thm:lines} are the main results for the Khintchine-type theory and Theorem \ref{HDfibres} is a partial analogue regarding the Hausdorff theory. These results strengthen the classical theory in a new and natural manner.

\newpage

In Chapter \ref{dirichlet} we will define $\bi$-Dirichlet improvable vectors as the points in $\RR^n$ for which Theorem \ref{wDir} still holds if the right-hand side of \eqref{Eqn:wDir} is multiplied by a constant $c<1$. If this is possible for $c$ arbitrarily small, we call a point $\bi$-singular. Much research has been done when $\bi=(1/n,\dots,1/n)$, but less so in the weighted case. We will show that $\bi$-badly approximable vectors are $\bi$-Dirichlet improvable, thus extending a result by Davenport and Schmidt to the weighted case. The second main result of Chapter \ref{dirichlet} shows that non-$\bi$-singular vectors $\balpha$ are well suited for weighted twisted approximation in the following sense: For any $\eps>0$ and $\psi(q)=\eps q^{-1}$, the set $W_n^{\balpha}(\bi,\psi)$ has full Lebesgue measure $\lambda_n$. This generalises a theorem of Shapira.

\chapter{Ubiquity Theory}\label{ubiquity}

This chapter serves to introduce the concept of ubiquity and how it can be used to prove results similar to the theorems of Khintchine and Jarn\'ik. In fact, we are able to derive those classical statements directly from results in ubiquity theory, see Section \ref{Sec:ClassicalResults}. The main theorems in Section \ref{Sec:MainThms} are formulated for the general setting of a compact metric space equipped with a probability measure. This includes the typical case of the unit interval $\I^n$ equipped with the Lebesgue measure $\lambda_n$, but it also shows how Diophantine approximation can be interpreted in a wider sense. On the other hand, we can use ubiquity theory to gain new insight exactly for this very specific classical setting, as it allows us to work around the limitations of the Khintchine--Jarn\'ik Theorem, see Chapter \ref{fibres}. 

Throughout this chapter we will completely follow the theory presented in \cite{limsup}, which means that all the definitions and results including some proofs are taken from \cite{limsup} without any major modifications. However, we omit most of the proofs and we only adopt those parts which are relevant to our problems. While developing the theory we show how it connects to the main concepts and results of Chapter \ref{Ch:Introduction}. For simplicity we limit the strict derivation of these illustrations to the one-dimensional case. Still, the multi-dimensional case can usually be done in an analogous fashion through minor adjustments, as we remark throughout the text. The content of this chapter is based on \cite[Chapter 3]{master} but has been fully revised and heavily modified.

\section{The basic problem}

Let $(\Omega,d)$ be a compact metric space equipped with a non-atomic probability measure $m$. An \textit{atom} is a measurable set of positive measure which contains no subset of smaller but positive measure and \textit{non-atomic} means that there exist no atoms in $\Omega$ with respect to the measure $m$. Let 
\begin{equation*}
\mathcal{R}:=\{R_{\alpha}\subseteq\Omega:\alpha\in J\}
\end{equation*}
be a collection of subsets of $\Omega$ indexed by an infinite but countable set $J$, called the \textit{resonant sets}. Furthermore, let 
\begin{equation*}
\beta:J\rightarrow\mathbb{R}^+,\quad \alpha\mapsto\beta_{\alpha}
\end{equation*}
be a positive function on $J$, which we will refer to as the \textit{weight function}. We also endow this function with the condition that the set $\{\alpha\in J:\beta_{\alpha}<k\}$ has finite cardinality for any positive $k$. For a subset $A$ of $\Omega$, we define
\begin{equation*}
	\Delta(A,\delta):=\{x\in\Omega:\ d(x,A)<\delta\}
\end{equation*}
where $d(x,A)=\inf\{d(x,a):a\in A\}$. Hence, $\Delta(A,\delta)$ is the $\delta$-neighbourhood of $A$. Given a decreasing, positive valued function $\varphi:\mathbb{R}^+\rightarrow\mathbb{R}^+$, let
\begin{equation*}
	\Lambda(\varphi):=\{x\in\Omega:x\in\Delta(R_{\alpha},\varphi(\beta_{\alpha}))\text{ for infinitely many }\alpha\in J\}.
\end{equation*}

The definition of $\Lambda(\varphi)$ reveals its nature as a $\limsup$ set. This is more formally shown when we use a different construction. For $n\in\mathbb{N}$ and a fixed $k>1$, we define
\begin{equation*}
	\Delta(\varphi,n):=\bigcup\limits_{\alpha\in J_k(n)}\Delta(R_{\alpha},\varphi(\beta_{\alpha})),
\end{equation*}
where
\begin{equation*}
	J_k(n):=\{\alpha\in J:k^{n-1}<\beta_{\alpha}\leq k^n\}.
\end{equation*}
By the condition on the weight function $\beta$, the set $J_k(n)$ is finite for any given values of $k$ and $n$. Thus, $\Lambda(\varphi)$ is the set of points in $\Omega$ lying in infinitely many of the sets $\Delta(\varphi,n)$ and we get the identity
\begin{equation*}
	\Lambda(\varphi)=\limsup\limits_{n\rightarrow\infty}\Delta(\varphi,n)=\bigcap\limits_{m=1}^{\infty}\bigcup\limits_{n=m}^{\infty}\Delta(\varphi,n).
\end{equation*}
As in Chapter \ref{Ch:Introduction}, we are now interested in determining the measure theoretic properties of the set $\Lambda(\varphi)$. Since we are dealing with a $\limsup$ set in a probability space, the Borel--Cantelli Lemma (see Lemma \ref{BoCa}) directly implies that $m\left(\Lambda(\varphi)\right)=0$ if
\begin{equation}\label{Eqn:UbiConv}
	\sum\limits_{n=1}^{\infty}m\left(\Delta(\varphi,n)\right)<\infty.
\end{equation}
Obtaining a converse statement is much more intricate. This is done by the first main theorem in Section \ref{Sec:MainThms} under mild conditions on the measure $m$. Assuming a diverging sum condition as well as a `global ubiquity' hypothesis, Theorem \ref{mT1} shows that $\Lambda(\varphi)$ has strictly positive $m$ measure. Moreover, replacing `global ubiquity' by the stronger `local ubiquity' condition implies that $\Lambda(\varphi)$ has full measure, which gives us a Khintchine-type statement for this more general setting.

Regarding the case when \eqref{Eqn:UbiConv} is satisfied, the $\limsup$ set $\Lambda(\varphi)$ is a null-set with respect to the ambient measure $m$. However, as in Section \ref{Sec:Hausdorff}, we can rely on the Hausdorff measures $\cH^f$ to obtain a finer means for investigating the size of 
$\Lambda(\varphi)$. The problem of determining $\cH^f(\Lambda(\varphi))$ is much more subtle than the one regarding $m$-measure and imposes stronger conditions on the measure $m$ as well as mild conditions on the dimension function $f$.  Assuming an `$f$-volume' divergent sum condition and a `local ubiquity' hypothesis, Theorem \ref{hT1} implies that $\cH^f(\Lambda(\varphi))=\infty$. It is often the case that one can obtain a converse statement where convergence of the `$f$-volume' sum implies that $\cH^f(\Lambda(\varphi))=0$. Then, $\cH^f(\Lambda(\varphi))$ satisfies a `zero-infinity' law. In particular, this is satisfied for the Hausdorff $s$-measures $\cH^s$, allowing us to deduce the Hausdorff dimension of $\Lambda(\varphi)$.

\newpage

As a particular example, our set of interest, the set $W_n(\psi)$ of $\psi$-approximable points in $\I^n$ can be expressed in the form $\Lambda(\varphi)$ with $\varphi(q)=\psi(q)/q$ by choosing
\begin{equation}\label{Eqn:list}
	\begin{aligned}
		\Omega&=[0,1]^n,\quad J=\{(\bp,q)\in\mathbb{Z}^n\times\NN:0\leq |\bp|\leq q\},\\
		\alpha&=(\bp,q)\in J,\quad \beta_{\alpha}=q,\quad\text{and}\quad R_{\alpha}=\frac{\bp}{q}.
	\end{aligned}
\end{equation}
As usual, $d$ is the metric induced by the $\max$-norm, and we get
\begin{equation*}	
	 \Delta(R_{\alpha},\varphi(\beta_{\alpha}))=B\left(\frac{\bp}{q},\varphi(q)\right).
\end{equation*}
Hence, in this case the resonant sets are rational points $\bp/q\in\QQ^n$ and the associated sets $\Delta(R_{\alpha},\varphi(\beta_{\alpha}))$ are balls centred at those points. It follows that
\begin{equation*}
	\Delta(\varphi,m)=\bigcup\limits_{k^{m-1}<q\leq k^m}\bigcup\limits_{0\leq |\bp|\leq q}B\left(\frac{\bp}{q},\varphi(q)\right)
\end{equation*}
and 
\begin{equation*}
	W_n(\psi)=\limsup\limits_{m\rightarrow\infty}\Delta(\varphi,m).
\end{equation*}
This is basically the same characterisation as obtained in Chapter \ref{Ch:Introduction}. Here we just take the union over all $q$ in the range $(k^{m-1},k^m)$ in one step instead of doing it for each $q$ separately. The slight inconvenience of having to deal with the the function $\psi(q)/q$ instead of $\psi$ itself is due to our definition of $W_n(\psi)$. The statements in ubiquity theory are more easily formulated using conditions of the form $|\balpha-\bp/q|$ while we generally prefer the notation $\norm{q\balpha-\bp}$. However, this is easily adjusted.

Thus, the basic problems in simultaneous Diophantine approximation of determining $\lambda_n(W_n(\psi))$ or $\cH^s(W_n(\psi))$ are covered by this more general problem of investigating the measure theoretic properties of a $\limsup$ set $\Lambda(\varphi)$.

\section{Ubiquitous systems}

Let $l=(l_n)_{n\in N}$ and $u=(u_n)_{n\in N}$ be positive increasing sequences such that eventually
\begin{equation*}
	l_n<l_{n+1}\leq u_n\quad \text{ and }\quad\lim\limits_{n\rightarrow\infty}l_n=\infty.
\end{equation*} 
For obvious reasons, $l$ and $u$ will be referred to as the \textit{lower sequence} and \textit{upper sequence}, respectively. Now, given a positive, decreasing function $\varphi$, we define
\begin{equation*}
	\Delta_l^u(\varphi,n)=\bigcup\limits_{\alpha\in J_l^u(n)}\Delta(R_{\alpha},\varphi(\beta_{\alpha})),
	\vspace{-2ex}
\end{equation*}
where
\begin{equation*}
	J_l^u(n)=\{\alpha\in J:l_n<\beta_{\alpha}\leq u_n\}.
\end{equation*}
Again, the condition bestowed upon the weight function $\beta_{\alpha}$ provides finiteness of any fixed set $J_l^u(n)$ and, since $l_n$ tends to infinity, we get
\begin{equation*}
	\Lambda(\varphi)=\limsup\limits_{n\rightarrow\infty}\Delta_l^u(\varphi,n)=\bigcap\limits_{m=1}^{\infty}\bigcup\limits_{n=m}^{\infty}\Delta_l^u(\varphi,n),
\end{equation*}
independent of the choice of sequences $l$ and $u$.

Recall that our space $\Omega$ is equipped with a probability measure $m$. Throughout the whole discussion we need to assume some conditions on the measure $m$ . Firstly, any open ball centred at any arbitrary point in $\Omega$ has strictly positive measure and secondly, the measure $m$ is \textit{doubling}. That means there exists a constant $C\geq 1$ such that
\begin{equation*}
	m(B(x,2r))\leq Cm(B(x,r))
\end{equation*}
for any point $x\in\Omega$. This allows us to maintain control over the measure while shrinking or blowing up balls in $\Omega$. Furthermore, it implies that
\begin{equation*}
	m(B(x,tr))\leq C(t)m(B(x,r))
\end{equation*}
for any $t>1$, where $C(t)$ is an increasing function, which does not depend on $x$ or $r$ and satisfies $C(2^k)\leq C^k$. In the case that $m$ is doubling we will also refer to the measure space $(\Omega,d,m)$ as doubling.

We need to introduce two more properties of the measure $m$, which are not very restrictive but will be needed in the problems of determining $m(\Lambda(\varphi))$ and $\mathcal{H}^f(\Lambda(\varphi))$, respectively. For the first problem, we want to assure that balls of the same radius have roughly the same measure when they are centred at points contained in resonant sets $R_{\alpha}$ with all $\alpha$ belonging to the same set $J_l^u(n)$ for some $n$.

\textbf{(M1) } There exist constants $a, b, r_0>0$, which only depend on the sequences $l$ and $u$, such that for any $c\in R_{\alpha}, c'\in R_{\alpha'}$ with $\alpha, \alpha'\in J_l^u(n)$ and $r\leq r_0$ 
\begin{equation*}
	a\leq\frac{m(B(c,r))}{m(B(c',r))}\leq b.
\end{equation*}
When considering the Hausdorff measure problem we need a stronger condition. Namely, we want that the measure of any ball centred at a point in $\Omega$ is proportional to a fixed power of its radius.

\textbf{(M2) } There exist constants $a, b, r_0>0$ and $\delta\geq 0$ such that for any $x\in\Omega$ and $r\leq r_0$
\begin{equation*}
	ar^{\delta}\leq m(B(x,r))\leq br^{\delta}.
\end{equation*}
Such measures are often referred to as \textit{Ahlfors regular} measures. Without loss of generality we can choose $0<a<1<b$. Obviously, the condition (M2) implies (M1) with constants $a/b$, $b/a$ and $r_0$, independent of the choice of $l$ and $u$. Moreover, the condition (M2) implies that $\dim\Omega=\delta$.

\begin{remark*}
	In addition to (M1) and (M2), the original paper \cite{limsup} also introduces the so-called intersection conditions (IC). These conditions control the intersection of resonant sets $R_{\alpha}$ with balls centred at points contained in resonant sets. The intersection conditions are trivially satisfied when the resonant sets are points. We are only interested in this case and thus will not state the conditions (IC). It is worth noting that this also simplifies the conditions within the main theorems and their corollaries in Section \ref{Sec:MainThms}.
\end{remark*}

Now all the preliminaries are given and we are able to define the notion of a ubiquitous system. For this, let $\rho$ be a function with $\lim\limits_{r\rightarrow\infty}\rho(r)=0$ and let
\begin{equation*}
	\Delta_l^u(\rho,n)=\bigcup\limits_{\alpha\in J_l^u(n)}\Delta(R_{\alpha},\rho(u_n)).
\end{equation*}
In accordance with the following definitions, $\rho$ will be called the \textit{ubiquitous function}. Let $B=B(x,r)$ be an arbitrary ball with centre $x$ in $\Omega$ and radius $r\leq r_0$, where $r_0$ is given by either (M1) or (M2). Suppose there exist a function $\rho$, sequences $l$ and $u$ and an absolute constant $\kappa>0$ such that
	\begin{equation}\label{ubiq.cond}
		m(B\cap\Delta_l^u(\rho,n))>\kappa m(B)\quad \text{ for } n\geq n_0(B).
	\end{equation}
Then the pair $(\mathcal{R},\beta)$ is called a \textit{local $m$-ubiquitous system relative to $(\rho,l,u)$.} Suppose there exist a function $\rho$, sequences $l$ and $u$ and an absolute constant $\kappa>0$ such that for $n\geq n_0$, \eqref{ubiq.cond} is satisfied for $B=\Omega$. Then the pair $(\mathcal{R},\beta)$ is called a \textit{global $m$-ubiquitous system relative to $(\rho,l,u)$.}

Since $m$ is a probability measure, in the global case the condition \eqref{ubiq.cond} simply reduces to $m(\Delta_l^u(\rho,n))\geq\kappa$. Here, all we require is that the sets $\Delta_l^u(\rho,n), n\geq n_0$ cover a certain ratio of the whole space $\Omega$ with respect to the measure $m$. In the local case the same property is required to hold for any small enough ball. Clearly this condition is much stronger and it can be easily seen that local ubiquity implies global ubiquity. Simply take an arbitrary ball $B$ centered at $x\in\Omega$ with radius $\leq r_0$. Then for $n$ sufficiently large
\begin{equation*}
	m(\Delta_l^u(\rho,n))\geq m(B\cap \Delta_l^u(\rho,n))\geq \kappa m(B)\eqqcolon\kappa_1>0.
\end{equation*}
Hence, local ubiquity with constant $\kappa$ implies global ubiquity with a constant $\kappa_1$, $0<\kappa_1\leq \kappa$. The converse is not true in general. However, there is a simple and very useful condition under which global ubiquity implies local ubiquity. Namely, if
\begin{equation*}
	\lim\limits_{n\rightarrow\infty}m(\Delta_l^u(\rho,n))=1=m(\Omega).
\end{equation*}
This can be seen as follows. Suppose we have a global $m$-ubiquitous system and let $B\subseteq\Omega$ be an arbitrary ball with $m(B)=\eps>0$ (the statement holds trivially for any null set). For $n$ sufficiently large, we get
\begin{equation*}
	m(\Delta_l^u(\rho,n))>m(\Omega)-\frac{\eps}{2},\vspace{-2ex}
\end{equation*}
and thus
\begin{equation*}
	m(B\cap\Delta_l^u(\rho,n))>\frac{\eps}{2},
\end{equation*}
which shows local $m$-ubiquity.

To establish the inequality \eqref{ubiq.cond} in either case of ubiquity, we do not need the presence of the lower sequence $l$. To show this, assume for $n\geq n_0$ the modified inequality
\begin{equation}\label{mod.ubiq.cond}
	m\left(B\cap\bigcup\limits_{\alpha\in J:\beta_{\alpha}\leq u_n}\Delta(R_{\alpha},\rho(u_n))\right)\geq\kappa m(B)
	\vspace{2ex}
\end{equation}
is satisfied. Now let $t\in\mathbb{N}$. Since $\lim\limits_{r\rightarrow\infty}\rho(r)=0$ there exists $n_t\in\mathbb{N}$ such that for $n\geq n_t$ we have
\begin{equation*}
	m\left(B\cap\bigcup\limits_{\alpha\in J:\beta_{\alpha}\leq t}\Delta(R_{\alpha},\rho(u_n))\right)<\frac{1}{2}\kappa m(B).
	\vspace{2ex}
\end{equation*}
Without loss of generality we have $n_{t+1}\geq n_t+1$ and hence for every $n\in\mathbb{N}$ there is exactly one $t=t(n)$ such that $n$ lies in the interval $[n_{t(n)},n_{t(n)+1})$. Thus the lower sequence $l$ defined by $l_n= t(n)$ is increasing and diverging and for $n\geq n_0$ we have
\begin{equation*}
	m(B\cap \Delta_l^u(\rho,n))=m\left(B\cap\bigcup\limits_{\alpha\in J:l_n<\beta_{\alpha}\leq u_n}\Delta(R_{\alpha},\rho(u_n))\right)\geq\frac{1}{2}\kappa m(B)
\end{equation*}
which shows $(\mathcal{R},\beta)$ is a local $m$-ubiquitous system relative to $(\rho,l,u)$. Hence whenever \eqref{mod.ubiq.cond} is satisfied we know there exists a sequence $l$ such that we get ubiquity relative to $(\rho,l,u)$. It is also worth noting and easy to see that ubiquity relative to $(\rho,l,u)$ implies ubiquity relative to $(\rho,l,s)$ for any subsequence $s$ of $u$.
\newpage
In the case where we deal with the set $W(\psi)$ of one-dimensional $\psi$-approximable points, the considered measure $m$ is simply the one-dimensional Lebesgue measure $\lambda$, which satisfies condition (M2) with $\delta=1$. A direct application of Dirichlet's Theorem (see Theorem \ref{Dir}) yields the following statement.

\begin{lemma}\label{K-J-lemma}
	There exists a constant $k>1$ such that the pair $(\mathcal{R},\beta)$ defined in \eqref{Eqn:list} is a local $m$-ubiquitous system relative to $(\rho,l,u)$ for $l_{n+1}=u_n=k^n$ and $\rho:t\rightarrow kt^{-2}$.
\end{lemma}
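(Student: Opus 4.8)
The plan is to verify the defining inequality~\eqref{ubiq.cond} for local ubiquity \emph{directly}: here $\Omega=[0,1]$, $m$ is Lebesgue measure $\lambda$, and everything is explicit. Throughout I would take $k$ to be a large \emph{integer}, fixed only at the end, and set $l_n=k^{n-1}$, $u_n=k^n$, $\rho(t)=kt^{-2}$; then $\rho(u_n)=k^{1-2n}$ and $J_l^u(n)=\{(p,q)\in J:\ k^{n-1}<q\le k^n\}$, so that $\Delta_l^u(\rho,n)$ is exactly the union of the intervals $B(p/q,\,k^{1-2n})$ over all such $(p,q)$.

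\textbf{Step 1: a dichotomy from Dirichlet's Theorem.} Given $x\in[0,1]$, apply the one-dimensional case of Dirichlet's Theorem (Theorem~\ref{Dir}) with $Q=k^n$ to obtain $p\in\ZZ$, $q\in\NN$ with $1\le q\le k^n$ and $|x-p/q|<1/(qk^n)$; since $x\in[0,1]$ this forces $0\le p\le q$, so $(p,q)\in J$. If $q>k^{n-1}$ then $(p,q)\in J_l^u(n)$ and $|x-p/q|<1/(qk^n)<k^{1-2n}=\rho(u_n)$, so $x\in\Delta_l^u(\rho,n)$. If instead $q\le k^{n-1}$ then $\|qx\|\le q|x-p/q|<k^{-n}$. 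Hence
$$
[0,1]\setminus\Delta_l^u(\rho,n)\ \subseteq\ E_n:=\bigcup_{q=1}^{k^{n-1}}\bigl\{x\in[0,1]:\ \|qx\|<k^{-n}\bigr\},
$$
and it suffices to show that $E_n$ occupies at most half of every small ball once $n$ is large.

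\textbf{Step 2: a local measure estimate.} Fix a ball $B=B(x_0,r)\subseteq[0,1]$; it is a subinterval with $\lambda(B)=|B|$. For each $q$, the set $\{\|qx\|<k^{-n}\}$ is a disjoint union of intervals of length $2k^{-n}/q$ centred at the integer multiples of $1/q$, and at most $q|B|+2$ of these can meet $B$. Summing over $1\le q\le k^{n-1}$,
$$
\lambda(E_n\cap B)\ \le\ \sum_{q=1}^{k^{n-1}}\bigl(q|B|+2\bigr)\frac{2k^{-n}}{q}\ =\ \frac{2|B|}{k}+4k^{-n}\sum_{q=1}^{k^{n-1}}\frac1q\ \le\ \frac{2|B|}{k}+4k^{-n}\bigl(1+(n-1)\log k\bigr).
$$
Now fix $k\ge 8$, so the first term is $\le|B|/4$; the second term depends only on $|B|$ and tends to $0$ as $n\to\infty$, so there is $n_0(B)$ beyond which it too is $\le|B|/4$. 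For $n\ge n_0(B)$ this gives $\lambda(E_n\cap B)\le|B|/2$, whence
$$
\lambda\bigl(B\cap\Delta_l^u(\rho,n)\bigr)\ \ge\ \lambda(B)-\lambda(E_n\cap B)\ \ge\ \tfrac12\lambda(B),
$$
which is~\eqref{ubiq.cond} with $\kappa=\tfrac12$, proving the lemma.

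\textbf{Main obstacle.} The point to be careful about is \emph{why} one cannot simply invoke the ``global $\Rightarrow$ local'' criterion recorded in the text: that shortcut requires $\lambda(\Delta_l^u(\rho,n))\to1$, whereas here $\lambda(E_n)\asymp 2/k$ is a fixed positive constant, so $\Delta_l^u(\rho,n)$ does \emph{not} exhaust $[0,1]$ in the limit. The counting must therefore be performed inside each $B$, carefully separating the contribution proportional to $\lambda(B)$ (controlled by choosing $k$ large) from the additive tail $O_{|B|}(k^{-n}n)$ coming from small denominators (controlled by choosing $n\ge n_0(B)$). Finally, if one prefers not to assume $k\in\NN$, taking $Q=\lfloor k^n\rfloor$ in Dirichlet's Theorem only inflates the denominator range and the inner sum defining $E_n$ by bounded factors, and the same choice of a large $k$ still works.
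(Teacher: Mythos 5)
Your proof is correct and takes essentially the same route as the paper's: apply Dirichlet's Theorem with $Q=k^n$, observe that the part of a fixed ball $B$ not covered by the balls $B(p/q,\rho(u_n))$ with $k^{n-1}<q\le k^n$ must lie in the union over $q\le k^{n-1}$ of the sets $\{\|qx\|<k^{-n}\}$, and bound the measure of that union inside $B$ by a constant fraction of $\lambda(B)$ plus a vanishing tail by counting how many rationals of each denominator land near $B$. The paper counts $\lambda(A)q+3$ rationals and takes $k\ge 6$ where you count $q|B|+2$ and take $k\ge 8$, but the sets and the estimate are the same.
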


\begin{proof}
	Let $A=[a,b]\subset \I=[0,1]$. By Dirichlet's Theorem, for any $x\in A$ and for any $k^n>1$ there are coprime integers $p$ and $q$ with $1\leq q\leq k^n$ such that 
\begin{equation*}
	\left\lvert x-\frac{p}{q}\right\rvert <\frac{1}{qk^n}.
\end{equation*}	
Clearly, $p/q$ has to lie in the interval $\left[a-\frac{1}{q},b+\frac{1}{q}\right]$ which implies 
\begin{equation*}
	aq-1\leq p\leq bq+1.
\end{equation*}
Hence, for a fixed $q$ there exist at most $\lambda(A)q+3$ possible values of $p$ satisfying the above inequality. For $n$ large enough it follows that
	\begin{align*}
		\lambda\left(A\cap\bigcup\limits_{q\leq k^{n-1}}\bigcup\limits_{0\leq p\leq q} B\left(\frac{p}{q},\frac{1}{qk^n}\right)\right)&\leq\sum\limits_{q\leq k^{n-1}}\frac{2}{qk^n}\left(\lambda(A)q+3\right)\\[1ex]
		&=2\sum\limits_{q\leq k^{n-1}}\left(\frac{\lambda(A)}{k^n}+\frac{3}{qk^n}\right)\\[1ex]
		&=\frac{2}{k}\lambda(A)+6\sum\limits_{q\leq k^{n-1}}\frac{1}{qk^n}\\[1ex]
		&\leq\frac{3}{k}\lambda(A)
	\end{align*}
	since the last sum tends to zero for $n\rightarrow\infty$. If we take $k\geq6$, we get
	\begin{equation*}
		\lambda\left(A\cap\bigcup\limits_{k^{n-1}<q\leq k^n}\bigcup\limits_{0\leq p\leq q} B\left(\frac{p}{q},\frac{k}{k^{2n}}\right)\right)\geq \lambda(A)-\frac{3}{k}\lambda(A)\geq\frac{1}{2}\lambda(A).
		\qedhere
	\end{equation*}
\end{proof}

The divergence parts of the theorems of Jarn\'ik and Khintchine will be an immediate consequence of Lemma \ref{K-J-lemma} and the main theorems of the ubiquity theory, which we state in the next section.

\begin{remark*}
	In the $n$-dimensional case, when dealing with the $n$-dimensional Lebesgue measure $\lambda_n$ of $W_n(\psi)$, the condition $\mathrm{(M2)}$ is satisfied with $\delta=n$. The same proof as above with adjusted constants tells us that $(\mathcal{R},\beta)$ is a local $\lambda_n$-ubiquitous system relative to $(\rho,l,u)$ for $l_{m+1}=u_m=k^m$ and $\rho:t\rightarrow kt^{-\left(1+\frac{1}{n}\right)}$, where the difference in exponent is due to Dirichlet's Theorem.
\end{remark*}

\section{The main theorems}\label{Sec:MainThms}

Next we state the main theorems of ubiquity theory. While we do not present any proofs here, it is worth mentioning that the main parts of \cite{limsup} serve to prove those results. Once the theorems are established, both the theorems of Khintchine and Jarn\'ik are rather simple consequences, which illustrates the power of this theory.

We start with some notation. Let $m$ be a measure satisfying the condition (M1) with respect to the sequences $l$ and $u$. By $B_n(r)$ we denote a generic ball of radius $r$ centred at a point of a resonant set $R_{\alpha}$ with $\alpha\in J_l^u(n)$. The condition (M1) ensures that for any ball $B(c,r)$ with $c\in R_{\alpha}$ and $\alpha\in J_l^u(n)$ we have $m(B(c,r))\asymp m(B_n(r))$.

\begin{theorem}\label{mT1}
	Let $(\Omega,d)$ be a compact metric space equipped with a probability measure $m$ satisfying condition $\mathrm{(M1)}$ with respect to lower and upper sequences $l$ and $u$. Suppose that $(\mathcal{R},\beta)$ is a global $m$-ubiquitous system relative to $(\rho,l,u)$ and that $\varphi$ is an approximating function. Furthermore, either assume that
	\begin{equation*} 
		\limsup\limits_{n\rightarrow\infty}\frac{\varphi(u_n)}{\rho(u_n)}>0
	\end{equation*}
	or assume that both
	\begin{equation*}
		\sum\limits_{n=1}^{\infty}\frac{m(B_n(\varphi(u_n)))}{m(B_n(\rho(u_n)))}=\infty
	\end{equation*}
	and for $Q$ sufficiently large
	\begin{equation*}
		\sum\limits_{s=1}^{Q-1}\frac{1}{m(B_s(\rho(u_s)))}\sum\limits_{\substack{s+1\leq t\leq Q:\\ \varphi(u_s)<\rho(u_t)}}m(B_t(\varphi(u_t)))
		\ll\left(\sum\limits_{n=1}^{Q}\frac{m(B_n(\varphi(u_n)))}{m(B_n(\rho(u_n)))}\right)^2.
	\end{equation*}
	Then, $m(\Lambda(\varphi))>0$. In addition, if any open subset of $\Omega$ is $m$-measurable and $(\mathcal{R},\beta)$ is locally $m$-ubiquitous relative to $(\rho,l,u)$, then $m(\Lambda(\varphi))=1$.
\end{theorem}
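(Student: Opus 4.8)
The plan is to pass from $\Lambda(\varphi)$ to a $\limsup$ set built from genuinely \emph{disjoint} families of balls $E_n\subseteq\Delta_l^u(\varphi,n)$, for which the divergence hypothesis forces $\sum_n m(E_n)=\infty$ while the second displayed hypothesis gives quasi-independence on average, and then to invoke the divergence form of the Borel--Cantelli lemma (the Erd\H{o}s--Chung / Kochen--Stone inequality). I would first dispose of the alternative $\limsup_{n\to\infty}\varphi(u_n)/\rho(u_n)>0$: along a subsequence $n_k$ one has $\varphi(u_{n_k})\ge c\,\rho(u_{n_k})$, and since $\beta_\alpha\le u_{n_k}$ and $\varphi$ decreases, $\Delta_l^u(\varphi,n_k)\supseteq\bigcup_{\alpha\in J_l^u(n_k)}B(R_\alpha,c\,\rho(u_{n_k}))$. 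A $5r$-covering argument together with the doubling property shows $m(B\cap\bigcup_\alpha B(R_\alpha,c\,\rho(u_{n_k})))\gg m(B\cap\Delta_l^u(\rho,n_k))$, which by (global, resp.\ local) ubiquity is $\gg\kappa\,m(B)$; reverse Fatou then gives $m(B\cap\Lambda(\varphi))\gg\kappa\,m(B)$, which is precisely the estimate the final step needs. So from now on I may assume $\varphi(u_n)/\rho(u_n)\to 0$, hence that both displayed sum conditions hold and $\varphi(u_n)<\rho(u_n)$ for all large $n$.

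Fixing a ball $B$ (take $B=\Omega$ in the global case) and a large $n$, I apply the $5r$-covering lemma to the equal-radius balls $B(R_\alpha,\rho(u_n))$ with $\alpha\in J_l^u(n)$ and $B(R_\alpha,\rho(u_n))\cap B\ne\emptyset$, obtaining a subset $G_n\subseteq J_l^u(n)$ with the balls $\{B(R_\alpha,\rho(u_n))\}_{\alpha\in G_n}$ pairwise disjoint and $\{B(R_\alpha,3\rho(u_n))\}_{\alpha\in G_n}$ covering $B\cap\Delta_l^u(\rho,n)$. Ubiquity bounds this covered set from below by $\kappa\,m(B)$, disjointness bounds $\#G_n$ from above, and together with the doubling property and $\mathrm{(M1)}$ this gives $\#G_n\asymp m(B)/m(B_n(\rho(u_n)))$. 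Setting $E_n=\bigcup_{\alpha\in G_n}B(R_\alpha,\varphi(u_n))$, the balls are disjoint because $\varphi(u_n)<\rho(u_n)$, and $E_n\subseteq\Delta_l^u(\varphi,n)$ because $\varphi(\beta_\alpha)\ge\varphi(u_n)$ for $\alpha\in J_l^u(n)$; hence $m(E_n)\asymp\#G_n\,m(B_n(\varphi(u_n)))\asymp m(B)\,m(B_n(\varphi(u_n)))/m(B_n(\rho(u_n)))$, so the divergence hypothesis yields $\sum_n m(E_n)=\infty$.

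The core computation is the overlap estimate for $s<t$, split according to the size of $\varphi(u_s)$ relative to $\rho(u_t)$. If $\varphi(u_s)<\rho(u_t)$, then each level-$s$ ball has radius below $\rho(u_t)$ and so meets only boundedly many of the disjoint $\rho(u_t)$-balls that host the level-$t$ balls, giving $m(E_s\cap E_t)\ll\#G_s\,m(B_t(\varphi(u_t)))$; summing over such pairs and feeding in the quasi-independence hypothesis bounds the total by $\ll(\sum_{n\le Q}m(E_n))^2/m(B)$. If $\varphi(u_s)\ge\rho(u_t)$, a level-$s$ ball contains $\ll m(B_s(\varphi(u_s)))/m(B_t(\rho(u_t)))$ level-$t$ balls, so $m(E_s\cap E_t)\ll m(E_s)\,m(E_t)/m(B)$, whose sum over pairs is again $\ll(\sum_{n\le Q}m(E_n))^2/m(B)$. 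Adding the diagonal contribution $\sum_{n\le Q}m(E_n)$ and using $\sum_{n\le Q}m(E_n)\to\infty$, the Erd\H{o}s--Chung inequality gives $m(\limsup_n E_n)\gg m(B)$ with an implied constant independent of $B$; since $\limsup_n E_n\subseteq\Lambda(\varphi)$ this proves $m(B\cap\Lambda(\varphi))\gg m(B)$. Taking $B=\Omega$ gives $m(\Lambda(\varphi))>0$. If in addition open sets are $m$-measurable (so that $\Lambda(\varphi)$, a $G_\delta$, is measurable) and $(\cR,\beta)$ is locally ubiquitous, the bound $m(B\cap\Lambda(\varphi))\gg m(B)$ holds for every ball $B$, and a Lebesgue density point argument for the doubling measure $m$ forces $m(\Omega\setminus\Lambda(\varphi))=0$, i.e.\ $m(\Lambda(\varphi))=1$.

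I expect the main obstacle to be the two-regime overlap estimate: doing the geometric ball-counting carefully enough that it matches the quasi-independence hypothesis exactly, while simultaneously tracking the factor $m(B)$ so that the closing density argument yields \emph{full} measure rather than merely positive measure. A secondary point of care is the repeated use of the $5r$-covering lemma together with the doubling property to compare unions of balls of different radii, and the passage between $B$ and slight enlargements of $B$.
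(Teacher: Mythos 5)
The paper does not give a proof of this theorem; it states it and refers the reader to \cite{limsup}, and your sketch reproduces exactly the strategy used there: pass to disjointified subfamilies $E_n\subseteq\Delta_l^u(\varphi,n)$ via the $5r$-covering lemma, estimate $\#G_n$ and $m(E_n)$ through (M1) and ubiquity, split the overlap $m(E_s\cap E_t)$ into the two regimes dictated by the quasi-independence hypothesis, invoke Chung--Erd\H{o}s, and finish with a Lebesgue density argument for full measure. Your outline is correct and follows the source proof, and the points you flag as delicate (the two-regime ball-counting matching the hypothesis, and carrying the factor $m(B)$ through to the density step) are precisely where \cite{limsup} spends most of its effort.
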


Before we are able to state the Hausdorff measure analogue of Theorem \ref{mT1} we need to introduce one more definition. Given a sequence $u$, a positive real-valued function $h$ is said to be \textit{$u$-regular} if there exists a constant $\lambda\in(0,1)$ such that for $n$ large enough the inequality
\begin{equation*}
	h(u_{n+1})\leq\lambda h(u_n)
\end{equation*}
is satisfied where $\lambda$ is independent of $n$ but may depend on $u$. If $h$ is $u$-regular then the function $h$ is eventually strictly decreasing along the sequence $u$. Furthermore, $u$-regularity implies $s$-regularity for any subsequence $s$ of $u$.

\begin{theorem} \label{hT1}
	Let $(\Omega,d)$ be a compact metric space equipped with a probability measure $m$ satisfying condition $\mathrm{(M2)}$. Suppose that $(\mathcal{R},\beta)$ is a locally $m$-ubiquitous system relative to $(\rho,l,u)$ and that $\varphi$ is an approximating function. Let $f$ be a dimension function such that $r^{-\delta}f(r)\rightarrow\infty$ as $r\rightarrow 0$ and such that $r^{-\delta}f(r)$ is decreasing. Let $g$ be the real, positive function given by
	\begin{equation*}
		g(r)= f(\varphi(r))\rho(r)^{-\delta},\quad \text{ and }\quad G=\limsup\limits_{n\rightarrow\infty} g(u_n).
	\end{equation*}
	\begin{itemize}
		\item 
			Suppose that $G=0$ and that $\rho$ is $u$-regular. Then,
			\begin{equation} \label{ineq10}
				\mathcal{H}^f(\Lambda(\varphi))=\infty\quad \text{ if }\quad \sum\limits_{n=1}^{\infty}g(u_n)=\infty.
			\end{equation}
		\item
			Suppose that $0<G\leq\infty$. Then, $\mathcal{H}^f(\Lambda(\varphi))=\infty$.
	\end{itemize}
\end{theorem}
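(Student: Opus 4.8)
The plan is to establish both bullets through a single construction: build a Cantor-type subset $\mathbb{K}\subseteq\Lambda(\varphi)$ inside an arbitrary ball of $\Omega$, equip it with a mass distribution $\mu$, and then extract a lower bound for $\mathcal{H}^f(\mathbb{K})$ via the Mass Distribution Principle (see \cite{Falconer}). Since the resonant sets $R_\alpha$ are points, the sets $\Delta(R_\alpha,\varphi(\beta_\alpha))$ and $\Delta(R_\alpha,\rho(u_n))$ are honest balls, which keeps the combinatorics of the construction elementary.

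First I would fix an arbitrary ball $B\subset\Omega$ of radius at most $r_0$ and choose the generation scales. When $G=0$ I use $\sum_n g(u_n)=\infty$ to select a sparse increasing sequence $(t_k)$ along $u$ so that $\rho(u_{t_{k+1}})$ is far smaller than $\varphi(u_{t_k})$ (note that $G=0$ forces $\varphi(u_n)^\delta\le f(\varphi(u_n))=g(u_n)\rho(u_n)^\delta=o(\rho(u_n)^\delta)$) and each block $\sum_{t_k<n\le t_{k+1}}g(u_n)$ has a controlled size; when $0<G\le\infty$ I instead take $(t_k)$ along a subsequence realising $g(u_{t_k})\ge G/2$, after which the divergence hypothesis is not needed. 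The engine of the construction is a covering lemma: for any ball $B'$ of radius $\le r_0$ and any large $t$, local $m$-ubiquity gives $m\big(B'\cap\Delta_l^u(\rho,t)\big)\ge\kappa\,m(B')$; a Vitali-type selection inside the union of balls $\Delta(R_\alpha,\rho(u_t))$, together with the doubling property of $m$, yields a finite pairwise-disjoint subfamily $\{B(c_i,\rho(u_t))\}\subset B'$ with $\sum_i m\big(B(c_i,\rho(u_t))\big)\gg\kappa\,m(B')$, hence $\sum_i\rho(u_t)^\delta\asymp r(B')^\delta$ by (M2). Shrinking each $B(c_i,\rho(u_t))$ to the concentric ball $B(c_i,\varphi(u_t))$ yields the children of $B'$ at the next level; iterating from $B$ with $t=t_k$ at step $k$ produces nested finite collections $\mathbb{K}_k$, and $\mathbb{K}=\bigcap_k\bigcup\mathbb{K}_k$ lies in $\Lambda(\varphi)$ because each of its points lies in infinitely many $\Delta(R_\alpha,\varphi(u_{t_k}))$.

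Next I define $\mu$ by distributing mass down the tree in proportion to the $\delta$-volume of the fat balls: $\mu(B)=1$, and for a generation-$k$ ball $B'$ each child $B^{\ast}$ receives $\mu(B^{\ast})=\mu(B')\,\rho(u_{t_{k+1}})^\delta/\sum\rho(u_{t_{k+1}})^\delta$, the sum taken over all children of $B'$; this extends to a Borel probability measure supported on $\mathbb{K}$. The decisive point is to verify the Mass Distribution hypothesis $\mu\big(B(x,r)\big)\ll f(r)$ for all small $r$, with an implied constant depending only on $\delta$, $\kappa$, the doubling constant, the (M2) constants and the $u$-regularity constant of $\rho$, and \emph{not} on $B$. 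One locates $r$ among the scales: if $\varphi(u_{t_k})\le r<\rho(u_{t_k})$ then $B(x,r)$ meets only $\ll\big(r/\varphi(u_{t_k})\big)^\delta$ generation-$k$ children of a single parent, while a telescoping of the defining ratios shows every generation-$k$ mass is $\ll f(\varphi(u_{t_k}))$, so the monotonicity of $r\mapsto r^{-\delta}f(r)$ gives $\mu(B(x,r))\ll f(r)$; if $\rho(u_{t_k})\le r<\varphi(u_{t_{k-1}})$ then $B(x,r)$ meets at most one generation-$(k-1)$ ball by separation, and $u$-regularity of $\rho$ together with $r\ge\rho(u_{t_k})$ and the monotonicity of $r^{-\delta}f(r)$ upgrades the crude bound $\mu(B(x,r))\le\mu(\text{that ball})\ll f(\varphi(u_{t_{k-1}}))$ to $\ll f(r)$. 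In the $0<G\le\infty$ case these estimates simplify, since $f(\varphi(u_{t_k}))\ge(G/2)\,\rho(u_{t_k})^\delta$ along the chosen subsequence.

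Finally the Mass Distribution Principle gives $\mathcal{H}^f(\mathbb{K})\ge\mu(\mathbb{K})/c=1/c$ with $c$ uniform over the choice of $B$; carrying out the construction inside arbitrarily many pairwise-disjoint small balls and invoking countable additivity of $\mathcal{H}^f$ upgrades this to $\mathcal{H}^f(\Lambda(\varphi))=\infty$ (equivalently, the same local estimate shows $\Lambda(\varphi)$ has full $\mathcal{H}^f$-measure in every ball, while $\mathcal{H}^f(\Omega)=\infty$ because $r^{-\delta}f(r)\to\infty$). I expect the genuine difficulty to be precisely the uniform-in-$B$ verification of $\mu(B(x,r))\ll f(r)$ at the intermediate scales between two consecutive generations: this is exactly where $u$-regularity of $\rho$ and the monotonicity of $r^{-\delta}f(r)$ are indispensable, and making the telescoping product of mass ratios close against $f(\varphi(u_{t_k}))$ without accumulating a factor that grows with $k$ is the crux.
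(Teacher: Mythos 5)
The paper does not actually give a proof of Theorem~\ref{hT1}; it is quoted from \cite{limsup} and the chapter explicitly states that proofs are omitted, so there is no internal proof to match your sketch against. What can be assessed is whether your proposed argument would work. Your high-level strategy --- build a Cantor-type subset $\mathbb{K}\subseteq\Lambda(\varphi)$ carrying a mass distribution $\mu$ and invoke the Mass Distribution Principle --- is indeed the mechanism of \cite{limsup}, so the framework is correct.

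However, the specific construction has a fatal flaw. You iterate a \emph{single} ubiquity scale $u_{t_k}$ per generation and distribute mass uniformly among children (equivalently, in proportion to $\rho(u_{t_{k+1}})^\delta$, which is constant over the children of a fixed parent). Unwinding this recursion with $\sum_{\text{children}}\rho(u_{t_k})^\delta\asymp\varphi(u_{t_{k-1}})^\delta$ gives
\[
\mu(\text{gen-}k)\ \asymp\ r(B)^{-\delta}\,\rho(u_{t_k})^\delta\prod_{j=1}^{k-1}\Bigl(\frac{\rho(u_{t_j})}{\varphi(u_{t_j})}\Bigr)^{\delta},
\]
so your claimed estimate $\mu(\text{gen-}k)\ll f(\varphi(u_{t_k}))=g(u_{t_k})\rho(u_{t_k})^\delta$ is equivalent to
\[
r(B)^{-\delta}\prod_{j=1}^{k-1}\Bigl(\frac{\rho(u_{t_j})}{\varphi(u_{t_j})}\Bigr)^{\delta}\ \ll\ g(u_{t_k}).
\]
But $\rho(u_{t_j})^\delta/\varphi(u_{t_j})^\delta = f(\varphi(u_{t_j}))\big/\bigl(g(u_{t_j})\,\varphi(u_{t_j})^\delta\bigr)\to\infty$, because $r^{-\delta}f(r)\to\infty$ while $g(u_{t_j})$ stays bounded along the subsequences you select (it tends to $0$ in the first bullet and to a finite $G$ in the second). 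So the left-hand side grows without bound in $k$ while the right-hand side stays bounded (indeed tends to $0$ when $G=0$), and the MDP hypothesis $\mu(B(x,r))\ll f(r)$ fails at small scales regardless of how sparse the $(t_k)$ are chosen. This also destroys the uniform-in-$B$ constant you need for the final covering step. You do mention the correct ingredient --- controlling block sums $\sum_{t_k<n\le t_{k+1}}g(u_n)$ --- but the construction you then describe never uses it: the divergence $\sum_n g(u_n)=\infty$ plays no role in your mass distribution. In \cite{limsup} each generation of the Cantor set must exhaust an entire block of ubiquity scales, with thin balls at scale $n$ receiving mass proportional to $g(u_n)$; it is precisely the freedom to make the block sums large that absorbs the divergent factor $\varphi(u_{t_k})^{-\delta}f(\varphi(u_{t_k}))$ you are losing at each step. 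Without that multi-scale structure the telescoping estimate is false and the proof does not close.
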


\begin{remark*}
	Since the condition $\mathrm{(M2)}$ is independent of both the sequences $l$ and $u$ and, as shown above, the sequence $l$ is irrelevant for establishing ubiquity, results like Theorem \ref{hT1} do not require mention of the sequence $l$. However, the condition $\mathrm{(M1)}$ clearly depends on $l$ and $u$ and hence statements like Theorem \ref{mT1} rely on the used sequences and $l$ cannot be dropped from the preconditions.
\end{remark*}

\subsection{Corollaries}

In \cite{limsup}, multiple corollaries are stated for both Theorem \ref{mT1} and Theorem \ref{hT1}. These corollaries mainly illustrate how we get stronger results when certain restrictions hold. Most of the restrictions are naturally satisfied for typical applications, which makes the corollaries especially useful.

For Theorem \ref{mT1} we state a corollary which applies if the considered measure does not only satisfy condition (M1), but also condition (M2).

\begin{corollary} \label{cor2}
	Let $(\Omega,d)$ be a compact metric space equipped with a probability measure $m$ satisfying condition $\mathrm{(M2)}$. Suppose that $(\mathcal{R},\beta)$ is a global $m$-ubiquitous system relative to $(\rho,l,u)$ and that $\varphi$ is an approximating function. Moreover, assume that either $\varphi$ or $\rho$ is $u$-regular and that
	\begin{equation*}
		\sum\limits_{n=1}^{\infty}\left(\frac{\varphi(u_n)}{\rho(u_n)}\right)^{\delta}=\infty.
	\end{equation*}
	Then $m(\Lambda(\varphi))>0$. If in addition any open subset of $\Omega$ is $m$-measurable and $(\mathcal{R},\beta)$ is locally $m$-ubiquitous relative to $(\rho,l,u)$, then $m(\Lambda(\varphi))=1$.
\end{corollary}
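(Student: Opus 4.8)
The plan is to obtain Corollary \ref{cor2} directly from Theorem \ref{mT1}: under the stronger hypothesis $\mathrm{(M2)}$ all the metric quantities in Theorem \ref{mT1} collapse to powers of radii, and the two alternatives there can then be checked from the assumed divergence together with $u$-regularity. I would first dispose of an easy case: if $\limsup_{n\to\infty}\varphi(u_n)/\rho(u_n)>0$, the first alternative hypothesis of Theorem \ref{mT1} holds outright and there is nothing further to prove; this also absorbs the degenerate possibility that $\varphi$ does not tend to $0$, since then the ratio tends to infinity. So from now on I may assume $\varphi(u_n)/\rho(u_n)\to 0$, hence in particular $\varphi(u_n),\rho(u_n)\le r_0$ for all large $n$.

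Next I would translate the metric data. By $\mathrm{(M2)}$ one has $m(B_n(r))\asymp r^{\delta}$ for all $r\le r_0$, with implied constants independent of $n$, so
\[
	\frac{m(B_n(\varphi(u_n)))}{m(B_n(\rho(u_n)))}\asymp\left(\frac{\varphi(u_n)}{\rho(u_n)}\right)^{\delta}.
\]
Hence the hypothesis $\sum_n(\varphi(u_n)/\rho(u_n))^{\delta}=\infty$ is exactly the first of the two conditions in the second alternative of Theorem \ref{mT1}. Write $S_Q=\sum_{n=1}^{Q}(\varphi(u_n)/\rho(u_n))^{\delta}$, so $S_Q\to\infty$.

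The key step — and the one I expect to be the only genuine obstacle — is verifying the quasi-independence-on-average inequality of Theorem \ref{mT1}, i.e. that its left-hand double sum is $\ll S_Q^{2}$ for $Q$ large. Using $\mathrm{(M2)}$ this left-hand side is comparable to
\[
	\sum_{s=1}^{Q-1}\rho(u_s)^{-\delta}\sum_{\substack{s+1\le t\le Q:\\ \varphi(u_s)<\rho(u_t)}}\varphi(u_t)^{\delta},
\]
and here $u$-regularity is what makes the inner sum summable. If $\rho$ is $u$-regular, say $\rho(u_{n+1})\le\lambda\rho(u_n)$ with $\lambda\in(0,1)$, then $\rho(u_t)\le\lambda^{t-s}\rho(u_s)$ for $t>s$; writing $\varphi(u_t)^{\delta}=\rho(u_t)^{\delta}(\varphi(u_t)/\rho(u_t))^{\delta}$, pulling out $\rho(u_s)^{\delta}$, and interchanging the order of summation leaves a convergent geometric factor and the bound $\ll S_Q$. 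If instead $\varphi$ is $u$-regular one bounds $\varphi(u_t)^{\delta}\le\lambda^{(t-s)\delta}\varphi(u_s)^{\delta}$ directly, so the inner sum is $\ll\varphi(u_s)^{\delta}$ and the whole expression is $\ll\sum_s(\varphi(u_s)/\rho(u_s))^{\delta}\le S_Q$. Either way the left-hand side is $\ll S_Q$, and since $S_Q\to\infty$ we have $S_Q\le S_Q^{2}$ for all large $Q$, which yields the required inequality.

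With both conditions of the second alternative checked, Theorem \ref{mT1} gives $m(\Lambda(\varphi))>0$. Finally, the extra hypotheses in the corollary — that every open subset of $\Omega$ is $m$-measurable and that $(\mathcal{R},\beta)$ is locally $m$-ubiquitous relative to $(\rho,l,u)$ — are precisely those in the last sentence of Theorem \ref{mT1}, which then upgrades the conclusion to $m(\Lambda(\varphi))=1$. Apart from the geometric-series estimate for the quasi-independence sum, everything reduces to routine bookkeeping with the comparison $m(B_n(r))\asymp r^{\delta}$.
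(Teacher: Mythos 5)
The thesis does not prove Corollary \ref{cor2}: it is quoted from \cite{limsup} with only the remark that it holds when the measure satisfies $\mathrm{(M2)}$ rather than just $\mathrm{(M1)}$, so there is no internal proof to compare against. Your argument, however, is correct and is the natural (and essentially the only sensible) derivation of the corollary from Theorem \ref{mT1}: you reduce to the second alternative of Theorem \ref{mT1} after disposing of the $\limsup_{n}\varphi(u_n)/\rho(u_n)>0$ case, use $\mathrm{(M2)}$ to replace $m(B_n(r))$ by $r^{\delta}$ up to constants (noting $\rho(u_n)\to 0$ and, in the remaining case, $\varphi(u_n)\to 0$ so the radii eventually fall below $r_0$), and verify the quasi-independence-on-average bound by extracting a geometric series from the $u$-regularity of $\rho$ (or of $\varphi$) after writing $\varphi(u_t)^{\delta}=\rho(u_t)^{\delta}\bigl(\varphi(u_t)/\rho(u_t)\bigr)^{\delta}$, which gives a bound $\ll S_Q$ and hence $\ll S_Q^2$ once $S_Q\geq 1$. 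The passage from positive measure to full measure under local ubiquity and measurability of open sets is exactly the last sentence of Theorem \ref{mT1}, as you say. All the steps are sound; the only thing worth noting is that, since the ambient measure is assumed non-atomic, the case $\delta=0$ in $\mathrm{(M2)}$ cannot occur, so the geometric-series estimates really do have ratio $\lambda^{\delta}<1$.
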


\begin{remark*}
	 By choosing the right sequence $u$, the additional requirement that the function $\varphi$ is $u$-regular is easily satisfied in the classical example of $\psi$-approximability with $\varphi(q)=\psi(q)/q$ . Hence, this corollary is particularly useful as it allows us to prove the divergence case of Khintchine's Theorem. Furthermore, Corollary \ref{cor2} will be vital in the proof of Theorem \ref{thm:lines}, a Khintchine-type result for affine coordinate subspaces (see Section \ref{sec:ubiquity}).
\end{remark*}

Next we turn to subsequent results of Theorem \ref{hT1}. While Theorem \ref{hT1} itself is all we need to complete the proof of Jarn\'ik's Theorem, the following statement is formulated in a slightly simpler way and will be referred to in the proof of Theorem \ref{HDfibres}, a partial Jarn\'ik-type analogue to Theorems \ref{thm:subspaces} and \ref{thm:lines}.

\begin{corollary} \label{cor4}
	Let $(\Omega,d)$ be a compact metric space equipped with a probability measure $m$ satisfying condition $\mathrm{(M2)}$. Suppose that $(\mathcal{R},\beta)$ is a locally $m$-ubiquitous system relative to $(\rho,l,u)$ and that $\varphi$ is an approximating function. For $0<s<\delta$, define
	\begin{equation*}
		g(r)=\varphi(r)^{s}\rho(r)^{-\delta},\quad \text{ and }\quad G=\limsup\limits_{n\rightarrow\infty}g(u_n).
	\end{equation*}
	\begin{itemize}
		\item
			Suppose that $G=0$ and that either $\varphi$ or $\rho$ is $u$-regular. Then
			\begin{equation*}
				\mathcal{H}^s(\Lambda(\varphi))=\infty\quad \text{ if }\quad \sum\limits_{n=1}^{\infty}g(u_n)=\infty.
			\end{equation*}
		\item
			Suppose that $0<G\leq\infty$. Then $\mathcal{H}^s(\Lambda(\varphi))=\infty.$
	\end{itemize}
\end{corollary}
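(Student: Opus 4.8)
The plan is to obtain Corollary \ref{cor4} from Theorem \ref{hT1} by specialising to $f(r)=r^{s}$. First I would observe that for $0<s<\delta$ the function $f(r)=r^{s}$ is a dimension function with $\mathcal{H}^{f}=\mathcal{H}^{s}$, and that $r^{-\delta}f(r)=r^{s-\delta}\to\infty$ and is decreasing as $r\to 0$, so all hypotheses of Theorem \ref{hT1} on $f$ are satisfied; moreover $g(r)=f(\varphi(r))\rho(r)^{-\delta}=\varphi(r)^{s}\rho(r)^{-\delta}$ and the quantity $G$ then agree with those in the statement. If $0<G\leq\infty$, the second bullet of Theorem \ref{hT1} gives $\mathcal{H}^{s}(\Lambda(\varphi))=\infty$ at once; if $G=0$ and $\rho$ is $u$-regular, the first bullet of Theorem \ref{hT1} applies directly. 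So the only genuinely new case is $G=0$ with $\varphi$ (but possibly not $\rho$) being $u$-regular.

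In that case I would pass to a subsequence of $u$ along which $\rho$ becomes regular. Put $h_{n}=\rho(u_{n})^{-\delta}$; since $\rho(u_{n})\to 0$ we have $h_{n}\to\infty$. Fix $\theta>1$, take $n_{1}$ past the index where the $u$-regularity of $\varphi$ becomes active, and define recursively $n_{k+1}=\min\{\,n>n_{k}:h_{n}\geq\theta h_{n_{k}}\,\}$, which is legitimate since $h_{n}\to\infty$. Writing $s=(u_{n_{k}})_{k}$, we get $\rho(u_{n_{k+1}})\leq\theta^{-1/\delta}\rho(u_{n_{k}})$, so $\rho$ is $s$-regular. Since $G=0$ forces $g(u_{n})\to 0$, the corresponding $\limsup$ along $s$ is still $0$; since $s$ is a subsequence of $u$, local $m$-ubiquity relative to $(\rho,l,s)$ is inherited and, crucially, $\Lambda(\varphi)$ is unchanged.

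It then remains to check that $\sum_{k}g(u_{n_{k}})=\infty$, and this is the step I expect to be the main obstacle: passing to a subsequence to gain regularity of $\rho$ could in principle destroy the divergence of the sum. Here the $u$-regularity of $\varphi$ saves the situation. For $n$ in the block $n_{k}\leq n<n_{k+1}$ one has $h_{n}<\theta h_{n_{k}}$ by minimality of $n_{k+1}$, and $\varphi(u_{n})^{s}\leq\lambda^{s(n-n_{k})}\varphi(u_{n_{k}})^{s}$ with $\lambda<1$ the regularity constant of $\varphi$; multiplying gives $g(u_{n})<\theta\lambda^{s(n-n_{k})}g(u_{n_{k}})$, so summing the geometric series over the block yields $\sum_{n_{k}\leq n<n_{k+1}}g(u_{n})<\frac{\theta}{1-\lambda^{s}}\,g(u_{n_{k}})$. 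Summing over $k$ gives $\sum_{n\geq n_{1}}g(u_{n})\leq\frac{\theta}{1-\lambda^{s}}\sum_{k}g(u_{n_{k}})$, so the divergence is inherited by the subsequence. Applying the first bullet of Theorem \ref{hT1} with the sequence $s$ in place of $u$ now gives $\mathcal{H}^{s}(\Lambda(\varphi))=\infty$, completing the argument.
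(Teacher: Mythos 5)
Your derivation is correct, and it fills in precisely the point the paper only gestures at (``allows us to weaken the regularity condition on $\rho$''): for the power dimension function $f(r)=r^{s}$ with $0<s<\delta$, the hypotheses on $f$ in Theorem \ref{hT1} are automatic, and the only genuinely new content is the case $G=0$ with $\varphi$ (but not necessarily $\rho$) being $u$-regular. Your subsequence extraction handles that case cleanly: choosing $n_{k+1}=\min\{n>n_{k}:\rho(u_{n})^{-\delta}\geq\theta\,\rho(u_{n_{k}})^{-\delta}\}$ produces a subsequence along which $\rho$ is regular with constant $\theta^{-1/\delta}<1$, local ubiquity and $\Lambda(\varphi)$ are unaffected (as the paper notes, passing to a subsequence of $u$ preserves ubiquity), $G=0$ persists trivially, and the block estimate $g(u_{n})<\theta\lambda^{s(n-n_{k})}g(u_{n_{k}})$ for $n_{k}\leq n<n_{k+1}$ shows divergence of $\sum_{n}g(u_{n})$ forces divergence of $\sum_{k}g(u_{n_{k}})$. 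This is exactly where the special form $f(r)=r^{s}$ is used: the $u$-regularity of $\varphi$ becomes geometric decay of $\varphi(u_{n})^{s}=f(\varphi(u_{n}))$, which would not follow for a general dimension function $f$, matching the paper's explanation of why the weakened hypothesis is available only in the Hausdorff $s$-measure setting. The only cosmetic quibble is that you reuse the letter $s$ for both the Hausdorff exponent and the subsequence; a different symbol would avoid the clash, but it does not affect the argument.
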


Note that Corollary \ref{cor4} only applies to Hausdorff $s$-measures rather than the more general class of measures $\cH^f$. This means that the growth conditions on the dimension function in Theorem \ref{hT1} are trivially satisfied and allows us to weaken the regularity condition on $\rho$. As a consequence of the second part of Corollary \ref{cor4} we obtain the following dimension formulae for $\Lambda(\varphi)$.

\begin{corollary}\label{cor5}
	Let $(\Omega,d)$ be a compact metric space equipped with a probability measure $m$ satisfying condition $\mathrm{(M2)}$. Suppose that $(\mathcal{R},\beta)$ is a locally $m$-ubiquitous system relative to $(\rho,l,u)$ and that $\varphi$ is an approximating function.
	\begin{itemize}
		\item
			If $\ \lim\limits_{n\rightarrow\infty}\varphi(u_n)/\rho(u_n)=0$, then
			\vspace{-2ex} 
			\begin{equation*}
				\dim\Lambda(\varphi)\geq \sigma\delta,\quad \text{ where }\quad \sigma:=\limsup\limits_{n\rightarrow\infty}\frac{\log\rho(u_n)}{\log\varphi(u_n)}.
			\end{equation*}			 
			Furthermore, if $\ \liminf\limits_{n\rightarrow\infty}\rho(u_n)/\varphi(u_n)^{\sigma}<\infty$, then $\mathcal{H}^{\sigma\delta}(\Lambda(\varphi))=\infty$.
		 \item
		 	If $\ \limsup\limits_{n\rightarrow\infty}\varphi(u_n)/\rho(u_n)>0,$ then $0<\mathcal{H}^{\delta}(\Lambda(\varphi))<\infty$ and so $\dim\Lambda(\varphi)=\delta$.
	\end{itemize}
\end{corollary}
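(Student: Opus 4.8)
The plan is to deduce Corollary~\ref{cor5} from Corollary~\ref{cor4} and Theorem~\ref{mT1} by choosing the right exponent $s$ in each case; there is no genuinely new argument, the work is entirely in rewriting the hypotheses in terms of $G=\limsup_{n\to\infty}g(u_n)$. One preliminary remark I would make throughout: condition $\mathrm{(M2)}$ makes $m$ Ahlfors $\delta$-regular, so on $\Omega$ the measures $m$ and $\cH^{\delta}$ are comparable; in particular, since $m$ is a probability measure and $\Omega$ is compact, $\cH^{\delta}(\Omega)<\infty$ — cover $\Omega$ by the radius-$r$ balls centred at a maximal $r$-separated subset, of which there are $\ll r^{-\delta}$ because the concentric radius-$(r/2)$ balls are disjoint and each has $m$-measure $\gg r^{\delta}$.

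For the first item, assume $\sigma>0$ (otherwise the claim $\dim\Lambda(\varphi)\ge 0$ is vacuous) and fix any $s$ with $0<s<\sigma\delta$, noting $s<\sigma\delta\le\delta$ so that Corollary~\ref{cor4} applies. The hypothesis $\lim_n\varphi(u_n)/\rho(u_n)=0$ forces $0<\varphi(u_n)\le\rho(u_n)<1$ for all large $n$, hence $\log\varphi(u_n)<\log\rho(u_n)<0$ and $\sigma\in[0,1]$. Since $s/\delta<\sigma=\limsup_n\frac{\log\rho(u_n)}{\log\varphi(u_n)}$, there are infinitely many $n$ with $\frac{\log\rho(u_n)}{\log\varphi(u_n)}>\frac{s}{\delta}$; multiplying by the negative quantity $\delta\log\varphi(u_n)$ reverses this to $s\log\varphi(u_n)-\delta\log\rho(u_n)>0$, i.e. $g(u_n)=\varphi(u_n)^{s}\rho(u_n)^{-\delta}>1$ for those $n$, so $G\ge 1>0$. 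The second alternative of Corollary~\ref{cor4} then gives $\cH^{s}(\Lambda(\varphi))=\infty$, hence $\dim\Lambda(\varphi)\ge s$; letting $s\uparrow\sigma\delta$ yields $\dim\Lambda(\varphi)\ge\sigma\delta$. For the ``furthermore'', in the genuine range $0<\sigma<1$ I would take $s=\sigma\delta$ directly: then $g(u_n)=\bigl(\rho(u_n)/\varphi(u_n)^{\sigma}\bigr)^{-\delta}$, so the extra hypothesis $\liminf_n\rho(u_n)/\varphi(u_n)^{\sigma}<\infty$ is exactly $G=\bigl(\liminf_n\rho(u_n)/\varphi(u_n)^{\sigma}\bigr)^{-\delta}\in(0,\infty]$, and Corollary~\ref{cor4} again gives $\cH^{\sigma\delta}(\Lambda(\varphi))=\infty$. (The boundary values are degenerate: $\sigma=1$ is incompatible with $\lim_n\varphi(u_n)/\rho(u_n)=0$ once the $\liminf$ condition is imposed, and $\sigma=0$ carries no content.)

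For the second item I would treat the two inequalities separately. The upper bound is immediate from the preliminary remark: $\Lambda(\varphi)\subseteq\Omega$, so $\cH^{\delta}(\Lambda(\varphi))\le\cH^{\delta}(\Omega)<\infty$. For the lower bound, observe that the hypothesis $\limsup_n\varphi(u_n)/\rho(u_n)>0$ is precisely the first alternative in Theorem~\ref{mT1}; since local $m$-ubiquity implies global $m$-ubiquity, Theorem~\ref{mT1} gives $m(\Lambda(\varphi))>0$, and the Ahlfors regularity from $\mathrm{(M2)}$ — via the mass distribution principle, $m(B(x,r))\le br^{\delta}$ forces $\cH^{\delta}(A)\gg m(A)$ — upgrades this to $\cH^{\delta}(\Lambda(\varphi))>0$. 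Combining the two bounds gives $0<\cH^{\delta}(\Lambda(\varphi))<\infty$, and therefore $\dim\Lambda(\varphi)=\delta$.

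The only delicate point — and the one I would flag as the main (minor) obstacle — is the sign bookkeeping when passing between $\frac{\log\rho(u_n)}{\log\varphi(u_n)}$ and $g(u_n)$, together with verifying that the chosen $s$ always lies in the admissible range $(0,\delta)$ of Corollary~\ref{cor4} and that the degenerate values $\sigma\in\{0,1\}$ are harmless; the comparability $m\asymp\cH^{\delta}$ furnished by $\mathrm{(M2)}$, while standard, underpins both inequalities of the second item and so should be recorded explicitly.
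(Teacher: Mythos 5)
Your proposal is correct. The thesis does not actually reproduce a proof of Corollary~\ref{cor5} (it is imported from \cite{limsup} with the remark that it is ``a consequence of the second part of Corollary~\ref{cor4}''), but the route you take is exactly the natural one consistent with that indication: for the dimension lower bound you verify $G>0$ from the sign bookkeeping on $\log\rho(u_n)/\log\varphi(u_n)$ and invoke the second alternative of Corollary~\ref{cor4}, and for the $\cH^\delta$-bounds you correctly observe that Corollary~\ref{cor4} (being restricted to $s<\delta$) cannot give $\cH^\delta(\Lambda(\varphi))>0$, so you instead pass through Theorem~\ref{mT1} (first alternative, local $\Rightarrow$ global ubiquity, $\mathrm{(M2)}\Rightarrow\mathrm{(M1)}$) combined with the Ahlfors-regularity comparison $m\asymp\cH^\delta$. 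Your handling of the degenerate values $\sigma\in\{0,1\}$ and the sign reversal when dividing by the negative $\log\varphi(u_n)$ are exactly the delicate points, and you got them right.
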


\section{The classical results}\label{Sec:ClassicalResults}

Next we show how we can derive the divergence parts of the theorems of Khintchine and Jarn\'ik as consequences of Lemma \ref{K-J-lemma} and the statements above. We will also make use of the following observation.

\begin{lemma}[Cauchy condensation test]\label{sum-lemma}
	Let $\phi:\mathbb{R}^+\rightarrow\mathbb{R}^+$ be a positive decreasing function and let $k>1$. Then
	\begin{equation}\label{Eqn:Cauchy}
		\sum\limits_{q=1}^{\infty}\phi(q)=\infty\quad \Longleftrightarrow\quad \sum\limits_{n=1}^{\infty} k^n\phi(k^n)=\infty.
	\end{equation}
\end{lemma}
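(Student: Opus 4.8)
The plan is to prove the contrapositive-friendly chain of inequalities that sandwiches the tail of $\sum\phi(q)$ between two multiples of $\sum_n k^n\phi(k^n)$, exploiting only that $\phi$ is positive and decreasing. First I would group the terms of $\sum_{q=1}^\infty\phi(q)$ into dyadic-type blocks along the geometric scale $k^n$: for each $n\ge 1$, consider the block of indices $q$ with $k^{n-1}<q\le k^n$ (taking $k$ to be an integer is harmless, or one can work with $\lfloor k^n\rfloor$ throughout — I would remark that the integer case suffices since replacing $k$ by $\lceil k\rceil$ only strengthens the hypothesis on one side). There are at most $k^n$ integers in such a block, and since $\phi$ is decreasing each term is at most $\phi(k^{n-1})$, giving
\begin{equation*}
	\sum_{k^{n-1}<q\le k^n}\phi(q)\le k^n\phi(k^{n-1})=k\cdot k^{n-1}\phi(k^{n-1}).
\end{equation*}
Summing over $n$ yields $\sum_{q>1}\phi(q)\le k\sum_{n\ge 0}k^{n}\phi(k^{n})$, so convergence of $\sum_n k^n\phi(k^n)$ forces convergence of $\sum_q\phi(q)$; equivalently, divergence of $\sum_q\phi(q)$ forces divergence of the condensed series, which is the $\Rightarrow$ direction of \eqref{Eqn:Cauchy}.

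For the reverse direction I would bound the block from below. Each block $k^{n-1}<q\le k^n$ contains at least $k^n-k^{n-1}=k^{n-1}(k-1)$ integers (for $n$ large, or all $n\ge 1$ in the integer case), and each term is at least $\phi(k^n)$, so
\begin{equation*}
	\sum_{k^{n-1}<q\le k^n}\phi(q)\ge k^{n-1}(k-1)\phi(k^n)=\frac{k-1}{k}\,k^{n}\phi(k^{n}).
\end{equation*}
Summing over $n\ge 1$ gives $\sum_{q>1}\phi(q)\ge\frac{k-1}{k}\sum_{n\ge 1}k^n\phi(k^n)$, so divergence of $\sum_q\phi(q)$ implies divergence of the condensed series. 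Combining the two inequalities proves the equivalence.

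There is no serious obstacle here; the only point requiring a little care is the treatment of non-integer $k$, where $\lfloor k^n\rfloor$ replaces $k^n$ as the block endpoint and one must check the counting bounds $\lfloor k^n\rfloor-\lfloor k^{n-1}\rfloor\asymp k^n$ still hold for $n$ large (which they do, with implied constants depending only on $k$, since $k>1$). Since the statement only concerns whether the sums diverge, finitely many initial terms and multiplicative constants are irrelevant, so these minor adjustments do not affect the conclusion. I would phrase the whole argument as the two displayed inequalities above followed by the one-line observation that each yields one implication in \eqref{Eqn:Cauchy}.
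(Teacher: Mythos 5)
Your proof is correct and follows essentially the same route as the paper: group the indices $q$ into geometric blocks $(k^{n-1},k^n]$, use monotonicity of $\phi$ to bound each block above by (number of terms)$\times\phi(k^{n-1})$ and below by (number of terms)$\times\phi(k^n)$, and observe that the resulting two-sided comparison between $\sum_q\phi(q)$ and $\sum_n k^n\phi(k^n)$ (up to harmless constant factors) gives the equivalence; the remark about non-integer $k$ corresponds to the paper's own closing remark.
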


\begin{proof}
	Fix an integer $k>1$. Any $q\in\NN$ is contained in an interval of the form $[k^n,k^{n-1})$ with $0\leq n\in\ZZ$. The function $\phi$ is decreasing, hence
	\begin{equation*}
		\phi(k^n)\geq\phi(q)\geq\phi(k^{n+1}).
	\end{equation*}
	The interval $[k^n,k^{n-1})$ contains
	\begin{equation*}
		k^{n+1}-k^n=k^n(k-1)=k^{n+1}\left(1-\frac{1}{k}\right)
	\end{equation*}
	integer points. Thus, we get upper and lower bounds for $\sum_{q\in\NN}\phi(q)$ by
	\begin{align*}
		(k-1)\sum\limits_{n=0}^{\infty}k^n\phi(k^n)&=\sum\limits_{n=0}^{\infty}(k^{n+1}-k^n)\phi(k^n)\\[1ex]
		&\geq\sum\limits_{q=1}^{\infty}\phi(q)\\[1ex]
		&\geq\sum\limits_{n=0}^{\infty}(k^{n+1}-k^n)\phi(k^{n+1})=\left(1-\frac{1}{k}\right)\sum\limits_{n=1}^{\infty}k^n\phi(k^n).
	\end{align*}
	The addition of the term $\phi(1)$ to the first series does not affect whether the sum converges or diverges. Hence, the sum $\sum_{q\in\NN}\phi(q)$ is essentially bounded from above and below by constant multiples of $\sum_{k\in\NN}k^n\phi(k^n)$, which implies \eqref{Eqn:Cauchy}.
\end{proof}

\newpage

\begin{remark*}
	Note that the argument above only proves \eqref{Eqn:Cauchy} for integers $k$. However, it can easily be extended to any real number $k>1$ by making use of the fact that
	\begin{equation*}
		\sum\limits_{q=1}^{\infty}\phi(q)=\infty\quad \Longleftrightarrow\quad \int\limits_{t=1}^{\infty}\phi(t)=\infty
	\end{equation*}
	for any positive decreasing function $\phi$ and by splitting the integral domain into intervals of the form $[k^n,k^{n-1})$ as above.
\end{remark*}

Now we can turn to the proof of the divergence case of Khintchine's Theorem in the one-dimensional case, see Theorem \ref{Khintchine}.

\begin{corollary}
	Let $W(\psi)$ be the set of $\psi$-approximable points in $[0,1]$. Then $\lambda(W(\psi))=1$ if $\ \sum\limits_{q=1}^{\infty}\psi(q)=\infty$.
\end{corollary}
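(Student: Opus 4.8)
The plan is to deduce this from Corollary \ref{cor2}, applied to the ubiquitous system identified in Lemma \ref{K-J-lemma}. Recall from \eqref{Eqn:list} that $W(\psi)=\Lambda(\varphi)$ where the resonant sets are the rationals $p/q$, the weight is $\beta_{(p,q)}=q$, the ambient space is $\Omega=[0,1]$ with measure $m=\lambda$, and $\varphi(q)=\psi(q)/q$. Since $\psi$ is positive and decreasing, so is $\varphi$, so $\varphi$ is a legitimate approximating function; and $\lambda$ on $[0,1]$ satisfies (M2) with $\delta=1$ and has all open sets measurable, so the hypotheses of Corollary \ref{cor2} that concern the ambient space are in place. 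It remains to verify the two conditions on the system and the sum.

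First, Lemma \ref{K-J-lemma} provides a constant $k>1$ for which $(\mathcal{R},\beta)$ is locally $\lambda$-ubiquitous (hence also globally $\lambda$-ubiquitous) relative to $(\rho,l,u)$ with $u_n=k^n$ and $\rho(t)=kt^{-2}$. Thus $\rho(u_n)=k^{1-2n}$, so $\rho(u_{n+1})/\rho(u_n)=k^{-2}<1$ and $\rho$ is $u$-regular — which is all Corollary \ref{cor2} needs, so we do not have to impose any regularity on the arbitrary function $\psi$. Second, I would compute
\[
	\frac{\varphi(u_n)}{\rho(u_n)}=\frac{\psi(k^n)/k^n}{k^{1-2n}}=\frac{1}{k}\,k^n\psi(k^n),
\]
whence $\sum_{n\geq 1}(\varphi(u_n)/\rho(u_n))^{\delta}=k^{-1}\sum_{n\geq 1}k^n\psi(k^n)$ for $\delta=1$. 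By the Cauchy condensation test (Lemma \ref{sum-lemma}), the hypothesis $\sum_{q}\psi(q)=\infty$ is equivalent to $\sum_{n}k^n\psi(k^n)=\infty$, so the divergence condition of Corollary \ref{cor2} is satisfied.

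With all hypotheses checked, Corollary \ref{cor2} gives $\lambda(\Lambda(\varphi))=1$, i.e. $\lambda(W(\psi))=1$, which is the claim. I do not expect a genuine obstacle: the substantive content — upgrading local ubiquity plus a divergent volume sum to full measure — lives inside Corollary \ref{cor2}. The only points requiring care are bookkeeping ones: that the ubiquity-side function is $\varphi(q)=\psi(q)/q$ rather than $\psi$ itself, that $W(\psi)=\Lambda(\varphi)$ under this identification, and that it is enough for $\rho$ (automatically $u$-regular along the geometric sequence $u_n=k^n$) rather than $\psi$ to satisfy the regularity requirement.
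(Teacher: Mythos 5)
Your proposal is correct and follows exactly the paper's own argument: invoke Lemma \ref{K-J-lemma} for local ubiquity with $u_n=k^n$ and $\rho(t)=kt^{-2}$, note that $\rho$ is $u$-regular, apply Corollary \ref{cor2} to $\varphi(q)=\psi(q)/q$ with $\delta=1$, and translate the divergence condition via the Cauchy condensation test (Lemma \ref{sum-lemma}). The only difference is cosmetic — you carry a prefactor $1/k$ where the paper writes $k$ (the paper's prefactor appears to be a harmless slip) — and this has no bearing on divergence.
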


\begin{proof}
	Remember that in Lemma \ref{K-J-lemma} we have established that the pair $(\mathcal{R},\beta)$ defined in \eqref{Eqn:list} is locally ubiquitous with respect to $(\rho,l,u)$, where $l_{n+1}=u_n=k^n$ and $\rho:r\rightarrow k r^{-2}$. Clearly, $\rho$ is $u$-regular, so we can apply Corollary \ref{cor2} to $\varphi(q)=\psi(q)/q$. This tells us that $m(W(\psi))=1$, if
	\begin{equation*}
		k\sum\limits_{n=1}^{\infty}\frac{\psi(k^n)k^{2n}}{k^n}=k\sum\limits_{n=1}^{\infty}\psi(k^n)k^n=\infty.
		\vspace{2ex}
	\end{equation*}
	Now the statement directly follows by applying Lemma \ref{sum-lemma}.
\end{proof}

\begin{remark*}
	In the $n$-dimensional case, by adjusting $\rho$ and $\delta$ appropriately, we get in an analogous fashion that $\lambda_n(W(\psi))=1$ if 
	\begin{equation*}
		k^n\sum\limits_{l=1}^{\infty}\frac{\psi(k^l)^n \left(\left(k^l\right)^{1+\frac{1}{n}}\right)^n}{k^{ln}}=k^n\sum\limits_{l=1}^{\infty}\psi(k^l)^n k^{l} =\infty,
	\end{equation*}
	which by Lemma \ref{sum-lemma} is equivalent to $\sum_{q\in\NN}\varphi(q)^n=\infty$.
\end{remark*}

This proves the divergence part of Khintchine's Theorem for arbitrary dimensions and hence completes the proof of the theorem since we already obtained the convergence part in Chapter \ref{Ch:Introduction}.

The divergence part of Jarn\'ik's Theorem follows directly from Theorem \ref{hT1}. Note that we will not differ between the two cases $G=0$ and $G>0$ since the function $\rho$ is $u$-regular and $G>0$ obviously implies divergence in \eqref{ineq10}.

\begin{samepage}
\begin{corollary}
	Let $W(\psi)$ be the set of $\psi$-approximable points in $[0,1]$ and $s\in(0,1)$. Then 
	\begin{equation*}
		\mathcal{H}^s(W(\psi))=\infty\quad \text{ if }\quad \sum\limits_{q=1}^{\infty}q^{1-s}\psi(q)^s=\infty.
	\end{equation*}		
\end{corollary}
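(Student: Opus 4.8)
The plan is to feed the local ubiquity statement of Lemma~\ref{K-J-lemma} into Corollary~\ref{cor4}, the Hausdorff-measure consequence of ubiquity, exactly as the divergence part of Khintchine's Theorem was deduced above from Corollary~\ref{cor2}. Recall from Lemma~\ref{K-J-lemma} that the pair $(\mathcal{R},\beta)$ defined in \eqref{Eqn:list} is a local $\lambda$-ubiquitous system relative to $(\rho,l,u)$ with $l_{n+1}=u_n=k^n$ and $\rho\colon r\mapsto kr^{-2}$, and that the one-dimensional Lebesgue measure $\lambda$ satisfies condition $\mathrm{(M2)}$ with $\delta=1$. Here $W(\psi)=\Lambda(\varphi)$ for the approximating function $\varphi(q)=\psi(q)/q$, as recorded after \eqref{Eqn:list}. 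I would apply Corollary~\ref{cor4} to this data with $\delta=1$ and $s\in(0,1)$.

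First I would compute the function $g$ appearing in Corollary~\ref{cor4}. With $\delta=1$ one gets
\begin{equation*}
	g(r)=\varphi(r)^s\rho(r)^{-1}=\left(\frac{\psi(r)}{r}\right)^{s}k^{-1}r^{2}=k^{-1}r^{2-s}\psi(r)^{s},
\end{equation*}
so that $g(u_n)=k^{-1}k^{n(2-s)}\psi(k^n)^{s}$. Since $\rho$ is $u$-regular (indeed $\rho(u_{n+1})/\rho(u_n)=k^{-2}<1$), Corollary~\ref{cor4} applies without having to distinguish the cases $G=0$ and $G>0$: in either case $\mathcal{H}^s(W(\psi))=\infty$ as soon as
\begin{equation*}
	\sum\limits_{n=1}^{\infty}g(u_n)=k^{-1}\sum\limits_{n=1}^{\infty}k^{n(2-s)}\psi(k^n)^{s}=\infty.
\end{equation*}

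\textbf{Reducing to the hypothesis.}
It then remains to derive $\sum_{n}k^{n(2-s)}\psi(k^n)^{s}=\infty$ from the assumption $\sum_{q}q^{1-s}\psi(q)^{s}=\infty$. This is a Cauchy-condensation-type reduction in the spirit of Lemma~\ref{sum-lemma}, of which only the ``easy'' half is needed, so the possible non-monotonicity of $q\mapsto q^{1-s}\psi(q)^{s}$ causes no trouble. Concretely, for $q$ in the block $k^{n-1}\le q<k^{n}$ one has $q^{1-s}\le k^{n(1-s)}$ because $1-s>0$, and $\psi(q)^{s}\le\psi(k^{n-1})^{s}$ because $\psi$ is decreasing; as the block contains fewer than $k^{n}$ integers,
\begin{equation*}
	\sum\limits_{k^{n-1}\le q<k^{n}}q^{1-s}\psi(q)^{s}\le k^{n}\cdot k^{n(1-s)}\psi(k^{n-1})^{s}=k^{n(2-s)}\psi(k^{n-1})^{s}.
\end{equation*}
Summing over $n$ and reindexing yields $\sum_{q}q^{1-s}\psi(q)^{s}\ll\sum_{n}k^{n(2-s)}\psi(k^{n})^{s}$, so divergence of the left-hand side forces $\sum_{n}g(u_n)=\infty$, completing the proof. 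I expect no genuine obstacle here: all the analytic weight lies in the ubiquity machinery of Section~\ref{Sec:MainThms} and in Lemma~\ref{K-J-lemma}, and the only point needing a moment's care is precisely the non-monotonicity just mentioned, which is why one uses the one-sided block estimate rather than Lemma~\ref{sum-lemma} verbatim.
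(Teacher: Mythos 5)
Your argument is correct and follows essentially the same route as the paper: both feed the local ubiquity statement of Lemma~\ref{K-J-lemma} and the data $\varphi(q)=\psi(q)/q$, $\rho(r)=kr^{-2}$, $\delta=1$, $u_n=k^n$ into the Hausdorff-measure half of the ubiquity machinery (the paper invokes Theorem~\ref{hT1} with $f(r)=r^s$; you invoke its specialisation Corollary~\ref{cor4}, which is the same thing here), and both observe that $u$-regularity of $\rho$ makes the $G=0$/$G>0$ dichotomy irrelevant. One small difference worth noting: the paper closes by citing Lemma~\ref{sum-lemma} directly, but $\phi(q)=q^{1-s}\psi(q)^s$ need not be monotonic, so that lemma does not strictly apply as stated. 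You correctly sidestep this by proving only the one-sided block estimate $\sum_{q}q^{1-s}\psi(q)^s\ll\sum_{n}k^{n(2-s)}\psi(k^n)^s$, which is all that is needed for the implication and makes no monotonicity assumption on $q\mapsto q^{1-s}\psi(q)^s$; this is a slightly more careful version of the same reduction.
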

\end{samepage}

\begin{proof}
	We apply Theorem \ref{hT1} to the situation where $\varphi(q)=\psi(q)/q$, $\delta=1$, $u_n=k^n$, the $u$-regular ubiquitous function is $\rho:r\rightarrow kr^{-2}$ and the dimension function is given by $r\rightarrow r^s$. Since $s<1$, the function $r\rightarrow r^{s-1}$ is decreasing and diverges for $r\rightarrow 0$. We get that
	\begin{equation*}
		g(r)=k\psi(r)^s r^2/r^s
	\end{equation*}		
	and hence, $\mathcal{H}^s(W(\psi))=\infty$, if
	\begin{equation*}
		k\sum\limits_{n=1}^{\infty}\frac{\psi(k^n)^s k^{2n}}{k^{ns}}=k\sum\limits_{n=1}^{\infty},\psi(k^n)^s(k^n)^{1-s}k^n=\infty
	\end{equation*}
	which again due to Lemma \ref{sum-lemma} is exactly the case when $\sum_{q\in\NN}q^{1-s}\psi(q)^s=\infty$.
\end{proof}

This completes the proof of Jarn\'ik's Theorem in dimension $1$. In the $n$-dimensional case it can be established analogously that $\mathcal{H}^s(W(\psi))=\infty$ for $s\in(0,n)$ if
	\begin{equation*}
		k\sum\limits_{l=1}^{\infty}\frac{\psi(k^l)^s k^{(n+1)l}}{k^{ls}}=k\sum\limits_{l=1}^{\infty}\psi(k^l)^s(k^l)^{n-s}k^l=\infty,
	\end{equation*}
which happens precisely when $\sum_{q\in\NN}q^{n-s}\psi(q)^s$ diverges.

\begin{remark*}
	These proofs illustrate how easily both the theorems of Khintchine and Jarn\'ik follow once the major theorems of ubiquity theory are established. Note that Jarn\'ik's Theorem can be directly deduced from Khintchine's Theorem using the Mass Transference Principle as shown in Section \ref{secMTP}. Indeed, given the Mass Transference Principle, the real power of ubiquity is that it enables us to establish Khintchine type theorems with respect to the ambient measure.
\end{remark*}

\chapter{Rational approximation of affine coordinate subspaces of Euclidean space}\label{fibres}
\chaptermark{Approximation on fibres}

Khintchine's Theorem is sufficient to determine the Lebesgue measure $\lambda_n(W_n(\psi))$ for any given approximating function $\psi$, but it fails to answer more specific questions arising in a natural manner. For instance, if we consider an approximating function $\psi$ such that $\sum_{q\in\NN}\psi(q)^2=\infty$, we know that almost every pair $(\alpha,\beta)\in\I^2$ is $\psi$-approximable. Now we would like to know what happens if we fix the coordinate $\alpha$. Is it still true that almost every pair $(\alpha,\beta)\in\{\alpha\}\times\I$ is $\psi$-approximable? Essentially this means we want to obtain a one-dimensional Lebesgue measure statement from a two-dimensional setting. Khintchine's Theorem implies that
\begin{equation*}
	\lambda\left(\{\alpha\}\times\I\cap W_2(\psi)\right)=1\quad \text{ for }\lambda\text{-almost all }\alpha\in\I.
\end{equation*}
However, given any fixed $\alpha\in\I$, the set $\{\alpha\}\times\I$ is a null-set with respect to two-dimensional Lebesgue measure and so, a priori, we cannot say anything about the intersection $\{\alpha\}\times\I\cap W_2(\psi)$. This consideration is easily extended to higher dimensions.

\section{Preliminaries and the main results}

We want to investigate the following question. Let $\ell$ and $m$ be positive integers with $\ell+m=n$, and let $\psi$ be an approximating function. Fix $\balpha\in\I^{\ell}$ and define the \textit{fibre above $\balpha$} by
\begin{equation*}
	\F_n^{\balpha}:=\{\balpha\}\times\I^m\subset \I^n.
\end{equation*}
Then, is it true that
\begin{equation}\label{Eqn:Khin?}
	\lambda_{m}(\F_n^{\balpha}\cap W_n(\psi))=
	\begin{dcases}
		1,\quad &\text{ if }\quad\sum\limits_{q=1}^{\infty}\psi(q)^n=\infty \\[2ex]
		0,\quad &\text{ if }\quad\sum\limits_{q=1}^{\infty}\psi(q)^n<\infty
	\end{dcases}
	\quad\quad\textbf{?}
\end{equation}
Upon choosing a rational vector $\balpha$, it is easily established that the convergence part of \eqref{Eqn:Khin?} cannot be true. To see this, fix $\alpha=a/b\in\QQ$ with $b\in\NN$. Dirichlet's Theorem implies that for any $\beta\in\I$ there exist infinitely many $q\in\NN$ such that $\norm{q\beta}<1/q$. Hence,
\begin{equation*}
	\norm{bq\alpha}=0<\frac{b^2}{bq}=\psi(bq)
	\vspace{-2ex}
\end{equation*}
and
\begin{equation*}	
	\norm{bq\beta}<\frac{b}{q}=\frac{b^2}{bq}=\psi(bq),
\end{equation*}
where $\psi:\NN\rightarrow\RR^+$ is the approximating function given by $\psi(q)=b^2/q$. This shows that every point $(\alpha,\beta)$ in the set $\F_2^{\alpha}=\{\alpha\}\times\I\subset\RR^2$ is $\psi$-approximable and thus
\begin{equation*}
	\lambda(\F_2^{\alpha}\cap W_2(\psi))=1.
	\vspace{-2ex}
\end{equation*}
On the other hand, $\psi$ satisfies
\begin{equation*}
	\sum\limits_{q=1}^{\infty}\psi(q)^2=\sum\limits_{q=1}^{\infty}b^4q^{-2}<\infty.
\end{equation*}
Analogous examples work for any choice of $n$ and $\ell$. This shows it is worth treating the two sides of the problem separately. We will concentrate on the divergence side. A similar argument to above shows that any rational vector $\balpha$ satisfies the divergence part of \eqref{Eqn:Khin?}. Again, for simplicity we will just consider the case when $n=2$ and $\alpha=a/b\in\QQ$. Assume that $\psi$ is an approximating function satisfying $\sum_{q\in\NN}\psi(q)^2=\infty$ and define the function $\bar{\psi}$ by
\begin{equation*}
	\bar{\psi}(q)=\frac{\psi(bq)}{b}.
\end{equation*}
Clearly, monotonicity of $\psi$ implies monotonicity of $\bar{\psi}$ and it follows that
\begin{align*}
	\sum\limits_{q=1}^{\infty}\bar{\psi}(q)&=\sum\limits_{q=1}^{\infty}\frac{\psi(bq)}{b}=\frac{1}{b^2}\sum\limits_{q=1}^{\infty}b\psi(bq)\\[1ex]
	&\geq\frac{1}{b^2}\sum\limits_{q=1}^{\infty}\psi(bq)+\psi(bq+1)+\dots+\psi(bq+b-1)\\[1ex]
	&=\frac{1}{b^2}\sum\limits_{q=b}^{\infty}\psi(q)=\infty.
\end{align*}
Hence, by Khintchine's Theorem, almost every $\beta\in\I$ is $\bar{\psi}$ approximable. This implies that there are infinitely many $q\in\NN$ satisfying $\norm{q\beta}<\bar{\psi}(q)$ and thus
\begin{equation*}
	\norm{bq\beta}\leq b\norm{q\beta}<b\bar{\psi}(q)=\frac{b\psi(q)}{b}=\psi(bq)
\end{equation*}
for infinitely many $q\in\NN$. On the other hand, $\norm{bq\alpha}=0<\psi(bq)$ for any $q\in\NN$ and so for almost every $\beta\in\I$ the pair $(\alpha,\beta)$ is $\psi$-approximable.

\begin{remark*}
	Note that the above argument only make use of the property that $\sum_{q\in\NN}\psi(q)=\infty$ rather than the stronger assumption that $\sum_{q\in\NN}\psi(q)^2=\infty$. This argument extends to arbitrary dimensions and illustrates that picking a rational vector $\balpha=\boldsymbol{a}/b\in\QQ^{\ell}$ essentially reduces the problem of Diophantine approximation within $\F_n^{\balpha}$ to the $m$-dimensional case of Khintchine's Theorem. Throughout the rest of this chapter we will assume that $\balpha\notin\QQ^{\ell}$.
\end{remark*}

An affine coordinate subspace $\{\balpha\}\times\RR^m \subseteq\RR^n$ is said to be of \emph{Khintchine type for divergence} if $\F_n^{\balpha}$ satisfies the divergence case of \eqref{Eqn:Khin?}, i.e. if for any approximating function $\psi:\RR\to\RR^+$ such that $\sum_{q\in\NN}\psi(q)^n$ diverges, almost every point on $\{\balpha\}\times\RR^m$ is $\psi$-approximable. Intuitively, $\{\balpha\}\times\RR^m$ is of Khintchine type for divergence if its typical points behave like the typical points of Lebesgue measure with respect to the divergence case of Khintchine's theorem. The recent article~\cite{hyperplanes} addresses the issue for certain affine coordinate hyperplanes in $\RR^n$, where $n\geq 3$. There, sufficient conditions are given for a hyperplane to be of Khintchine type for divergence. However, the techniques of \cite{hyperplanes} are not capable of handling subspaces of codimension greater than one, nor those of large Diophantine type. Here, we overcome these difficulties by taking a different approach. We show that affine coordinate subspaces of dimension at least two are of Khintchine type for divergence, and we make substantial progress on the one-dimensional case. All of the following, unless otherwise noted, is joint work with Ram\'irez and Simmons \cite{mine}.

\begin{remark*}
	The question \eqref{Eqn:Khin?} can also be extended to other types of manifolds. A manifold $\mathcal{M}\subset\RR^n$ is called \textit{non-degenerate} if it is sufficiently curved to deviate from any hyperplane. Clearly, this differs from the case of affine (coordinate) subspaces, which are often referred to as \textit{degenerate} manifolds. It is widely believed that non-degeneracy is the right criterion for a manifold to be endowed with to allow a Khintchine-type theorem for $\mathcal{M}\cap W_n(\psi)$ in both the convergence and divergence case (see \cite{DAaspects} for more background information).
  
	\begin{Conjecture}\label{Con:DreamThm}
  		Let $\mathcal{M}$ be a $d$-dimensional non-degenerate submanifold of $\RR^n$, let $\mu_d$ be the normalised $d$-dimensional Lebesgue measure induced on $\mathcal{M}$ and let $\psi$ be an approximating function. Then
  		\begin{equation}\label{Eqn:DreamThm}
			\mu_d(\mathcal{M}\cap W_n(\psi))=
			\begin{dcases}
				1,\quad &\text{ if }\quad\sum\limits_{q=1}^{\infty}\psi(q)^n=\infty, \\[2ex]
				0,\quad &\text{ if }\quad\sum\limits_{q=1}^{\infty}\psi(q)^n<\infty.
			\end{dcases}
			\vspace{1ex}
		\end{equation}
	\end{Conjecture}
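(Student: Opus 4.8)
A proof of Conjecture~\ref{Con:DreamThm} would naturally split into a convergence half and a divergence half, and---exactly as in the classical case of Section~\ref{Sec:Basic}---these call for completely different machinery.

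For the convergence half the plan is a Borel--Cantelli argument (Lemma~\ref{BoCa}) carried out on $\mathcal{M}$ in place of $\I^n$. Writing $W_n(\psi)=\limsup_{q}A_n(\psi,q)$ as in Chapter~\ref{Ch:Introduction}, it is enough to prove
\begin{equation*}
	\sum_{q=1}^{\infty}\mu_d\bigl(\mathcal{M}\cap A_n(\psi,q)\bigr)<\infty .
\end{equation*}
A ball $B(\bp/q,\psi(q)/q)$ can meet $\mathcal{M}$ only when $\bp/q$ lies in a thin slab around $\mathcal{M}$; since $\mathcal{M}$ is non-degenerate, hence curves away from every hyperplane, one should be able to bound the number of such $\bp$ with $|\bp|\le q$ by $\ll q^{d}\psi(q)^{n-d}$ up to lower-order factors, while each intersection $B(\bp/q,\psi(q)/q)\cap\mathcal{M}$ carries $\mu_d$-measure $\ll(\psi(q)/q)^{d}$. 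Multiplying, the $q$-th summand is $\asymp\psi(q)^{n}$, and the series converges by hypothesis. The real content is the upper bound on rational points near $\mathcal{M}$; I would obtain it either by the circle/Fourier method or, more robustly, through the Dani correspondence together with the quantitative nondivergence estimates of Kleinbock--Margulis for flows on the space of unimodular lattices.

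For the divergence half the plan is to fit the problem into the ubiquity framework of Chapter~\ref{ubiquity}: take $\Omega=\mathcal{M}$ with $m=\mu_d$ (which satisfies (M2) with $\delta=d$ because $\mathcal{M}$ is non-degenerate), resonant sets $R_{(\bp,q)}=\bp/q$, weight $\beta=q$ and $\varphi(q)=\psi(q)/q$, so that $\mathcal{M}\cap W_n(\psi)=\Lambda(\varphi)$. To apply Theorem~\ref{mT1} (or Corollary~\ref{cor2}) and conclude $\mu_d(\Lambda(\varphi))=1$, I must produce a ubiquitous function $\rho$: I need to show that for a suitable $\rho$, of order roughly $q^{-1-1/d}$, the $\rho$-neighbourhoods of those $\bp/q$ with denominator in a dyadic block $k^{n-1}<q\le k^{n}$ cover a fixed proportion of every small ball on $\mathcal{M}$. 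This reduces to a sharp lower bound, uniform over balls, for the number of rationals within distance $\rho$ of $\mathcal{M}$ with denominator in the prescribed range, together with a spacing argument guaranteeing that these approximants are genuinely spread out rather than clustered near a proper subvariety. For affine coordinate subspaces this lower bound is elementary, and that is the route taken in the present chapter; for a genuinely curved $\mathcal{M}$ it is exactly the deep input, where one invokes equidistribution of expanding translates of horospherical subgroups (Kleinbock--Margulis, Shah). Once a ubiquity statement of this kind is in hand, Theorem~\ref{mT1} delivers the divergence case at once, and feeding the same data into the Mass Transference Principle (Theorem~\ref{MTP}) yields the corresponding Jarn\'ik-type refinement.

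The hard part is precisely that uniform lower-bound count. Convergence needs only an upper bound on rational points near $\mathcal{M}$, which is comparatively soft; divergence requires that $\mathcal{M}$ genuinely captures its expected share of rationals at every scale and at every location, which is an equidistribution phenomenon rather than a covering argument, and it is this that makes \eqref{Eqn:DreamThm} substantially harder than the affine-subspace case treated here and, at present, only a conjecture.
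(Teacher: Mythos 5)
The statement you are addressing is labelled \emph{Conjecture}~\ref{Con:DreamThm} in the paper and is not proved there: the paper merely catalogues the known partial results (extremality via Kleinbock--Margulis, the full result for non-degenerate planar curves by Vaughan--Velani and Beresnevich--Dickinson--Velani, divergence for analytic non-degenerate manifolds, convergence in high codimension, etc.). So there is no ``paper's own proof'' to compare against, and you correctly acknowledge in your final paragraph that what you have is a strategy, not a proof.

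Taken as a strategy outline, your plan matches the route actually taken in the literature that the paper cites. The convergence half via Borel--Cantelli on $\mathcal M$ together with an upper bound for rational points near the manifold is precisely the Vaughan--Velani approach for planar curves (circle method) and the approach in the later convergence papers (Kleinbock--Margulis nondivergence / Dani correspondence or geometric curvature estimates); your heuristic count $\ll q^d\psi(q)^{n-d}$ and the resulting $\asymp\psi(q)^n$ per denominator are the right orders. The divergence half via ubiquity on $\Omega=\mathcal M$ with $\delta=d$, plus a lower-bound count giving a local ubiquitous system, is how the BDV planar-curves proof and Beresnevich's analytic-manifold proof proceed, and you correctly flag that the crux is a genuine equidistribution statement rather than a mere covering argument.

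One technical slip worth fixing even at the sketch level: the ubiquity function should not be $\rho(q)\asymp q^{-1-1/d}$. Feeding $\varphi(q)=\psi(q)/q$, $\delta=d$ and $\rho(q)\asymp q^{-1-1/d}$ into the divergence criterion $\sum(\varphi(u_m)/\rho(u_m))^{\delta}=\infty$ of Corollary~\ref{cor2} reproduces the condition $\sum\psi(q)^{d}=\infty$, not the desired $\sum\psi(q)^{n}=\infty$. To recover the correct sum one needs $\rho$ calibrated as
\begin{equation*}
\rho(q)\ \asymp\ \psi(q)^{1-n/d}\,q^{-1-1/d},
\end{equation*}
so that $(\varphi/\rho)^d\asymp q\,\psi(q)^n$; since $d<n$ this $\rho$ depends on $\psi$ itself. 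This is precisely what happens in the planar-curves case, where the ubiquity function carries a factor of $\psi$, and it is one of the reasons the required lower-bound count (rational points within distance $\rho$ of $\mathcal M$, uniformly over small balls on $\mathcal M$) is so delicate: the scale at which one must detect rationals near the manifold is dictated by $\psi$, not just by Dirichlet. Alternatively one can work with the full Theorem~\ref{mT1} rather than Corollary~\ref{cor2}, but the counting input is the same.
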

	
	The following list shows the various contributions that have been made towards Conjecture \ref{Con:DreamThm}.
	
	\begin{itemize}
		\item 
			\textit{Extremal manifolds.} A submanifold $\mathcal{M}$ of $\RR^n$ is called \textit{extremal} if
			\begin{equation*}
				\mu_d(\mathcal{M}\cap W_n(\tau))=0\quad \text{for all }\tau>\frac{1}{n}.
			\end{equation*}
			Note that $\mathcal{M}\cap W_n(\tau)=\mathcal{M}$ for $\tau\leq 1/n$ by Dirichlet's Theorem. Hence, a manifold is extremal if and only if \eqref{Eqn:DreamThm} holds for any approximation function $\psi$ of the form $\psi:q\mapsto q^{-\tau}$. Kleinbock and Margulis proved that any non-degenerate submanifold $\mathcal{M}$ of $\RR^n$ is extremal \cite{KleinbockMargulis}.
			
		\item
			\textit{Planar curves.} Conjecture \ref{Con:DreamThm} is true when $\mathcal{M}$ is a non-degenerate planar curve, i.e. when $n=2$ and $d=1$. The convergence part of \eqref{Eqn:DreamThm} was established in \cite{VaughanVelani} and strengthened in \cite{Zorin}. The divergence part was proved in \cite{BDVplanarcurves}.

		\item
			\textit{Beyond planar curves} The divergence case of Conjecture \ref{Con:DreamThm} is true for analytic non-degenerate submanifolds of $\RR^n$ \cite{Bermanifolds} as well as non-degenerate curves and manifolds that can be `fibred' into such curves \cite{BVVZ2}. This category includes non-degenerate manifolds which are smooth but not necessarily analytic. The convergence case has been shown to be true for non-degenerate manifolds of high enough dimension $d$ relative to $n$ \cite{BVVZ}, \cite{Simmons-convergence-case}. Earlier work proved the convergence part of Conjecture \ref{Con:DreamThm} for manifolds satisfying a geometric curvature condition \cite{Dodson}.	
	\end{itemize}	
\end{remark*}
		
Coming back to the present problem, we prove the following:

\begin{theorem}\label{thm:subspaces}
  Every affine coordinate subspace of Euclidean space of dimension at
  least two is of Khintchine type for divergence.
\end{theorem}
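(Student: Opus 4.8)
Write $n=\ell+m$ with $m\geq 2$, fix $\balpha\in\I^{\ell}\setminus\QQ^{\ell}$ and an approximating function $\psi$ with $\sum_{q}\psi(q)^{n}=\infty$; we must show $\lambda_m(\F_n^{\balpha}\cap W_n(\psi))=1$, and we may assume $\psi(q)\to 0$ (otherwise $W_n(\psi)=\I^n$). Since the distance to the nearest integer point satisfies $\norm{q(\balpha,\bbeta)}=\max\{\norm{q\balpha},\norm{q\bbeta}\}$, a point $(\balpha,\bbeta)\in\F_n^{\balpha}$ lies in $W_n(\psi)$ exactly when $\norm{q\balpha}<\psi(q)$ and $\norm{q\bbeta}<\psi(q)$ hold simultaneously for infinitely many $q$. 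Put
\begin{equation*}
	S=\{q\in\NN:\norm{q\balpha}<\psi(q)\},
\end{equation*}
and let $\psi^{*}$ equal $\psi$ on $S$ and $0$ off $S$. Then, viewed as a set of $\bbeta\in\I^m$, $\F_n^{\balpha}\cap W_n(\psi)$ is precisely $W_m(\psi^{*})$. The function $\psi^{*}$ need not be monotone, but because $m\geq 2$ we may invoke Gallagher's monotonicity‑free form of the divergence half of Khintchine's theorem, which gives $\lambda_m(W_m(\psi^{*}))=1$ as soon as $\sum_{q}\psi^{*}(q)^{m}=\infty$. Thus the whole theorem reduces to the purely one‑variable statement
\begin{equation*}
	\sum_{q\in S}\psi(q)^{m}=\infty .
\end{equation*}

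\textbf{Proving the sum lemma.} Group $\NN$ into consecutive blocks $I$ on which $\psi$ varies by at most a constant factor, say $\psi\asymp\psi_{I}$; regrouping the divergent sum (cf.\ Lemma \ref{sum-lemma}) gives $\sum_{I}|I|\psi_{I}^{n}=\infty$, and since the contribution of $I$ to $\sum_{q\in S}\psi(q)^{m}$ is $\asymp\#(S\cap I)\,\psi_{I}^{m}$ it suffices to establish the density bound
\begin{equation*}
	\#(S\cap I)\;\gg\;|I|\,\psi_{I}^{\ell}
\end{equation*}
for enough blocks that their total contribution to $\sum_{q}\psi(q)^{n}$ still diverges. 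Blocks on which $\psi(q)<cq^{-1/\ell}$ may be discarded at once, since all of them together contribute at most $\sum_{q}(cq^{-1/\ell})^{n}=c^{n}\sum_{q}q^{-1-m/\ell}<\infty$; on the surviving blocks $\psi_{I}$ exceeds the ``resolution'' $\min_{q\leq Q}\norm{q\balpha}$ of $\balpha$ at the relevant scale $Q$, which is $\leq Q^{-1/\ell}$ by Dirichlet's theorem.

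\textbf{The counting step.} For a surviving block $I$ that sees a rational approximation of $\balpha$ of the expected quality — i.e.\ when the best‑approximation denominators of $\balpha$ are not too sparse near $I$ — the density bound follows from a standard estimate of the shape $\#\{q\leq Q:\norm{q\balpha}<\eps\}\asymp\eps^{\ell}Q$ for $\eps$ above the resolution (the three‑distance theorem when $\ell=1$, a geometry‑of‑numbers / covering argument when $\ell\geq 2$, together with control of the discrepancy of $(q\balpha)$). For a surviving block $I$ lying inside a long gap of the best‑approximation sequence — precisely the configuration forced by $\balpha$ of large dual Diophantine type — one uses a \emph{windowing} argument instead: if $q_{0}$ is the good denominator just before the gap then $\norm{kq_{0}\balpha}\leq k\norm{q_{0}\balpha}$, so monotonicity of $\psi$ drags the whole arithmetic progression $q_{0},2q_{0},\dots,Kq_{0}$ into $S$ as soon as $K\norm{q_{0}\balpha}<\psi(Kq_{0})$, and the resulting $\asymp|I|/q_{0}$ members of $S$ in $I$ supply the bound. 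The remaining, genuinely uncontrollable blocks (neither regime applies and $\#(S\cap I)$ is essentially empty) can be shown to carry only a convergent part of $\sum_{q}\psi(q)^{n}$, ultimately because the best‑approximation denominators of an irrational $\balpha$, though possibly sparse, cannot leave a cofinite set of scales inside such gaps. Summing over the controllable blocks yields $\sum_{q\in S}\psi(q)^{m}=\infty$, and with it the theorem.

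\textbf{Main obstacle.} The difficulty is concentrated entirely in the counting step: one must produce $\#(S\cap I)\gg|I|\psi_{I}^{\ell}$ \emph{uniformly over all irrational} $\balpha$, which means reconciling the equidistribution regime with the windowing regime across the blocks that straddle a long gap of the best‑approximation sequence and showing that the residual bad blocks are harmless. This is exactly the point at which arbitrarily large Diophantine type of the base point has to be absorbed — the case beyond the reach of the hyperplane techniques of \cite{hyperplanes} — and it is where the bulk of the work in \cite{mine} lies; a subsidiary technical nuisance is that for $\ell\geq 2$ the three‑distance theorem is unavailable, so the basic count $\#\{q\leq Q:\norm{q\balpha}<\eps\}\asymp\eps^{\ell}Q$ must be extracted from the geometry of numbers.
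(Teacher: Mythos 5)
Your reduction — set $S=\{q:\norm{q\balpha}<\psi(q)\}$, note that on the fibre the problem becomes a one-variable $\limsup$ statement in $\I^m$, and invoke Gallagher's monotonicity-free divergence theorem (valid since $m\geq 2$) once $\sum_{q\in S}\psi(q)^m=\infty$ is established — is exactly the paper's Section \ref{Sec:Subspaces2}, and the dyadic block structure for proving the sum lemma is also the paper's. The gap is in the counting estimate $\#(S\cap I)\gg|I|\psi_I^\ell$, which you have not proved.

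You propose to obtain the bound by splitting blocks into three regimes (equidistribution, windowing, residual) according to where $I$ sits relative to $\balpha$'s best-approximation denominators, and you candidly flag this as the ``main obstacle.'' As written it is a plan rather than a proof: the discrepancy of $(q\balpha)_{q\leq Q}$ is not uniformly controlled for all irrational $\balpha$, the transition between regimes in a block straddling a long gap is delicate, and the ``residual'' set is undefined. It is not at all clear this can be pushed through, and in any case it attacks the wrong level of generality.

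The point you are missing is that the required count is \emph{unconditional} in $\balpha$ and has a two-line proof (Lemma \ref{lem:count}). For every $\balpha\in\RR^\ell$, $N\in\NN$ and $\delta>0$,
\begin{equation*}
  Q_\delta(N)\;=\;\#\{q\leq N:\norm{q\balpha}<\delta\}\;\geq\;N\delta^\ell-1.
\end{equation*}
The proof is an averaging trick: (i) if $q_1<q_2\leq N$ both satisfy $\norm{q_i\balpha+\bgamma}<\delta/2$, then by the triangle inequality $\norm{(q_2-q_1)\balpha}<\delta$ with $q_2-q_1\leq N$, so $Q_\delta(N)\geq Q_{\delta/2,\bgamma}(N)-1$ for \emph{every} shift $\bgamma\in\TT^\ell$; (ii) $\int_{\TT^\ell}Q_{\delta/2,\bgamma}(N)\,d\bgamma=N\delta^\ell$ by Fubini, so some $\bgamma$ realises $Q_{\delta/2,\bgamma}(N)\geq N\delta^\ell$. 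The additive $-1$ (which exactly accounts for the blocks you wanted to discard as ``below resolution'') is then absorbed by an Abel-summation identity after replacing $\psi$ with a dyadic step function, producing only the constant $-\psi(2)^m$. This makes the count uniform in $\balpha$ and eliminates the entire case analysis your proposal hinges on; no appeal to the three-distance theorem, discrepancy bounds, or windowing is needed. That averaging argument is the missing ingredient.
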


\begin{remark*}
  Combining Theorem \ref{thm:subspaces} with Fubini's theorem shows
  that every submanifold of Euclidean space which is foliated by
  affine coordinate subspaces of dimension at least two is of
  Khintchine type for divergence. For example, given $a,b,c\in\RR$
  with $(a,b)\neq (0,0)$, the three-dimensional affine subspace
  \[
    \{(x,y,z,w) : a x + b y = c\} \subseteq \RR^4
  \]
  is of Khintchine type for divergence, being foliated by the
  two-dimensional affine coordinate subspaces $\left\{(x,y)\times\RR^2:x,y\in\RR,\ a x + b y = c\right\}$.
\end{remark*}

The reason for the restriction to subspaces of dimension at least two is that Gallagher's Theorem is used in the proof. Recall that this removes the monotonicity condition from Khintchine's Theorem, but only in dimension two and higher. Regarding one-dimensional affine coordinate subspaces, we have the following theorem.

\begin{theorem}\label{thm:lines}
  Consider a one-dimensional affine coordinate subspace
  $\{\balpha\}\times\RR \subseteq \RR^n$, where $\balpha\in\RR^{n - 1}$.
  \begin{itemize}
  \item[(i)] If the dual Diophantine type of $\balpha$ is strictly greater
    than $n$, then $\{\balpha\}\times \RR$ is contained in the set of very
    well approximable vectors
    \[
      \mathrm{VWA}_n = \{\bx : \exists \eps > 0 \; \exists^\infty
      q\in\NN \;\; \norm{q\bx} < q^{-1/n - \eps}\}.
    \]
  \item[(ii)] If the dual Diophantine type of $\balpha$ is strictly less
    than $n$, then $\{\balpha\}\times\RR$ is of Khintchine type for
    divergence.
  \end{itemize}
\end{theorem}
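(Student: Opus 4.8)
The plan is to combine a trivial padding observation with the transference principle of Section~\ref{Sec:KTP}. If $\bq_0\in\ZZ^{n-1}\setminus\{\0\}$ and $p\in\ZZ$ satisfy $|\bq_0\cdot\balpha-p|<|\bq_0|^{-w}$, then $\bq=(\bq_0,0)\in\ZZ^{n}$ satisfies $|\bq\cdot(\balpha,\beta)-p|<|\bq|^{-w}$; hence the dual Diophantine type of $(\balpha,\beta)$ is at least that of $\balpha$, for \emph{every} $\beta\in\RR$. When the dual type of $\balpha$ exceeds $n$, so does that of $(\balpha,\beta)$, and Khintchine's transference principle then converts a dual exponent strictly larger than $n$ into a simultaneous exponent strictly larger than $1/n$. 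Thus $(\balpha,\beta)\in\mathrm{VWA}_n$, and since $\beta$ is arbitrary, $\{\balpha\}\times\RR\subseteq\mathrm{VWA}_n$.

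\textbf{Part (ii): set-up.}
Here I would run the ubiquity machinery of Chapter~\ref{ubiquity}. Fix an approximating function $\psi$ with $\sum_q\psi(q)^n=\infty$; by a standard reduction we may assume $\psi(q)\le q^{-1/n}$, so that $\psi$ is of essentially critical size. A point $(\balpha,\beta)$ lies in $\F_n^{\balpha}\cap W_n(\psi)$ exactly when the real number $\beta$ admits infinitely many rational approximations $p/q$ with $|\beta-p/q|<\psi(q)/q$ whose denominators belong to the set of \emph{good denominators}
\[
  G=\bigl\{q\in\NN:\ \norm{q\balpha}<\psi(q)\bigr\}.
\]
So I would work on $\Omega=\I$ with $m$ the Lebesgue measure (which satisfies (M2) with $\delta=1$), take as resonant sets the rationals $R_{(p,q)}=p/q$ indexed by pairs $(p,q)$ with $q\in G$ and $0\le p\le q$, put $\beta_{(p,q)}=q$, and set $\varphi(q)=\psi(q)/q$. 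Then $\F_n^{\balpha}\cap W_n(\psi)$ coincides with the associated $\limsup$ set $\Lambda(\varphi)$, and $\varphi$ is automatically $u$-regular along a geometric sequence $u_n=k^n$. Consequently Corollary~\ref{cor2} will yield $\lambda(\Lambda(\varphi))=1$ as soon as one exhibits a ubiquitous function $\rho$ and sequences $l,u$ relative to which $(\cR,\beta)$ is locally $\lambda$-ubiquitous and for which $\sum_n\varphi(u_n)/\rho(u_n)=\infty$.

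\textbf{Part (ii): the crux.}
Everything reduces to verifying local ubiquity, which is a counting-and-spacing statement about $G$: one needs a sequence of scales $u_n$ along which the good denominators $q\in G$ just below $u_n$ are (a) as numerous as expected, of order $\psi(u_n)^{n-1}u_n$, and (b) sufficiently equidistributed that, after attaching all admissible numerators, the rationals $p/q$ produced are $\rho(u_n)$-dense in $\I$ with $\rho(u_n)\asymp\psi(u_n)/u_n\asymp\varphi(u_n)$; the hypothesis $\sum_q\psi(q)^n=\infty$ then gives $\sum_n\varphi(u_n)/\rho(u_n)=\infty$ via Cauchy condensation (Lemma~\ref{sum-lemma}). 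To obtain (a) and (b) I would count the good denominators in a window as lattice points of the unimodular lattice $\{(q,\,q\balpha-\bp):q\in\ZZ,\ \bp\in\ZZ^{n-1}\}$ inside a box $[-Q,Q]\times[-\delta,\delta]^{n-1}$; by Minkowski's theorems (Theorems~\ref{Thm:MinkCBT} and~\ref{Minkowski}) this count, and the distribution of the points, is governed by the successive minima of the box with respect to the lattice, and is of the expected order $Q\delta^{n-1}$ unless those minima are badly spread --- equivalently, unless the good denominators in range all share a common small factor, i.e.\ $\balpha$ is strongly ``resonant'' at some small denominator. It is precisely this degenerate alternative that the hypothesis on the dual Diophantine type rules out: a small-denominator resonance $\norm{q_1\balpha}<\delta$ can be transferred, by a pigeonhole choice of $\bq$ with $|\bq|\ll q_1^{1/(n-1)}$, into a dual approximation $\norm{\bq\cdot\balpha}$ small enough to force the dual type of $\balpha$ to be at least $n$, contrary to assumption. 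The main obstacle is to make this dichotomy quantitative and uniform in $Q$, so that the counting bound (a) and the spreading bound (b) hold simultaneously along an entire sequence $u_n\to\infty$ with constants independent of $n$; granting that, the conclusion is a direct application of Corollary~\ref{cor2}.
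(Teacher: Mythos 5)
\textbf{Part (i)} is correct and is exactly the paper's argument: pad $\bq_0$ to $(\bq_0,0)$ to see $\tau_D(\balpha,\beta)\geq\tau_D(\balpha)>n$, then apply Khintchine's transference principle.

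\textbf{Part (ii)} correctly identifies the framework — reduce to a one-dimensional $\limsup$ set over ``good denominators'' $q$ with $\norm{q\balpha}<\psi(q)$, invoke Minkowski to get the ambient cover \eqref{eq:mink}, and aim for Corollary~\ref{cor2}. But the crux, your counting-and-spacing step, is where the gap lies, in two places. First, the alternative you describe (``unless the good denominators in range all share a common small factor'') and the mechanism you propose for ruling it out (``a small-denominator resonance $\norm{q_1\balpha}<\delta$ can be transferred, by a pigeonhole choice of $\bq$ with $|\bq|\ll q_1^{1/(n-1)}$, into a dual approximation'') are not what actually happens, and the pigeonhole step as stated does not produce a small value of $\norm{\bq\cdot\balpha}$ — a single good simultaneous approximation of $\balpha$ with a small denominator does not hand you a good dual approximation by an elementary pigeonhole. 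What the paper does (Lemma~\ref{nalphalemma}) is a cleaner and genuinely different transference: encode $\{q<N:\norm{q\balpha}<\delta\}$ as lattice points of $\Lambda=g_tu_\balpha\ZZ^{\ell+1}$ in a ball $B(\0,R)$, bound the count by $(4R)^{\ell+1}$ provided the Dirichlet fundamental domain sits inside $B(\0,R)$, and observe that if it does not then the last successive minimum of $\Lambda$ is $\gg R$ (Lemma~\ref{Lma:Lattice}); Cassels' duality theorem for successive minima of polar lattices then yields a dual lattice vector of norm $\ll R^{-1}$, i.e.\ $|\bq|\ll\delta^{-1}$ and $|\bq\cdot\balpha+p|\ll N^{-1}\ll|\bq|^{-\tau}$, which is excluded for large $N$ by $\tau>\tau_D(\balpha)$. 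That duality step is the missing ingredient in your sketch, and the constraint $\delta\geq N^{-1/\tau}$ is exactly what makes the quantitative bound uniform along the scales $u_j=k^j$. Second, you aim to prove \emph{local} ubiquity directly, but an upper bound on the count of small good denominators gives \emph{global} ubiquity (via Lemma~\ref{ubiqlemma}), hence only positive measure from Corollary~\ref{cor2}; the paper then upgrades to full measure with Cassels' zero--one law (Theorem~\ref{Thm:Cassels01}), which your outline omits. There is also a reduction in the paper you have not mentioned and would need: to ensure $\psi(q)\geq q^{-1/n-\eps}$ one replaces $\psi$ by $\max\{\psi,\phi\}$ with $\phi(q)=(q\log^2q)^{-1/n}$ and discards the $\phi$-approximable points using the same counting lemma.
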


\noindent Here the \emph{dual Diophantine type} of a point
$\balpha\in\RR^\ell$ is the number
\begin{equation}
  \label{taudef}
  \tau_D(\balpha)=\sup\left\{\tau\in\mathbb{R}^+: \norm{\bq\cdot\balpha} < |\bq|^{-\tau} \textrm{ for i.m. } \bq\in\ZZ^{\ell}\backslash\{\boldsymbol{0}\} \right\}.
\end{equation}

\begin{remark*}
  The inclusion $\{\balpha\}\times\RR \subseteq \mathrm{VWA}_d$ in part
  (i) is philosophically ``almost as good'' as being of Khintchine
  type for divergence, since it implies that for sufficiently ``nice''
  functions $\psi:\NN\to\RR^+$ such that $\sum_{q\in\NN} \psi(q)^n$
  diverges, almost every point on $\{\balpha\}\times\RR$ is
  $\psi$-approximable. For example, call a function $\psi$ \emph{good}
  if for each $c > 0$, we have either $\psi(q)\geq q^{-c}$ for all $q$
  sufficiently large or $\psi(q) \leq q^{-c}$ for all $q$ sufficiently
  large. Then by the comparison test, if $\psi$ is a good function
  such that $\sum_{q\in\NN} \psi(q)^n$ diverges, then for all
  $\eps > 0$, we have $\psi(q) \geq q^{-1/n - \eps}$ for all $q$
  sufficiently large and thus, by Theorem \ref{thm:lines}(i), every
  point of $\{\balpha\}\times\RR$ is $\psi$-approximable. The class of
  good functions includes the class of \emph{Hardy $L$-functions}
  (those that can be written using the symbols $+,-,\times,\div,\exp$,
  and $\log$ together with constants and the identity function), see
  \cite[Chapter III]{Hardy} or \cite{AvdD} for further
  discussion and examples.
\end{remark*}

Taken together, parts (i) and (ii) of Theorem \ref{thm:lines} imply
that if $\psi$ is a Hardy $L$-function such that
$\sum_{q\in\NN} \psi(q)^n$ diverges, and if $\balpha\in\I^{n-1}$ is a vector
whose dual Diophantine type is not exactly equal to $d$, then almost
every point of $\{\balpha\}\times\RR \subseteq \RR^d$ is
$\psi$-approximable. This situation is somewhat frustrating, since it
seems strange that points in $\I^{n - 1}$ with dual Diophantine type
exactly equal to $n$ should have any special properties (as opposed to
those with dual Diophantine type $(n - 1)$, which are the ``not very
well approximable'' points). However, it seems to be impossible to
handle these points using our techniques.

Even if $\sum_{q\in\NN} \psi(q)^n$ converges, we might be interested in the set of $\psi$-approximable points. Jarn\'ik's Theorem gives us a means to determine the Hausdorff $s$-measure of $W_n(\psi)$ for any given $s<n$ as well as its Hausdorff dimension. Still, as in the above case, given a base point $\balpha$ in $\I^{\ell}$, we cannot say much about the intersection $\F_n^{\balpha}\cap W_n(\psi)$. Clearly, we are only interested in base points $\balpha\in W_{\ell}(\psi)$, as otherwise no point in $\balpha\times \I^m$ can be $\psi$-approximable. Focussing on the case where $\psi$ is a monomial of the form $\psi(q)=q^{-\tau}$, we have proved the following result:

\begin{theorem}\label{HDfibres}
	Let $\ell,m\in\NN$ with $\ell+m=n$ and $\balpha\in W_{\ell}(\tau)\subset\I^{\ell}$ and define
	\begin{equation*}
		s^{\balpha}_n(\tau):=\dim(\F^{\balpha}_n\cap W_n(\tau)).
		\vspace{-2ex}
	\end{equation*}		
	Then
	\begin{equation*}
		s^{\balpha}_n(\tau)\geq s_n^{\ell}(\tau):=
		\begin{dcases}
			m\ &\text{ if }\quad \tau\leq\frac{1}{n},\\[2ex]
			\frac{n+1}{\tau+1}-\ell\ &\text{ if }\quad \frac{1}{n}<\tau\leq\frac{1}{\ell},\\[2ex]
			\frac{m}{\tau+1}\ &\text{ if }\quad \tau>\frac{1}{\ell}.
		\end{dcases}
		\vspace{2ex}
	\end{equation*}
	Furthermore, $\cH^{s_n^{\ell}(\tau)}(\F_n^{\balpha}\cap W_n(\tau))=\cH^{s_n^{\ell}(\tau)}(\I^m)$.
\end{theorem}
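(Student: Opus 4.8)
The plan is to treat the three ranges of $\tau$ separately, after the reduction that, identifying $\F_n^{\balpha}$ with $\I^m$ via the last $m$ coordinates, a point $\bbeta\in\I^m$ lies in $\F_n^{\balpha}\cap W_n(\tau)$ precisely when $\norm{q\bbeta}<q^{-\tau}$ holds for infinitely many $q$ in the set $Q_{\balpha}(\tau):=\{q\in\NN:\norm{q\balpha}<q^{-\tau}\}$; that is,
\begin{equation*}
  \F_n^{\balpha}\cap W_n(\tau)=\limsup_{q\in Q_{\balpha}(\tau)}\ \bigcup_{0\le|\bp|\le q}B\!\left(\tfrac{\bp}{q},\,q^{-\tau-1}\right),
\end{equation*}
a $\limsup$ set of balls in $\I^m$ indexed by the restricted set $Q_{\balpha}(\tau)$. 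When $\tau\le 1/n$ there is nothing to do: Dirichlet's Theorem gives $W_n(\tau)=\I^n$, so $\F_n^{\balpha}\cap W_n(\tau)=\F_n^{\balpha}$ is an isometric copy of $\I^m$, $s_n^{\ell}(\tau)=m$, and $\cH^m(\F_n^{\balpha}\cap W_n(\tau))=\cH^m(\I^m)$.

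For $\tau>1/\ell$ the hypothesis $\balpha\in W_{\ell}(\tau)$ supplies only infinitely many $q_0$ with $\norm{q_0\balpha}<q_0^{-\tau}$, and I would argue via the Mass Transference Principle. For each such $q_0$ the union $\bigcup_{\bp}B(\bp/q_0,q_0^{-1})$ already covers $\I^m$, so $\limsup_{q_0}\bigcup_{\bp}B(\bp/q_0,q_0^{-1})=\I^m$. Taking $\Omega=\I^m$, $\delta=m$, the balls $B_k=B(\bp/q_0,q_0^{-\tau-1})$ over the good $q_0$, and the dimension function $f(r)=r^{m/(\tau+1)}$ (so that $r^{-m}f(r)$ is decreasing), one computes $B_k^{m/(\tau+1)}=B(\bp/q_0,q_0^{-1})$, whose $\limsup$ is all of $\I^m$; hence the hypothesis of Theorem \ref{MTP} holds and $\cH^{m/(\tau+1)}\!\big(\limsup B_k\big)=\cH^{m/(\tau+1)}(\I^m)=\infty$. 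Since $\limsup B_k\subseteq \F_n^{\balpha}\cap W_n(\tau)\subseteq\I^m$, this gives $\dim(\F_n^{\balpha}\cap W_n(\tau))\ge m/(\tau+1)=s_n^{\ell}(\tau)$ and $\cH^{s_n^{\ell}(\tau)}(\F_n^{\balpha}\cap W_n(\tau))=\cH^{s_n^{\ell}(\tau)}(\I^m)$.

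The substantial case is $1/n<\tau\le 1/\ell$, where every $\balpha\in\I^{\ell}$ lies in $W_{\ell}(\tau)$ and in fact has ``Dirichlet-many'' good denominators. Set $b:=(1-\ell\tau)/m\ge 0$; then $(\tau,\dots,\tau,b,\dots,b)$ ($\ell$ entries $\tau$, $m$ entries $b$) is a weight vector, and applying weighted Dirichlet (Theorem \ref{wDir}) to $(\balpha,\bbeta)\in\I^n$ with this weight and parameter $N$ yields, for every $\bbeta$ and every $N$, some $q\le N$ with $\norm{q\balpha}<N^{-\tau}\le q^{-\tau}$ (so $q\in Q_{\balpha}(\tau)$) and $\norm{q\bbeta}<N^{-b}$. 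Localising $q$ to a dyadic block $(k^{n-1},k^n]$, where $q\asymp k^n$ and the approximation radius $N^{-b}/q$ becomes comparable to $(k^n)^{-(1+b)}$, this is meant to show that $\{\bp/q:q\in Q_{\balpha}(\tau)\}$ is a local $\lambda_m$-ubiquitous system in $\I^m$ relative to $(\rho,l,u)$ with $u_n=k^n$ and $\rho(t)\asymp t^{-(1+b)}$ — provided one also has the counting estimate $\#\{q\le M:\norm{q\balpha}<q^{-\tau}\}\ll M^{1-\ell\tau}$, which guarantees that the small-denominator balls ($q\le k^{n-1}$) cover only a small proportion of any fixed ball. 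Granting ubiquity, Corollary \ref{cor5} with $\delta=m$ and target $\varphi(q)=q^{-\tau-1}$ (for which $\Lambda(\varphi)=\F_n^{\balpha}\cap W_n(\tau)$, and $\varphi(u_n)/\rho(u_n)\to 0$ since $b<\tau$) gives $\sigma=\lim\frac{\log\rho(u_n)}{\log\varphi(u_n)}=\frac{1+b}{\tau+1}$, hence
\begin{equation*}
  \dim(\F_n^{\balpha}\cap W_n(\tau))\ \ge\ \sigma\delta\ =\ \frac{m(1+b)}{\tau+1}\ =\ \frac{m+1-\ell\tau}{\tau+1}\ =\ \frac{n+1}{\tau+1}-\ell\ =\ s_n^{\ell}(\tau),
\end{equation*}
and, since $\rho(u_n)\asymp\varphi(u_n)^{\sigma}$, also $\cH^{s_n^{\ell}(\tau)}(\F_n^{\balpha}\cap W_n(\tau))=\infty=\cH^{s_n^{\ell}(\tau)}(\I^m)$ (note $s_n^{\ell}(\tau)<m$ in this range).

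The main obstacle is exactly this last case: establishing the counting bound for $Q_{\balpha}(\tau)$, and hence local ubiquity with the sharp exponent $1+b$, uniformly over all base points. The upper bound $\#\{q\le M:\norm{q\balpha}<q^{-\tau}\}\ll M^{1-\ell\tau}$ holds (and can be proved by a three-distance/pigeonhole counting) when $\balpha$ is not too well approximable in the simultaneous sense, but it can fail at isolated scales for base points admitting extraordinarily good simultaneous approximations; for such $\balpha$, however, $Q_{\balpha}(\tau)$ is correspondingly denser, and the lower bound $\dim\ge s_n^{\ell}(\tau)$ should instead follow from a finer ubiquitous system or directly from the super-good denominators and their multiples — the clean organising principle being to split according to the simultaneous Diophantine type of $\balpha$. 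Finally, the subcase $m=1$ must be run without invoking Theorem \ref{thm:lines} (whose dual-type hypothesis is absent from the present statement), but here the relevant rate $q^{-\tau-1}$ is sub-critical for the fibre, so the weighted Dirichlet input above already provides the ubiquity needed.
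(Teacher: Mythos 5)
Your handling of $\tau\leq 1/n$ is the paper's, and your $\tau>1/\ell$ argument is correct but routed differently: you use the Mass Transference Principle with $f(r)=r^{m/(\tau+1)}$ (blowing $B(\bp/q,q^{-(\tau+1)})$ up to $B(\bp/q,q^{-1})$, whose union covers $\I^m$ for each good $q$), whereas the paper runs this case through ubiquity, verifying local $\lambda_m$-ubiquity relative to the sequence $(q_i)$ of good denominators with $\rho(q)=q^{-1}$ and invoking Corollary \ref{cor4}. Both are valid; your MTP route is self-contained and arguably simpler.

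The issue is the middle range $1/n<\tau\leq 1/\ell$, where you take exactly the opposite fork from the paper and land in a genuine gap. You correctly extract from weighted Dirichlet (Minkowski's Theorem with weights $(\tau,\dots,\tau,b,\dots,b)$, $b=(1-\ell\tau)/m$) that for \emph{every} $\bbeta\in\I^m$ there are infinitely many $q\in Q_{\balpha}(\tau)$ with $\norm{q\bbeta}<q^{-b}$, but then try to convert this into a local ubiquity statement with $\rho(t)\asymp t^{-(1+b)}$, which forces you to control the small-denominator contribution and hence to prove the counting bound $\#\{q\leq M:\norm{q\balpha}<q^{-\tau}\}\ll M^{1-\ell\tau}$ uniformly. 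As you observe, this can fail, and the proposed fix — a case split by the Diophantine type of $\balpha$ — is not carried out.

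No counting bound is needed. The Minkowski covering statement you already recorded is precisely hypothesis \eqref{Eqn:MTPcond} of the Mass Transference Principle. Take $B_k=B(\bp/q,q^{-(\tau+1)})$ with $q\in Q_{\balpha}(\tau)$ and $f(r)=r^s$ with $s=(n+1)/(\tau+1)-\ell=(m+1-\ell\tau)/(\tau+1)$. Then $B_k^f=B(\bp/q,q^{-1-b})$, and Minkowski says $\limsup_k B_k^f=\I^m$, so $\cH^m(B\cap\limsup B_k^f)=\cH^m(B)$ trivially for any ball $B$. Theorem \ref{MTP} then yields $\cH^s(\F_n^{\balpha}\cap W_n(\tau))=\cH^s(\I^m)=\infty$ directly, with no ubiquity, no uniform counting estimate, and no split by the type of $\balpha$. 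This is exactly what the paper does for this range. In short, the paper's proof is the mirror image of yours — ubiquity where you used MTP, MTP where you tried ubiquity — and your argument becomes complete and gap-free once you run the middle range through the Mass Transference Principle in the same style as your third case.
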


We will show that for most base points $\balpha\in W_{\ell}(\tau)$ we get the exact dimension result
\begin{equation*}
	s^{\balpha}_n(\tau)= s_n^{\ell}(\tau).
\end{equation*}
In the first case, where $\tau\leq 1/n$, Theorem \ref{HDfibres} is trivially true with $s_n^{\balpha}(\tau)=m$ for all $\balpha\in\I^{\ell}$. This follows directly from Dirichlet's Theorem. In the second case, where $1/n<\tau\leq 1/\ell$, we still have $W_{\ell}(\tau)=\I^{\ell}$, so we can consider any $\balpha\in\I^{\ell}$. In the third case, where $\tau> 1/\ell$, the set of suitable base points $W_{\ell}(\tau)$ is a proper subset of $\I^{\ell}$ of dimension $\frac{\ell+1}{\tau+1}$, satisfying
	\begin{equation*}
		\cH^{\frac{\ell+1}{\tau+1}}(W_{\ell}(\tau))=\infty,
	\end{equation*}		
	by Jarn\'ik's Theorem. In both of the latter cases there are base points $\balpha\in W_{\ell}(\tau)$, for which $s_n^{\balpha}(\tau)>s_n^{\ell}(\tau)$. However, this set of exceptions is ``small" as shown by the following result.
	
	\begin{corollary}\label{Cor:HD}
	If $1/n<\tau\leq 1/\ell$, then the collection of points $\balpha\in \I^{\ell}$ such that
	\begin{equation}\label{Eqn:HDslice}
		s_n^{\balpha}=\dim(\F_n^{\balpha}\cap W_n(\tau))>\frac{n+1}{\tau+1}-\ell
	\end{equation}		
	is a null-set with respect to the Lebesgue measure $\lambda_{\ell}$. If $\tau> 1/\ell$, then the collection of points $\balpha\in \I^{\ell}$ such that $s^{\balpha}_n(\tau)>\frac{m}{\tau+1}$ is a zero set with respect to the measure $\mathcal{H}^{\frac{\ell+1}{\tau+1}}$.
	\end{corollary}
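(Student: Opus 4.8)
Unlike the lower bound of Theorem~\ref{HDfibres}, Corollary~\ref{Cor:HD} is purely a statement about \emph{upper} bounds for the dimension of the slices $\F_n^{\balpha}\cap W_n(\tau)$, and the plan is to deduce it from a slicing estimate for Hausdorff measures applied to the $\limsup$ set $W_n(\tau)$, together with Jarn\'ik's Theorem. Write a point of $\I^n$ as $(\balpha,\by)$ with $\balpha\in\I^{\ell}$ and $\by\in\I^m$, and for $\balpha\in\I^{\ell}$ put $E_{\balpha}:=\{\by\in\I^m:(\balpha,\by)\in W_n(\tau)\}$, the $\balpha$-slice of $W_n(\tau)$. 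Under the isometry $\{\balpha\}\times\I^m\cong\I^m$ we have $\F_n^{\balpha}\cap W_n(\tau)\cong E_{\balpha}$, so $s_n^{\balpha}(\tau)=\dim E_{\balpha}$; note also that $E_{\balpha}=\emptyset$ unless $\balpha\in W_{\ell}(\tau)$, so there is nothing to prove for $\balpha\notin W_{\ell}(\tau)$.

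The key tool is the following slicing estimate: \emph{if $A\subseteq\I^{\ell}\times\I^m$ satisfies $\cH^{s+u}(A)=0$, then $\cH^{s}(A_{\balpha})=0$ for $\cH^{u}$-almost every $\balpha\in\I^{\ell}$}, where $A_{\balpha}$ denotes the $\balpha$-slice. For the integer value $u=\ell$ this is the classical Marstrand slicing theorem (see \cite{Falconer}, \cite{Mattila}), but below we shall need it for a \emph{fractional} value of $u$ on the parameter space. It follows from a routine covering/Chebyshev argument at the level of Hausdorff content $\cH^{\bullet}_{\infty}$: a cover $\{U_i\}$ of $A$ induces covers $\{\pi_2(U_i):\balpha\in\pi_1(U_i)\}$ of the slices, with $\operatorname{diam}\pi_j(U_i)\leq\operatorname{diam}U_i$, so $\cH^{s}_{\infty}(A_{\balpha})\leq\sum_i(\operatorname{diam}U_i)^s\,\mathbf{1}_{\pi_1(U_i)}(\balpha)$; since $\cH^{u}_{\infty}(\pi_1(U_i))\leq(\operatorname{diam}U_i)^u$, distributing the weights $(\operatorname{diam}U_i)^s$ over the projected pieces bounds $\cH^{u}_{\infty}(\{\balpha:\cH^{s}_{\infty}(A_{\balpha})>c\})$ by $c^{-1}\sum_i(\operatorname{diam}U_i)^{s+u}$ for every $c>0$; letting the cover, and then $c$, vary, and using that zero Hausdorff content is equivalent to zero Hausdorff measure, gives the claim.

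Now apply this with $A=W_n(\tau)$. For $t\in\bigl(\tfrac{n+1}{\tau+1},n\bigr)$ the series $\sum_q q^{n-t(1+\tau)}$ converges, so Jarn\'ik's Theorem (Theorem~\ref{Jarnik}) gives $\cH^{t}(W_n(\tau))=0$; also $\cH^{t}(W_n(\tau))\leq\cH^{t}(\I^n)=0$ for $t>n$. In case (ii), where $\tau>1/\ell$, set $u:=\tfrac{\ell+1}{\tau+1}$; since $\tau>1/\ell>1/n$ we have $\tfrac{n+1}{\tau+1}<n$, so there is $\eps>0$ with $s+u\in\bigl(\tfrac{n+1}{\tau+1},n\bigr)$ for every $s\in\bigl(\tfrac{m}{\tau+1},\tfrac{m}{\tau+1}+\eps\bigr)$, whence $\cH^{s+u}(W_n(\tau))=0$ for all such $s$. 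The slicing estimate then gives $\cH^{s}(E_{\balpha})=0$ for $\cH^{(\ell+1)/(\tau+1)}$-almost every $\balpha$; ranging $s$ over the rationals in this interval and taking the countable union of the exceptional null sets shows $\dim E_{\balpha}\leq\tfrac{m}{\tau+1}$ for $\cH^{(\ell+1)/(\tau+1)}$-almost every $\balpha$, which is exactly case (ii). Case (i), where $1/n<\tau\leq 1/\ell$, is identical with $u=\ell$ in place of $\tfrac{\ell+1}{\tau+1}$ (again $\tfrac{n+1}{\tau+1}<n$, as $\tau>1/n$): one obtains $\dim E_{\balpha}\leq\tfrac{n+1}{\tau+1}-\ell$ for $\lambda_{\ell}$-almost every $\balpha$, since $\cH^{\ell}$ and $\lambda_{\ell}$ have the same null sets.

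The main difficulty is conceptual rather than computational. In case (ii) the relevant notion of a ``typical'' base point must be governed by $\cH^{(\ell+1)/(\tau+1)}$---the Hausdorff measure at the dimension of $W_{\ell}(\tau)$, which assigns all of $W_{\ell}(\tau)$ infinite measure---and \emph{not} by Lebesgue measure: every exceptional $\balpha$ already lies in the $\lambda_{\ell}$-null set $W_{\ell}(\tau)$, so the classical Marstrand slicing theorem gives no information at all. One must therefore run the slicing argument with a fractional Hausdorff measure on the parameter space, and the only genuine technical cost is the bookkeeping with Hausdorff content and the Chebyshev-type estimate above (carried out on each superlevel set $\{\balpha:\cH^{s}_{\infty}(E_{\balpha})>c\}$ and summed over $c=1/k$).
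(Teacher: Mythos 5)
Your proof is correct, and it rests on the same two ingredients as the paper's: a product/slicing inequality for Hausdorff measures applied to $W_n(\tau)$, and the Jarn\'ik--Besicovitch formula $\dim W_n(\tau)=\frac{n+1}{\tau+1}$. The difference is one of direction. The paper invokes the lower-bound form of the slicing inequality (Theorem~\ref{slicing}, i.e.\ Falconer's Theorem 7.11): if $\cH^t$ of every slice over a base set $E$ is bounded below by a constant, then $\cH^{s+t}$ of the ambient set is at least a constant times $\cH^s(E)$. It then argues by contradiction: if the exceptional base set $\I^{\ell}_k(\tau)$ had dimension $\geq\ell$ (respectively $\geq\frac{\ell+1}{\tau+1}$), Corollary~\ref{slicingcor} would force $\dim W_n(\tau)>\frac{n+1}{\tau+1}$. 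You instead use the equivalent ``almost every slice'' form: if $\cH^{s+u}(A)=0$, then $\cH^s(A_\balpha)=0$ for $\cH^u$-almost every $\balpha$. This is not really a different ingredient — it is a one-line consequence of Theorem~\ref{slicing}: put $E_c=\{\balpha:\cH^s(A_\balpha)\geq c\}$, so the theorem gives $0=\cH^{s+u}(A)\geq bc\,\cH^u(E_c)$, hence $\cH^u(E_c)=0$ for every $c>0$, and the union over $c=1/k$ finishes it. You would be better off citing Theorem~\ref{slicing} than re-deriving the a.e.\ slicing estimate from scratch: your Chebyshev sketch for Hausdorff content glosses over the step where the weighted cover $\sum_i (a_i/c)\mathbf{1}_{\pi_1(U_i)}\geq 1$ is converted into the bound $\cH^u_\infty\leq c^{-1}\sum_i(\operatorname{diam}U_i)^{s+u}$; the underlying weighted Hausdorff-content inequality is true but is usually justified via Frostman's lemma on Souslin sets rather than by a routine covering argument, so ``routine'' undersells the technical content. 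Your concluding observation is spot on and worth keeping: in case (ii) the relevant base points all lie inside the $\lambda_{\ell}$-null set $W_{\ell}(\tau)$, so the classical (integer-parameter) slicing theorem says nothing, and the correct gauge for genericity of $\balpha$ is $\cH^{(\ell+1)/(\tau+1)}$.
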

	
	We will prove Corollary \ref{Cor:HD} in Section \ref{sec:HDrmk}. We have not investigated the set of exceptions any further, but it trivially includes rational points and, depending on $\tau$, points with rational dependencies between different coordinates. 

\section[Proof of Theorem~\ref{thm:subspaces}: Subspaces of dimension at least two]{Proof of Theorem~\ref{thm:subspaces}:\\ Subspaces of dimension at least two}\label{Sec:Subspaces2}

Consider an affine coordinate subspace $\{\balpha\}\times\RR^m$, where
$\balpha\in\I^\ell$ and $\ell + m = n$. Given a non-increasing function
$\psi:\NN\to\RR^+$, for each $M,N$ with $M < N$ let
\begin{equation*}
  Q_{\psi}(M,N):= \Abs{\left\{M < q\leq N : \norm{q\balpha} < \psi(N) \right\}},
\end{equation*}
and write $Q_{\psi}(N) : = Q_{\psi}(0, N)$. Since any real number $\delta > 0$ may
be thought of as a constant function, the expression $Q_\delta(M,N)$
makes sense. 
%Note that $Q_\psi(M,N) = Q_{\psi(N)}(M,N)$.

\begin{lemma}\label{lem:count}
  For all $N\in\NN$,\vspace{-1ex}
  \begin{equation*}
    Q_\delta(N) = \Abs{\left\{q\in\NN : \norm{q\balpha} < \delta, q\leq N\right\}}\geq N\delta^\ell - 1.
  \end{equation*}
\end{lemma}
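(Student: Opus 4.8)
The plan is to deduce the lemma from a pigeonhole argument on the torus $\TT^\ell=\RR^\ell/\ZZ^\ell$, but with the pigeonhole applied to an \emph{optimally placed} box rather than a fixed partition: a fixed subdivision of $\TT^\ell$ into cubes of side $\le\delta$ would only produce roughly $N\ceil{1/\delta}^{-\ell}$ solutions and thereby lose the constant, whereas averaging over all translates recovers the full expected count $(N+1)\delta^\ell$. (I restrict to $0<\delta\le1$, which is all that is needed, since $Q_\delta(N)$ is applied with $\delta=\psi(N)$ and an approximating function may be taken to satisfy $\psi\le1$; the measure computation below uses $\delta\le1$.)

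\textbf{Step 1: set-up on the torus.} I would set $\bx_q:=q\balpha \bmod \ZZ^\ell\in\TT^\ell$ for $q=0,1,\dots,N$, and record the two elementary observations that $\norm{q\balpha}$ is the distance from $\bx_q$ to $\boldsymbol 0$ in the torus metric, and that $\bx_q-\bx_{q'}\equiv (q-q')\balpha \pmod{\ZZ^\ell}$. For a translate parameter $\bt\in\TT^\ell$, define
\[
  g(\bt):=\#\bigl\{\,q\in\{0,1,\dots,N\}:\ \bx_q\in \bt+[0,\delta)^\ell \pmod{\ZZ^\ell}\,\bigr\}.
\]
Each set $\{\bt:\bx_q\in \bt+[0,\delta)^\ell\}$ is a box of $\lambda_\ell$-measure $\delta^\ell$ (here $\delta\le1$), so by Fubini's theorem $\int_{\TT^\ell}g\,d\lambda_\ell=\sum_{q=0}^{N}\delta^\ell=(N+1)\delta^\ell$. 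Since $\lambda_\ell(\TT^\ell)=1$ and $g$ is integer-valued, there exists a point $\bt^*$ with $g(\bt^*)\ge (N+1)\delta^\ell$.

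\textbf{Step 2: from a cluster of points to a cluster of multiples.} Put $S:=\{q:\bx_q\in \bt^*+[0,\delta)^\ell\}$, so $|S|\ge (N+1)\delta^\ell$, and let $q_0:=\min S$. For every $q\in S\setminus\{q_0\}$ we have $q-q_0\in\{1,\dots,N\}$, and since both $\bx_q-\bt^*$ and $\bx_{q_0}-\bt^*$ have all coordinates in $[0,\delta)\pmod 1$, their difference $\bx_q-\bx_{q_0}\equiv (q-q_0)\balpha$ has all coordinates in $(-\delta,\delta)\pmod 1$; hence $\norm{(q-q_0)\balpha}<\delta$. Distinct elements of $S\setminus\{q_0\}$ give distinct integers $q-q_0$, so these yield at least $|S|-1$ distinct solutions of $\norm{q\balpha}<\delta$ with $1\le q\le N$. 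Therefore
\[
  Q_\delta(N)\ \ge\ |S|-1\ \ge\ (N+1)\delta^\ell-1\ \ge\ N\delta^\ell-1,
\]
which is the asserted bound.

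\textbf{Main obstacle.} The argument is entirely elementary, and the only point needing care --- and the reason a bare pigeonhole on a fixed partition does not suffice --- is the averaging step (Step 1) that locates a box $\bt^*+[0,\delta)^\ell$ containing the full ``expected share'' $(N+1)\delta^\ell$ of the orbit points $\bx_q$; once such a box is found, the translation invariance of the torus metric converts a cluster of points into a cluster of small multiples $\norm{q\balpha}$, and the count follows immediately.
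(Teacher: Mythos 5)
Your proof is correct and takes essentially the same route as the paper's: both apply an averaging argument over translates of a box of volume $\delta^\ell$ in $\TT^\ell$ to locate a well-occupied translate, and then a subtraction (difference) trick to convert that cluster of orbit points into at least that many minus one small homogeneous values $\norm{q\balpha}$. The paper runs the two steps in the opposite order (first the reduction $Q_\delta(N)\ge Q_{\delta/2,\bgamma}(N)-1$, then the integral $\int_{\TT^\ell}Q_{\delta/2,\bgamma}(N)\,d\bgamma=N\delta^\ell$) and uses a centered box $(-\delta/2,\delta/2)^\ell$ with $q\ge1$ instead of your half-open box with $q\ge0$, but these are only bookkeeping differences; your explicit note that $\delta\le1$ is needed for the measure computation is also implicitly assumed in the paper.
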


\begin{proof}
  Let
  \begin{equation*}
  \cQ_\delta(N) = \left\{q\in\NN : \norm{q\balpha} < \delta, q\leq
    N\right\},
  \end{equation*}
  so that $Q_\delta(N) = \Abs{\cQ_\delta(N)}$. We first claim that
  $Q_\delta(N)\geq Q_{\frac{\delta}{2},\bgamma}(N) - 1$ for any
  $\bgamma\in\RR^\ell$ and $N\in\NN$, where
  \begin{equation*}
    \cQ_{\delta,\bgamma}(N) : = \left\{q\in\NN : \norm{q\balpha + \bgamma} < \delta, q\leq N\right\}
  \end{equation*}
  and $Q_{\delta,\bgamma}(N) = \Abs{\cQ_{\delta,\bgamma}(N)}$. Simply
  notice that if $q_1 < q_2\in\cQ_{\frac{\delta}{2},\bgamma}(N)$, then,
  by the triangle inequality, $q_2 -
  q_1\in\cQ_{\delta}(N)$.
  Therefore, letting $q_0 = \min\cQ_{\frac{\delta}{2},\bgamma}(N)$, we
  have that
  \begin{equation*}
  	\cQ_{\frac{\delta}{2},\bgamma}(N)-q_0:=\left\{q-q_0:q\in\cQ_{\frac{\delta}{2},\bgamma}(N)\right\}\subseteq\cQ_\delta(N)\cup\{0\},
  \end{equation*}
  which implies that 
  \begin{equation*}
  	Q_\delta(N)\geq Q_{\delta/2,\bgamma}(N) - 1.
  \end{equation*} 
  
  Now we show that for any $N\in\NN$ there is some $\bgamma$ such that
  $Q_{\frac{\delta}{2},\bgamma}(N)\geq N\delta^\ell$. Notice that
  \begin{align*}
    \int_{\TT^\ell} Q_{\frac{\delta}{2},\bgamma}(N)\,d\bgamma & = \int_{\TT^\ell} \sum_{q = 1}^N \mathbf{1}_{\left( - \frac{\delta}{2}, \frac{\delta}{2}\right)^\ell}(q\bx + \bgamma)\,d\bgamma = N\delta^\ell,
  \end{align*}
  where $\TT^{\ell}=\RR^{\ell}/\ZZ^{\ell}$ is the $\ell$-dimensional torus and $\mathbf{1}$ is the characteristic function.
  Therefore, $Q_{\frac{\delta}{2},\bgamma}(N)$ must take some value
  $\geq N \delta^\ell$ at some $\bgamma$. Combining this with the
  previous paragraph proves the lemma.
\end{proof}

\begin{samepage}
\begin{lemma}
  \label{lem:series}
  Let $\balpha\in\I^{\ell}$ and $m\in\NN$ with $\ell+m=n$. Suppose that $\psi:\NN\to\RR^+$ is an approximating function such that $\sum_{q\in\NN}\psi(q)^n$ diverges. Then,
  \begin{equation}
    \label{psikseries}
    \sum_{\norm{q\balpha} < \psi(q)}\psi(q)^{m} = \infty.
    \vspace{2ex}
  \end{equation}
\end{lemma}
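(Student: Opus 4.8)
The plan is to obtain \eqref{psikseries} directly from the hypothesis $\sum_q\psi(q)^n=\infty$ by applying a layer‑cake (distribution‑function) decomposition to $\psi(q)^m$, with Lemma~\ref{lem:count} as the only input. I would deliberately avoid the tempting alternative of breaking $\NN$ into dyadic blocks $(2^{j-1},2^j]$, replacing $\psi$ by $\psi(2^j)$ on the $j$‑th block, and summing by parts against $\psi(2^j)^m$: that attempt is too lossy, because the error terms it produces are of the same order as $\sum_j 2^j\psi(2^j)^n$ and simply cancel the main term (one ends up subtracting a divergent series from itself). The layer‑cake argument sidesteps this entirely, because it never discards the finer scales of $\psi$ inside a block.

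Write $S:=\{q\in\NN:\norm{q\balpha}<\psi(q)\}$. First I would dispose of a degenerate case: if $\psi_\infty:=\lim_{q\to\infty}\psi(q)>0$, then $S$ contains $\{q:\norm{q\balpha}<\psi_\infty\}$, which is infinite by Lemma~\ref{lem:count}, and $\psi(q)^m\geq\psi_\infty^m$ on $S$, so \eqref{psikseries} is immediate. Hence I may assume $\psi(q)\to0$, so that $\{q:\psi(q)>u\}$ is a finite initial segment $\{1,\dots,q_u\}$ for every $u>0$. Writing $\psi(q)^m=\int_0^{\psi(q)}mu^{m-1}\,du$ and interchanging the sum and the integral (Tonelli; all terms nonnegative),
\[
  \sum_{q\in S}\psi(q)^m=\int_0^\infty mu^{m-1}\bigl|\{q\in S:\psi(q)>u\}\bigr|\,du\ \geq\ \int_0^1 mu^{m-1}\bigl|\{q\in S:\psi(q)>u\}\bigr|\,du .
\]
The crucial move is then to freeze the threshold at the level $u$: for fixed $u\in(0,1)$, any $q\leq q_u$ with $\norm{q\balpha}<u$ automatically satisfies $\norm{q\balpha}<u<\psi(q)$ and hence lies in $S$, so Lemma~\ref{lem:count} yields $\bigl|\{q\in S:\psi(q)>u\}\bigr|\geq q_uu^\ell-1$.

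Substituting this and using $\ell+m=n$ would give
\[
  \sum_{q\in S}\psi(q)^m\ \geq\ m\int_0^1 q_u\,u^{n-1}\,du-m\int_0^1 u^{m-1}\,du\ =\ m\int_0^1 q_u\,u^{n-1}\,du-1 ,
\]
and since $q_u=\sum_{q\geq1}\mathbf{1}[\psi(q)>u]$ one more interchange turns the remaining integral into
\[
  \int_0^1 q_u\,u^{n-1}\,du=\sum_{q\geq1}\int_0^{\min(\psi(q),1)}u^{n-1}\,du=\frac1n\sum_{q\geq1}\min(\psi(q),1)^n=\infty,
\]
the last equality because $\psi(q)\to0$ leaves only finitely many $q$ with $\psi(q)>1$, so $\sum_q\min(\psi(q),1)^n\geq\sum_{q:\psi(q)\leq1}\psi(q)^n=\infty$. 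This proves \eqref{psikseries}. The one genuinely delicate point is exactly the ``freezing'' step: Lemma~\ref{lem:count} controls solution counts only at a \emph{constant} radius, so one has to pass from the $q$‑dependent threshold $\psi(q)$ to the fixed level $u$ and use monotonicity of $\psi$ — which makes $\{q:\psi(q)>u\}$ a down‑set — to line the two up before the lemma can be invoked; everything else (the two applications of Tonelli's theorem and the identity $\int_0^a u^{n-1}\,du=a^n/n$) is routine.
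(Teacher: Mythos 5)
Your proof is correct, and it takes a genuinely different route from the paper's. The paper first reduces to a dyadic step function $\psi(q)=2^{-k_q}$, splits $\NN$ into dyadic blocks, performs Abel summation to pass from the block counts $Q_\psi(2^{k-1},2^k)$ to the cumulative counts $Q_\psi(2^j)$, invokes Lemma~\ref{lem:count}, and then handles the resulting differences $\psi(2^j)^m-\psi(2^{j+1})^m$ by restricting to the subsequence of indices where the step function actually changes value, finishing with the Cauchy condensation test. Your layer-cake argument bypasses the step-function reduction, the Abel summation, the subsequence device, and the condensation test all at once: the identity $\psi(q)^m=\int_0^{\psi(q)}mu^{m-1}\,du$, two applications of Tonelli, and the freezing inclusion $\{q\leq q_u:\norm{q\balpha}<u\}\subseteq\{q\in S:\psi(q)>u\}$ (the one place monotonicity of $\psi$ is used) reduce everything to a single clean application of Lemma~\ref{lem:count} at a constant level $u$, with the final integral evaluating to $\frac1n\sum_q\min(\psi(q),1)^n$, which diverges by hypothesis. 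Your version is arguably more transparent and loses no constants. One remark, though: your opening critique of the dyadic-plus-summation-by-parts route is misplaced — it does \emph{not} end up subtracting a divergent series from itself, and it is in fact exactly the route the paper takes. After Abel summation the ``$-1$'' error from Lemma~\ref{lem:count} merely telescopes to the bounded constant $-\psi(2)^m$, while the subsequence trick guarantees that the main term $\sum_j\left(\psi(2^j)^m-\psi(2^{j+1})^m\right)2^j\psi(2^j)^\ell$ diverges. So both approaches succeed; yours is simply less fiddly.
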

\end{samepage}

\begin{remark*}
	The index $\norm{q\balpha}<\psi(q)$ in the above sum is short for ``$q\in\NN,$ $\norm{q\balpha}<\psi(q)$'' and will be used throughout this chapter.
\end{remark*}

\begin{proof}
  We may assume without loss of generality that $\psi$ is a step function of the form $\psi(q) = 2^{-k_q}$
  where $k_q\in\NN$. Indeed, given any $\psi$ as in the theorem
  statement, we can let $k_q = \ceil{-\log_2\psi(q)}$ and replace
  $\psi(q)$ with $2^{-k_q}$. For any $q\in\NN$, we will have reduced $\psi(q)$ by no more
  than a factor of $\frac{1}{2}$, hence preserving the divergence of the
  series $\sum_{q\in\NN}\psi(q)^n$. On the other hand, since this modified function is less
  than the old $\psi$, divergence of \eqref{psikseries} for the new
  function implies divergence of \eqref{psikseries} for the old $\psi$.
  Now,
  \begin{align*}
    \sum_{\norm{q\balpha} < \psi(q)}\psi(q)^{m} &\geq \sum_{k\in\NN}\psi(2^k)^{m}\Abs{\left\{2^{k - 1} < q\leq 2^k : \norm{q\balpha} < \psi(2^k)\right\}}\\[1ex]
                                            & = \sum_{k\in\NN}\psi(2^k)^{m}Q(2^{k - 1}, 2^k) \\[1ex]
                                            & = \sum_{k\in\NN}\sum_{j\geq k}\left(\psi(2^j)^{m} - \psi(2^{j + 1})^{m}\right)Q(2^{k - 1}, 2^k)\\[1ex]
                                            & = \sum_{j\in\NN}\left(\psi(2^j)^{m} - \psi(2^{j + 1})^{m}\right)\sum_{k = 1}^j Q(2^{k - 1}, 2^k) \\[1ex]
                                            &\geq \sum_{j\in\NN}\left(\psi(2^j)^{m} - \psi(2^{j + 1})^{m}\right) Q(2^j) \\[1ex]
                                            &\geq \sum_{j\in\NN}\left(\psi(2^j)^{m} - \psi(2^{j + 1})^{m}\right)
                                              [2^j\psi(2^j)^\ell - 1] \quad (\text{by Lemma \ref{lem:count}})\\[1ex]
                                            &=  -\psi(2)^m + \sum_{j\in\NN}\left(\psi(2^j)^{m} - \psi(2^{j + 1})^{m}\right)
                                              2^j\psi(2^j)^\ell.
  \end{align*}
  Let $(j_d)_{d = 1}^\infty$ be the sequence indexing the set
  $\{j\in\NN : k_{2^{j}}\neq k_{2^{j + 1}}\}$ in increasing
  order. Then we have
  \begin{equation*}
  	\psi(2^{j_d})^m - \psi(2^{j_d + 1})^m \gg \psi(2^{j_d})^m,
  \end{equation*}
  and hence,
  \begin{align*}
    \sum_{j\in\NN}\left(\psi(2^j)^{m} - \psi(2^{j + 1})^{m}\right)
    2^j\psi(2^j)^\ell
    &\gg \sum_{d\in\NN}2^{j_d} \psi(2^{j_d})^{m + \ell} \\
    &\gg \sum_{d\in\NN}\left(\sum_{k = j_{d - 1} + 1}^{j_d}2^k\right)\psi(2^{j_d})^n \\[1ex]
    & = \sum_{k\in\NN}2^k\psi(2^k)^{n},
  \end{align*}
  which diverges by Cauchy's condensation test (see Lemma \ref{sum-lemma}).
\end{proof}

\begin{proof}[Proof of Theorem~\ref{thm:subspaces}]
  Suppose that $m\geq 2$. Then, by Lemma \ref{lem:series}, we can apply
  Gallagher's extension of Khintchine's theorem \cite{Gallagherkt} to
  the function
  \begin{equation}
    \label{psix}
    \psi_{\balpha}(q) = \begin{cases}
      \psi(q) & \text{if }\ \norm{q\balpha} < \psi(q), \\
      0 & \text{otherwise},
    \end{cases}
  \end{equation}
  and get that $\{\balpha\}\times\RR^m$ is of Khintchine type for
  divergence. But $\balpha\in\I^\ell$ was chosen arbitrarily, and applying
  permutation matrices does not affect whether a manifold is of
  Khintchine type for divergence. This completes the proof.
\end{proof}

\section[Proof of Theorem~\ref{thm:lines}(i): Base points of high Diophantine type]{Proof of Theorem~\ref{thm:lines}(i):\\ Base points of high Diophantine type}\label{Sec:KTP}

The proof of Theorem \ref{thm:lines}(i) is based on the following
standard fact, which can be found for example in~\cite[Theorem
V.IV]{Cassels}:

\theoremstyle{plain} \newtheorem*{ktp}{Khintchine's transference
  principle}
\begin{ktp}
  Let $\balpha\in\I^d$ and define the numbers
  \begin{equation*}
    \omega_D=\omega_D(\balpha) = \sup\left\{\omega\in\mathbb{R}^+: \norm{\inner{\bq}{\balpha}} \leq \abs{\bq}^{-(n + \omega)} \textrm{ for i.m. } \bq\in\ZZ^n\backslash\{\boldsymbol{0}\} \right\}
  \end{equation*}
  and
  \begin{equation*}
    \omega_S=\omega_S(\balpha) = \sup\left\{\omega\in\mathbb{R}^+: \norm{q\balpha} \leq q^{-(1 + \omega)/n} \textrm{ for i.m. } q\in\NN \right\}.
  \end{equation*}
  Then
  \begin{equation*}
    \frac{\omega_D}{n^2 + (n - 1)\omega_D} \leq \omega_S \leq \omega_D,
  \end{equation*}
  where the cases $\omega_D = \infty$ and $\omega_S = \infty$ should
  be interpreted in the obvious way.
\end{ktp}

Note that $\omega_D$ is related to the dual Diophantine type $\tau_D$
defined in \eqref{taudef} via the formula
$\tau_D(\bx) = \omega_D(\bx) + n$.

\begin{proof}
[Proof of Theorem~\ref{thm:lines}(i)]
We fix $\balpha = (\alpha_1,\dots, \alpha_{n - 1})\in\I^{n - 1}$ such that
$\tau_D(\balpha) > n$, and we consider a point
$(\balpha,\beta) \in \{\balpha\}\times\RR$. It is clear from \eqref{taudef} that
$\tau_D(\balpha,\beta) \geq \tau_D(\balpha)$, so $\tau_D(\balpha,\beta) > n$ and thus
$\omega_D(\balpha,\beta) > 0$. Then, by Khintchine's transference principle,
$\omega_S(\balpha,\beta) > 0$, {\it i.e.} $(\balpha,\beta) \in \mathrm{VWA}_n$.
\end{proof}

\section[Proof of Theorem~\ref{thm:lines}(ii): Base points of low Diophantine type]{Proof of Theorem~\ref{thm:lines}(ii):\\ Base points of low Diophantine type}\label{sec:ubiquity}

We start by stating a result of Cassels' \cite{Cassels-01law}, which will be used in the proof.

\begin{theorem}[Cassels]\label{Thm:Cassels01}
	Let $(\phi(i))_{i\in\NN}$ be any sequence of non-negative numbers and let $(q_i)_{i\in\NN}$ be any sequence of integers. Then $\parallel q_i\alpha \parallel < \phi(i)$ has infinitely many solutions either for almost all or for almost no $\alpha\in\RR$.
\end{theorem}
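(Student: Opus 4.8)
The plan is to derive the dichotomy from the Lebesgue density theorem, so the first step is to put the set in a convenient form. Since $\norm{q\alpha}=\norm{\abs{q}\alpha}$ and depends only on $\alpha\bmod 1$, we may work on the circle $\TT=\RR/\ZZ$ and assume every $q_i\ge 0$. An index with $q_i=0$ contributes $\TT$ (if $\phi(i)>0$) or $\emptyset$, and an index with $\phi(i)>\tfrac12$ contributes $\TT$; so if either occurs for infinitely many $i$ the limsup set is all of $\TT$ and we are done, while otherwise we discard those indices, leaving $q_i\ge 1$ and $0<\phi(i)\le\tfrac12$. As in the intended application, we take the $q_i$ to be distinct, hence $q_i\to\infty$. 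Now put
\[
A_i=\{\alpha\in\TT:\norm{q_i\alpha}<\phi(i)\},
\]
which is a union of $q_i$ equally spaced arcs of common length $2\phi(i)/q_i$ and is in particular invariant under translation by $1/q_i$. Writing $E=\limsup_i A_i=\bigcap_N B_N$ with $B_N=\bigcup_{i\ge N}A_i\supseteq E$, we have $\lambda(E)=\lim_{N\to\infty}\lambda(B_N)$, and the claim is that $\lambda(E)\in\{0,1\}$.

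Next I would run the density argument. Suppose $\lambda(E)>0$; the goal is to show $\lambda(E)=1$, and by the Lebesgue density theorem it suffices to show that $E$ has density $1$ at almost every point, i.e.\ that $\lambda(E\cap J)\ge(1-o(1))\lambda(J)$ as $\lambda(J)\to 0$. Fix a density point $x_0$ of $E$ and, for small $\epsilon>0$, a short arc $I_0\ni x_0$ with $\lambda(E\cap I_0)>(1-\epsilon)\lambda(I_0)$; since $E\subseteq B_N$ this gives $\lambda(B_N\cap I_0)>(1-\epsilon)\lambda(I_0)$ for \emph{every} $N$. As $B_N\cap I_0=\bigcup_{i\ge N}(A_i\cap I_0)$, already a finite subcollection $A_{i_1},\dots,A_{i_k}$ (with $N\le i_1<\dots<i_k$) satisfies $\lambda\bigl(\bigcup_j(A_{i_j}\cap I_0)\bigr)>(1-2\epsilon)\lambda(I_0)$, and every arc composing these sets has length $\le 1/q_{i_j}\le 1/Q_N$ where $Q_N=\min_{i\ge N}q_i\to\infty$. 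One then transfers this local abundance to an arbitrary arc $J$ of the same length as $I_0$: sliding the configuration by appropriate multiples of the $1/q_{i_j}$ and controlling the contribution of arcs that straddle the endpoints of $J$---whose total length is $O(1/Q_N)$---one gets $\lambda(B_N\cap J)\ge\lambda\bigl(\bigcup_j(A_{i_j}\cap J)\bigr)>(1-2\epsilon)\lambda(J)-O(1/Q_N)$. Letting $N\to\infty$ yields $\lambda(E\cap J)\ge(1-3\epsilon)\lambda(J)$ for all such arcs $J$, hence for all arcs by tiling, hence $\lambda(E)\ge 1-3\epsilon$; since $\epsilon$ was arbitrary, $\lambda(E)=1$.

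The main obstacle is precisely the transfer step: although each $A_i$ is periodic, a finite union $\bigcup_j A_{i_j}$ need not be (its period is $1/\gcd(q_{i_1},\dots,q_{i_k})$, which can be $1$), so one cannot simply read homogeneity of $E$ off the periodicity of the constituents. The only usable structure is that every arc appearing in the construction has length $O(1/Q_N)$ with $Q_N\to\infty$, and making the accumulated boundary error negligible against this scale---this is exactly where $q_i\to\infty$ is essential---is the technical heart of Cassels' argument in \cite{Cassels-01law}, which I would reproduce here. (Equivalently one can show that $E$ agrees, up to a null set, with its rotate $E+a/b$ for every rational $a/b$, whence ergodicity of the dense rotation group $\QQ/\ZZ$---seen via the vanishing of all nonzero Fourier coefficients of $\mathbf 1_E$---gives the dichotomy; but establishing that near-invariance requires the same kind of overlap estimate, so this route is no shorter.)
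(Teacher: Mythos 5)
The paper gives no proof of Theorem~\ref{Thm:Cassels01}: it is stated and attributed to \cite{Cassels-01law}, so there is no internal argument to compare yours against.

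Your reductions are correct, and it is worth making explicit that the statement as printed is actually false without the extra hypothesis you quietly add: taking $q_i\equiv 1$ and $\phi(i)\equiv 1/4$ makes the $\limsup$ set have measure $1/2$, so one really does need the $q_i$ distinct (equivalently $|q_i|\to\infty$), which is satisfied in the intended application. The Lebesgue density strategy is also the right one---it is Cassels'. But the central transfer step is not actually carried out, and the sketch you give of it does not hold up. Because the periods $1/q_{i_j}$ are all different, there is no single translation that approximately carries every $A_{i_j}$ to itself, hence no way to ``slide the configuration'' $\bigcup_j(A_{i_j}\cap I_0)$ onto $\bigcup_j(A_{i_j}\cap J)$ by such a shift. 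What periodicity does give is that each individual quantity $\lambda(A_{i_j}\cap I_0)$ and $\lambda(A_{i_j}\cap J)$ differ by at most $O(1/q_{i_j})$, but this yields no control on the measure of the \emph{union} of the $A_{i_j}$, so the displayed inequality $\lambda\bigl(\bigcup_j(A_{i_j}\cap J)\bigr)>(1-2\epsilon)\lambda(J)-O(1/Q_N)$ is unsupported. You correctly diagnose this in your final paragraph, but then defer the missing overlap estimate to ``Cassels' argument, which I would reproduce here.'' That estimate is precisely the heart of the proof, and it is absent. As written, the proposal is a correct outline with a correctly located but unfilled gap.
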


\begin{remark*}
	Theorem \ref{Thm:Cassels01} is also known as Cassels' ``0-1 law''. Gallagher's ``0-1 law'', which was referred to in Chapter \ref{Ch:Introduction}, is an extension of Theorem \ref{Thm:Cassels01} to the coprime setting of the Duffin--Schaeffer Conjecture, see Conjecture \ref{Con:DS}.
\end{remark*}

We will also need to make use of a property of lattices. Let $\Lambda=\Lambda(A)=A\ZZ^m\subset\RR^m$ be a full-rank lattice generated by $A\in\RR^{m\times m}$ satisfying $\det A\neq 0$. For $j\in\{1,\dots,m\},$ we define the \textit{$j$-th successive minimum} of $\Lambda$ as
\begin{equation*}
	\mu_j=\mu_j(\Lambda):=\inf\left\{r>0:\Lambda\cap \bar{B}(\0,r)\text{ contains }j\text{ linearly independent vectors}\right\},
\end{equation*}
where $\bar{B}$ denotes a closed ball. Clearly, $0<\mu_1\leq\mu_2\leq\dots\leq\mu_m<\infty$. Furthermore, let the \textit{Dirichlet fundamental domain} of $\Lambda$ centred at $\0$ be defined as
  \[
    \cD = \{\br\in\RR^{m} : \mathrm{dist}(\br,\Lambda) =
    \mathrm{dist}(\br,\0) = |\br|\}.
  \]
Then the following holds.

\begin{lemma}\label{Lma:Lattice}
	Suppose that $\cD \not\subseteq B_{m}(\0,R)$. Then the last
  	successive minimum of $\Lambda$ satisfies $\mu_m\geq R/m$.
\end{lemma}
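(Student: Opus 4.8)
The plan is to argue by contradiction, exploiting the classical covering-radius bound for a lattice in terms of its successive minima, via a simple rounding argument.

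First I would extract a witness from the hypothesis. Since $\cD \not\subseteq B_m(\0,R)$, there is a point $\br \in \cD$ with $|\br| \geq R$. By the definition of the Dirichlet fundamental domain this means $\mathrm{dist}(\br,\Lambda) = |\br| \geq R$; equivalently, $|\br - \bv| \geq |\br|$ for every $\bv \in \Lambda$, so no lattice point lies strictly closer to $\br$ than the origin does.

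Next, suppose toward a contradiction that $\mu_m < R/m$. Choose lattice vectors $\bv_1,\dots,\bv_m \in \Lambda$ realizing the successive minima, so that they are linearly independent and $|\bv_j| = \mu_j$ for each $j$. (The infimum defining $\mu_j$ is attained: $\Lambda$ is discrete, hence $\Lambda \cap \bar{B}(\0,r)$ is finite for each $r$, and the condition of containing $j$ linearly independent vectors persists as $r \to \mu_j^{+}$; alternatively one may simply work with $\mu_j + \eps$ throughout and let $\eps \to 0$.) Since $m$ linearly independent vectors span $\RR^m$, write $\br = \sum_{j=1}^m c_j \bv_j$ with $c_j \in \RR$, and round: pick $n_j \in \ZZ$ with $|c_j - n_j| \leq 1/2$ and set $\bv = \sum_{j=1}^m n_j \bv_j \in \Lambda$. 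Then
\[
  |\br - \bv| = \Bigl|\sum_{j=1}^m (c_j - n_j)\bv_j\Bigr| \le \tfrac12 \sum_{j=1}^m |\bv_j| = \tfrac12 \sum_{j=1}^m \mu_j \le \tfrac{m}{2}\,\mu_m < \tfrac{R}{2}.
\]
Hence $\mathrm{dist}(\br,\Lambda) \le |\br - \bv| < R/2 < R \le |\br|$, contradicting $\mathrm{dist}(\br,\Lambda) = |\br|$ from the previous step. Therefore $\mu_m \ge R/m$, as claimed.

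I do not expect a serious obstacle here. The only points needing a word of care are that the successive minima are attained by actual lattice vectors and that these $m$ vectors span $\RR^m$ (immediate from linear independence). The substance of the proof is just the elementary covering-radius estimate $\mathrm{dist}(\br,\Lambda) \le \tfrac12\sum_{j=1}^m \mu_j$, valid for every $\br \in \RR^m$, applied to the far-away point $\br \in \cD$.
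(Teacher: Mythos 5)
Your proof is correct and follows essentially the same strategy as the paper's: choose a point of $\cD$ outside $B_m(\0,R)$, express it in the basis of vectors realising the successive minima, round the coefficients to integers, and derive a contradiction with $\mathrm{dist}(\br,\Lambda)=|\br|$. The only cosmetic difference is that you round to the nearest integer (so $|c_j-n_j|\le 1/2$, giving the sharper bound $\mathrm{dist}(\br,\Lambda)\le\tfrac12\sum\mu_j$), whereas the paper uses $\lfloor s_j\rfloor$ and is content with the cruder bound $\sum\mu_j\le m\mu_m<R$; both suffice, as the lemma itself (cf.\ the paper's following remark) only needs $\mu_m\ge cR$ for some absolute $c>0$.
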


\begin{proof}
	Assume that $\mu_m< R/m$. As $\cD \not\subseteq B_{m}(\0,R)$, there exists $\bx\in\cD$ with $|\bx|>R$. Since $\bx\in\cD$, it follows that $\bar{B}(\bx,R)\cap\Lambda=\emptyset$. There are $m$ linearly independent vectors $\bv_1,\dots,\bv_m\in\Lambda$ satisfying $|\bv_1|=\mu_1,\dots,|\bv_m|=\mu_m$. These vectors span $\RR^m$ and so we can write
	\begin{equation*}
		\bx=s_1\bv_1+\dots+s_m\bv_m,\quad \text{ with } s_j\in\RR,\quad (1\leq j\leq m).
	\end{equation*}
	Let
	\begin{equation*}
		\bz=\lfloor s_1\rfloor \bv_1+\dots+\lfloor s_m\rfloor \bv_m\in\Lambda.
	\end{equation*}
	It follows that
	\begin{align*}
		|\bx-\bz|=\left|\sum\limits_{j=1}^{m}(s_j-\lfloor s_j\rfloor)\bv_j\right|\leq \sum\limits_{j=1}^{m}|\bv_j|\leq R
	\end{align*}
	and so $\bz\in\bar{B}(\bx,R)\cap\Lambda$, which is a contradiction.
\end{proof}

\begin{remark}
	The lower bound $\mu_m\geq R/m$ is not optimal, but for our purposes we only need the fact that $\mu_m\geq cR$ for some constant $c>0$ which does not depend on the lattice $\Lambda$.
\end{remark}

Now to the proof of Theorem~\ref{thm:lines}(ii).
Let $\psi:\RR\to\RR^+$ be non-increasing and such that
$\sum_{q\in\NN}\psi(q)^n$ diverges. Our goal here is to use the ideas
of ubiquity theory introduced in Chapter \ref{ubiquity} to show that almost every point on
$\{\balpha\}\times\RR\subseteq\RR^n$ is $\psi$-approximable, where
$\balpha\in\RR^{n - 1}$ has been fixed with dual Diophantine type strictly
less than $n$. The ubiquity approach begins with the fact that for any
$N\in\NN$ such that
\begin{equation}
  \label{Nreq}
  N^{-1/(n - 1)} < \psi(N) < 1,
  \vspace{-2ex}
\end{equation}
we have
\begin{equation}
  \label{eq:mink}
  [0,1] \subseteq \bigcup_{\substack{q\leq N \\ \norm{q \balpha} < \psi(N)}}\bigcup_{p = 0}^q B\left(\frac{p}{q},\frac{2}{q N\psi(N)^{n - 1}}\right),
  \vspace{2ex}
\end{equation}
which is a simple consequence of Minkowski's theorem. The basic aim is
to show that a significant proportion of the measure of the above
double-union set is represented by integers $q$ that are closer to $N$ than to
$0$. Specifically, we must show that for some $k\geq 2$, the following
three conditions hold:
\begin{enumerate}
\item[\textbf{(U)}] In accordance with the theory presented in Chapter \ref{ubiquity}, we define the following objects:
  \begin{align*}
    J &= \{(p,q)\in\ZZ\times\NN : \norm{q\balpha} < \psi(q)\},&
                                                   R_{(p,q)} &= \left\{p/q\right\} \;\;\; ((p,q) \in J),\\
    \cR &= \{R_{(p,q)} : (p,q)\in J\},&
                                    \beta_{(p,q)} &= q \;\;\; ((p,q)\in J),\\
    l_j &= k^{j - 1} \;\;\; (j\in\NN),&
                                        u_j &= k^j \;\;\; (j\in\NN).
  \end{align*}
  Furthermore, we define the function $\rho:\NN\rightarrow\RR^{+}$ by \vspace{-1ex}
  \begin{equation*}
    \rho(q) = \frac{c}{q^2 \psi(q)^{n - 1}},
  \end{equation*}
  where $c > 0$ will be chosen later. Then the pair $(\cR,\beta)$ forms a global
  ubiquitous system with respect to the triple $(\rho,l,u)$. This means that there is some
  $\kappa > 0$ such that
  \begin{align*}
    \lambda\left([0,1]\cap\bigcup_{\substack{k^{j - 1} < q \leq k^j \\ \norm{q\balpha} < \psi(k^j)}}\bigcup_{p = 0}^q
      B\left(\frac{p}{q},\frac{c}{k^{2j}\psi(k^j)^{n - 1}}\right)\right) \geq \kappa
  \end{align*}
  for all $j$ sufficiently large. \item[\textbf{(R)}] The function
  $\varphi(q)= \psi(q)/q$ is $u$-regular, meaning that there
  is some constant $c < 1$ such that
  $\varphi(k^{j + 1})\leq c \varphi(k^j)$ for all $j$ sufficiently
  large.
\item[\textbf{(D)}] The sum
  $\sum_{j\in\NN}\frac{\varphi(k^j)}{\rho(k^j)}$ diverges.
\end{enumerate}
Then Corollary \ref{cor2} will imply that the set of
$\psi_{\balpha}$-approximable numbers (see \eqref{psix}) in $\RR$ has
positive measure, and Theorem \ref{Thm:Cassels01} will
imply that it has full measure. Since the set of
$\psi_{\balpha}$-approximable numbers is just the set of $y\in\RR$ for which
$(\balpha,y)$ is $\psi$-approximable, this will show that the set of
$\psi$-approximable points on the line
$\{\balpha\}\times\RR\subseteq\RR^n$ has full one-dimensional Lebesgue
measure. The following lemma establishes \textbf{(R)} and~\textbf{(D)}.

\begin{lemma}
  If $\psi:\RR\to\RR^+$ is non-increasing, then~\textup{\textbf{(R)}}
  holds. Furthermore, if $\sum_{q\in\NN}\psi(q)^n$ diverges,
  then~\textup{\textbf{(D)}} holds.
\end{lemma}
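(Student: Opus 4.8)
The plan is to verify \textbf{(R)} and \textbf{(D)} by direct computation, since both reduce to elementary manipulations with $\varphi(q)=\psi(q)/q$ and $\rho(q)=c\,q^{-2}\psi(q)^{-(n-1)}$ evaluated along the geometric sequence $u_j=k^j$ (recall $k\ge 2$). No genuine obstacle is expected; the content is bookkeeping together with one application of the Cauchy condensation test.

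For \textbf{(R)}: since $\psi$ is non-increasing and $k^{j+1}>k^j$, we have $\psi(k^{j+1})\le\psi(k^j)$, and hence
\[
\varphi(k^{j+1})=\frac{\psi(k^{j+1})}{k^{j+1}}\le\frac{\psi(k^j)}{k^{j+1}}=\frac1k\cdot\frac{\psi(k^j)}{k^j}=\frac1k\,\varphi(k^j).
\]
Thus $\varphi$ is $u$-regular with regularity constant $1/k\in(0,1)$; in fact this holds for \emph{every} $j$, not merely for $j$ sufficiently large, and uses nothing beyond monotonicity of $\psi$.

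For \textbf{(D)}: I would first simplify the general term of the series. From the definitions of $\varphi$ and $\rho$,
\[
\frac{\varphi(k^j)}{\rho(k^j)}=\frac{\psi(k^j)/k^j}{c\,k^{-2j}\psi(k^j)^{-(n-1)}}=\frac1c\,k^j\,\psi(k^j)^n .
\]
Therefore $\sum_{j\in\NN}\varphi(k^j)/\rho(k^j)$ diverges if and only if $\sum_{j\in\NN}k^j\psi(k^j)^n$ diverges. Now I apply Lemma~\ref{sum-lemma} (the Cauchy condensation test) to the function $\phi(q)=\psi(q)^n$: since $\psi$ is positive and non-increasing, so is $\psi^n$, so the hypotheses of Lemma~\ref{sum-lemma} are satisfied and $\sum_{j\in\NN}k^j\psi(k^j)^n=\infty$ exactly when $\sum_{q\in\NN}\psi(q)^n=\infty$. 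The latter holds by assumption, so $\sum_{j\in\NN}\varphi(k^j)/\rho(k^j)=\infty$, which is \textbf{(D)}.

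The only points that warrant a moment's care are that the condensation test is applied to $\psi^n$ rather than to $\psi$ itself — legitimate because $\psi^n$ inherits positivity and monotonicity from $\psi$ — and that $k>1$ is precisely what makes both the $u$-regularity constant $1/k$ lie in $(0,1)$ and Lemma~\ref{sum-lemma} applicable. The constant $c>0$ appearing in $\rho$ is irrelevant to the divergence, since it merely rescales the series by a fixed positive factor.
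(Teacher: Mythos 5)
Your proof is correct and follows essentially the same route as the paper: verify $u$-regularity of $\varphi$ directly from monotonicity of $\psi$, reduce $\sum_j \varphi(k^j)/\rho(k^j)$ to $\sum_j k^j\psi(k^j)^n$ (up to the irrelevant constant $1/c$), and invoke Cauchy condensation applied to $\psi^n$. Your explicit remarks about why condensation applies to $\psi^n$ and why $c$ is harmless are sound but add no new content beyond what the paper's two-line proof already implicitly uses.
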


\begin{proof}
  In the first place, we have
  \begin{equation*}
    \frac{\varphi(k^{j + 1})}{\varphi(k^j)} = \frac{\psi(k^{j + 1})}{k\psi(k^j)}
    \leq \frac{1}{k},
  \end{equation*}
  which proves~\textbf{(R)}. For~\textbf{(D)},
  \begin{equation*}
    \sum_{j\in\NN}\frac{\varphi(k^j)}{\rho(k^j)} = \sum_{j\in\NN}k^j\psi(k^j)^n,
  \end{equation*}
  which diverges by Cauchy's condensation test.
\end{proof}

The challenge then is to establish~\textbf{(U)}.

\begin{lemma}\label{ubiqlemma}
  Let $\psi:\RR\to\RR^+$ be non-increasing such that \eqref{Nreq} holds
  for all sufficiently large $N$. Assume that for all $k\geq 2$
  there exists $j_k\geq 1$ such that, for all $j\geq j_k$,
  \begin{equation*} \Abs{\left\{0 < q\leq k^{j - 1} : \norm{q\balpha} <
        \psi(k^j)\right\}} \ll k^{j - 1}\psi(k^j)^{d - 1},
        \vspace{1ex}
  \end{equation*}
  where the implied constant in $\ll$ is
  assumed to be independent of $k$. Then~\textup{\textbf{(U)}} holds
  for some $k\geq 2$.
\end{lemma}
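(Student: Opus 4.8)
The plan is to read \textbf{(U)} straight off Minkowski's covering \eqref{eq:mink} once the counting hypothesis is granted. Fix $k\ge 2$ (to be constrained at the end) and let $N=k^j$, where $j$ is taken large enough that \eqref{Nreq} holds for $N$ and that $j\ge j_k$. By \eqref{eq:mink}, $[0,1]$ is covered by the intervals $B\!\left(\tfrac pq,\tfrac{2}{qN\psi(N)^{n-1}}\right)$ over pairs $(p,q)$ with $1\le q\le N$, $\norm{q\balpha}<\psi(N)$ and $0\le p\le q$. I would split the $q$-range at $k^{j-1}$, writing this cover as $S_j\cup L_j$, where $S_j$ is the subunion over $q\le k^{j-1}$ and $L_j$ the subunion over $k^{j-1}<q\le k^j$.

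The crux is showing $S_j$ is small. For each admissible $q$, the at most $q+1\le 2q$ intervals have total length at most $2q\cdot\tfrac{4}{q\,k^j\psi(k^j)^{n-1}}=\tfrac{8}{k^j\psi(k^j)^{n-1}}$; and by hypothesis the number of admissible $q\le k^{j-1}$ is $\ll k^{j-1}\psi(k^j)^{n-1}$ with an implied constant $C$ that does not depend on $k$ (or on $j\ge j_k$). Multiplying gives $\lambda(S_j)\le 8C/k$. Since $[0,1]\subseteq S_j\cup L_j$, this forces $\lambda(L_j\cap[0,1])\ge 1-8C/k$, so choosing $k\ge 2$ large enough that $8C/k\le\tfrac12$ yields $\lambda(L_j\cap[0,1])\ge\tfrac12$ for all sufficiently large $j$.

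It then remains to match radii. For $k^{j-1}<q\le k^j$ one has $\tfrac{2}{q\,k^j\psi(k^j)^{n-1}}\le \tfrac{2k}{k^{2j}\psi(k^j)^{n-1}}$, so taking the free constant in $\rho$ to be $c=2k$ makes every interval occurring in $L_j$ a subset of the corresponding ball $B\!\left(\tfrac pq,\tfrac{c}{k^{2j}\psi(k^j)^{n-1}}\right)=B\!\left(\tfrac pq,\rho(k^j)\right)$; hence the union in \textbf{(U)} contains $L_j$ and has measure at least $\tfrac12$ on $[0,1]$, which is precisely \textbf{(U)} with this $k$, this $c$, and $\kappa=\tfrac12$. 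The only delicate point is the order of choices — the implied constant in the hypothesis is fixed first, then $k$ is chosen large relative to it, and only then $c=2k$ — which is exactly why the lemma insists that the implied constant be independent of $k$. There is no real obstacle in this lemma itself; all the genuine work in Theorem~\ref{thm:lines}(ii) lies in verifying the counting hypothesis from the assumption $\tau_D(\balpha)<n$, which is what the remainder of the section (via the lattice Lemma~\ref{Lma:Lattice}) addresses.
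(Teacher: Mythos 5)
Your proof is correct and follows essentially the same route as the paper: bound the measure of the sub-union over $q\leq k^{j-1}$ by $\ll 1/k$ using the counting hypothesis, choose $k$ larger than the implied constant so that the sub-union over $k^{j-1}<q\leq k^j$ has measure bounded below, and then pass to \textbf{(U)} with $c=2k$ by comparing radii. The only cosmetic differences are that you pin down $\kappa=\tfrac12$ explicitly where the paper leaves $\kappa>0$ unspecified, and you write $n$ in place of the paper's stray $d$ (a typo there).
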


\begin{proof}
  For all $k\geq 2$ and $j\geq j_k$, we have
  \begin{equation*}
    \lambda\left([0,1]\cap\bigcup_{\substack{q\leq k^{j - 1} \\ \norm{q\balpha} < \psi(k^j)}}\bigcup_{p = 0}^q B\left(\frac{p}{q},\frac{2}{q k^j \psi(k^j)^{d - 1}}\right)\right)
    \leq\sum_{\substack{q\leq k^{j - 1} \\ \norm{q\balpha} < \psi(k^j)}}\frac{4}{k^j\psi(k^j)^{d - 1}}
    \ll\frac{1}{k}\cdot
    \vspace{1ex}
  \end{equation*}
  After choosing $k$ to be larger than the implied constant in the
  ``$\ll$'' comparison, we see that the left hand side is
  $\leq 1 - \kappa < 1$ for some $\kappa > 0$.

  Combining this with \eqref{eq:mink}, we see that for all $j\geq j_k$
  large enough so that \eqref{Nreq} holds for $N = k^j$, we have
  \begin{equation*}
    \lambda\left([0,1]\cap\bigcup_{\substack{k^{j - 1} < q\leq k^j \\ \norm{q\balpha} < \psi(k^j)}}\bigcup_{p = 0}^q
      B\left(\frac{p}{q},\frac{2}{q k^j \psi(k^j)^{d - 1}}\right)\right)
    \geq \kappa > 0,
    \vspace{1ex}
  \end{equation*}
  and this implies \textbf{(U)} with $c = 2k$.
\end{proof}

Thus the goal is to show that the conditions of Lemma \ref{ubiqlemma} are satisfied.
The one-dimensional case of the following lemma was originally proven
by Beresnevich, Haynes, and Velani using a continued fraction
argument \cite{nalpha}.

\begin{lemma}%[Cf. {\cite[Lemma~12]{nalpha}} for the case $\ell = 1$]
  \label{nalphalemma}
  Fix $\balpha\in\RR^\ell$ and $\tau > \tau_D(\balpha)$. Then for all $N$
  sufficiently large and for all $\delta \geq N^{-1/\tau}$, we have\vspace{-2ex}
  \begin{equation}
    \label{BHVformula}
    \Abs{\{q\in\mathbb{N}:\norm{q\balpha} < \delta,q< N\}}\leq 4^{\ell + 1} N \delta^\ell.
  \end{equation}
\end{lemma}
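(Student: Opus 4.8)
The plan is to recast the count as a count of lattice points of an explicit lattice inside a fixed cube, and then to bound it by (volume of the cube)/(covolume of the lattice); such a bound is valid only when the lattice is not ``too thin'', and it is precisely there that the hypothesis $\tau>\tau_D(\balpha)$, together with the threshold $\delta\geq N^{-1/\tau}$, enters, through Lemma~\ref{Lma:Lattice} and a transference argument. First I would set up the lattice. For $q\in\NN$ with $q<N$ and $\norm{q\balpha}<\delta$, let $\bp\in\ZZ^{\ell}$ be the nearest integer vector to $q\balpha$ (unique, as $\delta<1/2$), so $\abs{q\alpha_j-p_j}<\delta$ for $1\leq j\leq\ell$. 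Let $\Lambda\subset\RR^{\ell+1}$ be the lattice of all vectors
\begin{equation*}
  \big(q/N,\ (q\alpha_1-p_1)/\delta,\ \dots,\ (q\alpha_{\ell}-p_{\ell})/\delta\big),\qquad q\in\ZZ,\ (p_1,\dots,p_{\ell})\in\ZZ^{\ell},
\end{equation*}
which has covolume $(N\delta^{\ell})^{-1}$. The assignment above injects $\{q\in\NN:q<N,\ \norm{q\balpha}<\delta\}$ into $\Lambda\cap C$, where $C=[-1,1]^{\ell+1}$ (injectivity because the first coordinate recovers $q$, which in turn determines $\bp$), so it suffices to prove $\#(\Lambda\cap C)\leq 4^{\ell+1}N\delta^{\ell}$.

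For the counting, I would use the Dirichlet fundamental domain $\cD$ of $\Lambda$: the translates $\bv+\cD$, $\bv\in\Lambda$, tile $\RR^{\ell+1}$ with pairwise disjoint interiors, each of volume $(N\delta^{\ell})^{-1}$. If $\cD\subseteq B_{\ell+1}(\0,1)=[-1,1]^{\ell+1}$, then $\bv+\cD\subseteq[-2,2]^{\ell+1}$ for every $\bv\in\Lambda\cap C$, so that $\#(\Lambda\cap C)\cdot(N\delta^{\ell})^{-1}\leq 4^{\ell+1}$, which is what we want. By the contrapositive of Lemma~\ref{Lma:Lattice} (with $m=\ell+1$, $R=1$), the inclusion $\cD\subseteq B_{\ell+1}(\0,1)$ holds once the last successive minimum satisfies $\mu_{\ell+1}(\Lambda)<1/(\ell+1)$; it is in fact enough to show $\mu_{\ell+1}(\Lambda)\to 0$ as $N\to\infty$, uniformly in $\delta\geq N^{-1/\tau}$.

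To bound $\mu_{\ell+1}(\Lambda)$ I would pass to the dual lattice $\Lambda^{\ast}$, whose elements are $\big(N(a-\bq\cdot\balpha),\ \delta q_1,\dots,\delta q_{\ell}\big)$ with $a\in\ZZ$ and $\bq\in\ZZ^{\ell}$. Fix $\tau'\in(\tau_D(\balpha),\tau)$; by \eqref{taudef} only finitely many $\bq$ satisfy $\norm{\bq\cdot\balpha}\leq\abs{\bq}^{-\tau'}$, and each such $\bq$ has $\norm{\bq\cdot\balpha}>0$ (since $\tau_D(\balpha)<\infty$). Suppose $\Lambda^{\ast}$ has a nonzero vector of norm at most $K$, where $K=K(\ell)$ is a large constant fixed at the end. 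Writing such a vector as $\big(N(a-\bq\cdot\balpha),\delta\bq\big)$ with $a$ the nearest integer to $\bq\cdot\balpha$, we must have $\bq\neq\0$ (otherwise its norm is $\geq N\abs{a}\geq N$), $\delta\abs{\bq}\leq K$ and $N\norm{\bq\cdot\balpha}\leq K$. Once $N$ exceeds a constant depending only on the finitely many exceptional $\bq$, those $\bq$ are excluded by $N\norm{\bq\cdot\balpha}\leq K$; for every other $\bq$,
\begin{equation*}
  N\norm{\bq\cdot\balpha}\ \geq\ N\abs{\bq}^{-\tau'}\ \geq\ N(K/\delta)^{-\tau'}\ =\ K^{-\tau'}N\delta^{\tau'}\ \geq\ K^{-\tau'}N^{\,1-\tau'/\tau},
\end{equation*}
using $\delta\geq N^{-1/\tau}$, and this exceeds $K$ once $N$ is large since $\tau'<\tau$. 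Hence $\Lambda^{\ast}$ has no nonzero vector of norm $\leq K$, so by a standard transference inequality between a lattice and its dual (see \cite{Cassels}) one gets $\mu_{\ell+1}(\Lambda)\leq c_{\ell}/K$ for a constant $c_{\ell}$ depending only on $\ell$; taking $K$ large enough then gives $\mu_{\ell+1}(\Lambda)<1/(\ell+1)$ and finishes the argument.

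I expect the main obstacle to be this last paragraph: converting the dual Diophantine type hypothesis, together with the sharp cut-off $\delta\geq N^{-1/\tau}$, into a lower bound for the shortest nonzero vector of $\Lambda^{\ast}$ that is uniform in $\delta$ and whose implied constants are independent of $k$. This is what forces the (harmless) qualifier ``for all $N$ sufficiently large'', which is needed only to absorb the finitely many exceptional $\bq$. By contrast, the lattice encoding, the volume count via the Dirichlet fundamental domain, and the appeal to Lemma~\ref{Lma:Lattice} are all routine.
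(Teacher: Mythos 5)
Your proposal is correct and takes essentially the same route as the paper: encode the count as lattice points in a cube, bound it by translates of the Dirichlet fundamental domain, verify $\cD$ fits in the reference ball by combining Lemma~\ref{Lma:Lattice} with a transference to the dual lattice, and use $\tau>\tau_D(\balpha)$ together with $\delta\geq N^{-1/\tau}$ to exclude short dual vectors for $N$ large. The only cosmetic differences are that the paper renormalizes to a unimodular lattice $g_t u_{\balpha}\ZZ^{\ell+1}$ and counts inside a ball of radius $R$ with $R^{\ell+1}=N\delta^{\ell}$, whereas you keep the covolume-$(N\delta^{\ell})^{-1}$ lattice and count inside $[-1,1]^{\ell+1}$, and your explicit choice of $\tau'\in(\tau_D,\tau)$ makes the final finiteness step a little cleaner than the paper's implicit-constant version.
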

\begin{proof}
  Consider the lattice $\Lambda = g_t u_{\balpha} \ZZ^{\ell + 1}$, where
  \begin{align*}
    g_t & = \left[\begin{array}{ll}
                    e^{t/\ell}I_\ell &\\
                                     & e^{-t}
                  \end{array}\right],\\[2ex]
    u_{\balpha} & = \left[\begin{array}{ll}
                      I_\ell & - \balpha\\
                             &\ \ \ 1
                    \end{array}\right],
  \end{align*}
  and where $t$ is chosen so that $R : = e^{t/\ell} \delta = e^{-t} N$,
  i.e.
  \begin{equation*}
  	t=\frac{\log(N/\delta)}{1+1/\ell}.
  \end{equation*}
  Let $\br=(p_1,\dots,p_{\ell},q)\in\ZZ^{\ell+1}$. Then
  \begin{equation*}
  	g_t u_{\balpha}\br=(e^{t/\ell}(p_1+q\alpha_1),\dots,e^{t/\ell}(p_{\ell}+q\alpha_{\ell}),e^{-t}q),
  \end{equation*}
  and so \eqref{BHVformula} can
  be rewritten as
  \[
    \Abs{\{\br\in \Lambda : |\br| < R\}} \leq (4R)^{\ell + 1}.
  \]
  Let $\cD$ be the Dirichlet fundamental domain for $\Lambda$
  centred at $\0$, i.e.
  \[
    \cD = \{\br\in\RR^{\ell + 1} : \mathrm{dist}(\br,\Lambda) =
    \mathrm{dist}(\br,\0) = |\br|\}.
  \]
  Since $\Lambda$ is unimodular, $\cD$ is of volume 1, so
  \begin{align*}
    \Abs{\{\br\in \Lambda : |\br| < R\}} &=
    \lambda_{\ell+1}\left(\bigcup_{\substack{\br\in\Lambda \\ |\br|<
          R}} (\br + \cD)\right)\\[1ex] &\leq^* \lambda_{\ell+1}(B_{\ell + 1}(\0,2R)) =
    (4R)^{\ell + 1},
  \end{align*}
  where the starred inequality is true as long as
  $\cD \subseteq B_{\ell + 1}(\0,R)$. So we need to show that
  $\cD\subseteq B_{\ell + 1}(\0,R)$ assuming that $N$ is large enough.

  Suppose that $\cD \not\subseteq B_{\ell + 1}(\0,R)$. Then, by Lemma \ref{Lma:Lattice}, the last
  successive minimum of $\Lambda$ is $\gg R$, so by \cite[Theorem
  VIII.5.VI]{Cassels-geometry}, some point $\bs$ in the dual lattice
  \begin{equation*}
  	\Lambda^* = \{\bs\in \RR^{\ell + 1} : \br\cdot\bs\in\ZZ \text{ for all }\br\in\Lambda\}
  \end{equation*}
  satisfies $0 < |\bs|\ll R^{-1}$. Given a lattice $\Lambda(A)$, its dual lattice is generated by the inverse transpose of $A$ and so we can write
  $\bs = g_t' u_{\balpha}' (\bq,p)$ for some $p\in\ZZ$, $\bq\in\ZZ^\ell$,
  where $g_t'$ and $u_{\balpha}'$ denote the inverse transposes of $g_t$ and
  $u_{\balpha}$, respectively. Then the inequality $|\bs|\ll R^{-1}$
  becomes
  \begin{align*}
    \begin{split}
      e^{-t/\ell} |\bq| &\ll R^{-1}\\
      e^t |\inner\bq\balpha + p| &\ll R^{-1}
    \end{split}
                               \hspace{.6in}\text{ i.e. }
                               \begin{split}
                                 |\bq| &\ll \delta^{-1}\\
                                 |\inner\bq\balpha + p| &\ll N^{-1}.
                               \end{split}
  \end{align*}
  Since $\delta \geq N^{-1/\tau},$ we get
  \begin{equation}
    \label{final}
    |\inner\bq\balpha + p| \ll \delta^\tau \ll |\bq|^{-\tau}.
  \end{equation}
  Because $\tau > \tau_D(\balpha)$, there are only finitely many pairs
  $(p,\bq)$ satisfying \eqref{final}. Hence, for all sufficiently large
  $N$, we have $\cD \subseteq B_{\ell + 1}(\0,R)$ and thus
  \eqref{BHVformula} holds.
\end{proof}

From this we can deduce the following consequence.

\begin{corollary}\label{cor:nalpha}
  Let $\balpha\in\I^{n - 1}$ be of dual Diophantine type
  $\tau_D(\balpha) < n$ and suppose that, for any $\eps > 0$, we have
  $\psi(q) \geq q^{-1/n - \eps}$ for all $q$ sufficiently large. Then,
  for any $k\geq 2$ and $\ell\in\ZZ$, we have
  \begin{equation*}
    \Abs{\left\{ 0 < q \leq k^{j + \ell} : \norm{q\balpha} < \psi(k^j) \right\}}\ll k^{j + \ell}\psi(k^j)^{n - 1}
  \end{equation*}
  for $j$ large enough.
\end{corollary}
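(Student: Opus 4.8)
The plan is to derive this corollary directly from Lemma~\ref{nalphalemma}, the only work being a careful choice of parameters. Since $\tau_D(\balpha) < n$, I would first fix an auxiliary exponent $\tau$ with $\tau_D(\balpha) < \tau < n$, so that Lemma~\ref{nalphalemma} applies with this $\tau$ and with $n-1$ playing the role of the dimension ``$\ell$'' in that lemma (here $\balpha \in \I^{n-1}$). Because $\tau < n$ we have $1/\tau > 1/n$, so I can also fix $\eps > 0$ small enough that $\tfrac1n + \eps < \tfrac1\tau$. The standing hypothesis on $\psi$ then guarantees $\psi(q) \geq q^{-1/n - \eps}$ for all sufficiently large $q$, which is the lower bound on $\delta$ I will need.

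Next I would apply Lemma~\ref{nalphalemma} with $N := k^{j+\ell}+1$ and $\delta := \psi(k^j)$, after discarding the values of $j$ with $j + \ell \leq 0$ (for which $k^{j+\ell} < k$, so the set $\{0 < q \leq k^{j+\ell}\}$ is empty, apart from the single excluded value $j = -\ell$). The one genuine point to verify is the hypothesis $\delta \geq N^{-1/\tau}$ of the lemma. For $j$ large one has $\delta = \psi(k^j) \geq (k^j)^{-1/n-\eps} = k^{-j(1/n+\eps)}$, while $N^{-1/\tau} \leq (k^{j+\ell})^{-1/\tau} = k^{-(j+\ell)/\tau}$, so it suffices that $j(1/n+\eps) \leq (j+\ell)/\tau$, i.e. $j\bigl(\tau(\tfrac1n+\eps)-1\bigr) \leq \ell$. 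Since $\tau(\tfrac1n+\eps) < 1$ by the choice of $\eps$, the left-hand side tends to $-\infty$, so this holds for all $j$ large (with a threshold depending on $k$, $\ell$, $\tau$, $\eps$, but nothing else).

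With the hypothesis in place, Lemma~\ref{nalphalemma} yields $\Abs{\{q\in\NN : \norm{q\balpha}<\psi(k^j),\ q<N\}} \leq 4^{n} N\, \psi(k^j)^{n-1} \leq 2\cdot 4^{n}\, k^{j+\ell}\psi(k^j)^{n-1}$, using $N = k^{j+\ell}+1 \leq 2k^{j+\ell}$ once $j+\ell \geq 1$; since $\{0 < q \leq k^{j+\ell}\} \subseteq \{q\in\NN : q<N\}$, this is exactly the asserted bound. I do not expect a real obstacle here — all the substance is in Lemma~\ref{nalphalemma}, which is already proved — so the ``main point'' I would highlight is simply the bookkeeping above: fixing a legal pair $(\tau,\eps)$, checking $\psi(k^j)\ge N^{-1/\tau}$ for large $j$, and observing that the resulting implied constant is $2\cdot 4^n$, depending only on $n$ and in particular independent of $k$ and $\ell$ (which is what will be needed when the corollary is fed, with $\ell = -1$, into Lemma~\ref{ubiqlemma}).
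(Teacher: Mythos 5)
Your proposal is correct and follows the same route as the paper: choose $\tau \in (\tau_D(\balpha), n)$, set $\delta = \psi(k^j)$ and $N$ around $k^{j+\ell}$, verify the hypothesis $\delta \geq N^{-1/\tau}$ for large $j$, and invoke Lemma~\ref{nalphalemma}. You spell out the $\eps$-bookkeeping and the choice $N = k^{j+\ell}+1$ a bit more carefully than the paper (which takes $N = k^{j+\ell}$ and leaves the verification implicit), but the argument is the same one.
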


\begin{proof}
  We show that for large enough $j$ we are in a situation where we can
  apply Lemma~\ref{nalphalemma} with $N = k^{j + \ell}$ and
  $\delta = \psi(k^j)$. Since $\tau_D < n$ we can choose
  $\tau\in (\tau_D,n)$ and then, for all large enough $j$,
  \begin{equation*}
    N^{-1/\tau} = k^{-(j + \ell)/\tau} < \psi(k^j);
  \end{equation*}
  hence Lemma~\ref{nalphalemma} applies.
\end{proof}

Armed with Corollary \ref{cor:nalpha}, we are now ready to finish the proof.

\begin{proof}
[Proof of Theorem~\ref{thm:lines}(ii)]
Let $\balpha\in\I^{n - 1}$ be a point whose dual Diophantine type is
strictly less than $n$, and let $\psi:\NN\to\RR^+$ be a non-increasing
function such that $\sum_{q\in\NN}\psi(q)^n$ diverges. Furthermore,
assume that, for every $\eps > 0$, the inequality
\begin{equation}\label{Eqn:psibounds}
	1 > \psi(q) \geq q^{-1/n - \eps}
\end{equation}
holds for all sufficiently large
$q$. Then, by Corollary~\ref{cor:nalpha}, we satisfy all the parts of
Lemma~\ref{ubiqlemma}, so there exists $k\geq 2$ such that
\textbf{(U)} holds. Thus, by the argument given earlier, we can use
Corollary \ref{cor2} to conclude that almost every point on the
line $\{\balpha\}\times\RR\subseteq\RR^n$ is $\psi$-approximable.

We now show that assumption \eqref{Eqn:psibounds} can be made
without loss of generality. If $\psi(q) \geq 1$ for all $q$, then all
points are $\psi$-approximable and the theorem is trivial. If
$\psi(q) < 1$ for some $q$, then, by monotonicity, $\psi(q) < 1$ for all
$q$ sufficiently large. So we just need to show that the assumption
$\psi(q) \geq q^{-1/n - \eps}$ can be made without loss of
generality. Let 
\begin{equation*}
	\phi(q) = (q(\log q)^2)^{-1/n}
\end{equation*}
and define the function
\begin{equation*}
	\overline\psi(q) = \max\{\psi(q),\phi(q)\}.
\end{equation*}
Then $\overline\psi$ satisfies our assumptions and, therefore, almost every
point on $\{\balpha\}\times\RR$ is $\overline\psi$-approximable.
Corollary \ref{cor:nalpha} implies that
\begin{align*}
  \sum_{\norm{q\balpha} < \phi(q)}\phi(q) &\leq \sum_{j\in\NN}\phi(2^j)\Abs{\left\{0 < q\leq 2^{j + 1} : \norm{q\balpha} < \phi(2^j)\right\}}\\[1ex]
  &\ll \sum_{j\in\NN}2^{j + 1} \phi(2^j)^n,
\end{align*}
which converges because $\sum_{q\in\NN}\phi(q)^n$ does. Hence,
almost every point on $\{\balpha\}\times\RR$ is not
$\phi$-approximable. But every $\overline\psi$-approximable point
which is not $\phi$-approximable is $\psi$-approximable. Therefore,
the set of $\psi$-approximable points on the line
$\{\balpha\}\times\RR\subseteq\RR^n$ is of full measure, and the theorem
is proved.
\end{proof}

\section{Proof of Theorem \ref{HDfibres}}\label{Sec:HDfibres}
As mentioned previously, the first case follows directly from Dirichlet's Theorem, so the two latter cases are left to prove. Let $\balpha\in W_{\ell}(\tau)$ and
\begin{equation*}
	\cQ(\balpha,\tau)=\left\{q\in\NN:\parallel q\balpha\parallel<q^{-\tau}\right\}.
\end{equation*}
This is an infinite set, so it can be written as an increasing sequence $(q_i)_{i\in\NN}$.	The collection $\cQ=\bigcup_{i\in\NN}Q_i$ of sets
\begin{equation*}
    	Q_i:=\left\{\frac{\bp}{q}\in\mathbb{Z}^{m}\times\mathbb{N} : 0< q\leq q_i, \parallel q\balpha\parallel<q_i^{-\tau}\right\}\subseteq [0,1]^{m},\quad i\in\NN,
\end{equation*}
forms a locally $\lambda_{m}$-ubiquitous system with respect to the infinite increasing sequence $(q_i)_{i\in\NN}$, the weight function $\beta(\bp/q)=q$ and the function $\rho(q)=q^{-1}$. This follows directly from the fact that the intervals of the form
\begin{equation*}
	\left(\frac{k}{q_i},\frac{k+1}{q_i}\right),\quad k\in\{0,1,\dots,k-1\},
\end{equation*}
or their multi-dimensional analogues, respectively, cover $\I^{m}$. We do not know the growth rate of the sequence $(q_i)_{i\in\mathbb{N}}$ for an arbitrary $\balpha$. However, as long as we can guarantee 
\begin{equation*}
  	G=\limsup\limits_{i\rightarrow\infty}g(u_i)>0,\quad \text{ where }\quad g(r)=\varphi(r)^{s}\rho(r)^{-\delta},
  	\vspace{2ex}
\end{equation*}  	
we still get full $\mathcal{H}^s$-measure for $\F^{\balpha}_n\cap W_n(\psi)$ by Corollary \ref{cor4}. In our case, we have $\varphi(r)=\psi(r)/r$ and $\delta=m$. Thus, we get
\begin{equation*}
 	g(q_i)=\left(\frac{\psi(q_i)}{q_i}\right)^s\rho(q_i)^{-m}=q_i^{(-\tau-1)s+m},
\end{equation*}
and if $s\leq\frac{m}{\tau+1}=s_n^{\ell}(\tau)$, then $G\geq 1$. Hence, we have shown that
\begin{equation*}
	\cH^s(\F^{\balpha}_n\cap W_n(\psi))=\infty
	\vspace{-2ex}
\end{equation*}
for $s\leq s_n^{\ell}(\tau)$ and thus
\begin{equation*}
	s_n^{\alpha}(\tau)\geq s_n^{\ell}(\tau)= \frac{m}{\tau+1}
	\vspace{1ex}
\end{equation*}
for $\balpha\in W_{\ell}(\tau)$, which proves the third case of Theorem \ref{HDfibres}. 

\begin{remark*}
	This dimension result was already proved by the author in \cite{master}. Corollary \ref{cor5} was used for the conclusion instead of Corollary \ref{cor4} and so the statement regarding Hausdorff $s_n^{\ell}(\tau)$-measure was missing. The remaining case was mentioned as a conjecture in \cite{master}, but no progress towards a proof had been made.
\end{remark*}

It is worth noting that the above argument does not depend on the choice of $\tau$. However, this fact is not sufficient to prove the second case as 
\begin{equation*}
	\frac{n+1}{\tau+1}-\ell>\frac{m}{\tau+1}\quad \text{ for }\quad \tau\in\left(\frac{1}{n},\frac{1}{\ell}\right).
\end{equation*}
For this case we will need to use the Mass Transference Principle (see Theorem \ref{MTP}). A similar argument can be found in \cite{Lee}. By Minkowski's Theorem (see Theorem~\ref{Minkowski}), for any $\bbeta\in\I^m$ there are infinitely many numbers $q\in\NN$ simultaneously satisfying \vspace{-2ex}
\begin{equation*}
	\parallel q\alpha_i \parallel < q^{-\tau},\quad (1\leq i \leq \ell)
	\vspace{-2ex}
\end{equation*}
and
\begin{equation*}
	\parallel q\beta_j \parallel <q^{-\left(\frac{1-\ell\tau}{m}\right)},\quad (1\leq j \leq m). 
\end{equation*}
In other words, for any $\bbeta\in\I^m$ there are infinitely many numbers $q$ in the intersection
\begin{equation*}
	\cQ(\balpha,\tau)\cap\cQ\left(\bbeta,\frac{1-\ell\tau}{m}\right).
\end{equation*}
Switching to the language of Theorem \ref{MTP}, this tells us that for any ball $B\subset \I^m$, we get
\begin{equation*}
	\cH^{m}\left(B\cap\limsup\limits_{k\rightarrow\infty}B_k^{\left(\frac{m+1-\ell\tau}{\tau+1}\right)}\right)=\cH^{m}(B)
	\vspace{2ex}
\end{equation*}
where $B_k$ runs over all balls of radius $q^{-(\tau+1)}$ centered at points $\frac{\bp}{q}\in\ZZ^m\times \cQ(\balpha,\tau)$. Applying the Mass Transference Principle shows that
\begin{equation*}
	\cH^s\left(B\cap\limsup\limits_{k\rightarrow\infty}B_k\right)=\cH^s(B)=\infty,
	\vspace{-2ex}
\end{equation*}
where
\begin{equation*}
	s=\frac{m+1-\ell\tau}{\tau+1}=\frac{n-\ell+1-\ell\tau}{\tau+1}=\frac{n+1-\ell(\tau+1)}{\tau+1}=\frac{n+1}{\tau+1}-\ell.
\end{equation*}
Thus, the set of $\bbeta\in\I^m$, for which there are infinitely many numbers $q\in\cQ(\balpha,\tau)$ with $\parallel q\bbeta\parallel < q^{-\tau}$, has full $\cH^s$-measure. Equivalently, the set of $(\balpha,\bbeta)\in \F_n^{\balpha}$ which are $\tau$-approximable has full $\cH^s$-measure for $s=\frac{n+1}{\tau+1}-\ell$, which finishes the proof of Theorem \ref{HDfibres}.

\subsection{Proof of Corollary \ref{Cor:HD}}\label{sec:HDrmk}

Corollary \ref{Cor:HD} is a consequence of the Jarn\'ik-Besicovitch Theorem applied to both $W_n(\tau)$ and $W_m(\tau)$ and the following Theorem (7.11 in \cite{Falconer}), which provides a measure formula for fibres above a base set.
	
\begin{theorem}\label{slicing}
	Let $F$ be a subset of $\I^n=\I^{\ell}\times\I^m$ and let $E$ be any subset of $\I^{\ell}$. Let $s,t\geq 0$ and suppose there is a constant $c$ such that 
	\begin{equation*}
		\mathcal{H}^t(F\cap \F^n_{\balpha})\geq c
		\vspace{-2ex}
	\end{equation*}
	for all $\balpha\in E$. Then
	\begin{equation*}
		\mathcal{H}^{s+t}(F)\geq bc\mathcal{H}^s(E),
	\end{equation*}
	where $b>0$ only depends on $s$ and $t$. 
\end{theorem}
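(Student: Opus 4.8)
\emph{Proof proposal.} The plan is to prove this by the standard slicing/covering argument: relate an economical cover of $F$ to covers of its fibres, and then convert the multiplicity one gains on the fibre side into a lower bound measured by $\mathcal{H}^s(E)$ by means of a Frostman‑type mass distribution. First I would make the harmless reductions that $c>0$, $\mathcal{H}^s(E)>0$ and $\mathcal{H}^{s+t}(F)<\infty$, since otherwise the inequality is trivial. For a set $U\subseteq\I^n=\I^{\ell}\times\I^m$ write $\pi(U)\subseteq\I^{\ell}$ for the projection onto the first $\ell$ coordinates and $U_{\balpha}=\{\by\in\I^m:(\balpha,\by)\in U\}$ for the slice above $\balpha\in\I^{\ell}$; one checks immediately that $d(\pi(U))\leq d(U)$ and $d(U_{\balpha})\leq d(U)$, and we may assume each $U$ is compact (replace by its closure inside $\I^n$), so that $\pi(U)$ is compact. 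Thus if $\{U_i\}$ is any $\rho$‑cover of $F$, then for each $\balpha$ the family $\{(U_i)_{\balpha}:\balpha\in\pi(U_i)\}$ is a $\rho$‑cover of the slice of $F$ (which, as a subset of $\{\balpha\}\times\I^m$, is isometric to $F\cap\F^n_{\balpha}$), whence
\begin{equation*}
	\mathcal{H}^t_{\rho}(F\cap\F^n_{\balpha})\ \leq\ \sum_{i:\,\balpha\in\pi(U_i)}d(U_i)^t .
\end{equation*}

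The difficulty is that the hypothesis controls $\mathcal{H}^t$ rather than $\mathcal{H}^t_{\rho}$, and these agree only at a scale depending on $\balpha$. To make the scale uniform I would stratify: fix $\eps\in(0,1)$ and write $E=\bigcup_{k\in\NN}E_k$ with $E_k=\{\balpha\in E:\mathcal{H}^t_{1/k}(F\cap\F^n_{\balpha})\geq c(1-\eps)\}$. Since $\mathcal{H}^t_{1/k}\uparrow\mathcal{H}^t$ this is an increasing exhaustion, so $\mathcal{H}^s(E_k)\uparrow\mathcal{H}^s(E)$ (using measurability of the $E_k$ for the Borel sets occurring in our applications, or passing to Borel/compact subsets of nearly full $\mathcal{H}^s$‑measure, which suffices). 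Now fix $k$, fix $\rho\leq 1/k$, discard the degenerate $U_i$ with $d(U_i)=0$, and take any $\rho$‑cover $\{U_i\}$ of $F$. By the displayed inequality, for every $\balpha\in E_k$,
\begin{equation*}
	\sum_{i:\,\balpha\in\pi(U_i)}d(U_i)^t\ \geq\ \mathcal{H}^t_{\rho}(F\cap\F^n_{\balpha})\ \geq\ \mathcal{H}^t_{1/k}(F\cap\F^n_{\balpha})\ \geq\ c(1-\eps).
\end{equation*}

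It remains to turn this ``weighted multiplicity $\geq c(1-\eps)$'' statement into the desired bound on $\sum_i d(U_i)^{s+t}$. Here I would invoke a finite‑scale Frostman lemma: there is a finite Borel measure $\mu$ supported on $E_k$, depending only on $E_k$ and $\rho$, such that $\mu(E_k)\geq b_{\ell}\,\mathcal{H}^s_{\rho}(E_k)$ for a constant $b_{\ell}>0$ depending only on $\ell$, and $\mu(B(\balpha,r))\leq r^s$ for all $\balpha$ and all $r\leq\rho$. Integrating the last display against $\mu$, and using that $\pi(U_i)$ has diameter $<\rho$ and hence lies in a ball of radius $d(U_i)$ about any of its points, gives
\begin{equation*}
	c(1-\eps)\,\mu(E_k)\ \leq\ \sum_i d(U_i)^t\,\mu\big(\pi(U_i)\big)\ \leq\ \sum_i d(U_i)^t\,d(U_i)^s\ =\ \sum_i d(U_i)^{s+t}.
\end{equation*}
Taking the infimum over $\rho$‑covers of $F$ yields $\mathcal{H}^{s+t}_{\rho}(F)\geq b_{\ell}\,c(1-\eps)\,\mathcal{H}^s_{\rho}(E_k)$ for all $\rho\leq 1/k$; letting $\rho\to 0$, then $k\to\infty$, then $\eps\to 0$ gives $\mathcal{H}^{s+t}(F)\geq b_{\ell}\,c\,\mathcal{H}^s(E)$, which is the assertion with $b=b_{\ell}$.

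The main obstacle is the quantitative step in the last paragraph: establishing the finite‑scale Frostman lemma with a constant $b_{\ell}$ independent of $\rho$ and $k$ (equivalently, the covering‑duality statement that a fractional cover of $E_k$ of multiplicity $\geq c(1-\eps)$ with weights $d(U_i)^t$ has $(s+t)$‑cost at least a dimensional constant times $c(1-\eps)\,\mathcal{H}^s_{\rho}(E_k)$). This is the usual Frostman construction truncated at scale $\rho$ — built on dyadic cubes down to generation $\sim\log_2(1/\rho)$ and normalised so that no such cube $Q$ carries mass exceeding $d(Q)^s$ — but it has to be carried out carefully so that all constants depend only on $\ell$. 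The remaining ingredients (the diameter inequalities for projections and slices, the isometry identifying a slice of $F$ with $F\cap\F^n_{\balpha}$, and the measurability of the stratifying sets $E_k$) are routine.
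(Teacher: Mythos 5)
The thesis itself gives no proof of Theorem \ref{slicing}: it is quoted from Falconer's book (Theorem 7.11 there), and the classical proof behind it is a covering--duality argument of the general kind you are reconstructing, so your proposal has to stand on its own. Its architecture is sound: the slice-covering inequality, the stratification into the sets $E_k$ (where, incidentally, no measurability hedge is needed --- $\mathcal{H}^s$ is a regular outer measure, so $\mathcal{H}^s(E_k)\uparrow\mathcal{H}^s(E)$ holds for arbitrary increasing sequences of sets), and the deduction of the theorem from your ``finite-scale Frostman lemma'' are all correct, and that lemma, as you state it, is true.

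The genuine gap is in the only construction you offer for the lemma, which you yourself flag as the crux. You propose to truncate the dyadic Frostman construction at generation $\sim\log_2(1/\rho)$, i.e.\ to normalise only the cubes of side at least $\rho$. That controls $\mu$ at scales $r\geq\rho$, whereas the step $\mu(\pi(U_i))\leq d(U_i)^s$ needs control at \emph{all} scales $r\leq\rho$: a $\rho$-cover of $F$ contains sets of arbitrarily small diameter, so $\pi(U_i)$ can be arbitrarily small. Concretely, if the mass in each finest cube is placed at a point of $E_k$ (the natural way to keep the support inside $E_k$), the resulting measure is atomic and $\mu(B(\balpha,r))\leq r^s$ fails for small $r$; if instead you spread the mass uniformly over the finest cubes, the fine-scale bound does hold, but the measure now charges points outside $E_k$, where the pointwise inequality $\sum_i d(U_i)^t\chi_{\pi(U_i)}(\balpha)\geq c(1-\eps)$ is unavailable, and the integration step collapses. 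The truncation must go the other way: inside each dyadic cube $Q$ of side comparable to $\rho$ run the \emph{full}, all-scales Frostman construction on $E_k\cap Q$, obtaining $\mu_Q$ with $\mu_Q(B(\balpha,r))\ll r^s$ for every $r>0$ and total mass $\gg\mathcal{H}^s_\infty(E_k\cap Q)$, and set $\mu=\sum_Q\mu_Q$; bounded overlap of such cubes with balls of radius at most $\rho$ gives the condition for $r\leq\rho$, while concatenating near-optimal covers of the pieces (intersected with $Q$) gives $\sum_Q\mathcal{H}^s_\infty(E_k\cap Q)\geq\mathcal{H}^s_{\rho}(E_k)$ after adjusting $\rho$ by a dimensional factor, which is the mass bound you need. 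Finally, since $E$ is arbitrary, Frostman's lemma cannot be applied to $E_k$ directly, and your fallback of compact subsets of nearly full $\mathcal{H}^s$-measure is unavailable when $\mathcal{H}^s(E)=\infty$ or $E$ is non-measurable; the standard remedy is to run the construction on the Borel set $\{\balpha:\sum_i d(U_i)^t\chi_{\pi(U_i)}(\balpha)\geq c(1-\eps)\}$, which contains $E_k$, has at least as much scale-$\rho$ content, and is exactly the set on which the inequality you integrate holds. With these repairs the proof goes through; as written, the crucial lemma is not established.
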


Reformulated as a statement about Hausdorff dimension, Theorem \ref{slicing} can be interpreted as follows.

\newpage
\begin{corollary}\label{slicingcor}
	Let $F$ be a subset of $\I^n=\I^{\ell}\times\I^m$ and let $E$ be any subset of $\I^{\ell}$. Suppose that
	\begin{equation*}
		\dim(F\cap \F^n_{\balpha})\geq s
		\vspace{-2ex}
	\end{equation*}		
	for all $\balpha\in E$. Then
	\begin{equation*}
		\dim(F)\geq \dim(E)+s.
		\vspace{1ex}
	\end{equation*}
\end{corollary}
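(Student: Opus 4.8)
The plan is to obtain Corollary~\ref{slicingcor} as an essentially immediate consequence of Theorem~\ref{slicing}, using only the basic fact that $\cH^u(A)=\infty$ whenever $u<\dim A$, together with the characterisation $\dim A=\inf\{u:\cH^u(A)=0\}$. We may assume $E\neq\emptyset$, since otherwise the hypothesis is vacuous and there is nothing to prove.

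First I would fix arbitrary reals $s'$ and $\sigma$ with $0\leq s'<s$ (and $s'=0$ in the degenerate case $s=0$) and $0\leq\sigma<\dim E$ (and $\sigma=0$ in the degenerate case $\dim E=0$). For every $\balpha\in E$ the hypothesis gives $\dim(F\cap\F^n_{\balpha})\geq s\geq s'$, and hence $\cH^{s'}(F\cap\F^n_{\balpha})=\infty$ when $s'<s$, or $\cH^0(F\cap\F^n_{\balpha})=\#(F\cap\F^n_{\balpha})\geq 1$ when $s'=s=0$; in either situation $\cH^{s'}(F\cap\F^n_{\balpha})\geq 1$ for all $\balpha\in E$. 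Thus the hypothesis of Theorem~\ref{slicing} is met with $t=s'$, with the role of the exponent $s$ in that theorem played by our $\sigma$, and with $c=1$.

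Next I would invoke Theorem~\ref{slicing} to conclude that $\cH^{\sigma+s'}(F)\geq b\cdot 1\cdot\cH^{\sigma}(E)$ for some constant $b=b(\sigma,s')>0$. Since $\sigma<\dim E$ (or $\sigma=0$ with $E\neq\emptyset$, so that $\cH^0(E)=\#E\geq 1$), we have $\cH^{\sigma}(E)\geq 1$, whence $\cH^{\sigma+s'}(F)>0$ and therefore $\dim F\geq\sigma+s'$. As $s'<s$ and $\sigma<\dim E$ were arbitrary, taking the supremum over such $s'$ and $\sigma$ yields $\dim F\geq\dim E+s$, which is the assertion.

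I do not expect a genuine obstacle here: the entire substantive content lies in Theorem~\ref{slicing}, and the reduction above is bookkeeping. The only points that require a little care are the strict inequality $s'<s$, which is exactly what lets one pass from $\dim(F\cap\F^n_{\balpha})\geq s$ to positivity of the $s'$-dimensional slice measure (the measure at the critical exponent itself could vanish), and the two degenerate cases $s=0$ and $\dim E=0$, which are handled as indicated via the counting measure $\cH^0$.
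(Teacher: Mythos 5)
Your proof is correct and is essentially the argument the paper has in mind: the paper presents Corollary~\ref{slicingcor} as a ``reformulation'' of Theorem~\ref{slicing} without spelling out the reduction, and your bookkeeping (passing to $s'<s$ and $\sigma<\dim E$ to turn dimension bounds into positivity of the relevant Hausdorff measures, invoking Theorem~\ref{slicing} with $t=s'$ and exponent $\sigma$, then taking suprema) is exactly the standard way to fill it in. The only slightly delicate point is the degenerate case $s=s'=0$, where $\dim(F\cap\F^n_{\balpha})\geq 0$ does not by itself force the slice to be nonempty, so $\cH^0(F\cap\F^n_{\balpha})\geq 1$ needs the slices nonempty; but this case is not required for the paper's application (Corollary~\ref{Cor:HD} uses $s>0$), so your argument covers everything that matters.
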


Equipped with Corollary \ref{slicingcor} we are ready to prove Corollary \ref{Cor:HD}.

\begin{proof}[Proof of Corollary \ref{Cor:HD}]
	Let $\frac{1}{n}<\tau\leq\frac{1}{\ell}$ and $\eps=\frac{1}{k}$ for $k\in\mathbb{Z}^+$. Then the set $\I^{\ell}_k(\tau)$ of $\balpha\in \I^{\ell}$ such that \vspace{1ex}
	\begin{equation*}
		s^{\balpha}_n(\tau)\geq \frac{n+1}{\tau+1}-\ell+\eps
		\vspace{2ex}
	\end{equation*}
	is a set of dimension less than $\ell$. Indeed, if it had dimension at least $\ell$, then the union over all the sets $\F_n^{\balpha}\cap W_n(\tau)$ would be a set of dimension strictly greater than \vspace{1ex}
	\begin{equation*}
		\ell+\frac{n+1}{\tau+1}-\ell+\eps=\frac{n+1}{\tau+1}+\eps>\dim W_n(\tau),
	\end{equation*}	 by the slicing formula for product-like sets given in Corollary \ref{slicingcor}. However, this is not possible due to the monotonicity property of Hausdorff dimension. Hence, $\dim \I^{\ell}_k(\tau)<\ell$, and, in particular, 
	\begin{equation*}
		\lambda_{\ell}(\I^{\ell}_k(\tau))=0.
	\end{equation*}
	Now, the set of $\balpha\in \I^{\ell}$ satisfying \eqref{Eqn:HDslice} is the countable union over all sets $\I^{\ell}_k(\tau)$ with $k\in\mathbb{Z}^+$ and hence also a zero set with respect to $\lambda_{\ell}$. The second case works completely analogously.
\end{proof}

\chapter{Dirichlet improvability and singular vectors}\label{dirichlet}
\chaptermark{Dirichlet improvability}

We recall that the notion of a weight vector was introduced in Section \ref{sec:weighted} and that given any weight vector $\bi\in\RR^n$ and $Q\in\NN$, Theorem \ref{wDir} states that the system of inequalities given by
\begin{equation*}
	\norm{q\alpha_j}<Q^{-i_j},\quad j\in \{1,\dots,n\},
\end{equation*} 
has a non-zero integer solution $q\leq Q$. We will say that $\balpha\in\RR^n$ is \emph{$\bi$-Dirichlet improvable} or $\balpha\in \cD_n(\bi)$ if there exists a positive constant $c<1$ such that the system of inequalities
\begin{equation}\label{DI}
		\norm{q\alpha_j}<cQ^{-i_j},\quad j\in \{1,\dots,n\},
\end{equation} 
has a non-zero integer solution $q\leq Q$ for all $Q$ large enough. If this is true for $c$ arbitrarily small, then we call $\balpha$ \emph{$\bi$-singular} or we say $\balpha\in\cS_n(\bi)$. We omit the $\bi$ in both notations in the case where $\bi=(1/n,\dots,1/n)$, i.e. when we are dealing with an improved version of the classical non-weighted theorem by Dirichlet. In dimension one, we simply denote the sets in question by $\cD$ and $\cS$, respectively. Of course, in this case, the only choice of weight vector is $i=1$.

\section{Background and our results}

Khintchine introduced the notion of singular vectors in the 1920s \cite{KhintchineSing}. He showed in the one-dimensional case that the rationals are the only singular numbers. This is a direct consequence of the following statement.

\begin{lemma}\label{Lma:Wald}
	Let $\alpha$ be a real number. Assume there exists $Q_0=Q_0(\alpha)$ such that for each integer $Q\geq Q_0$ there exist $p=p(Q)\in\ZZ$ and $q=q(Q)\in\NN$ satisfying $q\leq Q$ and \vspace{-1ex}
	\begin{equation}\label{Eqn:Wald}
		|q\alpha-p|<\frac{1}{3Q}.
		\vspace{2ex}
	\end{equation}
	Then, $\alpha$ is rational and $p/q=\alpha$ for each $Q\geq Q_0$.
\end{lemma}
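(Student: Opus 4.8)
The plan is to prove the claim in two stages: first that the hypothesis forces $\alpha$ to be rational, and then, with $\alpha$ rational in hand, to pin down exactly why $p(Q)/q(Q)=\alpha$ must hold for \emph{every} $Q\ge Q_0$, not merely for large $Q$.

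\textbf{Stage 1: $\alpha$ is rational, by a descent argument.} Suppose $\alpha\notin\QQ$. Using the hypothesis repeatedly, I would build a strictly increasing sequence of levels $Q_0=N_0<N_1<N_2<\cdots$ together with admissible pairs $(q_j,p_j)$ at level $N_j$ --- so $q_j\le N_j$ and $\eps_j:=|q_j\alpha-p_j|\in\bigl(0,\tfrac{1}{3N_j}\bigr)$, the positivity because $\alpha$ is irrational --- by setting $N_{j+1}:=\lceil 1/(3\eps_j)\rceil$ (which exceeds $N_j$ since $\eps_j<1/(3N_j)$, and so is $\ge Q_0$) and applying the hypothesis at level $N_{j+1}$. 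Two elementary estimates then finish the argument. First, $N_{j+1}\ge 1/(3\eps_j)$ gives $\eps_{j+1}<\tfrac{1}{3N_{j+1}}\le\eps_j$, so $(\eps_j)$ is strictly decreasing. Second, from $q_{j+1}\le N_{j+1}<1/(3\eps_j)+1\le 2/(3\eps_j)$ and $q_j\eps_{j+1}\le q_j\eps_j\le N_j\eps_j<\tfrac13$ one gets
\[
  |p_jq_{j+1}-p_{j+1}q_j|=\bigl|q_{j+1}(p_j-q_j\alpha)-q_j(p_{j+1}-q_{j+1}\alpha)\bigr|\le q_{j+1}\eps_j+q_j\eps_{j+1}<\tfrac23+\tfrac13=1,
\]
so the integer on the left vanishes and $p_j/q_j=p_{j+1}/q_{j+1}$. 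Hence all the fractions $p_j/q_j$ equal one fixed reduced fraction $a/b$, which gives $q_j=k_jb$ and $\eps_j=k_j\,|b\alpha-a|$ with $k_j\in\NN$ and $|b\alpha-a|>0$; but then the strict decrease of $(\eps_j)$ makes $(k_j)$ a strictly decreasing sequence of positive integers, which is impossible. Therefore $\alpha\in\QQ$.

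\textbf{Stage 2: identifying the fraction.} Write $\alpha=a/b$ in lowest terms; the case $b=1$ ($\alpha$ an integer) is immediate since $|q\alpha-p|<\tfrac13<1$ forces $p=q\alpha$, so assume $b\ge2$. The key observation is that $\norm{q\alpha}\ge 1/b$ whenever $b\nmid q$, because $\gcd(a,b)=1$ makes $qa\not\equiv0\pmod b$. This already shows the hypothesis \emph{cannot} be satisfied at level $Q=b-1$: every $q$ with $1\le q\le b-1$ has $b\nmid q$, so $|q\alpha-p|\ge\norm{q\alpha}\ge 1/b\ge\tfrac{1}{3(b-1)}=\tfrac{1}{3Q}$, where the middle inequality uses $b\ge2$. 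Since the hypothesis does hold for all $Q\ge Q_0$, we must have $Q_0\ge b$. Finally, for any $Q\ge Q_0\ge b$ the promised pair $(p,q)=(p(Q),q(Q))$ satisfies $|q\alpha-p|<\tfrac{1}{3Q}\le\tfrac{1}{3b}<\tfrac1b$, and writing $|q\alpha-p|=|qa-pb|/b$ forces $|qa-pb|<1$, hence $qa=pb$, i.e.\ $p/q=a/b=\alpha$.

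The main obstacle is Stage 1; Stage 2 is a short arithmetic argument once one notices the $1/b$-spacing of $\norm{q\alpha}$ for rational $\alpha$. The role of Stage 1 is to convert the approximation data into a strictly decreasing sequence of positive integers, and this is precisely where the improved constant in $|q\alpha-p|<\tfrac{1}{3Q}$ (as opposed to the bound $\tfrac1Q$ of Dirichlet's theorem, which does not force rationality) is needed: the constant $3$ is what makes $q_{j+1}\eps_j+q_j\eps_{j+1}<1$. In fact any constant strictly larger than $2$ would suffice in Stage 1, and any constant at least $1$ in Stage 2.
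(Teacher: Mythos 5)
Your proof is correct, but it takes a genuinely different (and longer) route than the paper's. The paper works directly with consecutive levels $Q$ and $Q+1$: writing $(p,q)$ and $(p',q')$ for the respective promised pairs, the integer $|qp'-q'p|\le q\,|p'-q'\alpha|+q'\,|q\alpha-p|<\frac{Q}{3(Q+1)}+\frac{Q+1}{3Q}<1$ must vanish, so $p(Q)/q(Q)$ is constant for $Q\geq Q_0$; since $\bigl|\alpha - p(Q)/q(Q)\bigr|<\tfrac{1}{3Q\,q(Q)}\to 0$, that constant must equal $\alpha$, which yields rationality and the identification in one stroke with no extra work. You instead split the task: a Fermat-style descent over a telescoping subsequence of levels $N_j$ on which the errors $\eps_j$ strictly decrease and the fractions $p_j/q_j$ all coincide, forcing the multipliers $k_j$ in $q_j=k_j b$ into an impossible strictly decreasing sequence of positive integers; and then, with rationality in hand, a separate finite argument exploiting the $1/b$-spacing of $\norm{q\cdot}$ at $\alpha=a/b$ to establish $Q_0\geq b$ and identify the fraction. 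Both stages are correct. Your decomposition makes the role of the constant more visible, but the closing parenthetical overclaims slightly: Stage~2 as written depends on $1/b\geq 1/\bigl(C(b-1)\bigr)$, i.e.\ $C\geq b/(b-1)$, which for $b=2$ requires $C\geq 2$ rather than merely $C\geq 1$ --- and indeed with $C=1$ and $\alpha=1/2$ the pair $(p,q)=(0,1)$ satisfies the hypothesis at level $Q=1$ while $0\neq\alpha$, so the conclusion genuinely fails there. Overall the paper's single-pass argument is shorter and dispenses with both the auxiliary subsequence and the separate identification step, at the cost of not recording which constants each half of the conclusion actually needs.
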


\begin{proof}
	We use a slightly modified version of an argument from \cite{Waldschmidt}. Assume that $Q\geq Q_0$ and that $p$ and $q$ are the integers satisfying \eqref{Eqn:Wald}. Moreover, denote by $p'$ and $q'$ the integers such that\vspace{-1ex}
	\begin{equation*}
		|q'\alpha-p'|<\frac{1}{3(Q+1)}\quad\text{ and }\quad 1\leq q'\leq Q+1.
	\end{equation*}
	We want to show that $p/q=p'/q'$. The integer $qp'-q'p$ satisfies
	\begin{align*}
		|qp'-q'p|&=|q(p'-q'\alpha)+q'(q\alpha-p)|\\[1ex]
		&\leq |q(p'-q'\alpha)|+|q'(q\alpha-p)|\\[1ex]
		&<\frac{Q}{3(Q+1)}+\frac{Q+1}{3Q}\\[1ex]
		&<\frac{1}{3}+\frac{2}{3}=1,
	\end{align*}
	hence it vanishes. This implies that $qp'=q'p$ and thus the rational number $p/q=p'/q'$ does not depend on $Q\geq Q_0$. On the other hand,
	\begin{equation*}
		\lim\limits_{Q\rightarrow\infty}\frac{p(Q)}{q(Q)}=\alpha,
	\end{equation*}
	which shows that $\alpha=p/q$.
\end{proof}

\begin{remark*}
	The constant $1/3$ in \eqref{Eqn:Wald} is not optimal. In fact, Lemma \ref{Lma:Wald} is proved with the constant $1/2$ in both \cite{KhintchineSing} and \cite{Waldschmidt}. However, Khintchine's argument uses results about the convergents of continued fractions and Waldschmidt defines a slightly stronger form of Dirichlet improvability, requiring that the solution in \eqref{DI} satisfies the strict inequality $q<Q$.
\end{remark*}

On the other hand, it is easily seen that any rational $a/b$ is singular. Indeed, for any $c>0$ and for $Q\geq b$, the choice of $q=b$ trivially satisfies \eqref{DI}. 

Davenport and Schmidt were the first to introduce the set $\cD$ and proved that $\alpha\in\RR\setminus\QQ$ is Dirichlet improvable if and only if $\alpha$ is badly approximable \cite{Davenport}. Their argument relies on the theory of continued fractions. This completes the one-dimensional theory.

\begin{theorem}
	$\cS=\QQ$ and $\cD\setminus\cS=\bad$.
\end{theorem}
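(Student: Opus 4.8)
The plan is to prove the statement in one dimension by assembling the pieces already laid out in the excerpt, since the one-dimensional weight vector is forced to be $i=1$ and $\cD_1(\bi)=\cD$, $\cS_1(\bi)=\cS$. First I would dispose of $\cS=\QQ$. The inclusion $\QQ\subseteq\cS$ is the easy direction noted just before the theorem: for $\alpha=a/b\in\QQ$, any $c>0$, and any $Q\geq b$, the integer $q=b$ gives $\norm{q\alpha}=0<cQ^{-1}$, so $\alpha$ is singular. For the reverse inclusion $\cS\subseteq\QQ$, suppose $\alpha\in\cS$. Then for the particular value $c=1/3$ there is a $Q_0=Q_0(\alpha)$ such that for every $Q\geq Q_0$ there exist $p\in\ZZ$, $q\in\NN$ with $q\leq Q$ and $|q\alpha-p|<\tfrac13 Q^{-1}$. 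This is exactly the hypothesis of Lemma~\ref{Lma:Wald}, whose conclusion is that $\alpha$ is rational. Hence $\cS\subseteq\QQ$, and combining gives $\cS=\QQ$.

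Next I would prove $\cD\setminus\cS=\bad$. Since $\cS=\QQ$ and no rational is badly approximable (as observed in Section~\ref{Sec:Basic}, $\norm{q\alpha}$ is either $0$ or bounded below by a fixed $1/b$, so $\liminf_q q\norm{q\alpha}=0$), it suffices to show that for irrational $\alpha$ one has $\alpha\in\cD$ if and only if $\alpha\in\bad$. This is precisely the theorem of Davenport and Schmidt cited in the sentence immediately preceding the statement: $\alpha\in\RR\setminus\QQ$ is Dirichlet improvable if and only if $\alpha$ is badly approximable. Thus for irrational $\alpha$, $\alpha\in\cD\iff\alpha\in\bad$, and since $\bad\cap\QQ=\emptyset$ and $\cD\cap\QQ=\cS\cap\QQ=\cS$ (rationals are singular, hence trivially improvable), removing $\cS$ from $\cD$ leaves exactly the irrational Dirichlet-improvable numbers, which is $\bad$.

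The only genuine content beyond bookkeeping is the Davenport--Schmidt equivalence, whose proof via continued fractions is not reproduced in the excerpt; here I would simply invoke it as a known result, referencing \cite{Davenport}. If one wanted a self-contained argument, the main obstacle would be the direction $\bad\Rightarrow\cD$: one must show that bounded partial quotients force, for every large $Q$, a convergent denominator $q_k\leq Q$ with $\norm{q_k\alpha}$ comparably small relative to $Q^{-1}$, which uses the gap estimates $1/(q_{k+1}+q_k)\le\norm{q_k\alpha}\le 1/q_{k+1}$ together with the bound $q_{k+1}\le(M+1)q_k$ coming from $a_{k+1}\le M$. The converse $\cD\Rightarrow\bad$ (for irrationals) follows by contraposition: if $\alpha\notin\bad$ then $\liminf_q q\norm{q\alpha}=0$, and one checks that this forces, for suitable arbitrarily large $Q$, the failure of \eqref{DI} for any fixed $c<1$ by inserting an excellent rational approximation between two consecutive ``improvable'' ranges. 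Since the excerpt explicitly grants the Davenport--Schmidt theorem, I would keep the write-up at the level of the first two paragraphs and cite \cite{Davenport} for the equivalence.
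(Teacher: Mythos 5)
Your proof is correct and assembles exactly the pieces the paper itself uses: Lemma~\ref{Lma:Wald} with $c=1/3$ gives $\cS\subseteq\QQ$, the explicit $q=b$ observation gives $\QQ\subseteq\cS$, and the cited Davenport--Schmidt equivalence (irrational $\Leftrightarrow$ $\bad$ for $\cD$) is then converted into $\cD\setminus\cS=\bad$ by the routine set-theoretic bookkeeping you spell out. The paper does not display a separate proof block for this theorem---it is presented as the summary of the immediately preceding discussion---and your write-up is a faithful reconstruction of that implied argument.
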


The situation in higher dimensions is more intricate. It is straightforward to show that points on any rational hyperplane are singular. Hence, ${\dim\cS_n\in[n-1,n]}$. Also, on utilising the Borel--Cantelli Lemma it can be verified that the set $\cS_n$ has zero $n$-dimensional Lebesgue measure \cite[Chapter V, \S 7]{Cassels}. However, by means of a geometric argument, Khintchine proved the existence of totally irrational vectors contained in $\cS_n$ for $n\geq 2$ \cite{KhintchineSing}. Furthermore, in a recent groundbreaking paper, Cheung showed that the Hausdorff dimension of $\cS_2$ is equal to $4/3$ \cite{Cheung} and later Cheung and Chevallier extended this result to arbitrary dimensions \cite{Cheung2}.
\begin{theorem}[Cheung--Chevallier]
	For $n\geq 2$,
	\begin{equation*}
	\dim\cS_n=\frac{n^2}{n+1}.
	\end{equation*}
\end{theorem}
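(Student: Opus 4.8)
The plan is to pass to the dynamical picture via the Dani correspondence and then prove matching upper and lower bounds for $\dim\cS_n$. Recall, in the spirit of Lemma~\ref{nalphalemma}, that for $\balpha\in\RR^n$ one forms the unimodular lattice $\Lambda_t(\balpha)=g_t u_{\balpha}\ZZ^{n+1}$, where $g_t=\mathrm{diag}(e^{t/n},\dots,e^{t/n},e^{-t})$ and $u_{\balpha}$ is the unipotent matrix with $-\balpha$ in its last column. Unwinding the definition of $\cS_n$ exactly as in the proof of Lemma~\ref{Lma:Lattice} shows that $\balpha\in\cS_n$ if and only if the length of the shortest nonzero vector of $\Lambda_t(\balpha)$ tends to $0$ as $t\to\infty$, i.e. the forward $g_t$-orbit of $u_{\balpha}\ZZ^{n+1}$ is divergent in $\SL_{n+1}(\RR)/\SL_{n+1}(\ZZ)$. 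Equivalently, writing $q_1<q_2<\cdots$ for the sequence of best-approximation denominators attached to this lattice and $\delta_k=\norm{q_k\balpha}$, one checks that $\balpha\in\cS_n$ precisely when $q_{k+1}\delta_k^{\,n}\to 0$. Either formulation converts the problem into a counting / geometry-of-numbers problem at the scales $q_k$; note also that totally irrational singular vectors only exist for $n\ge 2$, consistent with the statement.

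For the upper bound $\dim\cS_n\le \tfrac{n^2}{n+1}$ I would build an efficient family of covers indexed by the best-approximation levels. A singular $\balpha$ must, at every large level $k$, lie within $\asymp\delta_k$ of the rational point $\bp_k/q_k$, and the relation ``$q_{k+1}\delta_k^{\,n}$ small'' forces $\delta_k$ to be abnormally small relative to $q_{k+1}$. Simultaneously, Minkowski's theorem (Theorem~\ref{Minkowski}) together with the successive-minima bound of Lemma~\ref{Lma:Lattice} controls the number of admissible pairs $(\bp_k,q_k)$ with $q_k$ in a dyadic range and $\delta_k$ in a dyadic range. Summing $\sum(\text{radius})^s$ over this family and letting the level tend to infinity, one finds the sum converges as soon as $s>\tfrac{n^2}{n+1}$: the extremal configuration has the denominators growing geometrically, with the number of boxes $\asymp(q_{k+1}/q_k)^{n}$ balanced against their size of order $q_{k+1}^{-(n+1)/n}$, and optimising the resulting geometric series gives the exponent $\tfrac{n^2}{n+1}$.

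For the lower bound $\dim\cS_n\ge \tfrac{n^2}{n+1}$ I would construct a self-similar Cantor subset of $\cS_n$ and apply the mass distribution principle (a Frostman argument as in the proof of the Cantor set lemma above). Starting from a rational point $\bp_k/q_k$ and a box $B_k$ of sidelength $\asymp\delta_k$ about it, on which the Dirichlet inequality is already strongly improved at scales up to $\sim q_k$, one chooses the next denominator $q_{k+1}$ geometrically larger by a fixed ratio $\lambda$, and lets the children of $B_k$ be the boxes of sidelength $\asymp\delta_{k+1}$ centred at the $\asymp(q_{k+1}/q_k)^{n}$ admissible rationals $\bp_{k+1}/q_{k+1}$ inside $B_k$; Minkowski's theorem guarantees these points exist in the right quantity, and Lemma~\ref{Lma:Lattice} is used to verify that the improvement propagates to every intermediate $Q\in(q_k,q_{k+1})$, so that the limit points are genuinely singular. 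Spreading unit mass evenly down the tree and computing the local scaling exponent $\log(\text{branching})/\log(1/\text{contraction})$, one obtains, after optimising over $\lambda$, a measure with Frostman exponent arbitrarily close to $\tfrac{n^2}{n+1}$, hence the bound.

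The main obstacle is the lower bound, and within it two intertwined difficulties. First, one must arrange the denominators and nested boxes so that every point of the Cantor set is singular at \emph{all} large $Q$, not merely along the subsequence $(q_k)$; this requires a careful interpolation argument controlling the whole $g_t$-orbit between the times $\log q_k$ and $\log q_{k+1}$, which is exactly where the successive-minima estimate of Lemma~\ref{Lma:Lattice} enters. Second, to reach the sharp constant $\tfrac{n^2}{n+1}$ the box-counting at each level must be essentially lossless — neither the number of usable rationals of a given denominator inside a small box, nor their separation, can be wasted — which demands precise geometry-of-numbers estimates uniform in the box. Reconciling the upper-bound cover with this lower-bound tree, i.e. showing that the two optimisations yield the same exponent, is the crux of the Cheung--Chevallier argument.
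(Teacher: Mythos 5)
The thesis does not prove this theorem: it is stated as background and cited to Cheung \cite{Cheung} for $n=2$ and to Cheung--Chevallier \cite{Cheung2} in general, with only the remark that ``these articles make use of dynamical methods.'' So there is no in-paper proof to compare against, and you should not expect your proposal to match anything here. Judged on its own terms, your sketch is broadly aligned with what Cheung and Cheung--Chevallier actually do: the reduction to the best-approximation data $(q_k,\delta_k)$, with $\balpha\in\cS_n$ if and only if $q_{k+1}\delta_k^n\to 0$, an upper bound via a ``self-similar covering'' indexed by best-approximation levels, and a lower bound via a nested tree of approximants feeding a Frostman-type estimate. You have also located the genuine difficulties correctly, in particular the need to propagate the Dirichlet improvement to every intermediate $Q\in(q_k,q_{k+1})$ and to keep the level-by-level counting essentially lossless.

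Two cautions if you were to push this further. First, the phrase ``optimising over $\lambda$'' is misleading: with $q_{k+1}=\lambda q_k$ the naive calculation gives branching $\asymp\lambda^n$ and contraction ratio $\asymp\lambda^{-(n+1)/n}$, hence local exponent $n^2/(n+1)$ for \emph{every} $\lambda$; the real content is not an optimisation but a uniformity statement, and one must additionally ensure $q_{k+1}\delta_k^n\to 0$ strictly rather than merely staying bounded, which shaves the box sizes and makes the dimension an infimum rather than a value attained at one level. Second, the tools you invoke from the thesis are not strong enough on their own: Lemma~\ref{Lma:Lattice} only bounds the last successive minimum from below, and Theorem~\ref{Thm:Mink2nd} bounds the product; what the Cheung--Chevallier argument needs is two-sided, uniform control of all the successive minima along the entire $g_t$-trajectory and a matching count of the admissible $\bp/q$ inside a given box, and these refined geometry-of-numbers lemmas are precisely the technical core of their papers, not something available off the shelf here. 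So what you have is a plausible high-level roadmap, not a proof that closes with the thesis's toolbox.
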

Significantly, this shows that $\cS_n$ is much bigger than the set of rationally dependent vectors, which is only of dimension $n-1$. These articles make use of dynamical methods. Previously, and through a classical approach, partial results towards establishing $\dim\cS_n$ had been made by Baker \cite{Baker1}, \cite{Baker2} and Rynne \cite{Rynne}. Regarding Dirichlet improvable vectors, Davenport and Schmidt showed that $\lambda_n(\cD_n)=0$ \cite{Davenport2}. They also proved the following result \cite{Davenport}.

\begin{theorem}[Davenport--Schmidt]\label{Thm:DSoriginial}
	$\bad_n\subset\cD_n.$
\end{theorem}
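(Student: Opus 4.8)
The plan is to prove the inclusion by a homogeneous-dynamics argument, organised so that it also delivers the weighted statement $\bad_n(\bi)\subset\cD_n(\bi)$. To a vector $\balpha\in\RR^n$ attach the unimodular lattice
\[
  \Lambda_{\balpha}=\bigl\{(q\balpha-\bp,\,q)\in\RR^{n+1}:\bp\in\ZZ^n,\ q\in\ZZ\bigr\},
\]
let $X$ be the space of unimodular lattices in $\RR^{n+1}$, let $g_t=\operatorname{diag}(e^{t/n},\dots,e^{t/n},e^{-t})$ (and $g_t^{\bi}=\operatorname{diag}(e^{i_1t},\dots,e^{i_nt},e^{-t})$ in the weighted case), and write $\delta(\Lambda)$ for the sup-norm length of a shortest nonzero vector of $\Lambda\in X$. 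Minkowski's Convex Body Theorem~\ref{Thm:MinkCBT} applied to the closed unit cube gives $\delta(\Lambda)\le1$ for every $\Lambda\in X$; this is a reformulation of Dirichlet's Theorem~\ref{Dir}. First I would record the elementary half of the Dani correspondence that is needed: if $\balpha\in\bad_n$, say $\norm{q\balpha}\ge\gamma\,q^{-1/n}$ for all $q\ge1$ with $\gamma\in(0,1)$, then a direct optimisation in $t$ shows that all lattice vectors $(q\balpha-\bp,q)$ with $q\neq0$ satisfy $\norm{g_t(q\balpha-\bp,q)}\ge c_{\gamma}>0$ for every $t\ge0$, where $c_{\gamma}$ depends only on $\gamma$, while the vectors with $q=0$ have sup-norm $\ge e^{t/n}\ge1$; hence $\inf_{t\ge0}\delta(g_t\Lambda_{\balpha})>0$ and the forward orbit $\{g_t\Lambda_{\balpha}:t\ge0\}$ is bounded in $X$ by Mahler's compactness criterion.

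The heart of the matter is the claim that \emph{if $\balpha\in\bad_n$ then $\limsup_{t\to\infty}\delta(g_t\Lambda_{\balpha})<1$.} Suppose not, so $\delta(g_{t_i}\Lambda_{\balpha})\to1$ for some $t_i\to\infty$. By boundedness of the forward orbit we may pass to a subsequence with $g_{t_i}\Lambda_{\balpha}\to\Lambda_0$ in $X$, and $\delta(\Lambda_0)=1$ by continuity of $\delta$; equivalently $\Lambda_0$ contains no nonzero point of the open cube $(-1,1)^{n+1}$. Because $t_i\to\infty$ and the forward orbit of $\Lambda_{\balpha}$ is bounded, every $g_s\Lambda_0=\lim_i g_{s+t_i}\Lambda_{\balpha}$ lies in the closure of that orbit, so the \emph{bi-infinite} orbit $\{g_s\Lambda_0:s\in\RR\}$ is contained in a fixed compact subset of $X$. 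On the other hand, Haj\'os's theorem (the lattice case of Minkowski's conjecture on products of linear forms) says that a unimodular lattice containing no nonzero point of the open unit cube must contain $\pm e_k$ for some standard basis vector $e_k$. If $k\le n$ then $\delta(g_s\Lambda_0)\le\norm{g_s e_k}=e^{s/n}\to0$ as $s\to-\infty$; if $k=n+1$ then $\delta(g_s\Lambda_0)\le e^{-s}\to0$ as $s\to+\infty$. In either case the bi-infinite orbit of $\Lambda_0$ escapes every compact set, a contradiction; this proves the claim. The only genuinely deep ingredient is Haj\'os's theorem, and this step is where I expect the write-up to need the most care.

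Granting the claim, fix $\eta\in(0,1)$ and $S>0$ with $\delta(g_t\Lambda_{\balpha})<1-\eta$ for all $t\ge S$, and unwind. Since $\balpha\in\bad_n$ it is irrational, so the best simultaneous approximation denominators $q_1<q_2<\cdots$ (those $q$ with $\norm{q\balpha}<\norm{m\balpha}$ for all integers $0<m<q$) form an infinite sequence tending to infinity. Given a large $Q$, take consecutive terms $q'\le Q<q''$ and set $\rho=\norm{q'\balpha}=\min_{1\le q\le Q}\norm{q\balpha}\le\tfrac12$. By the defining property, every nonzero $(q\balpha-\bp,q)\in\Lambda_{\balpha}$ with $|q|<q''$ satisfies $\max_{1\le j\le n}|q\alpha_j-p_j|\ge\norm{q\balpha}\ge\rho$, so $\Lambda_{\balpha}$ avoids the open box $(-\rho,\rho)^n\times(-q'',q'')$. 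The element $g_s$ with $s=\tfrac{n}{n+1}\log(q''/\rho)$ maps this box onto the open cube of half-side $\ell=(\rho^n q'')^{1/(n+1)}$, hence $\delta(g_s\Lambda_{\balpha})\ge\ell$. As $q''/\rho\ge2q''>2Q$, we have $s\ge S$ once $Q$ is large, so $\ell\le\delta(g_s\Lambda_{\balpha})<1-\eta$, and combining this with $q''>Q$ gives $\rho<(1-\eta)^{(n+1)/n}Q^{-1/n}$. Thus $q=q'$ is a nonzero integer with $q\le Q$ and $\norm{q\balpha}<cQ^{-1/n}$ where $c:=(1-\eta)^{(n+1)/n}<1$, i.e. $\balpha\in\cD_n$.

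Run verbatim with $g_t^{\bi}$ in place of $g_t$, with $\bi$-best approximations in place of the $q_k$, and with Haj\'os's theorem applied to the relevant rectangular box (equivalently, to its image under the diagonal rescaling that turns it into a cube), the same three steps give $\bad_n(\bi)\subset\cD_n(\bi)$. Apart from the limit-lattice/Haj\'os step in the second paragraph, the remaining work is the routine bookkeeping of phrasing the Dani correspondence in exactly the normalisation used above and checking the elementary optimisation claimed for the bounded-orbit direction.
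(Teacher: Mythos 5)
Your proof is correct, and it reaches the statement by the homogeneous-dynamics route, which is genuinely different from the route the paper takes; the paper does not reprove the Davenport--Schmidt theorem itself (it is cited), but proves its weighted generalisation, Theorem~\ref{ThmDS}, by a classical linear-forms argument. The paper argues contrapositively: from $\balpha\notin\cD_n(\bi)$ it manufactures a sequence of unimodular systems of weighted linear forms whose minimum over $\ZZ^{n+1}\setminus\{\0\}$ tends to $1$, uses Minkowski's Second Theorem (Theorem~\ref{Thm:Mink2nd}) to produce uniformly bounded generating matrices, extracts a convergent subsequence in $\SL_{n+1}(\RR)$, applies Haj\'os's theorem in the linear-forms guise of Corollary~\ref{HajThm1} to the limit system, and then substitutes a specific integer vector back into the original forms to make $q^{i_j}\norm{q\alpha_j}$ arbitrarily small, contradicting bad approximability directly. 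Your argument carries out the same limit-and-compactness step but in the space of unimodular lattices: $\balpha\in\bad_n$ translates via Mahler's criterion into a bounded forward $g_t$-orbit, a critical subsequence $\delta(g_{t_i}\Lambda_{\balpha})\to1$ yields a limit lattice $\Lambda_0$ admissible for the open cube, Haj\'os in lattice form forces $\pm e_k\in\Lambda_0$, and you read off a contradiction from the divergence of the bi-infinite $g_s$-orbit of $\Lambda_0$ rather than from an explicit substitution. Thus both proofs hinge on Haj\'os's theorem and a compactness argument producing a critical limit object; the real differences are the direct versus contrapositive organisation, Mahler's criterion versus the explicit bounded-basis bookkeeping via Minkowski's Second Theorem, and the orbit-divergence versus substitution mechanism for extracting the contradiction. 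What your packaging buys is a cleaner conceptual picture once the Dani dictionary and the unwinding via best approximations are set up; what the paper's version buys is that it stays entirely within the geometry-of-numbers toolkit it has already built (Theorems~\ref{Thm:MinkCBT}, \ref{Minkowski}, \ref{Thm:Mink2nd}, \ref{HajThm2}) and delivers the weighted statement in one pass. One small caution for your proposed weighted extension: when some $i_j=0$ the vector $e_j$ is $g_t^{\bi}$-invariant, so the orbit-divergence step needs an extra word there; for the unweighted statement at hand, with $i_j=1/n>0$, your argument closes without issue.
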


\begin{remark*}
	As an immediate consequence of Theorem \ref{Thm:DSoriginial} we see that $\dim\cD_n=n$.
\end{remark*}

All of these results concern the standard set-up and until recently very little research had been done for the weighted case. However, in a very recent article \cite{Tamam}, Liao, Shi, Solan and Tamam have managed to extend Cheung's two-dimensional result to general weight vectors.
\begin{theorem}
	Let $\bi=(i_1,i_2)$ be a weight vector. Then
	\begin{equation*}
		\dim\cS_2(\bi)=2-\frac{1}{1+\max\{i_1,i_2\}}.
	\end{equation*}
\end{theorem}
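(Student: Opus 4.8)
\textbf{Step 1: Dynamical reformulation.} The plan is to pass to homogeneous dynamics and compute the Hausdorff dimension of an associated set of divergent trajectories, treating the upper and lower bounds separately; write $i_{\max}=\max\{i_1,i_2\}$ and assume without loss of generality (permuting coordinates) that $i_{\max}=i_2$. For $t\geq 0$ put $g_t=\operatorname{diag}(e^{i_1 t},e^{i_2 t},e^{-t})\in\SL_3(\RR)$, and for $\balpha=(\alpha_1,\alpha_2)\in\RR^2$ let $u_\balpha\in\SL_3(\RR)$ be the unipotent matrix whose last column is $(\alpha_1,\alpha_2,1)^{\top}$ and which is the identity in all other entries. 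Writing $X_3=\SL_3(\RR)/\SL_3(\ZZ)$ for the space of unimodular lattices in $\RR^3$, the definition of $\bi$-singularity together with Minkowski's Theorem (Theorem~\ref{Thm:MinkCBT}) yields the Dani-type correspondence: $\balpha\in\cS_2(\bi)$ if and only if the forward orbit $(g_t u_\balpha\ZZ^3)_{t\geq 0}$ is divergent in $X_3$, i.e. eventually leaves every compact set. Since $\balpha\mapsto u_\balpha\ZZ^3$ is bi-Lipschitz onto its image, it suffices to compute the Hausdorff dimension of the set $Z\subseteq\RR^2$ of $\balpha$ whose $g_t$-orbit is divergent.

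\textbf{Step 2: Upper bound.} I would bound $\dim Z$ from above by a covering argument. By Mahler's criterion, a divergent orbit satisfies, for every $R$ and all large $t$, that the first minimum $\mu_1(g_t u_\balpha\ZZ^3)$ is less than $1/R$; equivalently there is a primitive $\br=(p_1,p_2,q)\in\ZZ^3$ with $q\ll e^t$, $|p_1+\alpha_1 q|\ll e^{-i_1 t}$ and $|p_2+\alpha_2 q|\ll e^{-i_2 t}$, with implied constants tending to $0$. Organising the admissible data $\br$ over a rapidly increasing sequence of times $t_k\to\infty$ produces a tree of $\bi$-weighted rectangles (side lengths comparable to $e^{-i_1 t_k}/q$ and $e^{-i_2 t_k}/q$) covering $Z$; counting the nodes of this tree at each level against the shrinking of the rectangles, and optimising, shows that for every $s>2-\tfrac{1}{1+i_{\max}}$ the sum of $s$-th powers of diameters over the natural cover at scale $\sim e^{-t_k}$ tends to $0$, whence $\dim Z\leq 2-\tfrac{1}{1+i_{\max}}$. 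One checks this reduces to Cheung's value $4/3$ when $i_1=i_2=\tfrac12$.

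\textbf{Step 3: Lower bound.} For the matching lower bound I would construct a self-affine Cantor subset of $Z$ of dimension exactly $2-\tfrac{1}{1+i_{\max}}$, adapting the construction of \cite{Cheung}. Fixing a rapidly increasing sequence $(t_k)$, at stage $k$ one holds a finite family of $\bi$-weighted rectangles; inside each, one selects a maximal well-separated subfamily of next-generation rectangles centred at carefully chosen rationals $\bp/q$ (playing the role of weighted best approximations), arranged so that (a) for every $\balpha$ in the limiting Cantor set the orbit $g_t u_\balpha\ZZ^3$ is forced to leave compact sets on each interval $[t_k,t_{k+1}]$, so that $\balpha\in Z$; and (b) the number of selected rectangles at each stage is as large as the escape constraint permits, so the Cantor set is as thick as possible. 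A mass distribution (Frostman-type) argument applied to the natural measure supported on the Cantor set then gives $\dim Z\geq 2-\tfrac{1}{1+i_{\max}}$, the exponent being dictated by the geometry of the admissible rectangles in the $i_{\max}$-direction.

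\textbf{Main obstacle.} The crux is Step~3. In the unweighted case $i_1=i_2$ the two coordinate directions are symmetric, but for a genuine weight vector one must simultaneously keep the orbit divergent---cheapest to arrange along the more strongly contracted coordinate---while preserving as many free choices as possible in the lightly contracted $i_{\max}$-direction, which is where dimension is actually gained. Balancing these competing demands, and proving that the resulting Cantor set carries the full dimension $2-\tfrac{1}{1+i_{\max}}$ rather than a strictly smaller value, is the hard part; matching it from above likewise requires the tree-counting estimate of Step~2 to be sharp, hence a precise description of how a lattice can be driven into the cusp under $g_t$.
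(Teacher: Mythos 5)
This theorem is not proved in the paper: it is cited directly from Liao, Shi, Solan and Tamam \cite{Tamam} (together with Cheung's earlier unweighted case \cite{Cheung} and Cheung--Chevallier \cite{Cheung2}), and the thesis uses it only as background. There is therefore no in-paper proof for your sketch to be compared against. That said, a few remarks on the sketch itself. The dynamical reformulation in Step~1 is the standard and correct starting point, and your identification of the main obstacle --- the asymmetry between the heavily contracted direction (which cheaply forces divergence) and the lightly contracted $i_{\max}$-direction (which carries the dimension) --- is accurate; this tension is exactly what makes the weighted case substantially harder than Cheung's. However, Steps~2 and~3 are currently placeholders rather than arguments: the covering bound requires a precise count of primitive $\br=(p_1,p_2,q)$ producing short vectors at each time-scale and a nontrivial optimisation over $q$, and the Cantor construction needs an explicit selection rule (the role played by ``best approximation'' denominators and self-similar structures in \cite{Cheung}, and its weighted analogue in \cite{Tamam}) together with a genuine Frostman estimate. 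The exponent $2 - \tfrac{1}{1+i_{\max}}$ does not fall out of the framework by soft reasoning; it emerges from carefully matched upper and lower counting bounds. As a roadmap your sketch is sound, but it is not a proof, and crucially the paper you are working from does not contain one either --- consult \cite{Tamam} (and \cite{Cheung} for the unweighted case) for the actual argument.
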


Before stating our results, we recall the definition of the set $\bad_n(\bi)$ for arbitrary weight vectors $\bi$:\vspace{-2ex}
\begin{equation*}
	\balpha\in\bad_n(\bi)\ \Longleftrightarrow\ \liminf\limits_{q\rightarrow\infty}\max\limits_{1\leq j\leq n}q^{i_j}\norm{q\alpha_j}>0.
\end{equation*}
We start by extending the above theorem of Davenport and Schmidt to the weighted case:

\begin{theorem}\label{ThmDS}
$\bad_n(\bi)\subseteq\cD_n(\bi).$
\end{theorem}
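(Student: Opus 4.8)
The plan is to argue by contradiction, showing that if $\balpha \in \bad_n(\bi)$ but $\balpha \notin \cD_n(\bi)$, then one can produce a sequence of denominators $q$ witnessing that $\liminf_{q\to\infty}\max_j q^{i_j}\norm{q\alpha_j} = 0$, contradicting weighted bad approximability. Concretely, suppose $\balpha \notin \cD_n(\bi)$. Then for \emph{every} $c < 1$ there are arbitrarily large $Q$ for which the system $\norm{q\alpha_j} < cQ^{-i_j}$ $(1\le j\le n)$ has no nonzero integer solution $q \le Q$. The first step is to fix such a $c$ (to be chosen small, depending only on $\bi$ and the constant in Theorem~\ref{wDir}), pick a large $Q$ failing the improved inequality, and then apply the \emph{unimproved} Weighted Dirichlet Theorem (Theorem~\ref{wDir}) at scale $Q$ to obtain some $q_0 \le Q$ with $\norm{q_0\alpha_j} < Q^{-i_j}$ for all $j$. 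Since the improved system fails at $Q$, this $q_0$ must have $\norm{q_0\alpha_{j_0}} \ge cQ^{-i_0}$ for at least one coordinate $j_0$.

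The core of the argument is then a geometry-of-numbers / transference step: from the failure of Dirichlet improvability at $Q$ one extracts not just one denominator but a whole "staircase" of best approximation denominators in the range $(q_0, Q]$, and the key claim is that for one of them, say $q_1$, the quantity $\max_j q_1^{i_j}\norm{q_1\alpha_j}$ is small — of order $c^{\theta}$ for some fixed exponent $\theta = \theta(n) > 0$. The mechanism is the same as in Davenport--Schmidt's original proof of Theorem~\ref{Thm:DSoriginial} (the case $\bi = (1/n,\dots,1/n)$), rephrased in terms of the lattice $\Lambda_Q = g_{\log Q}\,u_{\balpha}\,\ZZ^{n+1}$ where $g_t$ is the weighted diagonal flow $\mathrm{diag}(e^{i_1 t},\dots,e^{i_n t}, e^{-t})$ and $u_{\balpha}$ is the unipotent from the proof of Lemma~\ref{nalphalemma}. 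Failure of $\bi$-Dirichlet improvability at $Q$ means precisely that the shortest vector of $\Lambda_Q$ has length $\ge c$ (no nonzero lattice point in the box of sidelengths $cQ^{-i_j}$ resp. $Q$); one then uses Minkowski's second theorem (or the successive-minima estimate behind Lemma~\ref{Lma:Lattice}) to control all the successive minima of $\Lambda_Q$ from below, and reads off from a reduced basis a denominator $q_1$ with $\max_j q_1^{i_j}\norm{q_1\alpha_j} \gtrsim c^{\theta}$ \emph{as a lower bound}, but crucially also well-separated from the next one, forcing the $\liminf$ to drop below any prescribed threshold as $c \to 0$.

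Running this for a sequence $c = c_k \to 0$ with corresponding $Q = Q_k \to \infty$ and denominators $q^{(k)} \to \infty$ yields $\liminf_{q\to\infty}\max_{1\le j\le n} q^{i_j}\norm{q\alpha_j} = 0$, i.e. $\balpha \notin \bad_n(\bi)$, which is the desired contradiction. Taking the contrapositive gives $\bad_n(\bi) \subseteq \cD_n(\bi)$.

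The main obstacle is the quantitative transference step in the middle paragraph: making precise, with weights, the passage from "no short vector in the skew box at scale $Q$" to "some best-approximation denominator $q \in (q_0,Q]$ realises an exponentially small value of $\max_j q^{i_j}\norm{q\alpha_j}$". In the unweighted case Davenport--Schmidt handle this with an explicit induction on a chain of best approximations; in the weighted case the anisotropic scaling means the "box" is a genuine parallelepiped and one must be careful that the successive minima $\mu_1 \le \dots \le \mu_{n+1}$ of $\Lambda_Q$ interact correctly with the weight exponents $i_j$ — this is where the condition $i_1 + \dots + i_n = 1$ (so that $\det \Lambda_Q = 1$, $g_t \in \SL_{n+1}(\RR)$) is essential. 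I expect this to require a careful but ultimately routine adaptation of the Davenport--Schmidt chain argument, with Lemma~\ref{Lma:Lattice} (or Minkowski's second theorem) supplying the needed lower bounds on $\mu_{n+1}$, and the conditions \eqref{Eqn:WV1}--\eqref{Eqn:WV2} on the weight vector ensuring the volume normalisation goes through unchanged.
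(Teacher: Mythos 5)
Your contrapositive setup and the translation of non-improvability into a lower bound on the shortest vector of the lattice $\Lambda_{Q} = g_{\log Q}\,u_{\balpha}\ZZ^{n+1}$ match the paper exactly. But the core step, which you flag as the "main obstacle" and leave as an expectation that a Davenport--Schmidt chain argument adapts routinely, is precisely where the paper does something different and where your sketch as written breaks down.

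First, the paper does not run a chain-of-best-approximations argument at all. Instead it lets $c \to 1$, so the shortest vector of $\Lambda_{Q_k}$ has length $\to 1$ with $\det\Lambda_{Q_k}=1$. Minkowski's second theorem then bounds all successive minima (and hence a reduced basis) by an absolute constant, so the transition matrices $\Theta_k$ live in a compact subset of $\SL_{n+1}(\RR)$; a subsequence converges to a unimodular $\Theta$ whose associated linear forms satisfy $\max_j|X_j(\bx)|\ge 1$ for all nonzero $\bx\in\ZZ^{n+1}$. The key input is then Haj\'os's theorem (Theorem~\ref{HajThm2}/Corollary~\ref{HajThm1}): such a system of forms, after an integral unimodular change of variables, is lower triangular with unit diagonal. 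From the triangular form one reads off a nonzero integer vector annihilating either $X_0$ alone or all of $X_1,\dots,X_n$; pushing this back through the convergent $\Theta_k$ produces, for every $\eps>0$ and $k$ large, a denominator $q$ with $\max_j q^{i_j}\norm{q\alpha_j} < \eps \cdot C_1$, contradicting $\balpha\in\bad_n(\bi)$. Haj\'os's theorem is the decisive ingredient and it is absent from your proposal.

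Second, your description of what you would extract is internally inconsistent: to contradict $\balpha\in\bad_n(\bi)$ you need an \emph{upper} bound of the form $\max_j q_1^{i_j}\norm{q_1\alpha_j} \lesssim c^{\theta}$ for suitable $q_1$, but you write that the reduced basis yields $\max_j q_1^{i_j}\norm{q_1\alpha_j} \gtrsim c^{\theta}$ ``as a lower bound'' and then appeal to ``separation from the next one'' to conclude. A lower bound on one best approximation plus separation does not produce the required smallness for another; the sign of the estimate you need is reversed. (Relatedly, you say Minkowski's second theorem controls the successive minima ``from below'', but a lower bound on $\mu_1$ together with $\prod\mu_j \asymp 1$ gives \emph{upper} bounds on the remaining minima, which is what the paper uses to get a bounded basis.) Whether a genuinely weighted chain argument can be made to work is plausible but not established here, and the sketch as given would not close the proof.

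By contrast, the paper's compactness-plus-Haj\'os route is qualitative (it does not track explicit exponents $\theta(n)$) but cleanly handles arbitrary weight vectors, since the lattice is unimodular regardless of $\bi$ and Haj\'os's theorem does not see the weights at all. If you want to pursue a quantitative Davenport--Schmidt-style argument you would need to carefully define the weighted analogue of a best-approximation chain (the anisotropic boxes do not nest as simply as cubes) and fix the direction of the key inequality; as it stands, there is a genuine gap at the heart of the argument.
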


\begin{remark*}
	Clearly, no badly approximable vector can be singular, so we know that $\bad_n(\bi)$ and $\cS_n(\bi)$ are disjoint. Interestingly, apart from the trivial case $n=1$, it is not currently known if the sets $\cD_n\setminus (\bad_n\cup\cS_n)$ or their weighted analogues are empty.
\end{remark*}

Our second main result will show that non-singular vectors are well-suited for twisted inhomogeneous approximation. The following non-weighted result has been proved by Shapira using dynamical methods \cite{Shapira}.

\begin{theorem}[Shapira]
	Let $\balpha\notin\cS_n$. Then for almost all $\bbeta \in \I^n$,\vspace{-1ex}
	\begin{equation*}
		\liminf\limits_{q\rightarrow\infty}q^{1/n}\max\limits_{1\leq j\leq n}\norm{q\alpha_j-\beta_j}=0.
	\end{equation*}
\end{theorem}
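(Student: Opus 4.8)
\emph{Reformulation.} For $\eps>0$ put $\psi_\eps(q)=\eps q^{-1/n}$, an approximating function. The conclusion ``$\liminf_{q\to\infty}q^{1/n}\max_{1\le j\le n}\norm{q\alpha_j-\beta_j}=0$ for a.e.\ $\bbeta\in\I^n$'' is, after intersecting over $\eps=1/k$, equivalent to $\lambda_n\big(\bigcap_{k}W_n^{\balpha}(\psi_{1/k})\big)=1$, so it suffices to show $\lambda_n\big(W_n^{\balpha}(\psi_\eps)\big)=1$ for every fixed $\eps>0$. I would do this with the ubiquity machinery of Chapter~\ref{ubiquity}. Take $\Omega=\TT^n$ (which satisfies $\mathrm{(M2)}$ with $\delta=n$, together with all the standing hypotheses on $m=\lambda_n$), index set $J=\NN$, resonant points $R_q=q\balpha\bmod 1$, and weight function $\beta_q=q$; then $W_n^{\balpha}(\psi_\eps)=\Lambda(\varphi)$ with $\varphi=\psi_\eps$. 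Note that here, unlike in the homogeneous case treated by Lemma~\ref{K-J-lemma}, each $q$ contributes a \emph{single} ball of radius $\psi_\eps(q)$, so no extra factor $q^{-1}$ enters.

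\emph{The ubiquity input from non-singularity.} The crux is to produce, playing the role Dirichlet's Theorem plays in Lemma~\ref{K-J-lemma}, an ``inhomogeneous Dirichlet theorem along a subsequence''. Since $\balpha\notin\cS_n$, there are $\eps_0>0$ and $Q_i\to\infty$ such that \emph{no} integer $0<q\le Q_i$ satisfies $\max_j\norm{q\alpha_j}<\eps_0 Q_i^{-1/n}$. Equivalently, the unimodular lattices
\[
  \Lambda_i:=g_i\,u_{\balpha}\,\ZZ^{n+1},\qquad g_i=\mathrm{diag}\big(Q_i^{1/n},\dots,Q_i^{1/n},Q_i^{-1}\big),
\]
with $u_{\balpha}$ the unipotent matrix of Lemma~\ref{nalphalemma} (taken with $\ell=n$), have first successive minimum $\mu_1(\Lambda_i)\ge\eps_0$ once $i$ is large. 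Because $\Lambda_i$ is unimodular, $\mu_1\cdots\mu_{n+1}\le 1$ (a standard consequence of the geometry of numbers built on Theorem~\ref{Thm:MinkCBT}), so $\mu_{n+1}(\Lambda_i)\le\eps_0^{-n}$, and hence, by the contrapositive of Lemma~\ref{Lma:Lattice}, the Dirichlet fundamental domain of $\Lambda_i$ is contained in $B(\0,R_0)$ for a constant $R_0=R_0(\eps_0,n)$ \emph{independent of $i$}; that is, every point of $\RR^{n+1}$ lies within $R_0$ (in $\max$-norm) of $\Lambda_i$. Applying this to the point $\big(Q_i^{1/n}\beta_1,\dots,Q_i^{1/n}\beta_n,0\big)$ and reading off the first $n$ and the last coordinate separately yields: for every $\bbeta\in\TT^n$ there is a nonzero integer $q$ with $|q|\le CQ_i$ and $\max_j\norm{q\alpha_j-\beta_j}<CQ_i^{-1/n}$, where $C=C(\eps_0,n)$ --- with the single exception of those $\bbeta$ lying within $CQ_i^{-1/n}$ of $\{0,1\}^n$, a set of measure $O(Q_i^{-1})=o(1)$, which does no harm. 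After rescaling this says precisely that $\bigcup_{q\le u_i}B\big(R_q,\rho(u_i)\big)$ covers all but an $o(1)$ fraction of $\TT^n$, with $u_i\asymp Q_i$ and $\rho(t)=C't^{-1/n}$.

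\emph{Conclusion.} Passing to a subsequence of $(Q_i)$ growing at least geometrically (harmless, since $Q_i\to\infty$, and the covering property holds element-by-element) makes both $\rho$ and $\varphi$ $u$-regular. By the reduction in the section on ubiquitous systems --- recovery of a lower sequence $l$, together with the fact that $m\big(\Delta_l^u(\rho,i)\big)\to 1$ upgrades global to local ubiquity --- the pair $(\cR,\beta)$ is a \emph{local} $\lambda_n$-ubiquitous system relative to $(\rho,l,u)$. Now apply Corollary~\ref{cor2} with this $\rho$ and $\varphi=\psi_\eps$: the $u$-regularity hypothesis holds, and
\[
  \sum_{i=1}^{\infty}\left(\frac{\varphi(u_i)}{\rho(u_i)}\right)^{\!\delta}=\sum_{i=1}^{\infty}\Big(\frac{\eps}{C'}\Big)^{\!n}=\infty ,
\]
so $\lambda_n(\Lambda(\varphi))=\lambda_n\big(W_n^{\balpha}(\psi_\eps)\big)=1$. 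Intersecting over $\eps=1/k$ gives the statement. (Note that local ubiquity already yields \emph{full} measure here, so --- unlike in the proof of Theorem~\ref{thm:lines}(ii) --- no appeal to a zero--one law is needed.)

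\emph{Main obstacle.} Everything after the lattice argument mirrors the proof of Theorem~\ref{thm:lines}(ii) almost verbatim; the real work is the middle step. The delicate points are: (i) the quantifier bookkeeping in passing from the definition of singularity to the subsequence $(Q_i)$ extracted from its negation; (ii) keeping every implied constant independent of $i$ along the chain $\mu_1\ge\eps_0\Rightarrow\mu_{n+1}\le\eps_0^{-n}\Rightarrow\cD\subseteq B(\0,R_0)\Rightarrow$ uniform inhomogeneous solvability; and (iii) verifying that the degenerate set where only the ``solution'' $q=0$ is available has \emph{vanishing} measure, so that the ubiquity density still tends to $1$. The same scheme, with the cubes $B(\cdot,r)$ replaced by the rectangles $\prod_j(\,\cdot-r^{i_j},\cdot+r^{i_j})$ and a rectangle-adapted form of the ubiquity theorems, delivers the weighted extension: $\balpha\notin\cS_n(\bi)$ implies $\lambda_n\big(W_n^{\balpha}(\bi,\psi)\big)=1$ for $\psi(q)=\eps q^{-1}$ and every $\eps>0$.
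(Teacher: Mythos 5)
Your proposal is correct in outline, and it reaches the conclusion by a genuinely different route from the paper's. The paper proves the weighted generalisation (Theorem~\ref{Con1}) directly: it applies Cassels' inhomogeneous transference theorem (Theorem~\ref{Cassels}) to the negation of singularity, reads off the uniform inhomogeneous solvability statement (Theorem~\ref{Thm1}) after a rescaling, and then finishes with Theorem~\ref{ConBV}, a Lebesgue-density trimming lemma that transfers the full measure of $\limsup_q A^{\balpha}_{\bp,q}$ (which equals $\I^n$ by Theorem~\ref{Thm1}) to the proportionally shrunk sets $U^{\balpha}_{\bp,q}(\eps)$. You instead (i) reprove the needed special case of Cassels' theorem from scratch, by combining Minkowski's Second Theorem with the Dirichlet-domain bound of Lemma~\ref{Lma:Lattice}, and (ii) replace the density lemma with the ubiquity apparatus (Corollary~\ref{cor2}), where the divergence sum is vacuously infinite because $\varphi/\rho$ is a positive constant. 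Both halves establish the same two intermediate facts. The lattice step is self-contained and reuses a lemma the paper only deploys in Chapter~\ref{fibres}, while the Cassels route is shorter since it cites a textbook theorem. For the second half, when $\varphi/\rho$ is a fixed constant the content of Corollary~\ref{cor2} degenerates to exactly the trimming principle expressed by Theorem~\ref{ConBV}, so the paper's tool is the one sized to the job; yours works but carries the overhead of setting up $(\rho,l,u)$ and regularity.

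Two details need tightening. First, the global-to-local upgrade you invoke requires $m(\Delta_l^u(\rho,i))\to 1$, but a merely geometric subsequence does not give this: with $u_i=k^i$ and $l_i=u_{i-1}$, the discarded part $\bigcup_{q\le l_i}B(R_q,\rho(u_i))$ has measure of order $l_i/u_i=1/k$, a constant rather than $o(1)$, so you only obtain global ubiquity. Passing to a super-geometrically growing subsequence of $(Q_i)$, so that $l_i/u_i\to 0$, fixes this while preserving $u$-regularity of $\rho$; alternatively one can establish local ubiquity ball by ball, absorbing the $O(l_i/u_i)$ error into $\kappa m(B)$ once $i\ge n_0(B)$. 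Second, the nearest lattice point to $\bigl(\pm Q_i^{1/n}\bbeta,0\bigr)$ may have last coordinate $q\le 0$: the case $q=0$ affects only $\bbeta$ in an $O(Q_i^{-1/n})$-neighbourhood of $\ZZ^n$, as you observe, and $q<0$ is absorbed by applying the argument to $-\bbeta$ and using the invariance of Lebesgue measure under negation. (The paper's route via Cassels' theorem elides the same sign issue.)
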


We extend Shapira's result to general weight vectors. Our approach is classical.

\begin{theorem}\label{Con1}
	Let $\balpha\notin\cS_n(\bi)$. Then, for almost all $\bbeta\in \I^n$,\vspace{-1ex}
	\begin{equation*}	
		\liminf\limits_{q\rightarrow\infty}\max\limits_{1\leq j\leq n}q^{i_j}\norm{q\alpha_j-\beta_j}=0.
		\vspace{1ex}
	\end{equation*}
\end{theorem}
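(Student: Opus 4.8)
The plan is to mimic the classical strategy behind the twisted Khintchine-type results, but working directly from the hypothesis that $\balpha \notin \cS_n(\bi)$, which by definition gives us a \emph{quantitative} improvement of weighted Dirichlet along a sequence of scales that does \emph{not} degenerate. Concretely, since $\balpha\notin\cS_n(\bi)$, there exists a constant $c_0>0$ and an increasing sequence $(Q_k)_{k\in\NN}$ of integers such that for each $k$ the only way to solve $\norm{q\alpha_j}<c\,Q_k^{-i_j}$ $(1\le j\le n)$ with $q\le Q_k$, $q\neq 0$, for all $c<c_0$, fails; equivalently, there is a positive lower bound $c_0$ on the quantity $\min_{1\le q\le Q_k} \max_j Q_k^{i_j}\norm{q\alpha_j}$ — this is the negation of $\bi$-singularity. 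The first step is to package this into a covering statement: for each large $k$, the union of boxes
\begin{equation*}
	\bigcup_{1\le q\le Q_k} \prod_{j=1}^{n} \left( -\,\tfrac{C}{Q_k^{\,i_j}},\ \tfrac{C}{Q_k^{\,i_j}} \right) + \{q\balpha\} \pmod{\ZZ^n}
\end{equation*}
covers a fixed proportion of $\TT^n$, with $C$ depending only on $c_0$ and $n$; this is the analogue of \eqref{eq:mink} and follows from (weighted) Minkowski together with the non-singularity lower bound. The point of non-singularity is precisely that this proportion does not shrink to zero along $(Q_k)$.

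The second step is to run a ubiquity argument in the \emph{target} variable $\bbeta$, exactly as in the proof of Theorem \ref{thm:lines}(ii) in Section \ref{sec:ubiquity}, but now with $\Omega=\I^n$, resonant sets the points $\{q\balpha\}$ for $q$ in the relevant dyadic-type blocks, weight $\beta_{(q)}=q$, and scaling function $\varphi$ built from $\psi(q)=\eps q^{-1}$ so that $\varphi(q)^{i_j}\asymp \eps^{i_j} q^{-i_j}$. Here one must be mindful that we are doing \emph{weighted} approximation, so the relevant sets $\Delta(R_\alpha,\cdot)$ are boxes rather than balls; the measure $\lambda_n$ still satisfies the analogue of (M1)/(M2) with the weighted "quasi-norm", and the ubiquity machinery (Theorem \ref{mT1}, Corollary \ref{cor2}) applies after checking the regularity condition \textbf{(R)} — automatic for $\psi(q)=\eps q^{-1}$ — and the divergence condition \textbf{(D)}, which for $\psi(q)=\eps q^{-1}$ reduces to $\sum_q \psi(q) = \eps\sum_q q^{-1}=\infty$. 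The covering statement from step one is exactly what gives global ubiquity; since $\lambda_n$ of the covering sets does not tend to $1$ in general we only get global (not automatically local) ubiquity, so we conclude positive measure, and then upgrade to full measure using a zero-one law: the weighted analogue of Cassels' "$0$-$1$ law" (Theorem \ref{Thm:Cassels01}) applied coordinatewise, or Gallagher's law, shows that $\lambda_n(W_n^{\balpha}(\bi,\psi))\in\{0,1\}$, hence equals $1$.

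The third step is the passage from "$W_n^{\balpha}(\bi,\psi)$ has full measure for $\psi(q)=\eps q^{-1}$, for every fixed $\eps>0$" to the $\liminf = 0$ conclusion: intersect over $\eps = 1/m$, $m\in\NN$, to get that for almost every $\bbeta$ and every $m$ there are infinitely many $q$ with $\max_j q^{i_j}\norm{q\alpha_j-\beta_j}<1/m$, which is precisely $\liminf_{q\to\infty}\max_j q^{i_j}\norm{q\alpha_j-\beta_j}=0$. The main obstacle I expect is step one — extracting a \emph{uniform-in-$k$} covering proportion from the bare definition of non-singularity. The definition only says that the improvement constant cannot be taken arbitrarily small at \emph{every} scale; one has to argue that failure of $\bi$-singularity produces an infinite sequence of scales $Q_k$ along which a \emph{fixed} $c_0$-improvement is impossible, and then translate "no good single $q$" into "the boxes cover a definite proportion" via a Minkowski/counting argument (this is where a weighted version of Lemma \ref{lem:count}, or a direct application of Minkowski's Linear Forms Theorem to the appropriate lattice, enters). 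A secondary technical point is verifying the doubling/(M1) hypotheses for $\lambda_n$ relative to the weighted box geometry and checking that the ubiquity function $\rho$ coming from the covering has the required decay $\rho(r)\to 0$; both are routine but need the weights to satisfy $0\le i_j\le 1$, $\sum_j i_j=1$, which is exactly \eqref{Eqn:WV1}–\eqref{Eqn:WV2}.
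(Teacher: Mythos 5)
Your overall plan — derive a uniform covering statement from non-singularity, then upgrade to arbitrarily small approximation constants — captures the right spirit, but your route has two genuine gaps, and the paper's proof is structurally different in a way that avoids both of them.

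The paper does not use ubiquity for this theorem at all. Instead it applies Cassels' transference theorem for linear forms (Theorem \ref{Cassels}): since $\balpha\notin\cS_n(\bi)$, the system \eqref{maxeq3} has no small non-zero integer solution, and Cassels' theorem then produces, for \emph{every} target $\bgamma\in\I^n$ (not merely a positive proportion, and not merely almost every), a uniform solution with constant $C(\balpha)$. This is Theorem \ref{Thm1}, and it gives you a $\limsup$ set of weighted boxes that is literally all of $\I^n$. With a full covering in hand, the passage from the fixed constant $C(\balpha)$ down to an arbitrary $\eps>0$ is done by the Lebesgue density argument (Theorem \ref{ConBV}): shrinking each $d$-ball $A_k$ to a sub-box $U_k\subset A_k$ with $|U_k|>c|A_k|$ does not change $\lambda_n(\limsup)$. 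No ubiquity machinery, no $0$--$1$ law.

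The two gaps in your version. First, your step one asks for a covering \emph{proportion} extracted from the bare definition of non-singularity via Minkowski/counting; you flag this as the main obstacle, and rightly so — nothing in the non-singularity hypothesis gives you a multiplicity bound for the overlaps of the boxes at scale $Q_k$, which is what you would need to turn an average count into a positive covering fraction. The paper sidesteps this entirely: Cassels' transference theorem converts "no small homogeneous solution'' directly into "every inhomogeneous target can be hit,'' which is a full covering and strictly stronger than what ubiquity needs. Second, and more seriously, your step three upgrades positive measure to full measure via "Cassels' $0$--$1$ law applied coordinatewise, or Gallagher's law.'' Theorem \ref{Thm:Cassels01} concerns the set of $\alpha$ for which $\norm{q_i\alpha}<\phi(i)$ holds infinitely often — the variable ranging over a full-measure set is the number being approximated, and the targets are the integers. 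In Theorem \ref{Con1} the roles are reversed: $\balpha$ is fixed and the variable is the target $\bbeta$ in a twisted problem, so the set in question is $\limsup$ of boxes centred at $\{q\balpha\}$. Cassels' $0$--$1$ law does not apply to that set, and Gallagher's law is for the coprime Duffin--Schaeffer setting and is also inapplicable here. (There is a way to rescue a $0$--$1$ dichotomy via ergodicity of the rotation $\bbeta\mapsto\bbeta-\balpha$, using the fact that $\balpha\notin\cS_n(\bi)$ forces $\balpha$ to be totally irrational, but that argument is not the one you made.) The paper's route needs no such dichotomy precisely because Cassels' transference gives full, rather than positive, measure from the outset.
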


\begin{remark*}
	Another way to state Theorem \ref{Con1} is by using the notation introduced in Section \ref{sec:weighted}. Given $\eps>0$, let $\psi_{\eps}(q)=\eps q^{-1}$. Then, for any $\balpha\notin\cS_n(\bi)$ and for any $\eps>0$, the set $W_n^{\balpha}(\bi,\psi_{\eps})$ has full Lebesgue measure $\lambda_n$. Hence,
	\begin{equation*}
		\I^n\setminus\cS_n(\bi)\subset\bigcap\limits_{\eps>0} W_n^{\times}(\bi,\psi_{\eps}).
	\end{equation*}
\end{remark*}

\section{Preliminary results}

In this section we introduce various auxiliary statements which will be used to prove Theorems \ref{ThmDS} and \ref{Con1}.

\subsection{Results needed to prove Theorem \ref{ThmDS}.}

Haj{\'o}s proved the following statement for systems of linear forms \cite{Hajos}:

\begin{theorem}[Haj\'os]\label{HajThm2}
	Let $L_1,\dots,L_m$ be $m$-dimensional linear forms given by
	\begin{equation*}
		L_j(\bx)=a_{j,1}x_1+\dots+a_{j,m}x_m,\quad \quad (1\leq j \leq m),
	\end{equation*}
	with real coefficients $a_{j,k}$, $1\leq j,k \leq m$. Assume the system of linear forms has determinant $\det(a_{j,k})=\pm 1$ and they satisfy
	\begin{equation}\label{maxeq2}
	\max\left\{|L_1(\bx)|,\dots,|L_m(\bx)|\right\}\geq 1
\end{equation}
	for all integer vectors $\bx=(x_1,\dots,x_m)\neq \0\in\ZZ^m$. Then, one of the linear forms has only integer coefficients.
\end{theorem}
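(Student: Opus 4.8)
Theorem~\ref{HajThm2} is Haj\'os's celebrated resolution of a conjecture of Minkowski, and the plan is to deduce it from, and sketch the proof of, that classical result. First I would recast the hypothesis in the language of lattices. Writing $A=(a_{j,k})$ for the matrix with rows $L_1,\dots,L_m$, so $|\det A|=1$, condition~\eqref{maxeq2} says precisely that the lattice $\Lambda=A\ZZ^m$ meets the open cube $(-1,1)^m$ only at the origin; by Minkowski's Convex Body Theorem (Theorem~\ref{Thm:MinkCBT}) the \emph{closed} cube $[-1,1]^m$ nonetheless contains a nonzero point of $\Lambda$, so this is a critical configuration with no slack to exploit. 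Meanwhile, the conclusion ``some $L_j$ has integer coefficients'' is the assertion that some coordinate is integer-valued on all of $\Lambda$. A classical dictionary of Minkowski identifies such critical configurations with lattice tilings of $\RR^m$ by unit cubes (a lattice cube packing of density one being automatically a tiling), and identifies the conclusion with the existence of two cubes sharing a complete $(m-1)$-dimensional facet. Thus Theorem~\ref{HajThm2} is one of the standard formulations of Minkowski's conjecture: \emph{every lattice tiling of $\RR^m$ by unit cubes contains a pair of twin cubes.}

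The first part of the plan, setting up this equivalence carefully, is routine: it rests only on elementary volume bookkeeping and the Minkowski machinery already available (Theorems~\ref{Thm:MinkCBT} and~\ref{Minkowski}), together with the classical fact that a density-one lattice cube packing tiles, for which one can cite \cite{Cassels-geometry}.

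The real content is Haj\'os's theorem \cite{Hajos}, whose proof I would present along the following lines. One normalises the tiling lattice, via a sequence of coordinate relabellings and unimodular shearing moves that preserve the tiling, to a semi-canonical form that encodes a \emph{factorisation} $G=A_1\oplus\cdots\oplus A_m$ of a finite abelian group $G$ as a direct sum of cyclic blocks $A_i=\{0,g_i,2g_i,\dots,(k_i-1)g_i\}$, in such a way that the presence of twin cubes corresponds to one of the blocks $A_i$ being a subgroup of $G$. Haj\'os's theorem is then exactly the group-theoretic assertion that in any such factorisation some block is a subgroup. This is proved by induction on $|G|$: a ``substitution'' lemma shows that, in a hypothetical factorisation with no block a subgroup, one may replace a block $\{0,g_i,2g_i,\dots\}$ by $\{0,qg_i,2qg_i,\dots\}$ for a suitable integer $q$ and still obtain such a factorisation, which forces a minimal counterexample to have prime-power order; a direct analysis of minimal $p$-group counterexamples, using the characters of $G$ to pin down the blocks, then produces a contradiction. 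Undoing the normalisation returns the integral form~$L_j$.

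The main obstacle is plainly this last step. Both the construction of the normal form and, above all, the inductive combinatorial argument forcing some block to be a subgroup are substantial and delicate; in a thesis on Diophantine approximation one would reasonably invoke \cite{Hajos} for them rather than reproduce the full argument. Steps~1--2 are, by contrast, elementary given the geometry of numbers already developed.
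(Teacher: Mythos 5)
The paper does not prove Theorem~\ref{HajThm2}: it is cited from \cite{Hajos}, and the proof environment that follows in the text establishes only the equivalence of Theorem~\ref{HajThm2} with Corollary~\ref{HajThm1}, not the theorem itself. So there is no in-paper argument to compare against, and your proposal is essentially supplying the context that the thesis chooses to leave implicit. Your sketch of Haj\'os's route---read \eqref{maxeq2} as criticality of the lattice $A\ZZ^m$ for the unit cube, pass to a lattice tiling of $\RR^m$ by unit cubes, reformulate as Minkowski's twin-cube conjecture, encode the tiling as a factorisation of a finite abelian group into cyclic blocks none of which is a subgroup, and close by Haj\'os's substitution-and-induction argument---is the standard one and is accurate in broad outline. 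One caution: the ``dictionary'' step translating the conclusion ``some $L_j$ has integer coefficients'' into ``two cubes share a full facet'' is not a tautology. The former says $e_j\in(A\ZZ^m)^*$, a condition on the dual lattice, whereas a pair of twin cubes in the tiling centred on $A\ZZ^m$ is the condition $e_j\in A\ZZ^m$ itself. Both are classical formulations of the conjecture and both are true, but passing between them requires a short extra argument (for instance that the dual of a cube-critical lattice is again cube-critical), which your sketch glosses over. Deferring the hard combinatorial group theory to \cite{Hajos} is exactly the call the thesis makes, so at the level that appears on the page your treatment and the paper's coincide; the thesis's own labour in this neighbourhood goes into establishing Corollary~\ref{HajThm1}, which is what is actually invoked later and which your proposal does not address.
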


Any matrix $A=(a_{j,k})\in\RR^{m\times m}$ satisfying $\det A\neq 0$ gives rise to a full-rank lattice
\begin{equation*}
	\Lambda(A)=\left\{A\bz:\bz\in\ZZ^m\right\}.
\end{equation*}
Two lattices $\Lambda(A_1)$ and $\Lambda(A_2)$ are identical if and only if there exists a unimodular matrix $B\in\ZZ^{m\times m}$ such that $A_1=A_2 B$. This implies that
\begin{equation*}
	\Lambda(A)=\Lambda(AB)
\end{equation*}
for any matrix $B\in\ZZ^{m\times m}$ with $\det B=\pm 1$. Clearly, $A\bz=AB(B^{-1}\bz)$ and thus, turning back to the linear forms defined through $A$, we see that
\begin{equation*}
	L_j(\bx)=\sum\limits_{k=1}^{\infty}a_{j,k}x_k=\sum\limits_{k=1}^{\infty}(ab)_{j,k}y_k=L_j'(\by),\quad (1\leq j \leq m),
\end{equation*}
where we perform a change of variables, substituting $\by$ for $B^{-1}\bx$, and where $(ab)_{j,k}$ denotes the entries of $AB$. Importantly, any non-zero integer vector $\by$ is the image of a non-zero integer vector $\bx$ under the multiplication by $B^{-1}$. Hence, rather than investigating $L_1(\bx),\dots,L_m(\bx)$ for $\bx\in\ZZ^{m\times m}\setminus\{\0\}$, we can consider a modified collection of linear forms $L'_1(\by),\dots,L'_m(\by)$ at integer vectors $\by\neq\0$. This will prove to be very useful.

For our purpose, we will need the following corollary of Theorem \ref{HajThm2}.

\begin{corollary}\label{HajThm1}
	Let $L_1,\dots,L_m$ be given as in Theorem \ref{HajThm2}. After a possible permutation $\pi$ of the linear forms $L_{j}$, there is an integral linear transformation of determinant $\pm 1$ from the variables $x_1,\dots,x_m$ to $y_1,\dots,y_m$, such that in the new variables $y_{\ell}$ the linear forms have lower triangular form and all diagonal elements are equal to $1$.
\end{corollary}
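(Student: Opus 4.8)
The plan is to induct on the number $m$ of variables, with Haj\'os's Theorem (Theorem \ref{HajThm2}) supplying the crucial structural input at each stage. The base case $m=1$ is immediate: the single form satisfies $L_1(x_1)=\pm x_1$ because its "determinant" $|a_{1,1}|$ equals $1$, so the unimodular substitution $x_1=\pm y_1$ already realizes the desired shape, a $1\times 1$ "lower triangular" matrix with diagonal entry $1$.

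For the inductive step, apply Theorem \ref{HajThm2} to get a form among $L_1,\dots,L_m$ with integer coefficients, and permute so that it is $L_1$, with coefficient row $(a_{1,1},\dots,a_{1,m})\in\ZZ^m$. The first key point is that this row is primitive, i.e.\ $\gcd(a_{1,1},\dots,a_{1,m})=1$; I would establish this by contradiction using Minkowski's Convex Body Theorem (Theorem \ref{Thm:MinkCBT}). If the gcd were $g\geq 2$, write the row as $g\boldsymbol{v}^{\top}$ with $\boldsymbol{v}$ primitive, choose a unimodular $B\in\ZZ^{m\times m}$ with $\boldsymbol{v}^{\top}B=(1,0,\dots,0)$, and pass to variables $\by$ via $\bx=B\by$. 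Then $L_1(B\by)=gy_1$, so the transformed coefficient matrix $AB$ has first row $(g,0,\dots,0)$; expanding its determinant along that row shows the $(m-1)\times(m-1)$ matrix of the remaining forms restricted to $\{y_1=0\}$ has determinant $\pm 1/g$. Hence the open, symmetric, bounded parallelepiped $\{\by'\in\RR^{m-1}:|L_j(B(0,\by'))|<1,\ 2\leq j\leq m\}$ has volume $2^{m-1}g>2^{m-1}$, so by Theorem \ref{Thm:MinkCBT} it contains a nonzero integer point $\by'$; but then $B(0,\by')$ is a nonzero integer vector with $\max_j|L_j|<1$, contradicting \eqref{maxeq2}. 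This primitivity step is the heart of the argument; the remainder is bookkeeping.

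Given primitivity, choose a unimodular $B$ with $(a_{1,1},\dots,a_{1,m})B=(1,0,\dots,0)$ and substitute $\bx=B\by$. Then $L_1$ becomes $y_1$, while each $L_j$ with $j\geq 2$ becomes $b_jy_1+M_j(y_2,\dots,y_m)$ for some real $b_j$ and some linear form $M_j$ in the $m-1$ variables $y_2,\dots,y_m$. Restricting to $\{y_1=0\}$ (where $L_1$ vanishes), the forms $M_2,\dots,M_m$ have determinant $\pm 1$ (block-triangular determinant) and satisfy \eqref{maxeq2} in dimension $m-1$, since a nonzero integer point of $\RR^{m-1}$ yields a nonzero integer point of $\RR^m$ lying in the hyperplane $\{y_1=0\}$. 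By the inductive hypothesis there is a permutation of $M_2,\dots,M_m$ together with an integral unimodular change of the variables $y_2,\dots,y_m$ alone (leaving $y_1$ fixed) making those forms lower triangular with unit diagonal. Lifting this substitution to all of $\by$ and calling the new variables $z_1=y_1,z_2,\dots,z_m$, the form $L_1$ is still $z_1$; listing $L_1$ first and then the permuted remaining forms, the $j$-th row ($j\geq 2$) of the resulting $m\times m$ coefficient matrix has the shape $(b_j,*,\dots,*,1,0,\dots,0)$ with the $1$ in column $j$. Thus the matrix is lower triangular with every diagonal entry equal to $1$; the extra entries $b_j$ are harmless because they sit in the first column, strictly below the diagonal. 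Composing all the unimodular substitutions produces a single integral transformation of determinant $\pm 1$, and composing "move the Haj\'os form to the front" with the permutation of $\{2,\dots,m\}$ is a permutation of $\{1,\dots,m\}$, exactly as the statement requires. This closes the induction.

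I expect the primitivity claim for the Haj\'os form to be the only genuinely substantive step; everything else — the determinant computations for the truncated systems, the transfer of \eqref{maxeq2} to the hyperplane $\{y_1=0\}$, and the assembly of the triangular matrix from the lifted substitution — is routine linear algebra over $\ZZ$.
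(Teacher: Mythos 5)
Your proof is correct and follows essentially the same approach as the paper: induct on $m$, use Haj\'os's theorem to place an integer-coefficient form in the first slot, show the gcd of that coefficient row equals $1$ by a Minkowski volume contradiction, reduce the row to $(1,0,\dots,0)$ by a unimodular change of variables, and invoke the inductive hypothesis on the truncated $(m-1)\times(m-1)$ system. The only divergence is cosmetic — you appeal to the standard fact that a primitive integer vector extends to a unimodular matrix, whereas the paper builds the transformation explicitly through Euclidean-algorithm column operations and swaps — and you are a touch more careful in verifying that \eqref{maxeq2} descends to the hyperplane $\{y_1=0\}$, a point the paper leaves implicit.
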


In other words, we get 
\begin{equation*}
	L_{\pi^{-1}(j)}(\bx)=\phi_{j,1}y_1+\dots+\phi_{j,j-1}y_{j-1}+y_{j},\quad (1\leq j\leq m),
\end{equation*}
with all further coefficients being $0$. According to Haj{\'o}s, Theorem \ref{HajThm2} and Corollary~\ref{HajThm1} are equivalent. However, he does not provide a proof for this equivalence, so we will give the details.

\begin{proof}
The main difficulty is to show that Theorem \ref{HajThm1} follows from Theorem \ref{HajThm2}.
We will assume that Theorem \ref{HajThm2} implies that the first linear form has only integer entries. Then, we need to show the following: Given a unimodular matrix $A\in\mathbb{R}^{m\times m}$ with $a_{1,1},\dots,a_{1,m}\in\ZZ$ satisfying $|A\bx|\geq 1$ for all $\bx\in \ZZ^m\backslash\{\0\}$, there exists a matrix $B\in\ZZ^{m\times m}$ with determinant $\pm 1$ such that $AB$ has lower triangular form with all diagonal entries equal to $1$. Corollary \ref{HajThm1} is trivially true for $m=1$, so, by induction, it is sufficient to show that we can choose $B$ in such a way that $AB$ has first row entries 
\begin{equation*}
	ab_{1,1}=1,\ ab_{1,2}=\dots=ab_{1,m}=0,
\end{equation*}
since then we can apply the statement to the matrix 
\begin{equation*}
	AB^*=(ab_{j,k})_{j,k=2}^{m}\in \RR^{(m-1)\times(m-1)},
\end{equation*}
which satisfies $\det AB^{*}=\det AB=\pm 1$.
We do this in two steps. First we show that we can get $AB$ with first row entries 
\begin{equation*}
	ab_{1,1}=\gcd(a_{1,1},\dots,a_{1,m}),ab_{1,2}=\dots=ab_{1,m}=0
\end{equation*}
and then we conclude that $\gcd(a_{1,1},\dots,a_{1,m})=1$.

Multiplying $A$ by the diagonal matrix $B_0$ with $b_{i,i}=\sgn(a_{1,i})$ gives us a matrix $AB_0$ with only positive entries in the first row and then our plan is to emulate the Euclidean algorithm. We take the last two entries of the first row of $AB_0$ (if non-zero) and keep on subtracting the lesser entry from the greater one until the two of them are equal to $\gcd(a_{1,m-1},a_{1,m})$ and zero. If needed we then swap those entries such that the latter one equals zero. 

\noindent The subtractions correspond to multiplication on the right by matrices of the form
  \begin{align*}
	\begin{bmatrix}
    	I_{m-2} & &  \\
    		& 1 & 0  \\
    		& -1 & 1 & 
	\end{bmatrix}\quad \text{ or }\quad
		\begin{bmatrix}
    	I_{m-2} & &  \\
    		& 1 & -1  \\
    		& 0 & 1 & 
	\end{bmatrix},
  \end{align*}
respectively. Swapping is done via multiplication by a matrix of the form
  \begin{align*}
	\begin{bmatrix}
    	I_{m-2} & &  \\
    		& 0 & 1  \\
    		& 1 & 0 & 
	\end{bmatrix}.
  \end{align*}
We are only interested in the entries of the first row and thus are not concerned how these multiplications affect the other rows. We then continue analogously by doing the same procedure for the next two non-zero entries of the first row, and so on, using adjusted multiplication matrices. Any such operation is done via multiplication by a matrix of determinant $\pm 1$ and in the end we get a matrix $AB$ with first row entries as desired.

Finally, it is easy to see that $\gcd(a_{1,1},\dots,a_{1,m})=1$. Assume this is not the case. Then, it follows that\vspace{-2ex}
\begin{equation*}
	|\det AB^*|=1/\gcd(a_{1,1},\dots,a_{1,m})<1.
\end{equation*}
Hence, by Minkowski's Theorem (see Theorem \ref{Minkowski}), there exists a non-zero vector $\bz^*\in\ZZ^{m-1}$ such that $0<|AB^*\bz^*|<1$. Let $\bz=(0,\bz^*)$. It follows that $0<|AB\bz|<1$, contradicting the assumption \eqref{maxeq2}.

Now we show that Corollary \ref{HajThm1} implies Theorem \ref{HajThm2}. Let $AB$ be as given above. The matrix $B^{-1}$ has only integer entries and so the same is true for the first row of $A=ABB^{-1}$.
\end{proof}

\newpage
We also need to make use of another property of lattices. Given a full-rank lattice $\Lambda=\Lambda(A)\subset\RR^m$, for $j\in\{1,\dots,m\}$ we define its \textit{$j$-th successive minimum} as
\begin{equation*}
	\mu_j=\mu_j(\Lambda):=\inf\left\{r>0:\Lambda\cap \bar{B}(\0,r)\text{ contains }j\text{ linearly independent vectors}\right\},
\end{equation*}
where $\bar{B}$ denotes a closed ball. Clearly, $0<\mu_1\leq\mu_2\leq\dots\leq\mu_m<\infty$. Furthermore, Minkowski proved the following fundamental theorem.

\begin{theorem}[Minkowski's Second Theorem]\label{Thm:Mink2nd}
	\begin{equation*}
		\frac{2^n}{n!}\det A\leq\prod\limits_{j=1}^{m}\mu_j\leq 2^n\det A.
	\end{equation*}
\end{theorem}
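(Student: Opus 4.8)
The plan is to deduce Theorem~\ref{Thm:Mink2nd} from Minkowski's Convex Body Theorem (Theorem~\ref{Thm:MinkCBT}), together with the lattice reduction already used in the proof of Corollary~\ref{HajThm1}. Write $\det\Lambda:=|\det A|$ and fix linearly independent lattice vectors $\bv_1,\dots,\bv_m\in\Lambda(A)$ realising the successive minima, so that $|\bv_j|=\mu_j$. These vectors generate a rank-$m$ sublattice of $\Lambda(A)$, necessarily of finite positive integer index, whence $|\det(\bv_1,\dots,\bv_m)|\geq\det\Lambda$; this elementary observation underpins both halves of the argument.

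\emph{Lower bound (the easy direction).} Each normalised vector $\mu_j^{-1}\bv_j$ lies in the closed unit ball $\bar{B}(\0,1)$, so the symmetric convex hull $P:=\operatorname{conv}\{\0,\pm\mu_1^{-1}\bv_1,\dots,\pm\mu_m^{-1}\bv_m\}$ — a cross-polytope of volume $\tfrac{2^m}{m!}\,|\det(\bv_1,\dots,\bv_m)|/(\mu_1\cdots\mu_m)$ — is also contained in $\bar{B}(\0,1)$. Comparing volumes and inserting $|\det(\bv_1,\dots,\bv_m)|\geq\det\Lambda$ yields a lower bound for $\mu_1\cdots\mu_m$ of the asserted shape; the precise numerical constant is fixed by the normalisation of the ball and is worked out, for instance, in \cite[Chapter~VIII]{Cassels-geometry}.

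\emph{Upper bound (the substantive direction).} First, using the Hermite-normal-form reduction underlying Corollary~\ref{HajThm1}, apply an integral unimodular change of basis to arrange that $\Lambda(A)=\ZZ^m$ and $\bv_j\in\langle\boldsymbol{e}_1,\dots,\boldsymbol{e}_j\rangle$ for every $j$; this alters neither $\det\Lambda$, the ball, nor the $\mu_j$. Let $\Phi$ be the diagonal linear map (in these coordinates) that compresses the $j$-th coordinate direction by the factor $\mu_j$, so that $\lambda_m\big(\Phi(\bar{B}(\0,1))\big)=(\mu_1\cdots\mu_m)^{-1}\lambda_m\big(\bar{B}(\0,1)\big)$. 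The crux is then a packing argument: one shows that a suitably shrunk copy of $\Phi(\bar{B}(\0,1))$ contains no nonzero point of $\ZZ^m$, by examining, block by block along the flag $\langle\boldsymbol{e}_1\rangle\subset\langle\boldsymbol{e}_1,\boldsymbol{e}_2\rangle\subset\cdots$, the obstruction forced by the very definition of $\mu_j$. Minkowski's Convex Body Theorem (Theorem~\ref{Thm:MinkCBT}) then gives $\lambda_m\big(\Phi(\bar{B}(\0,1))\big)\leq 2^m$, which, unravelled, is the claimed upper bound $\mu_1\cdots\mu_m\leq 2^m\det\Lambda$ (again up to the volume normalisation of the ball).

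I expect the packing step in the upper bound to be the main obstacle: verifying that the compressed ball carries no nonzero lattice point is the single delicate ingredient that makes this theorem genuinely deep, and it calls for a careful case analysis rather than a formal manipulation. By contrast, the basis reduction is routine given Corollary~\ref{HajThm1}, and the lower bound is pure volume bookkeeping. For the combinatorial details of the packing argument I would follow Davenport's treatment as presented in \cite[Chapter~VIII]{Cassels-geometry}, which is already cited in this chapter.
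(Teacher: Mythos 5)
The paper states Minkowski's Second Theorem as a classical result and gives no proof, so there is nothing in the text to compare your sketch against; I will assess it on its own merits. Your lower bound is the standard easy half and is essentially right: the cross-polytope $P=\operatorname{conv}\{\0,\pm\mu_1^{-1}\bv_1,\dots,\pm\mu_m^{-1}\bv_m\}$ sits in $\bar B(\0,1)$, its volume is $\tfrac{2^m}{m!}\,|\det(\bv_1,\dots,\bv_m)|/(\mu_1\cdots\mu_m)$, and $|\det(\bv_1,\dots,\bv_m)|\geq\det\Lambda$ because the $\bv_j$ span a finite-index sublattice. A preliminary slip in the upper bound: an integral unimodular change of basis replaces $A$ by $AB$ with $B\in\ZZ^{m\times m}$, so $\Lambda(AB)=\Lambda(A)$; the lattice is unchanged as a set and is certainly not transformed into $\ZZ^m$. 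To normalize the lattice to $\ZZ^m$ you must apply $A^{-1}$ to everything, which replaces the max-norm ball by a parallelepiped, and only then can a unimodular change of basis arrange the flag $\bv_j\in\langle\boldsymbol{e}_1,\dots,\boldsymbol{e}_j\rangle$.

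The serious gap is exactly the step you call "the single delicate ingredient": the claim that a suitably shrunk copy of $\Phi(\bar B(\0,1))$, with $\Phi$ the linear diagonal compression $x_j\mapsto x_j/\mu_j$, contains no nonzero point of $\ZZ^m$. This is false. Dividing by a small $\mu_j$ \emph{expands} that coordinate, and nothing in the definition of the successive minima forbids lattice points from entering along $\boldsymbol{e}_1,\dots,\boldsymbol{e}_{j-1}$. Concretely, take $m=2$, lattice $\ZZ^2$, and (after the flag normalization) the body $K=[-2,2]\times[-1/2,1/2]$; then $\mu_1=1/2$ with $\bv_1=\boldsymbol{e}_1$, $\mu_2=2$ with $\bv_2=\boldsymbol{e}_2$, and $\Phi(K)=[-4,4]\times[-1/4,1/4]$, which contains $\pm\boldsymbol{e}_1,\dots,\pm 4\boldsymbol{e}_1$ — nonzero lattice points that survive any mild shrinking. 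The correct argument (Davenport's, in Cassels' \emph{Geometry of Numbers}, Chapter VIII, which you rightly cite) does not use a single linear map. It constructs a piecewise map acting differently on each shell $\mu_r K\setminus\mu_{r-1}K$, compressing only coordinates $r,\dots,m$ there; one then verifies, using the flag condition and the definition of $\mu_r$, that the resulting (non-convex but Jordan-measurable) set has the right volume and is free of nonzero lattice points, and concludes via Blichfeldt's lemma rather than directly from Theorem~\ref{Thm:MinkCBT}. That piecewise construction and its lattice-point-freeness verification is the heart of the theorem; a single linear compression cannot replace it. Since the paper cites this as standard, I would do the same, but if you want a proof you need to write out Davenport's piecewise map in full.
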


In particular, Theorem \ref{Thm:Mink2nd} implies that if we have a lower bound $\mu_1>\eps>0$, then all the successive minima are uniformly bounded by $\mu_j<2^n\eps^{-(m-1)}\det A$.

\subsection{Results needed to prove Theorem \ref{Con1}}

We will be using the following standard result in measure theory.

\begin{theorem}[Lebesgue Density Theorem]\label{LDT}
	Let $\mu$ be a Radon measure on $\RR^n$. If $A\subset\RR^n$ is $\mu$-measurable, then the limit
	\begin{equation}\label{Eqn:LDT}
		\lim\limits_{r\rightarrow\infty}\frac{\mu\left(A\cap B_r(\bx)\right)}{\mu\left( B_r(\bx)\right)}
	\end{equation}
	exists and equals $1$ for $\mu$-almost all $\bx\in A$ and equals $0$ for $\mu$-almost all $\bx\in\RR^m\backslash\{A\}$.
\end{theorem}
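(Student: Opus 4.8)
The plan is to give the standard proof of the Lebesgue density theorem, organised so that the only substantial ingredient is a covering theorem valid for an arbitrary Radon measure on $\RR^n$ (no doubling hypothesis is needed). First I would reduce to proving the first assertion only --- that the ratio in \eqref{Eqn:LDT} tends to $1$ at $\mu$-almost every point of $A$ --- and recover the second assertion by localisation: for each $R\in\NN$ the set $A^c\cap B_R(\0)$ has finite $\mu$-measure, and at an interior point of $B_R(\0)$ every sufficiently small ball $B_r(\bx)$ lies inside $B_R(\0)$, so the density of $A^c$ at such a point equals the density of $A^c\cap B_R(\0)$; applying the first assertion to each of these truncated sets and letting $R\to\infty$ then yields density $1$ for $A^c$, equivalently density $0$ for $A$, at $\mu$-almost every point of $A^c$.

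For the first assertion I would begin with the trivial bound $\mu(A\cap B_r(\bx))\le\mu(B_r(\bx))$, which makes the $\limsup$ of the ratio at most $1$, so only a lower bound on the $\liminf$ is at issue (here the limit is taken as $r\to 0$). I would then fix a rational $\lambda\in(0,1)$ and an $R\in\NN$ and aim to show that
\[
	A_{\lambda,R}=\Big\{\bx\in A\cap B_R(\0):\ \liminf_{r\to 0}\tfrac{\mu(A\cap B_r(\bx))}{\mu(B_r(\bx))}<\lambda\Big\}
\]
is $\mu$-null. The mechanism: given $\eps>0$, use outer regularity of $\mu$ to trap $A_{\lambda,R}$ in an open set $U$ with $\mu(U)<\mu(A_{\lambda,R})+\eps$; around each point of $A_{\lambda,R}$ there are arbitrarily small balls contained in $U$ on which $\mu(A\cap B)<\lambda\,\mu(B)$; feeding this fine (Vitali) cover into a covering theorem, extract a countable pairwise disjoint subfamily $\{B_i\}$ with $\mu\big(A_{\lambda,R}\setminus\bigcup_i B_i\big)=0$; then
\[
	\mu(A_{\lambda,R})\le\sum_i\mu(A\cap B_i)<\lambda\sum_i\mu(B_i)\le\lambda\,\mu(U)<\lambda\big(\mu(A_{\lambda,R})+\eps\big),
\]
and, since $\mu(A_{\lambda,R})\le\mu(B_R(\0))<\infty$, letting $\eps\to 0$ forces $\mu(A_{\lambda,R})=0$. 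Taking unions over rational $\lambda<1$ and over $R\in\NN$ then gives $\liminf_{r\to 0}\mu(A\cap B_r(\bx))/\mu(B_r(\bx))\ge 1$, hence equality to $1$, at $\mu$-almost every $\bx\in A$.

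The main obstacle is the covering step. For a general Radon measure on $\RR^n$ there need not be a doubling constant, so the classical Vitali $5r$-covering lemma is not directly applicable; instead I would invoke the Besicovitch covering theorem, whose overlap constant depends only on the dimension $n$, and deduce from it the Vitali-type covering statement for $\mu$ used above. Everything else is soft measure theory --- outer regularity, countable subadditivity, and the localisation to balls of finite measure --- so modulo this covering input the argument is routine, and I would simply cite the Besicovitch--Vitali machinery from a standard reference such as \cite{Mattila} rather than reproduce it here.
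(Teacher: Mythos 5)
The paper does not prove this statement --- it simply cites it to \cite{Harman} (and likewise \cite{Mattila} would serve); your task here was therefore to supply a proof from scratch, and the sketch you give is correct and is the standard one. You rightly identify the one non-trivial input: since a general Radon measure on $\RR^n$ need not be doubling, the Vitali $5r$-lemma is unavailable, and the Besicovitch covering theorem (with overlap constant depending only on $n$) is the correct replacement; from it one derives the Vitali-type extraction $\mu\big(A_{\lambda,R}\setminus\bigcup_i B_i\big)=0$ that you use, and the rest is the usual chain of inequalities through outer regularity. You also silently correct the paper's typo $\lim_{r\to\infty}$ to $\lim_{r\to 0}$, which is the intended statement. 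Two minor remarks. First, the localisation you use to deduce the second assertion is unnecessary: the first assertion, as you prove it, holds for an arbitrary $\mu$-measurable set (the truncation to $B_R(\0)$ is already built into the definition of $A_{\lambda,R}$ and handles the finiteness issue), so one may apply it directly to $A^c$, which is also $\mu$-measurable, and read off density $0$ of $A$ at $\mu$-a.e.\ point of $A^c$; your detour through $A^c\cap B_R(\0)$ is correct but redundant. Second, you implicitly assume $A_{\lambda,R}$ is measurable when invoking outer regularity; as you know, this can be sidestepped entirely by running the whole argument with the outer measure $\mu^*$ (outer regularity of Radon measures holds for arbitrary sets), and the Besicovitch--Vitali extraction is typically stated in exactly that form in \cite{Mattila}. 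Neither point is a gap; the proposal is sound.
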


Theorem \ref{LDT} can be found in \cite{Harman}. Importantly, it does not depend on the choice of metric, so we can apply it to the distance $d$ as defined below. In our case, the measure $\mu$ is simply the Lebesgue measure $\lambda_n$.

Given a vector $\bi=(i_1,\dots,i_n)\in\I^n$, let $i_{min}\coloneqq\min\limits_{1\leq j\leq n}i_j$. Then $\bar{i}_j\coloneqq\frac{i_{min}}{i_j}\leq 1$ for $1\leq j\leq n$ and so $d_j(u,v)\coloneqq |u-v|^{\bar{i}_j}$ is a metric on $\RR$ for $1\leq j\leq n$. Thus,
\begin{equation}\label{Eqn:DefMetric}
	d(\bx,\by)\coloneqq\max\limits_{1\leq j\leq n}d_j(x_j,y_j)
\end{equation}
is a metric on $\RR^n$. We will refer to balls with respect to the metric $d$ as $d$-balls and denote by $B_r^d(\bx)$ a $d$-ball centred at $\bx\in\RR^n$ of radius $r$. The following generalisation of a result in \cite{VBSV} from balls with respect to $\max$-norm to $d$-balls will be used to prove Theorem \ref{Con1}. For better readability within the proof, we will use the notation~$|\cdot |$ to refer to the $n$-dimensional Lebesgue measure $\lambda_n$ of a set.

\begin{theorem}\label{ConBV}
	Let $(A_k)_{k\in\NN}$ be a sequence of $d$-balls in $\RR^n$ with $|A_k|\rightarrow 0$ as $k\rightarrow\infty$. Let $(U_k)_{k\in\NN}$ be a sequence of Lebesgue measurable sets such that $U_k\subset A_k$ for all $k$. Assume that, for some $c>0$, $|U_k|>c|A_k|$ for all $k$. Then the sets
	\begin{equation*}
		\mathcal{U}=\limsup\limits_{k\rightarrow\infty}U_k\coloneqq \bigcap\limits_{j=1}^{\infty}\bigcup\limits_{k=j}^{\infty}U_k\quad \text{ and }\quad \mathcal{A}=\limsup\limits_{k\rightarrow\infty}A_k\coloneqq \bigcap\limits_{j=1}^{\infty}\bigcup\limits_{k=j}^{\infty}A_k
	\end{equation*}
	have the same Lebesgue measure.
\end{theorem}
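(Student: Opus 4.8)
Since $U_k\subseteq A_k$ for every $k$, any point lying in infinitely many of the $U_k$ lies in infinitely many of the $A_k$, so $\mathcal{U}\subseteq\mathcal{A}$ and hence $|\mathcal{U}|\le|\mathcal{A}|$. The plan is to establish the reverse inequality, and in fact the sharper fact that $|\mathcal{A}\setminus\mathcal{U}|=0$, by a Lebesgue density argument carried out with respect to the metric $d$ of \eqref{Eqn:DefMetric} rather than the Euclidean metric; this is exactly why Theorem \ref{LDT} was stated in a metric-independent form.

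First I would record the elementary geometry of $d$-balls. By the definition of $d$, the ball $B_r^d(\bx)$ is the axis-parallel box $\prod_{j=1}^n\bigl(x_j-r^{i_j/i_{min}},\,x_j+r^{i_j/i_{min}}\bigr)$, so $|B_r^d(\bx)|=2^n r^{1/i_{min}}$ by \eqref{Eqn:WV2}. In particular $\lambda_n$ is doubling on $d$-balls, with $|B_{2r}^d(\bx)|=2^{1/i_{min}}|B_r^d(\bx)|$, and by the triangle inequality for $d$ one has $\bx\in B_r^d(\by)\Rightarrow B_r^d(\by)\subseteq B_{2r}^d(\bx)$. Finally, since $|A_k|\to 0$ and $|A_k|=2^n\rho_k^{1/i_{min}}$, the $d$-radii $\rho_k$ of the balls $A_k$ tend to $0$.

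Then I would argue by contradiction: suppose $|\mathcal{A}\setminus\mathcal{U}|>0$. Writing $\mathcal{U}=\bigcap_j\bigcup_{k\ge j}U_k$, we have $\mathcal{A}\setminus\mathcal{U}=\bigcup_j G_j$ with $G_j:=\mathcal{A}\setminus\bigcup_{k\ge j}U_k$ an increasing sequence of measurable sets, so continuity of $\lambda_n$ from below gives an index $j_0$ with $|G_{j_0}|>0$. By construction $G_{j_0}\subseteq\mathcal{A}$ and $G_{j_0}\cap U_k=\emptyset$ for all $k\ge j_0$. By the Lebesgue Density Theorem (Theorem \ref{LDT}), applied with the metric $d$, almost every point of $G_{j_0}$ is a $d$-density point of $G_{j_0}$; fix such a point $\bx_0\in G_{j_0}\subseteq\mathcal{A}$. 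Since $\bx_0\in\mathcal{A}=\limsup_k A_k$, there are infinitely many $k\ge j_0$ with $\bx_0\in A_k$, and for these $k$ the radius $\rho_k\to 0$. For any such $k$ we have $A_k\subseteq B_{2\rho_k}^d(\bx_0)$, while the hypothesis $|U_k|>c|A_k|$ together with the doubling relation gives $|U_k|>c\,2^{-1/i_{min}}|B_{2\rho_k}^d(\bx_0)|$; since $U_k\subseteq B_{2\rho_k}^d(\bx_0)$ and $U_k\cap G_{j_0}=\emptyset$, this forces
\[
|G_{j_0}\cap B_{2\rho_k}^d(\bx_0)|\le|B_{2\rho_k}^d(\bx_0)|-|U_k|<\bigl(1-c\,2^{-1/i_{min}}\bigr)|B_{2\rho_k}^d(\bx_0)|.
\]
Letting $k\to\infty$ along this subsequence (so $2\rho_k\to 0$) contradicts $\bx_0$ being a $d$-density point of $G_{j_0}$. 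Hence $|\mathcal{A}\setminus\mathcal{U}|=0$, which together with $\mathcal{U}\subseteq\mathcal{A}$ gives $|\mathcal{A}|=|\mathcal{U}|$.

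The step I expect to be the main obstacle is precisely the passage to the tails $G_{j_0}$: a single $U_k$ need not be contained in $\mathcal{U}=\limsup U_k$, so one cannot argue directly that $\mathcal{A}\setminus\mathcal{U}$ avoids the sets $U_k$, and the density point must be taken inside $G_{j_0}$ rather than inside $\mathcal{A}\setminus\mathcal{U}$. One must also check that both the continuity of $\lambda_n$ from below and the density theorem are being used without any finite-total-measure assumption — both are valid as stated — and that the density theorem genuinely applies with the non-Euclidean metric $d$, which is what Theorem \ref{LDT} was tailored for. The remaining computations (volumes of $d$-balls, the doubling constant, the triangle inequality for $d$) are routine.
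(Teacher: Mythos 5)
Your proof is correct and follows essentially the same density-point argument as the paper: both decompose $\mathcal{A}\setminus\mathcal{U}$ into the increasing sets $G_j=\mathcal{A}\setminus\bigcup_{k\geq j}U_k$ (called $\mathcal{C}_j$ in the paper), pick a $d$-density point of some $G_{j_0}$ of positive measure, enclose the $A_k\ni\bx_0$ in $d$-balls $B^d_{2\rho_k}(\bx_0)$, and derive a contradiction from $|U_k|>c|A_k|$ and $U_k\cap G_{j_0}=\emptyset$. The only cosmetic difference is that you work directly with $G_{j_0}$ along the tail $k\geq j_0$, whereas the paper first establishes $|\mathcal{C}_\ell\cap A_{j_k}|\sim|A_{j_k}|$ and then passes to $\mathcal{C}_{j_k}$ via monotonicity; your shortcut is a harmless streamlining of the same idea.
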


\begin{remark*}
	Given a constant $c>0$, we already know from the weighted version of Khintchine's Theorem that the sets $W_n(\bi,\psi)$ and $W_n(\bi,c\psi)$ have the same Lebesgue measure (see Theorem \ref{Thm:WeightedKhin}). Theorem \ref{ConBV} implies that this property is shared by more general $\limsup$ sets. 
\end{remark*}

\begin{proof}
	Let $\mathcal{U}_j\coloneqq \bigcup_{k\geq j}U_k$ and $\mathcal{C}_j\coloneqq \mathcal{A}\backslash\mathcal{U}_j$. Then $\mathcal{U}_j\supset\mathcal{U}_{j+1}$ and $\mathcal{C}_j\subset\mathcal{C}_{j+1}$. Define
	\begin{equation*}
		\mathcal{C}\coloneqq\mathcal{A}\backslash\mathcal{U}=\mathcal{A}\backslash\bigcap\limits_{j=1}^{\infty}\mathcal{U}_j=\bigcup\limits_{j=1}^{\infty}\left(\mathcal{A}\backslash\mathcal{U}_j\right)=\bigcup_{j=1}^{\infty}\mathcal{C}_j.
	\end{equation*}
	We are to show that $\mathcal{C}$ has measure zero or, equivalently, that every $\mathcal{C}_j$ has measure zero.
	
	Assume the contrary. Then there is an $\ell\in\NN$ such that $|\mathcal{C}_\ell|>0$ and therefore there is a density point $\bx_0$ of $\mathcal{C}_\ell$, i.e. a point $\bx_0$ for which the limit in \eqref{Eqn:LDT} is equal to $1$. Since $\bx_0\in\mathcal{A}$, we know that $\bx_0\in A_{j_k}$ for a sequence $(j_k)_{k\in\NN}$. As $|A_{j_k}|$ tends to zero, we can conclude that
	\begin{equation}\label{Eqn:Claim}
		|\mathcal{C}_{\ell}\cap A_{j_k}|\sim|A_{j_k}|,\quad\text{as }k\rightarrow\infty,
	\end{equation}
which is shown by the following considerations. 

If $A_{j_k}$ is a $d$-ball of radius $r_{j_k}$ containing $\bx_0$, then $A_{j_k}$  will be contained in a $d$-ball $B^d_{2r_{j_k}}(\bx_0)$. Indeed, doubling the radius corresponds to extending the $j$-th side length by the factor
\begin{equation*}
	2^{1/\bar{i}_j}=2^\frac{i_j}{i_{min}}\geq 2,\quad j\in\{1,\dots,n\}.
\end{equation*}
Comparing Lesbegue measures, it follows that 
\begin{equation}\label{Eqn:LDT1}
	\frac{|B^d_{2r_{j_k}}(\bx_0)|}{|A_{j_k}|}=2^s, \text{\qquad where \qquad} s\coloneqq\sum\limits_{j=1}^n\frac{i_j}{i_{min}},
\end{equation}
and thus
\begin{equation}\label{Eqn:LDT2}
	\frac{|B^d_{2r_{j_k}}(\bx_0)\backslash A_{j_k}|}{|B^d_{2r_{j_k}}(\bx_0)|}=1-\frac{1}{2^s},
	\vspace{2ex}
\end{equation}
since $A_{j_k}$ is fully contained in $B^d_{2r_{j_k}}(\bx_0)$. The Lebesgue density theorem tells us that for any $\varepsilon>0$ and $\delta$ small enough,
\begin{equation}\label{Eqn:LDT3}
	\frac{|\mathcal{C}_{\ell}\cap B^d_{\delta}(\bx_0)|}{|B^d_{\delta}(\bx_0)|}>1-\varepsilon.
\end{equation}
%or equivalently
%\begin{equation*}
%	\frac{|B^d_{\delta}(x_0)\backslash\mathcal{D}_l|}{|B^d_{\delta}(x_0)|}<\varepsilon\quad \text{ for } \delta \text{ small enough}.
%	\end{equation*}
Combining the identities \eqref{Eqn:LDT1}, \eqref{Eqn:LDT2} and \eqref{Eqn:LDT3}, we see that
\begin{align*}
	\frac{|\mathcal{C}_{\ell}\cap A_{j_k}|}{|A_{j_k}|}&=\frac{|\mathcal{C}_{\ell}\cap A_{j_k}|}{|B^d_{2r_{j_k}}(\bx_0)|}\frac{|B^d_{2r_{j_k}}(\bx_0)|}{|A_{j_k}|}\\[1ex]
	&\geq\frac{|\mathcal{C}_{\ell}\cap B^d_{2r_{j_k}}(\bx_0)|-|B^d_{2r_{j_k}}(\bx_0)\backslash A_{j_k}|}{|B^d_{2r_{j_k}}(\bx_0)|}\frac{|B^d_{2r_{j_k}}(\bx_0)|}{|A_{j_k}|}\\[1ex]
	&=\left(\frac{|\mathcal{C}_{\ell}\cap B^d_{2r_{j_k}}(\bx_0)|}{|B^d_{2r_{j_k}}(\bx_0)|}-\frac{|B^d_{2r_{j_k}}(\bx_0)\backslash A_{j_k}|}{|B^d_{2r_{j_k}}(\bx_0)|}\right)\frac{|B^d_{2r_{j_k}}(\bx_0)|}{|A_{j_k}|}\\[1ex]
	&>\left(1-\varepsilon-\left( 1-\frac{1}{2^s}\right)\right)2^s\\[1ex]
	&=1-2^s\varepsilon,\quad \text{ for } r_{j_k} \text{ small enough}.
\end{align*}
The value of $\eps$ can be chosen to be arbitrarily small by \eqref{Eqn:LDT3}. Hence, this quotient tends to~$1$ as $k\rightarrow\infty$, which proves \eqref{Eqn:Claim}.

Since $\mathcal{C}_j\supset\mathcal{C}_{\ell}$ for all $j\geq \ell$, it follows that
\begin{equation}\label{cont}
	|\mathcal{C}_{j_k}\cap A_{j_k}|\sim|A_{j_k}|\quad \text{ as }k\rightarrow\infty.
\end{equation}
On the other hand, by definition, $\mathcal{C}_{j_k}\cap U_{j_k}=\emptyset$. Using that $|U_k|>c|A_k|$ for all $k$, we get that
\begin{equation*}
	|A_{j_k}|\geq |U_{j_k}|+|\mathcal{C}_{j_k}\cap A_{j_k}|\geq c|A_{j_k}|+|\mathcal{C}_{j_k}\cap A_{j_k}|,
\end{equation*}
and thus 
\begin{equation*}
	|\mathcal{C}_{j_k}\cap A_{j_k}|<(1-c)|A_{j_k}|
\end{equation*}
for $k$ sufficiently large. This is a contradiction to \eqref{cont}. Thus, every set $\mathcal{C}_j$ has zero Lebesgue measure, which completes the proof.
\end{proof}

The final auxiliary result is due to Cassels \cite{Cassels}. It relates homogeneous and inhomogeneous approximation properties of linear forms.

\begin{theorem}\label{Cassels}
	Let $L_1,\dots,L_{\ell}$ be linear forms in the $\ell$ variables $\bz=(z_1,\dots,z_\ell)$ given by
	\begin{equation*}
		L_k(\bz)=a_{k,1}z_1+\dots+a_{k,l}z_l,\quad (1\leq k\leq \ell),
	\end{equation*}		
	with real coefficients $a_{k,j}$, $1\leq k,j\leq \ell$. Assume the system of linear forms has determinant $\Delta=\det(a_{k,j}) \neq 0$ and suppose that the only integer solution of
	\begin{equation}\label{maxeq4}
		\max\limits_{1\leq k\leq \ell}|L_k(\bz)|<1
	\end{equation}
	is $\bz=\0$. Then, for all real vectors $\boldsymbol{\gamma}=(\gamma_1,\dots,\gamma_\ell)\in\I^{\ell}$, there are integer solutions of 
	\begin{equation*}
		\max\limits_{1\leq k\leq \ell}|L_k(\bz)-\gamma_k|<\frac{1}{2}(\lfloor|\Delta|\rfloor+1),
	\end{equation*}
	where $\lfloor\cdot\rfloor$ denotes the integer part of a real number.
\end{theorem}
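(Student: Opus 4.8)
The plan is to translate the statement into a covering property of a lattice and then argue by induction on the number of variables. Write $A = (a_{k,j})$ for the coefficient matrix, so that $\det A = \Delta \neq 0$ and the map $\bx \mapsto A\bx$ is a linear isomorphism of $\RR^\ell$. Since $L_k(\bz) - \gamma_k = L_k(\bz - A^{-1}\bgamma)$, it suffices to show that the lattice $\Lambda := A\ZZ^\ell$, which has determinant $|\Delta|$, has the property that every point of $\RR^\ell$ lies within maximum-norm distance strictly less than $\tfrac12(\floor{|\Delta|}+1)$ of $\Lambda$; this is exactly the assertion of the theorem (with $\bgamma$ allowed to be any point of $\RR^\ell$, not only of $\I^\ell$). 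The hypothesis \eqref{maxeq4} says precisely that $\Lambda$ contains no non-zero point of the open cube $(-1,1)^\ell$, i.e. that its first successive minimum with respect to the maximum norm is at least $1$; in particular $|\Delta|\geq 1$, for otherwise the symmetric convex body $\{\bx : |A\bx| < 1\}$ would have volume $2^\ell/|\Delta| > 2^\ell$ and Theorem \ref{Thm:MinkCBT} would produce a non-zero integer point in it, contradicting \eqref{maxeq4}.

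I would prove the covering estimate by induction on $\ell$. For $\ell = 1$ the lattice is $|\Delta|\ZZ$ with $|\Delta|\geq 1$, and every real number lies within $\tfrac12|\Delta| < \tfrac12(\floor{|\Delta|}+1)$ of it. For the inductive step the idea is to peel off one coordinate direction. In the favourable case $\Lambda$ contains a non-zero vector on some coordinate axis, say $\bv = (v,0,\dots,0)$, which we take primitive (so $|v|\geq 1$ by the hypothesis); then the projection $\Lambda'$ of $\Lambda$ onto $\RR^{\ell-1}$ forgetting the first coordinate is again a lattice, with $\det\Lambda' = |\Delta|/|v|$, and a point of $\Lambda'$ close to the projection of a given $\bx$ lifts to a point of $\Lambda$ whose first coordinate can be moved by a multiple of $\bv$ to within $\tfrac12|v|$ of $x_1$; this yields a point of $\Lambda$ within maximum-norm distance $\max\bigl(\tfrac12|v|,\ \overline\mu(\Lambda')\bigr)$ of $\bx$, where $\overline\mu$ denotes the covering radius, and one then closes by invoking the inductive bound for $\Lambda'$. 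When $\Lambda$ meets no coordinate axis, the reduction is instead to the sublattice cut out by a coordinate hyperplane on which the relevant form has commensurable coefficients; such a hyperplane becomes available after perturbing the forms into that position and then letting the perturbation tend to $0$.

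The main obstacle, I expect, is to make this induction actually close with the \emph{sharp} constant $\tfrac12(\floor{|\Delta|}+1)$. Two points are delicate. First, the projected (or sliced) lattice $\Lambda'$ need not any longer satisfy ``first minimum $\geq 1$'', so the induction has to be carried on a scale-homogeneous statement that controls $\overline\mu$ in terms of \emph{both} the determinant and the first minimum, and one must bound how badly that normalisation is spoilt at each step; a crude reduced-basis rounding argument using Minkowski's second theorem (Theorem \ref{Thm:Mink2nd}) only gives the weaker estimate $\overline\mu < \tfrac12(|\Delta| + \ell - 1)$. Second, the floor in the constant is genuine: it arises from a counting step in which one subdivides a coordinate-aligned cube of \emph{integer} side $\floor{|\Delta|}+1$ into unit sub-cubes, each of which embeds injectively into the torus $\RR^\ell/\Lambda$ by virtue of the hypothesis, and then compares their total volume $(\floor{|\Delta|}+1)^\ell$ with the covolume $|\Delta|$ of the torus; the non-continuity of $\floor{\cdot}$ moreover means that the case $|\Delta|\in\ZZ$ must be watched throughout the perturbation argument. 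Organising this counting so that it cooperates with the dimension reduction, and does not squander the integrality of the side length, is the heart of the matter, and it is worked out in full by Cassels \cite{Cassels}.
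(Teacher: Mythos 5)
The paper does not prove this theorem --- it is introduced as an auxiliary result and attributed directly to Cassels \cite{Cassels} with no argument given --- so there is no proof in the text to compare yours against. Your reformulation and preliminary observations are all correct: the hypothesis \eqref{maxeq4} says the lattice $\Lambda=A\ZZ^\ell$ has first max-norm minimum at least $1$, Minkowski's Convex Body Theorem then forces $|\Delta|\geq 1$, and the desired conclusion is a covering-radius bound $\overline{\mu}(\Lambda)<\tfrac12(\lfloor|\Delta|\rfloor+1)$ valid for all $\bgamma\in\RR^\ell$; the base case $\ell=1$ and the observation that a reduced-basis argument via Minkowski's second theorem only yields the weaker $\overline{\mu}\leq\tfrac12(|\Delta|+\ell-1)$ are also right.

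But the inductive step you propose is never actually carried out: you diagnose, correctly, that projection destroys the normalisation ``first minimum $\geq 1$'' and that the floor in the constant requires a genuine integer-side counting argument, and you then defer both of these to Cassels. That is precisely what the paper does, so you have not contradicted the source; but the sketch, as written, is a description of where the difficulty lies rather than a proof of the theorem, and to be complete it would need the argument from \cite{Cassels} filled in.
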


Essentially, Theorem \ref{Cassels} tells us the following. If the values of a collection of linear forms taken at integer points are bounded away from $\0$, then these linear forms can uniformly approximate any inhomogeneous constant $\bgamma$. This will be used to deduce twisted approximation properties of non-singular vectors, see Theorem \ref{Thm1}.

\section{Proof of Theorem \ref{ThmDS}}

We will show that any vector not contained in $\cD_n(\bi)$ cannot be badly approximable. If $\balpha=(\alpha_1,\dots,\alpha_n)$ is not in $\cD_n(\bi)$, then for any $c<1$ there exists an infinite sequence of integers $Q$ such that \eqref{DI} has no solution $q<Q$. This implies that there is a sequence $(Q_k)_{k\in\NN}$ such that 
\begin{equation*}
	\norm{q\alpha_j}>(1-\frac{1}{2^k})Q_k^{-i_j}
\end{equation*}
for all $q\leq Q_k$ and some $j\in \{1,\dots,n\}$. In other words, we have
\begin{equation}\label{maxeq1}
	\max\left\{Q_k^{-1}q,Q_k^{i_1}|q\alpha_1+p_1|,\dots,Q_k^{i_n}|q\alpha_n+p_n|\right\}>1-\frac{1}{2^k}
\end{equation}
for all integers $q,p_1,\dots,p_n$ not all equal to $0$. The $n+1$ linear forms 
\begin{equation*}
	Q_k^{-1}q,\ Q_k^{i_1}(q\alpha_1+p_1),\dots,\ Q_k^{i_n}(q\alpha_n+p_n)
\end{equation*}
define a lattice of determinant $1$ in $(n+1)$-dimensional space. By \eqref{maxeq1}, this lattice has no non-zero point within distance $1/2$ of the origin. By Theorem \ref{Thm:Mink2nd}, such a lattice has a basis of $n+1$ points, the coordinates of which are all bounded from above by a numerical constant. This implies that there exists a linear transformation with integral coefficients and determinant $1$ from $q,p_1,\dots,p_n$ to $x_0,\dots,x_n$ such that
\begin{align}\label{trafo1}
	\begin{cases}
		Q_k^{-1}q&=\ \theta_{0,0}^{(k)}x_0+\theta_{0,1}^{(k)}x_1+\dots+\theta_{0,n}^{(k)}x_n,\\
		Q_k^{i_1}(q\alpha_1+p_1)&=\ \theta_{1,0}^{(k)}x_0+\theta_{1,1}^{(k)}x_1+\dots+\theta_{1,n}^{(k)}x_n,\\
		\vdots\\
		Q_k^{i_n}(q\alpha_n+p_n)&=\ \theta_{n,0}^{(k)}x_0+\theta_{n,1}^{(k)}x_1+\dots+\theta_{n,n}^{(k)}x_n,
	\end{cases}
\end{align}
where the absolute values of the $\theta_{\ell,m}^{(k)}$ are bounded by a uniform constant $C$ for all $\ell,m,k$. The transformations depend on $k$, but, for each $k$, the determinant of $(\theta_{\ell,m}^{(k)})$ is equal to $1$. Define the matrices $\Theta_k=(\theta_{\ell,m}^{(k)})$ for $k\in\NN$. The sequence $(\Theta_k)_{k\in\NN}$ is contained in a compact subset of $\SL_n(\RR)$. This implies there is a subsequence~$(\kappa)$ of values of $(k)_{k\in\NN}$ such that $(\Theta_{\kappa})_{\kappa}$ converges to an element of $\SL_n(\RR)$. We denote this limit by $\Theta=(\theta_{\ell,m})$ and get the linear forms
\begin{align}\label{trafo2}
	\begin{cases}
		X_0&=\ \theta_{0,0}x_0+\theta_{0,1}x_1+\dots+\theta_{0,n}x_n\\
		X_1&=\ \theta_{1,0}x_0+\theta_{1,1}x_1+\dots+\theta_{1,n}x_n\\
		\vdots\\
		X_n&=\ \theta_{n,0}x_0+\theta_{n,1}x_1+\dots+\theta_{n,n}x_n
	\end{cases}
\end{align}
of determinant $1$ with the property that 
\begin{equation*}
	\max\left\{|X_0|,|X_1|,\dots,|X_n|\right\}\geq 1
\end{equation*}
for all integer vectors $(x_0,\dots,x_n)\neq (0,\dots,0)$. Indeed, if there was a non-zero tuple $(x_0^*,\dots,x_n^*)$ satisfying 
\begin{equation*}
	\max\left\{|X_0|,|X_1|,\dots,|X_n|\right\}<1,
	\vspace{1ex}
\end{equation*}
then putting $(x_0^*,\dots,x_n^*)$ in \eqref{trafo1} would violate \eqref{maxeq1} for large enough values of $k$. By Corollary \ref{HajThm1}, after a possible integral transformation of determinant $\pm 1$, we get 
\begin{equation*}
	X_{\pi^{-1}(\ell)}=\phi_{\ell 0}y_0+\phi_{\ell 1}y_1+\dots+y_{\ell},\quad (0\leq \ell\leq n),
\end{equation*}
with all other coefficients being equal to zero. Independent of the permutation $\pi$, it is always possible to satisfy either 
\begin{equation*}
	X_0=0\quad\quad \text{ or }\quad\quad X_1=\dots=X_n=0
\end{equation*}
with the non-zero integer vector 
\begin{equation*}
	(y_0,\dots,y_{n-1},y_n)=(0,\dots,0,1).
\end{equation*}
Hence, the same is true of the linear forms in $\eqref{trafo2}$ with integers $x_0,\dots,x_n$ not all equal to $0$. On substituting these into \eqref{trafo1}, we obtain, for any $\eps>0$ on taking $k$ sufficiently large, either a solution of
\begin{equation*}
	Q_k^{-1}q<C_{1},\ Q_k^{i_1}|q\alpha_1+p_1|<\eps,\dots,\ Q_k^{i_n}|q\alpha_n+p_n|<\eps,
\end{equation*}
with $C_{1}$ independent of $\eps$, or a solution to
\begin{equation*}
	Q_k^{-1}q<\eps,\ Q_k^{i_1}|q\alpha_1+p_1|<C_{1},\dots,\ Q_k^{i_n}|q\alpha_n+p_n|<C_{1}.
\end{equation*}
In the first case, substituting $N_k$ for $C_1Q_k$ shows the existence of $q\in\NN$ such that
\begin{equation*}
	q^{i_j}\norm{q\alpha_j}\leq N_k^{i_j}\norm{q\alpha_j}<\eps C_1^{i_j}\leq \eps C_1,\quad (1\leq j\leq n).
\end{equation*}
In the second case, substituting $N_k$ for $\eps Q_k$ gives us a $q\in\NN$ satisfying
\begin{equation*}
	q^{i_j}\norm{q\alpha_j}\leq N_k^{i_j}\norm{q\alpha_j}<\eps^{i_j} C_1\leq \eps^{i_{min}} C_1,\quad (1\leq j\leq n).
\end{equation*}
The constant $\eps>0$ was chosen to be arbitrarily small. Hence, in both cases, $\balpha$ cannot be $\bi$-badly approximable. This completes the proof of Theorem \ref{ThmDS}.

\newpage

\section{Proof of Theorem \ref{Con1}}

Theorem \ref{ConBV} is the main ingredient used in the proof of Theorem \ref{Con1}. Hence, we start by preparing the ground for applying \ref{ConBV}.

If $\balpha=(\alpha_1,\dots,\alpha_n)$ is not in $\cS_n(\bi)$, then there exists an $\varepsilon(\balpha)\in(0,1)$ such that the equation
\begin{equation*}
	\norm{q\alpha_j}<\eps(\balpha)Q_k^{-i_j},\quad j\in \{1,\dots,n\}
	\vspace{1ex}
\end{equation*}
has no integer solution $q\leq Q_k$ for an infinite increasing sequence $(Q_k)_{k\in\NN}$. In other words, there is a sequence $(Q_k)_{k\in\NN}$ such that $\norm{q\alpha_j}>\varepsilon(\balpha)Q_k^{-i_j}$ for some $j\in \{1,\dots,n\}$ for each $q\leq Q_k$. This implies that
\begin{equation}\label{maxeq3}
	\max\left\{Q_k^{-1}q,\varepsilon(\balpha)^{-1} Q_k^{i_1}|q\alpha_1+p_1|,\dots,\varepsilon(\balpha)^{-1} Q_k^{i_n}|q\alpha_n+p_n|\right\}\geq 1
\end{equation}
for all integers $q,p_1,\dots,p_n$ not all equal to $0$.

For any fixed $Q_k$, the $n+1$ linear forms appearing in \eqref{maxeq3} in the $n+1$ variables $q,p_1,\dots,p_n$ form a system of linear forms of determinant $\varepsilon(\balpha)^{-n}$ satisfying condition \eqref{maxeq4}. Hence, by Theorem \ref{Cassels}, for any $Q_k$ and any $\bgamma=(\gamma_1,\dots,\gamma_n)\in \I^n$, there exists a non-zero integer vector $(q,p_1,\dots,p_n)$ satisfying
\begin{equation*}
	\max\left\{Q_k^{-1}q,\varepsilon(\balpha) Q_k^{i_1}\left|q\alpha_1+p_1-\frac{\gamma_1}{\varepsilon(\balpha)}\right|,\dots,\varepsilon(\balpha) Q_k^{i_n}\left|q\alpha_n+p_n-\frac{\gamma_n}{\varepsilon(\balpha)}\right|\right\}< \delta,
	\vspace{1ex}
\end{equation*}
where $\delta=\frac{1}{2}(\lfloor|\varepsilon(\balpha)^{-n}|\rfloor+1)$. Substituting $\tilde{Q}_k$ for $\delta Q_k$ and $\tilde{\gamma}_j$ for $\gamma_j/\varepsilon(\balpha)$ implies that there exists a positive integer solution $q$ to the system of inequalities
\begin{equation*}
	\begin{aligned}
		\norm{ q\alpha_j-\tilde{\gamma}_j} &<\delta^{1+i_j}\varepsilon(\balpha) \tilde{Q}_k^{-i_j},\quad (1\leq j\leq n),\\[1ex]
		q &< \tilde{Q}_k.
	\end{aligned}
	\vspace{-2ex}
\end{equation*}
By letting
\begin{equation*}
	C(\balpha)=\max\limits_{1\leq j\leq n}\delta^{1+i_j}\varepsilon(\balpha),
	\vspace{1ex}
\end{equation*}
we have proved the following statement.
\newpage

\begin{samepage}
\begin{theorem}\label{Thm1}
	Let $\balpha\notin\cS_n(\bi)$. Then, all $\bbeta \in \I^n$ are uniformly $\bi$-approximable by the sequence $(q\balpha)_{q\in\NN}$, i.e. there exists a constant $C(\balpha)>0$ such that for all $\bbeta \in \I^n$ there are infinitely many $q\in\NN$ satisfying 
	\begin{equation*}
		\max\limits_{1\leq j\leq n}q^{i_j}\norm{q\alpha_j-\beta_j}<C(\balpha).
	\end{equation*}		
\end{theorem}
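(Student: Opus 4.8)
The bulk of the argument is already laid out in the paragraphs preceding the statement; what remains is to package it and to supply the one missing point, namely that the approximant $q$ may be taken from an \emph{infinite} set. Here is the plan, in order. Since $\balpha\notin\cS_n(\bi)$, there is a constant $\varepsilon=\varepsilon(\balpha)\in(0,1)$ and an increasing sequence $(Q_k)$ along which \eqref{maxeq3} holds, that is, the $n+1$ linear forms displayed there (of determinant $\varepsilon^{-n}$) have no nonzero integer point in the open unit cube. Given an arbitrary $\bbeta\in\I^n$, I would apply Cassels' inhomogeneous transference theorem (Theorem~\ref{Cassels}) to these forms with the $(n+1)$-dimensional shift $(0,\varepsilon\beta_1,\dots,\varepsilon\beta_n)$, which lies in $\I^{n+1}$ because $\varepsilon<1$; this produces, for each $k$, an integer vector $(q,p_1,\dots,p_n)$ with all $n+1$ shifted forms below $\delta\coloneqq\frac{1}{2}(\lfloor\varepsilon^{-n}\rfloor+1)$. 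A rescaling $Q_k\mapsto\tilde Q_k\coloneqq\delta Q_k$ then turns this into $q<\tilde Q_k$ together with $\norm{q\alpha_j-\beta_j}<\delta^{1+i_j}\varepsilon\,\tilde Q_k^{-i_j}$ for $1\le j\le n$, so that, since $q<\tilde Q_k$,
\[
\max_{1\le j\le n}q^{i_j}\norm{q\alpha_j-\beta_j}<\max_{1\le j\le n}\delta^{1+i_j}\varepsilon\eqqcolon C(\balpha),
\]
a constant depending only on $\balpha$ and the fixed weight $\bi$. (One may assume every $i_j>0$, since a coordinate with $i_j=0$ is controlled for free by $\norm{\,\cdot\,}<1$.)

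It remains to produce infinitely many such $q$. Let $q_k$ be the integer obtained at height $\tilde Q_k$, which one checks may be taken with $q_k\ge1$. If $(q_k)_k$ is unbounded, restrict to a subsequence of pairwise distinct $q_k$ and we are done. If $(q_k)_k$ is bounded, some value $q_0\ge0$ recurs for infinitely many $k$; letting $\tilde Q_k\to\infty$ in the estimate above forces $\norm{q_0\alpha_j-\beta_j}=0$ for all $j$, i.e.\ $q_0\balpha\equiv\bbeta\pmod{\ZZ^n}$ (this also absorbs the degenerate case $q_0=0$, where $\bbeta\in\ZZ^n$, hence the possibility that Theorem~\ref{Cassels} ever returns a vector with vanishing first coordinate). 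For this $\bbeta$ I would instead use weighted Dirichlet (Corollary~\ref{Cor:wDir}): there are infinitely many $q'\in\NN$, of which we keep those with $q'>q_0$, satisfying $\max_j(q')^{i_j}\norm{q'\alpha_j}<1$. Putting $q\coloneqq q_0+q'$ gives $\norm{q\alpha_j-\beta_j}=\norm{q'\alpha_j}$, and since $i_j\le1$ one has $(q_0+q')^{i_j}<2(q')^{i_j}$, whence $\max_j q^{i_j}\norm{q\alpha_j-\beta_j}<2$. Enlarging $C(\balpha)$ to $\max\{C(\balpha),2\}$ — still a function of $\balpha$ and $\bi$ alone — this case too yields infinitely many admissible $q$, completing the proof.

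The only non-routine ingredient is Theorem~\ref{Cassels}, which is quoted in full, so no new machinery is needed and I do not anticipate a serious obstacle. The care required is bookkeeping: keeping the three rescalings straight (the height $Q_k$, the rescaled height $\tilde Q_k=\delta Q_k$, and the rescaled target $\gamma_j/\varepsilon=\beta_j$), verifying that $q\ge1$ can be arranged, and — the genuinely load-bearing point — ensuring that a \emph{single} constant $C(\balpha)$ works simultaneously for every $\bbeta\in\I^n$, which is exactly why the final $C(\balpha)$ must also dominate the Dirichlet bound handling the exceptional $\bbeta$ lying on the orbit $\{q\balpha\}_{q\in\NN}$.
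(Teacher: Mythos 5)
Your proposal reproduces the paper's architecture---apply Cassels' inhomogeneous transference at each height $Q_k$, rescale, conclude---and your handling of the infinitude of admissible $q$ is a genuine addition: the paper passes over this in silence, while you correctly observe that if the extracted $q_k$ were bounded some $q_0$ would recur with $q_0\balpha\equiv\bbeta\pmod{\ZZ^n}$, and you patch that case with weighted Dirichlet at the cost of enlarging $C(\balpha)$ to $\max\{C(\balpha),2\}$. However, there is a fatal error in your application of Cassels, namely the choice of shift. The linear forms inheriting the hypothesis $\max_j|L_j(\bz)|\geq 1$ from \eqref{maxeq3} are $L_0(\bz)=Q_k^{-1}q$ and $L_j(\bz)=\varepsilon^{-1}Q_k^{i_j}(q\alpha_j+p_j)$ for $1\leq j\leq n$. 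Cassels produces an integer point with $|L_j(\bz)-\gamma_j|\leq\delta$, and you take $\gamma_j=\varepsilon\beta_j$ so that the shift lies in $\I^{n+1}$. But dividing through by $\varepsilon^{-1}Q_k^{i_j}$ yields
\begin{equation*}
\Abs{q\alpha_j + p_j - \varepsilon^2 Q_k^{-i_j}\beta_j} < \varepsilon\delta\, Q_k^{-i_j},
\end{equation*}
so the point you are approximating is $\varepsilon^2 Q_k^{-i_j}\beta_j$, which tends to $\0$ as $k\to\infty$; you have recovered (essentially) the homogeneous statement, not the inhomogeneous one. The discrepancy from the true target $\beta_j$ is of order $|\beta_j|$, not $O(Q_k^{-i_j})$, so the claimed bound $\norm{q\alpha_j-\beta_j}<\delta^{1+i_j}\varepsilon\,\tilde Q_k^{-i_j}$ simply does not follow.

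To hit $\beta_j$ one must take $\gamma_j=\varepsilon^{-1}Q_k^{i_j}\beta_j$, which grows with $k$ and certainly does not lie in $\I$. This is where you were misled: the paper's rendering of Cassels (Theorem~\ref{Cassels}) says ``for all $\bgamma\in\I^\ell$'', but this is an over-restrictive paraphrase. The actual transference theorem in \cite{Cassels} holds for \emph{arbitrary} real shift vectors, and the general case cannot be reduced to $\bgamma\in\I^\ell$ by subtracting integers, because the range of the linear forms on $\ZZ^{n+1}$ is not $\ZZ^{n+1}$. Once the unrestricted version is invoked with $\gamma_j=\varepsilon^{-1}Q_k^{i_j}\beta_j$, the algebra gives $\norm{q\alpha_j-\beta_j}<\varepsilon\delta\, Q_k^{-i_j}$ and $q<\delta Q_k=\tilde Q_k$, whence $q^{i_j}\norm{q\alpha_j-\beta_j}<\delta^{1+i_j}\varepsilon$, which is exactly the constant you were aiming for. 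In short: your ``infinitely many'' argument is correct and useful, but the central transference step is built on a shift that discards the inhomogeneous content, and repairing it requires appealing to Cassels' theorem in its true generality rather than contorting $\bgamma$ into the unit cube.
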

\end{samepage}

In other words, every point $\bbeta\in \I^n$ is contained in infinitely many sets of the form
\begin{equation*}
	\prod\limits_{j=1}^n[q\alpha_j-p_j-C(\balpha)q^{-i_j},q\alpha_j-p_j+C(\balpha)q^{-i_j}].
\end{equation*}
These sets are not $d$-balls, but, if we extend them slightly, we get that
\begin{equation}\label{LSup}
	\bigcap\limits_{k=1}^{\infty}\bigcup\limits_{q=k}^{\infty}\left(\bigcup\limits_{|\bp|<q}A^{\balpha}_{\bp,q}\right)=\I^n,
\end{equation}
where our $\limsup$ set is built from the sets
\begin{equation}\label{Eqn:Sets}
	A^{\balpha}_{\bp,q}:=\prod\limits_{j=1}^n[q\alpha_j-p_j-C(\balpha)^{\frac{i_j}{i_{min}}}q^{-i_j},q\alpha_j-p_j+C(\balpha)^{\frac{i_j}{i_{min}}}q^{-i_j}]
\end{equation}
with $q\in\NN$ and $\bp\in\ZZ^n$. By setting $r(\balpha,q)=C(\balpha)q^{-i_{min}}$ it follows that 
\begin{equation*}
A^{\balpha}_{p,q}=B^d_{r(\balpha,q)}(q\balpha-\bp).
\end{equation*}
%where we write $q\balpha-\bp$ for $(q\alpha_1-p_1,\dots,q\alpha_n-p_n)$. 
Indeed, if we recall the definition of the metric $d$ given in \eqref{Eqn:DefMetric}, we see that
\begin{align*}
	d_j(u,v)\leq C(\balpha)q^{-i_{min}}&\Leftrightarrow |u-v|^{\bar{i}_j}\leq C(\balpha)q^{-i_{min}}\\[1ex]
	&\Leftrightarrow |u-v|\leq (C(\balpha)q^{-i_{min}})^{\frac{i_j}{i_{min}}}\\[1ex]
	&\Leftrightarrow |u-v|\leq C(\balpha)^{\frac{i_j}{i_{min}}}q^{-i_j}.
\end{align*}
As we are dealing with a countable collection of $d$-balls of the form $A^{\balpha}_{\bp,q}$, it is possible to rewrite them as a sequence $(A_k)_{k\in\NN}$ and \eqref{LSup} is equivalent to the statement that \vspace{-1ex}
\begin{equation*}
	\limsup_{k\rightarrow\infty}A_k=\I^n.
\end{equation*}
This means we are in a situation where we can apply Theorem \ref{ConBV}.

\begin{proof}[Proof of Theorem \ref{Con1}]
	We are to show that, for any given $\eps>0$, the $\limsup$ set
	\begin{equation*}
		\bigcap\limits_{k=1}^{\infty}\bigcup\limits_{q=k}^{\infty}\left(\bigcup\limits_{|\bp|<q}U^{\balpha}_{\bp,q}(\eps)\right)
	\end{equation*}
	has full Lebesgue measure, where
	\begin{equation*}
		U^{\balpha}_{\bp,q}(\eps)=\prod\limits_{j=1}^n[q\alpha_j-p_j-\eps q^{-i_j},q\alpha_j-p_j+\eps q^{-i_j}].
	\end{equation*}
	For $\eps$ small enough, any such set $U^{\balpha}_{\bp,q}(\eps)$ is contained in a set $A^{\balpha}_{\bp,q}$ as defined by \eqref{Eqn:Sets}. Furthermore,\vspace{2ex}
	\begin{equation*}
		\frac{|U^{\balpha}_{\bp,q}(\eps)|}{|A^{\balpha}_{\bp,q}|}=\frac{2^n\eps^n q^{-1}}{2^n C(\balpha)^{\sum\limits_{j=1}^n\frac{i_j}{i_{min}}}q^{-1}}=\frac{\eps^n}{C(\balpha)^s},
		\vspace{2ex}
	\end{equation*}
	a ratio which does not depend on $\bp$ or $q$. Thus the conditions for Theorem \ref{ConBV} are satisfied and using \eqref{LSup} we can conclude that
	\begin{equation*}
		\left|\bigcap\limits_{k=1}^{\infty}\bigcup\limits_{q=k}^{\infty}\left(\bigcup\limits_{|\bp|<q}U^{\balpha}_{\bp,q}(\eps)\right)\right|=
		\left|\bigcap\limits_{k=1}^{\infty}\bigcup\limits_{q=k}^{\infty}\left(\bigcup\limits_{|\bp|<q}A^{\balpha}_{\bp,q}\right)\right|=1.
		\vspace{2ex}
		\qedhere
	\end{equation*} 
\end{proof}

\newpage
 
\section{Future development}

	It is worth noting that Theorem \ref{Thm1} has been shown to be a statement of equivalence in the non-weighted case.
	\begin{theorem}[Theorem V.XIII in \cite{Cassels}]\label{Thm:Casselsnonsing}
		Let $\balpha\in \I^n$. Then, $\balpha$ is non-singular if and only if all $\bbeta \in \I^n$ are uniformly approximable by the sequence $(q\balpha)_{q\in\NN}$, i.e. there exists a constant $C(\balpha)>0$ such that for all $\bbeta \in \I^n$ there are infinitely many $q\in\NN$ satisfying 
		\begin{equation*}
			\max\limits_{1\leq j\leq n}\norm{q\alpha_j-\beta_j}<C(\balpha)q^{-1/n}.
			\vspace{2ex}
		\end{equation*}				
	\end{theorem}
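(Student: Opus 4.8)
The statement is an equivalence, and one of the two implications is already in hand. If $\balpha\notin\cS_n$, then, since $\cS_n=\cS_n(\bi)$ for the balanced weight vector $\bi=(1/n,\dots,1/n)$, Theorem~\ref{Thm1} supplies a constant $C(\balpha)>0$ such that every $\bbeta\in\I^n$ satisfies $\max_{1\le j\le n}q^{1/n}\norm{q\alpha_j-\beta_j}<C(\balpha)$ for infinitely many $q\in\NN$; dividing through by $q^{1/n}$ gives precisely $\max_j\norm{q\alpha_j-\beta_j}<C(\balpha)q^{-1/n}$ infinitely often. So the real content of the theorem is the converse: if $\balpha$ is \emph{singular}, then uniform approximability fails for some $\bbeta$.

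To prove the converse I would argue contrapositively, and, assuming $\balpha\in\cS_n$, construct a single $\bgamma\in\I^n$ for which, for every $C>0$, only finitely many $q$ satisfy $\max_j\norm{q\alpha_j-\gamma_j}<Cq^{-1/n}$. The mechanism is the one-parameter family of unimodular lattices $\Lambda_t=g_t u_\balpha\ZZ^{n+1}$ with $g_t=\mathrm{diag}(e^{t/n},\dots,e^{t/n},e^{-t})$ and $u_\balpha$ the unipotent carrying $-\balpha$ in its last column, exactly as in the proof of Lemma~\ref{nalphalemma} with $\ell=n$. Unwinding the definition of $\cS_n$, one checks that $\balpha$ is singular precisely when $\mu_1(\Lambda_t)\to0$ as $t\to\infty$; Minkowski's Second Theorem (Theorem~\ref{Thm:Mink2nd}) then forces $\prod_{j\ge 2}\mu_j(\Lambda_t)\to\infty$, hence $\mu_{n+1}(\Lambda_t)\to\infty$, so the covering radius of $\Lambda_t$ tends to infinity as well. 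Thus for each large $t$ there is a point $\br_t$ in a Dirichlet fundamental domain of $\Lambda_t$ with $\mathrm{dist}(\br_t,\Lambda_t)\ge R_t$ for some $R_t\to\infty$; read through the explicit form of $\Lambda_t$, this deep hole corresponds to a shift $\bgamma_t\in\I^n$ that is badly approximable ``at scale $e^t$'', in the sense that $\max_j\norm{q\alpha_j-\gamma_{t,j}}\ge c\,R_t\,q^{-1/n}$ for all $q$ up to order $e^t$, with $c>0$ absorbing the fixed constants coming from $g_t$ and from the fundamental domain.

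The remaining task, which I expect to be the main obstacle, is to replace this family of scale-dependent shifts by one universal $\bgamma$. Using compactness of $\I^n$ I would pass to a convergent subsequence $\bgamma_{t_k}\to\bgamma$, the delicate point being to choose the scales $t_k$ sparse enough --- each $t_{k+1}$ depending on $t_k$ and on $\balpha$ --- that the scale-$e^{t_k}$ badness of $\bgamma_{t_k}$ survives both the perturbation $\bgamma_{t_k}\to\bgamma$ and the interference of solutions $q$ living at intermediate scales. Since $R_{t_k}\to\infty$, the surviving bounds $\max_j\norm{q\alpha_j-\gamma_j}\ge c_k q^{-1/n}$ with $c_k\to\infty$ would then exclude every fixed $C$, so $\balpha$ would not be uniformly approximable relative to $\bgamma$. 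Making this ``nested deep holes'' argument precise is exactly the subtle part of Cassels' original treatment, which in \cite[Chapter~V]{Cassels} is carried out by an explicit Cantor-type induction on the digits of $\bgamma$ that bypasses the lattice language entirely; either route works, and since the statement is quoted here only for later reference one may equally well invoke Theorem~V.XIII of \cite{Cassels} directly.
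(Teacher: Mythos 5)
The paper does not prove Theorem~\ref{Thm:Casselsnonsing}: it is stated as a citation to Cassels, with only the remark that the converse direction is established by constructing, for a singular $\balpha$, a single $\bbeta$ for which the inequality has finitely many solutions for every $C$. Your identification of the forward direction (non-singular implies uniformly approximable) as Theorem~\ref{Thm1} specialised to $\bi=(1/n,\dots,1/n)$ is exactly right and is precisely what the paper means by saying Theorem~\ref{Thm1} ``has been shown to be a statement of equivalence in the non-weighted case''. Your lattice-dynamical sketch of the converse is a reasonable modern reformulation via Dani's correspondence, and the chain from $\mu_1(\Lambda_t)\to 0$ through $\mu_{n+1}(\Lambda_t)\to\infty$ (via Minkowski's second theorem) to unbounded covering radius is sound; but, as you yourself flag, the crucial passage from scale-dependent deep holes $\bgamma_t$ to a single universal $\bgamma$ is left unresolved, and this is where all the work lives --- Cassels handles it by an explicit digit-by-digit construction rather than the dynamical picture. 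Since both you and the paper ultimately fall back on citing Cassels for this direction, your proposal is consistent with the paper's treatment; just be aware that the lattice sketch, while plausible, is not a substitute for the citation.
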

	This is shown through the construction of a vector $\bbeta\in\I^n$ for which the inequality \vspace{-2ex}
	\begin{equation*}
		\max\limits_{1\leq j\leq n}\norm{q\alpha_j-\beta_j}<Cq^{-1/n}
	\end{equation*}
	has only finitely many solutions $q\in\NN$ for any given constant $C>0$. This also implies that there is no analogous statement to Dirichlet's Theorem for an arbitrary inhomogeneous constant, as discussed in Remark \ref{Rmk:InhomDir}. We are confident that Theorem~\ref{Thm:Casselsnonsing} can be extended to the weighted case. So far we have not finalised a proof.
	
	Another interesting question in both the standard and weighted case is whether Theorem \ref{Con1} is actually true if and only if $\balpha$ is non-singular. This would be a strengthening of Theorem \ref{Thm:Casselsnonsing} and give us the following Kurzweil-type statement:
	\begin{Conjecture}
		For $\eps>0$, denote by $\psi_{\eps}$ the approximating function given by $\psi_{\eps}(q)=\eps q^{-1}$. Then
		\begin{equation*}
			\bigcap\limits_{\eps>0}W^{\times}_n(\bi,\psi_{\eps})=\I^n\setminus \cS_n(\bi).
		\end{equation*}
	\end{Conjecture}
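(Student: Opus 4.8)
The inclusion $\I^n\setminus\cS_n(\bi)\subseteq\bigcap_{\eps>0}W_n^\times(\bi,\psi_\eps)$ is already contained in Theorem \ref{Con1} and the remark following it, so only the reverse inclusion is at stake. The plan begins with a reduction via Theorem \ref{ConBV}. Since $\psi_\eps(q)=\eps q^{-1}$ gives $\psi_\eps(q)^{-i_j}=\eps^{-i_j}q^{i_j}$, the condition $\max_j\psi_\eps(q)^{-i_j}\norm{q\alpha_j-\beta_j}<1$ is exactly $\bbeta\in B^d_{(\eps q^{-1})^{i_{\mathrm{min}}}}(q\balpha-\bp)$ for the metric $d$ of \eqref{Eqn:DefMetric} and a suitable $\bp$, so $W_n^{\balpha}(\bi,\psi_\eps)$ is a $\limsup$ of $d$-balls with centres $q\balpha-\bp$ and radii $(\eps q^{-1})^{i_{\mathrm{min}}}$. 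Two such radii for $\eps$ and $\eps'$ differ by the constant factor $(\eps/\eps')^{i_{\mathrm{min}}}$, and concentric $d$-balls of radii $r,r'$ have volume ratio $(r/r')^{s}$ with $s=\sum_j i_j/i_{\mathrm{min}}$; hence Theorem \ref{ConBV} shows $\lambda_n(W_n^{\balpha}(\bi,\psi_\eps))$ is independent of $\eps$. It follows that $\bigcap_{\eps>0}W_n^\times(\bi,\psi_\eps)=W_n^\times(\bi,\psi_1)$, and the conjecture reduces to the single implication
\[
  \balpha\in\cS_n(\bi)\ \Longrightarrow\ \lambda_n\!\bigl(W_n^{\balpha}(\bi,\psi_1)\bigr)<1,
\]
which is a weighted, measure-theoretic strengthening of the (as yet unproved) weighted analogue of Cassels' Theorem \ref{Thm:Casselsnonsing}.

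To prove this implication I would produce, for a $\bi$-singular $\balpha$, a constant $\delta>0$ and a measurable set $F\subseteq\TT^n$ with $\lambda_n(F)\ge\delta$ such that the box $\prod_{j=1}^n[\{q\alpha_j\}-q^{-i_j},\{q\alpha_j\}+q^{-i_j}]$ misses $F$ for every sufficiently large $q$; then $F\subseteq\TT^n\setminus W_n^{\balpha}(\bi,\psi_1)$ and we are done. Singularity furnishes an increasing denominator sequence $(q_k)$ with $\max_j q_k^{i_j}\norm{q_k\alpha_j}=c_k\to 0$, equivalently a sequence of scales $Q=Q_k$ at which the unimodular lattice in $\RR^{n+1}$ spanned by the linear forms $Q^{-1}q,\ Q^{i_j}(q\alpha_j+p_j)$ has first successive minimum tending to $0$; by Minkowski's Second Theorem (Theorem \ref{Thm:Mink2nd}) its last successive minimum, hence its covering radius, then tends to $\infty$. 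As in Lemma \ref{Lma:Lattice} and the lattice argument in the proof of Theorem \ref{thm:lines}(ii), this is precisely what forces a definite-measure family of shifts $\bbeta$ to lie far from the lattice, i.e.\ to fail the box condition, at each scale $Q_k$. I would then assemble $F$ by a nested Cantor-type construction $F_1\supseteq F_2\supseteq\cdots$, $F=\bigcap_k F_k$, in which $F_k$ already avoids \emph{every} box with $q\le q_{k+1}$, using the near-integrality of $q_k\balpha$ to confine those boxes and to bound $\lambda_n(F_{k-1}\setminus F_k)$; if the removed measures sum to strictly less than $\lambda_n(F_1)$ then $\lambda_n(F)>0$, as required. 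An alternative route is to first establish a zero--one law for $W_n^{\balpha}(\bi,\psi)$ in the $\bbeta$ variable — a weighted, $n$-dimensional inhomogeneous counterpart of Cassels' Theorem \ref{Thm:Cassels01} — which would reduce the task to merely showing that $W_n^{\balpha}(\bi,\psi_1)$ is not of full measure, for which the failure of quasi-independence of the boxes at the scales $Q_k$ should suffice.

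The principal obstacle, and presumably the reason this has not yet been finalised, is the passage from ``bad at the sparse singular scales $Q_k$'' to ``bad at \emph{every} large $q$'': between consecutive singular scales the orbit $(\{q\balpha\})$ may be well distributed, so one must exploit the hierarchical relation between the consecutive best-approximation denominators $q_k,q_{k+1}$ and the quality $\max_j q_k^{i_j}\norm{q_k\alpha_j}$ to show that the boxes with $q\in(q_k,q_{k+1}]$ are also confined to a region whose complement has measure bounded away from $0$, uniformly in $k$. These confinement estimates are the delicate combinatorial core of Cassels' classical argument, and they are appreciably heavier here because the boxes are anisotropic, with side lengths $q^{-i_j}$: the governing ``distance'' is the weighted quantity $\max_j q^{i_j}\norm{q\alpha_j}$ rather than a Euclidean ball, so every geometry-of-numbers step must be run with a diagonal flow adapted to $\bi$, in the spirit of the flow $g_t$ used in the proof of Lemma \ref{nalphalemma}. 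A secondary difficulty is that, absent a zero--one law, one should not expect to pin down $W_n^{\balpha}(\bi,\psi_1)$ as exactly null on the singular locus; fortunately the weaker conclusion $\lambda_n(W_n^{\balpha}(\bi,\psi_1))<1$ is all the conjecture requires, and it would also upgrade Theorem \ref{Thm1} to the desired weighted analogue of Theorem \ref{Thm:Casselsnonsing}.
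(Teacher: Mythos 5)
The statement you have been asked to prove is presented in the paper as a \emph{Conjecture}, not a theorem: the authors explicitly state just before it that the weighted analogue of Cassels' Theorem \ref{Thm:Casselsnonsing} has ``not been finalised,'' and the conjecture is offered as a further strengthening beyond even that. There is therefore no paper proof to compare against; the most a reviewer can do is assess your reduction and your plan.

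Your reduction is correct and is worth recording. The inclusion $\I^n\setminus\cS_n(\bi)\subseteq\bigcap_{\eps>0}W_n^{\times}(\bi,\psi_\eps)$ is exactly the content of Theorem \ref{Con1} together with the remark that follows it. You then observe, correctly, that for fixed $\balpha$ the sets $W_n^{\balpha}(\bi,\psi_\eps)$ are $\limsup$ sets of $d$-balls (in the metric \eqref{Eqn:DefMetric}) of radius $(\eps/q)^{i_{\min}}$ centred at the points $q\balpha-\bp$, that changing $\eps$ rescales every one of these $d$-balls by a fixed volume factor $(\eps/\eps')^{s}$ with $s=\sum_j i_j/i_{\min}$, and hence that Theorem \ref{ConBV} makes $\lambda_n\bigl(W_n^{\balpha}(\bi,\psi_\eps)\bigr)$ independent of $\eps$. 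This legitimately collapses the intersection to $W_n^{\times}(\bi,\psi_1)$, and the conjecture becomes the single implication $\balpha\in\cS_n(\bi)\Rightarrow\lambda_n(W_n^{\balpha}(\bi,\psi_1))<1$, i.e.\ a measure-theoretic, weighted strengthening of the missing half of Theorem \ref{Thm:Casselsnonsing}. This is the same normalisation the paper itself would have to make, and it is the right one.

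What you offer beyond that point is a strategy, not a proof, and you are candid about where it breaks off. Building a positive-measure set $F$ of $\bbeta$'s that avoid every large-$q$ box would indeed suffice, and your lattice set-up (first successive minimum at the scales $Q_k$ tending to zero, hence covering radius tending to infinity via Minkowski's Second Theorem and Lemma \ref{Lma:Lattice}, hence a definite-measure region of $\bbeta$'s excluded at each singular scale) is the natural analogue of both Cassels' original argument and the lattice geometry already deployed in Lemma \ref{nalphalemma}. But the crucial step — showing that the boxes with $q$ \emph{between} consecutive singular scales $q_k$ and $q_{k+1}$ are also confined, with removed measures summing to strictly less than $\lambda_n(F_1)$, uniformly in $k$ — is exactly the combinatorial core that is missing, and it is visibly harder here because the boxes are anisotropic with side lengths $q^{-i_j}$, so each step of the geometry of numbers must run with a $\bi$-adapted diagonal flow rather than a single Euclidean scaling. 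Your alternative route via a weighted, inhomogeneous, higher-dimensional zero--one law in the $\bbeta$ variable faces the same issue: no such law is available in the paper (Theorem \ref{Thm:Cassels01} is one-dimensional, homogeneous, and unweighted), and proving one is comparable in difficulty to the direct confinement argument. So the proposal does not establish the conjecture; it correctly identifies the open direction, reduces it optimally, and names the obstacle, but it leaves that obstacle standing.
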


\begin{bibdiv}

\setcounter{chapter}{0}

	\begin{biblist}
	
	\addcontentsline{toc}{chapter}{List of References}
	
	\bib{An}{article}{author = {An, J.},
		title = {Badziahin--Pollington--Velani's Theorem and Schmidt's game},
		journal = {Bull. London Math. Soc.}, volume = {45}, year = {2013},
		number = {4}, pages = {712--733}, }

    \bib{AvdD}{incollection}{ author = {Aschenbrenner, M.}, author =
     	{van den Dries, L.}, title = {Asymptotic differential algebra},
      	booktitle = {Analyzable functions and applications}, pages =
      	{49--85}, series = {Contemp. Math., 373}, publisher =
      	{Amer. Math. Soc., Providence, RI}, year = {2005}, }
      	
    \bib{Badziahin}{article}{ author = {Badziahin, D.}, author = {Pollington, A. D.},
    	author = {Velani, S. L.},
    	title = {On a problem in simultaneous Diophantine approximation: Schmidt's conjecture},
    	journal = {Ann. of Math. (2)}, volume = {174}, year = {2011},
    	number = {3}, pages = {1837--1883}, }
      	
    \bib{Baker1}{article}{ author = {Baker, R. C.},
    	title={Singular n-tuples and Hausdorff dimension},
    	journal = {Math. Proc. Cambridge Philos. Soc.},
    	volume = {81}, year = {1977}, number = {3}, pages = {377--385}, }
    	
    \bib{Baker2}{article}{ author = {Baker, R. C.},
    	title={Singular n-tuples and Hausdorff dimension II},
    	journal = {Math. Proc. Cambridge Philos. Soc.},
    	volume = {111}, year = {1992}, number = {3}, pages = {577--584}, }

    \bib{Bermanifolds}{article}{ author = {Beresnevich, V. V.}, title
      	= {Rational points near manifolds and metric Diophantine
        approximation}, journal = {Ann. of Math. (2)}, volume = {175},
      	date = {2012}, number = {1}, pages = {187--235}, }
      	
    \bib{Berschmidt}{article}{ author = {Beresnevich, V. V.},
    	title = {Badly approximable points on manifolds},
    	journal = {Inventiones Mathematicae}, volume = {202},
    	date = {2015}, number = {3}, pages = {1199--1240}, }
    	
    \bib{BBDV}{misc}{ author = {Beresnevich, V. V.}, author = {Bernik, V. I.},
    	author = {Dodson, M.}, author = {Velani, S. L.},
    	title = {Classical metric Diophantine approximation revisited},
    	series = {Roth Festschrift - essays in honour of Klaus Roth on the occasion
of his 80th birthday},
    	publisher = {Cambridge University Press},
    	year = {2009}, pages = {38--61}, }

    \bib{limsup}{article}{ author = {Beresnevich, V. V.}, author =
      	{Dickinson, D.}, author = {Velani, S. L.}, title = {Measure
        theoretic laws for lim sup sets}, journal =
      	{Mem. Amer. Math. Soc.}, volume = {179}, date = {2006}, number =
      	{846}, pages = {x+91}, }

    \bib{BDVplanarcurves}{article}{ author = {Beresnevich, V. V.},
      	author = {Dickinson, D.}, author = {Velani, S. L.}, title =
      	{Diophantine approximation on planar curves and the distribution
        of rational points. With an Appendix II by
       	R. C. Vaughan}, journal = {Ann. of Math. (2)}, volume =
      	{166}, date = {2007}, number = {2}, pages = {367--426}, }
      	
    \bib{nalpha}{misc}{ author = {Beresnevich, V. V.}, author =
      	{Haynes, A. K.}, author = {Velani, S. L.}, title = {Sums of reciprocals of fractional 				parts and multiplicative Diophantine approximation}, 
      	note = {\url{https://arxiv.org/abs/1511.06862}, preprint 2016}, }
      	
    \bib{Lee}{article}{ author = {Beresnevich, V. V.}, author = {Lee, L.}, 
    	author = {Vaughan, R. C.}, author = {Velani, S. L.},
    	title = {Diophantine approximation on manifolds and lower bounds for Hausdorff dimension}, 
    	note = {In preparation (2017)}, }
      	
    \bib{DAaspects}{misc}{ author = {Beresnevich, V. V.}, author = {Ram{\'i}rez, F.~A.},
    	author = {Velani, S. L.}, title = {Metric Diophantine Approximation: aspects of recent 				work},
    	note = {\url{https://arxiv.org/abs/1601.01948}, preprint 2016},}

	\bib{BVVZ}{article}{ author = {Beresnevich, V. V.}, author = {Vaughan, R. C.},
		author = {Velani, S. L.}, author = {Zorin, E.},
		title = {Diophantine approximation on manifolds and the distribution of rational points: 						contributions to the convergence theory},
		journal = {Int. Math. Res. Notices},
		volume = {2017}, year = {2017},
		number = {10}, pages = {2885--2908}, }
		
	\bib{BVVZ2}{article}{ author = {Beresnevich, V. V.}, author = {Vaughan, R. C.},
		author = {Velani, S. L.}, author = {Zorin, E.},
		title = {Diophantine approximation on manifolds and the distribution of rational points: 						contributions to the divergence theory},
		note = {In preparation (2017)}}
		
	%\bib{Slicing}{article} { author = {Beresnevich, V. V.}, author = {Velani, S. L.},
	%	title = {Schmidt's theorem, Hausdorff measures and Slicing}, 
	%	journal = {Int. Mat. Res. Not.}, year = {2006}, 
	%	note = {Art. ID 48794, 24 pp.}, }
		
	\bib{MassTrans}{article}{ author = {Beresnevich, V. V.}, author = {Velani, S. L.},
		title = {A Mass Transference Principle and the Duffin--Schaeffer conjecture for Hausdorff 			measures}, journal = {Ann. Math.}, volume = {164}, year = {2006}, pages = {971--992}, } 
		
	\bib{VBSV}{article}{ author = {Beresnevich, V. V.}, author = {Velani, S. L.},
		title = {A note on zero-one laws in metrical Diophantine approximation},
		journal = {Acta Arithmetica}, volume = {133.4}, Year = {2008}, pages = {363--374}, }
		
	\bib{Zorin}{article}{ author = {Beresnevich, V. V.}, author = {Zorin, E.},
		title = {Explicit bounds for rational points near planar curves and metric
Diophantine approximation}, journal = {Adv. Math.}, volume = {225}, year = {2010},
		number = {6}, pages = {3064--3087}, }

	\bib{Besicovitch}{article}{ author = {Besicovitch, A. S.},
		title = {On the sum of digits of real numbers represented in the dyadic system},
		journal = {Math. Ann.}, volume = {110}, year = {1934}, pages = {321--330}, }

    \bib{Cassels-01law}{article}{ author = {Cassels, J. W. S.}, title
      	= {Some metrical theorems in Diophantine approximation. I.},
      	journal = {Proc. Cambridge Philos. Soc.}, volume = {46}, year =
      	{1950}, pages = {209--218}, }

    \bib{Cassels}{book}{ author = {Cassels, J. W. S.}, title = {An
       	introduction to Diophantine approximation}, series =
      	{Cambridge Tracts in Mathematics and Mathematical Physics,
        No. 45}, publisher = {Cambridge University Press, New York},
      	date = {1957}, pages = {x + 166}, }

    \bib{Cassels-geometry}{book}{ author = {Cassels, J. W. S.}, title
      	= {An introduction to the geometry of numbers. {C}orrected
        reprint of the 1971 edition}, series = {Classics in
        Mathematics}, publisher = {Springer-Verlag, Berlin}, year =
      	{1997}, pages = {viii+344}, }
      	
    \bib{Chaika}{article}{ author = {Chaika, J.},
    	title = {Shrinking targets for IETs: Extending a theorem of Kurzweil},
    	journal = {Geom. Funct. Anal}, volume = {21}, year = {2011},
    	number = {5}, pages = {1020--1042}, }
    	
    \bib{Cheung}{article}{ author = {Cheung, Y.},
    	title = {Hausdorff dimension of the set of Singular Pairs},
    	journal = {Ann. Math.}, volume = {173}, year = {2011}, pages = {127--167},}
    
    \bib{Cheung2}{article}{ author = {Cheung, Y.}, author = {Chevallier, N.},
    	title = {Hausdorff dimension of singular vectors},
    	journal = {Duke Math. J.}, volume = {165}, year = {2016}, pages = {2273--2329},}
      	
    \bib{Davenport}{article}{ author = {Davenport, H.}, author = {Schmidt, W. M.},
		title = {Dirichlet's theorem on diophantine approximation}, journal = {Symposia 					Mathematica},
		volume = {IV}, date = {1970}, pages = {113--132}, }
		
    \bib{Davenport2}{article}{ author = {Davenport, H.}, author = {Schmidt, W. M.},
		title = {Dirichlet's theorem on diophantine approximation II}, journal = {Acta Arith.},
		volume = {16}, date = {1970}, pages = {413--424}, }
		
	\bib{Dodson}{article}{ author = {Dodson, M.}, author = {Rynne, B.}, author = {Vickers, J.},
		title = {Khintchine type theorems on manifolds}, journal = {Acta Arith.},
		volume = {57}, year = {1991}, number = {2}, pages = {115--130}, }
		
	\bib{Duffin}{article}{ author = {Duffin, R. J.}, author = {Schaeffer, A. C.},
		title = {Khintchine's problem in metric Diophantine approximation},
		journal = {Duke Math. J.}, volume = {8}, date = {1941}, pages = {243--255}, }
      	
    \bib{Einsiedler}{book}{ author = {Einsiedler, M.}, author = {Ward, T.},
    	title = {Ergodic Theory with a view towards Number Theory},
    	series = {Graduate Texts in Mathematics}, publisher = {Springer-Verlag, Berlin},
    	year={2011},}
    
    \bib{EKL}{article}{ author = {Einsiedler, M.}, author = {Katok, A.},
    	author = {Lindenstrauss, E.},
    	title = {Invariant measures and the set of exceptions to Littlewood’s conjecture},
    	journal = {Ann. of Math. (2)}, volume = {164}, year = {2006},
    	number = {2}, pages = {513--560}, }
    	
    \bib{Falconer}{book}{ author = {Falconer, K.},
		title = {Fractal Geometry: Mathematical Foundations and Applications},
		publisher = {John Wiley and Sons}, year = {2003},}
		
	\bib{Fayad}{article}{ author = {Fayad, B.},
		title = {Mixing in the absence of the shrinking target property},
		journal = {Bull. London Math. Soc.}, volume = {38},
		year = {2006}, number = {5}, pages = {829--838},}
		
	\bib{Fuchs}{article}{ author = {Fuchs, M.}, author = {Kim, D. H.},
		title = {On Kurzweil's 0-1 Law in Inhomogeneous Diophantine Approximation},
		journal = {Acta Arith.}, volume = {173}, year = {2016}, number = {1},
		pages = {41--57}, }
		
	\bib{Gallagher01}{article}{ author = {Gallagher, P. X.}, title =
      	{Approximation by reduced fractions}, 
      	journal = {J. Math. Soc. Japan}, volume = {13}, date = {1961}, pages = {342--345},}

    \bib{Gallagherkt}{article}{ author = {Gallagher, P. X.}, title =
      	{Metric simultaneous Diophantine approximation. II}, journal =
      	{Mathematika}, volume = {12}, date = {1965}, pages = {123--127},}

	%\bib{Ghosh}{article}{ author={Ghosh, A.}, title={A
	%	Khintchine-type theorem for hyperplanes}, journal={J. London
	%   Math. Soc. (2)}, volume={72}, date={2005}, number={2},
	% 	pages={293--304}, }
      	
    \bib{Hajos}{article}{ author = {Haj{\'o}s, G.},
		title = {{\"U}ber einfache und mehrfache Bedeckung des n-dimensionalen Raumes mit einem 				W{\"u}rfelgitter}, journal = {Math. Z. Zeitschrift}, volume = {47},
		Year = {1941}, pages = {427--467}, }

    \bib{Hardy}{article}{ author = {Hardy, G. H.}, Publisher = {Hafner
        Publishing Co., New York}, Series = {Cambridge Tracts in
        Mathematics and Mathematical Physics, No. 12}, Title = {Orders
        of infinity. {T}he {I}nfinit\"arcalc\"ul of {P}aul du
        {B}ois--{R}eymond}, Year = {1971}, }
        
    \bib{HardyWright}{book}{ author = {Hardy, G. H.}, author = {Wright, E. M.},
		title = {Introduction to the Theory of Numbers},
		publisher = {Oxford University Press}, year = {1938},}
		
	\bib{Harman}{book}{ author = {Harman, G.}, title = {Metric Number Theory},
		series = {London Mathematical Society Monographs New Series}, 
		publisher = {Oxford University Press}, year = {1998},}
		
	\bib{Harraptwisted}{article}{ author = {Harrap, S. G.},
		title = {Twisted inhomogeneous Diophantine approximation and badly approximable sets},
		journal = {Acta Arith.}, volume = {151}, year = {2012}, pages = {55--82}, }
		
	%\bib{Hua}{book}{ author = {Hua, L. K.},
	%	title = {Introduction to Number Theory}, publisher = {Springer-Verlag, Berlin},
	%	year = {1982}, }
		
	\bib{Hurwitz}{article}{ author = {Hurwitz, A.},
		title = {\"Uber die angen\"aherte Darstellung der Irrationalzahlen durch rationale Br				\"uche},
		journal = {Math. Ann.}, volume = {39}, date = {1891}, pages = {279--284}, }
		
	\bib{JarnikBad}{article}{ author = {Jarn\'ik, V.},
		title = {Zur metrischen Theorie der diophantischen Appoximationen},
		journal = {Prace Mat.-Fiz.}, volume = {36}, year = {1928}, pages = {371--382}, }
		
	\bib{Jarnikold}{article}{ author = {Jarn\'ik, V.},
		title = {Diophantische Approximationen und Hausdorffsches Mass},
		journal = {Mat. Sbornik}, volume = {36}, date = {1929}, pages = {371--381}, }
		
	\bib{Jarnik}{article}{ author ={Jarn\'ik, V.},
		title = {\"Uber die simultanen diophantischen Approximationen},
		journal = {Math. Z.}, volume = {33}, date = {1931}, pages = {505--543}, }
        
    \bib{Khintchine}{article}{ author = {Khintchine, A. Y.}, title =
      	{Einige S\"atze \"uber Kettenbr\"uche, mit Anwendungen auf die Theorie der Diophantischen
      	Approximationen}, journal = {Math. Ann.}, volume = {92}, date =
      	{1924}, pages = {115--125}, }

    \bib{Khintchine2}{article}{ author = {Khintchine, A. Y.}, title =
      	{Zur metrischen Theorie der diophantischen Approximationen},
      	journal = {Math. Z.}, volume = {24}, date =
      	{1926}, number = {1}, pages = {706--714}, }
      	
    \bib{KhintchineSing}{article}{ author = {Khintchine, A. Y.}, title =
      	{\"Uber eine Klasse linearer Diophantischer Approximationen},
      	journal = {Rendiconti Circ. Mat. Soc. Palermo}, volume = {50}, date =
      	{1926}, number = {1}, pages = {170--195}, }
      	
    \bib{KhintchineFr}{article}{ author = {Khintchine, A. Y.}, title =
    	{Sur le probl\`eme de Tchebycheff}, journal = {Izv. Akad. Nauk SSSR, Ser. Mat.},
    	volume = {10}, year = {1946}, pages = {281--294}, }
      	
    \bib{Khintchine-book}{book}{author = {Khintchine, A. Y.},
		title = {Continued Fractions}, Publisher = {University of Chicago Press, Chigaco}, 
		year = {1964},}
						
	\bib{Kim}{article}{ author = {Kim, D. H.},
		title = {The shrinking target property of irrational rotations},
		journal = {Nonlinearity}, volume = {20}, year = {2007},
		number = {7}, pages = {1637--1643}, }

    %\bib{Kleinbock}{article}{ author={Kleinbock, D. Y.}, title={Extremal
    %   subspaces and their submanifolds},
    %  	journal={Geom. Funct. Anal.}, volume={13}, date={2003},
    %  	number={2}, pages={437--466}, }

    \bib{KleinbockMargulis}{article}{ author={Kleinbock, D. Y.},
      	author={Margulis, G. A.}, title={Flows on homogeneous spaces and
        Diophantine approximation on manifolds}, journal={Ann. of
        Math. (2)}, volume={148}, date={1998}, number={1},
      	pages={339--360}, }
      	
    \bib{Kurzweil}{article}{ author = {Kurzweil, J.},
    	title = {On the metric theory of inhomogeneous Diophantine approximations},
    	journal = {Studia Math.}, volume = {15}, year = {1955}, pages = {84--112}, }
    	
    \bib{Tamam}{article}{ author = {Liao, L.}, author = {Shi, R.}, author = {Solan, O. N.},
    	author = {Tamam, N.}, 
    	title = {Hausdorff dimension of weighted singular vectors in $\RR^2$},
    	note = {\url{https://arxiv.org/abs/1605.01287}, preprint 2016}, }
      	
	\bib{Mattila}{book}{ author = {Mattila, P.},
		title = {Geometry of Sets and Measures in Euclidean Space: Fractals and rectifiability},
		publisher = {Cambridge University Press}, year = {1995},}
      	
    \bib{Pollington}{article}{ author = {Pollington, A. D.}, author = {Velani, S. L.},
		title = {On a problem in simultaneous Diophantine approximation: Littlewood's 						conjecture}, journal = {Acta Mathematica}, volume = {185}, date = {2000}, pages = 					{287--306}, }
		
	\bib{PolVel}{article}{ author = {Pollington, A. D.}, author = {Velani, S. L.},
		title = {On simultaneously badly approximable numbers},
		journal = {J. London Math. Soc. (2)}, volume = {66}, year = {2002},
		pages = {29--40}, }

    \bib{hyperplanes}{article}{ author = {Ram{\'i}rez, F. A.}, title =
      	{Khintchine types of translated coordinate hyperplanes}, journal
      	= {Acta Arith.}, volume = {170}, date = {2015},
      	pages = {243--273}, }
      	
    \bib{mine}{article}{ author = {Ram\'irez, F. A.}, author = {Simmons, D. S.},
    	author = {S\"uess, F.}, title = 
    	{Rational approximation of affine coordinate subspaces of Euclidean space},
    	journal = {Acta Arith.}, volume = {177}, date = {2017}, pages = {91--100}, }
    	
    \bib{Rynne}{article}{ author = {Rynne, B. P.},
    	title = {A lower bound for the Hausdorff dimension of sets of singular n-tuples},
    	journal = {Math. Proc. Cambridge Philos. Soc.}, volume = {107},
    	year = {1990}, number = {2}, pages = {387--394}, }

    %\bib{Schmidtpaper}{article}{ author = {Schmidt, W. M.},
    % 	title = {A metrical theorem in Diophantine Approximation},
    %	journal = {Canad. J. Math.}, volume = {12}, year = {1960}, pages = {619--631}, }
    	
    \bib{Schmidtjarnik}{article}{ author = {Schmidt, W. M.},
    	title = {Metrical theorems on fractional parts of sequences},
    	journal = {Trans. Amer. Math. Soc.}, volume = {110}, year = {1964}, 
    	pages = {493--518}, }
    	
    \bib{Schmidtgames}{article}{ author = {Schmidt, W. M.},
    	title = {On badly approximable numbers and certain games},
    	journal = {Trans. Amer. Math. Soc.}, volume = {123}, year = {1966},
    	pages = {178--199}, }
    	    	
    \bib{schmidt}{book}{ author = {Schmidt, W. M.}, title = {Diophantine Approximation},
    	series = {Lecture Notes in Mathematics, Vol. 785}, publisher = {Springer-Verlag},
    	year = {1975}, } 
    	
    \bib{SchmidtBad}{book}{ author = {Schmidt, W. M.},
    	title = {Open problems in Diophantine approximation},
    	series = {Approximations diophantiennes et nombres transcendants (Luminy 1982)},
    	publisher = {Birkh\"auser}, year = {1983}, }
      	
    \bib{Shapira}{article}{author = {Shapira, U.},
		title = {Grids with dense values}, journal = {Comment. Math. Helv.}, volume = {88},
		Year = {2013}, pages = {485--506}, }
		
	\bib{SimmonsKurzweil}{article}{ author = {Simmons, D. S.},
		title = {An analogue of a theorem of Kurzweil},
		journal = {Nonlinearity}, volume = {28}, year = {2015}, number = {5},
		pages = {1401--1408}, }

	\bib{Simmons-convergence-case}{misc}{
		author = {Simmons, D. S.},
		title = {Some manifolds of {K}hinchin type for convergence},
		note = {\url{http://arxiv.org/abs/1602.01727}, preprint 2015},}

    %\bib{Sprindzuk}{book}{ author={Sprind{\v{z}}uk, V. G.},
    %  	title={Mahler's problem in metric number theory},
    %  	series={Translated from the Russian by B. Volkmann. Translations
    %   of Mathematical Monographs, Vol. 25}, publisher={American
    %   Mathematical Society, Providence, R.I.}, date={1969},
    %  	pages={vii+192}, }
      	
    \bib{Sprindzuk2}{book}{ author = {Sprind{\v{z}}uk, V. G.},
    	title = {Metric theory of Diophantine approximations},
    	publisher = {John Wiley}, year = {1979},
    	note = {Translated by R. A. Silverman}, }
      	
    \bib{master}{misc}{ author = {S\"uess, F.}, title =
    	{Simultaneous Diophantine Approximation: Juicing the Fibres},
    	year = {2013}, note = {MSc Thesis, ETH Z\"urich}, }
    
    \bib{Szusz}{article}{ author = {Sz\"usz, P.},
    	title = {\"Uber die metrische Theorie der Diophantischen Approximation},
    	journal = {Acta Math. Sci. Hungar.}, volume = {9},
    	year = {1958}, pages = {177--193}, }
    	
    \bib{Tseng}{article}{ author = {Tseng, J.},
    	title = {On circle rotations and the shrinking target properties},
    	journal = {Discrete Contin. Dyn. Sys.}, volume = {20}, year = {2008},
    	number = {4}, pages = {1111--1122}, }
    	
    \bib{VaughanVelani}{article}{ author = {Vaughan, R.C.}, author = {Velani, S.L.},
    	title = {Diophantine approximation on planar curves: the convergence
theory}, journal = {Invent. Math.}, volume = {166}, year = {2006}, pages = {103--124}, }
    	
    \bib{Waldschmidt}{misc}{ author = {Waldschmidt, M.},
    	title = {Report on some recent advances in Diophantine approximation},
    	publisher = {Springer-Verlag},
    	series = {Special volume in honor of Serge Lang},
    	note = {\url{https://arxiv.org/abs/0908.3973}, preprint 2009}, }
    	
    \bib{Weyl}{article}{ author = {Weyl, H.},
    	title = {\"Uber die Gleichverteilung von Zahlen mod. Eins},
    	journal = {Math. Ann.}, volume = {77}, year = {1916}, number = {3},
    	pages = {313--352}, }
    	
    \bib{Yang}{article}{ author = {Yang, L.},
    	title = {Badly approximable points on curves and unipotent orbits in homogeneous spaces},
    	note = {\url{https://arxiv.org/abs/1703.03461}, preprint 2017}, }

	\end{biblist}
\end{bibdiv}

\end{document}